\numberwithin{equation}{section}
\newcommand{\ds}{\displaystyle}
\newcommand{\R}{{\mathbb{R}}}
\newcommand{\Z}{{\mathbb{Z}}}
\newcommand{\N}{{\mathbb{N}}}
\newcommand{\dx}{\,dx}
\newcommand{\ie}{{; \it i.e., }}
\newcommand{\A}{\mathcal{A}}
\newcommand{\T}{\mathcal{T}}
\newcommand{\C}{\mathscr{C}}
\let\e=\varepsilon
\let\O=\Omega
\let\G=\Gamma
\providecommand{\newoperator}[3]{%		
  \newcommand*{#1}{\mathop{#2}#3}}
\newoperator{\Per}{\mathrm{Per}}{\nolimits}
\newoperator{\id}{\mathrm{id}}{\nolimits}
\newoperator{\supp}{\mathrm{supp}}{\nolimits}
\newoperator{\tr}{\mathrm{tr}}{\nolimits}
\newoperator{\diag}{\mathrm{diag}}{\nolimits}
\newoperator{\diam}{\mathrm{diam}}{\nolimits}
\newoperator{\dist}{\mathrm{dist}}{\nolimits}
\newoperator{\Div}{\mathrm{div}}{\nolimits}
\newoperator{\cof}{\mathrm{cof}}{\nolimits}
\newoperator{\sspan}{\mathrm{span}}{\nolimits}
\newoperator{\gconv}{\overset{\Gamma}{\rightarrow}}{\nolimits}
\newoperator{\sconv}{\overset{*}{\rightharpoonup}}{\nolimits}
\newoperator{\intS}{\overset{\circ}{S}}{\nolimits}
\newoperator{\dd}{\mathrm{d}}{\nolimits}
\newoperator{\coff}{\mathrm{cof}}{\nolimits}
\newoperator{\conv}{\mathrm{conv}}{\nolimits}
\newoperator{\vvec}{\mathrm{vec}}{\nolimits}
\newoperator{\argmin}{\mathrm{argmin}}{\nolimits}
\newcommand{\LL}{\mathcal{L}}
\newcommand{\HH}{\mathcal{H}}
\newcommand{\W}{\mathcal{W}}
\newtheorem{definition}{Definition}[section]
\newtheorem{theorem}[definition]{Theorem}
\newtheorem{prop}[definition]{Proposition}
\newtheorem{corollary}[definition]{Corollary}
\newtheorem{rem}[definition]{Remark}
\begin{document}
\title[Finite-difference approximations of free-discontinuity problems]
{Quantitative analysis of finite-difference approximations of free-discontinuity problems}
\author[A. Bach, A. Braides, C.I. Zeppieri]{Annika Bach, Andrea Braides, and Caterina Ida Zeppieri}

\address[Annika Bach\footnote{Now Zentrum Mathematik-M7, Technische Universit\"at M\"unchen, Boltzmannstr. 3, 85747 Garching, Germany. E-mail address: annika.bach@ma.tum.de} and Caterina Ida Zeppieri]{Angewandte Mathematik, Universit\"at M\"unster, Einsteinstr. 62, 48149 M\"unster, Germany}
\email{a\_bach10@uni-muenster.de}
\email{caterina.zeppieri@uni-muenster.de}
\address[Andrea Braides]{Dipartimento di Matematica, Universit\`a di Roma ``Tor Vergata'',
via della Ricerca Scientifica 1, 00133 Rome, Italy} 
\email{braides@mat.uniroma2.it}

\begin{abstract}
Motivated by applications to image reconstruction, in this paper we analyse a \emph{finite-difference discretisation} of the Ambrosio-Tortorelli functional. Denoted by $\e$ the elliptic-approximation parameter and by $\delta$ the discretisation step-size, we fully describe the relative impact of $\e$ and $\delta$ in terms of $\Gamma$-limits for the corresponding discrete functionals, in the three possible scaling regimes. We show, in particular, that when $\e$ and $\delta$ are of the same order, the underlying lattice structure affects the $\Gamma$-limit which turns out to be an anisotropic free-discontinuity functional. 
\end{abstract}

\maketitle

\begin{center}
\begin{minipage}{12cm}
\small{ %\vspace{15pt}
 \noindent {\bf Keywords}:  finite-difference discretisation, Ambrosio-Tortorelli functional, $\G$-convergence, elliptic approximation, free-discontinuity functionals.  

\vspace{6pt} \noindent {\it 2000 Mathematics Subject
Classification:} 49M25, 49J45, 68U10, 65M06.}
\end{minipage}
\end{center}

%\tableofcontents 

\section{Introduction}
The detection of objects and object contours in images is a central issue in Image Analysis and Computer Vision. 
From a mathematical modelling standpoint, a grey-scale image can be described in terms of a scalar function $g:\Omega\to [0,1]$ (here, $\O \subset \R^n$ is a set parameterising the image domain, \textit{e.g.}, a rectangle in the plane), which measures, at every point in $\O$, the brightness (or grey-level) of the picture. After a model introduced by Mumford and Shah \cite{MS}, the 
relevant information from an input image $g$ can be obtained from a ``restored'' image described by a function $u$ which solves the minimisation problem
\begin{equation}\label{def:MS}
\min\bigg\{M\!S(u)+\int_\Omega|u-g|^2dx \colon u\in SBV(\O)\bigg\},
\end{equation}
where
\begin{equation}\label{intro:MS}
M\!S(u)=\int_\Omega|\nabla u|^2dx+\HH^{n-1}(S_u)
\end{equation}
is the so-called Mumford-Shah functional and $SBV(\O)$ denotes the space of special functions of bounded variation in $\O$ \cite{DGA}, $S_u$ denotes the discontinuity set of $u$, and $\HH^{n-1}$ is the $(n-1)$-dimensional Hausdorff measure. 
By solving \eqref{def:MS} the discontinuous function $g$ is replaced by a function $u$ which is ``close'' to $g$ and at the same time is smooth outside its discontinuity set $S_u$. The latter, moreover, having a ``minimal'' $(n-1)$-dimensional Hausdorff measure will only detect the relevant contours in the input image $g$. We note that a more complete Mumford-Shah functional would be of the form $\alpha\int_\Omega|\nabla u|^2dx+\beta\HH^{n-1}(S_u)$ with $\alpha,\beta$ positive ``contrast parameters''. In the analysis carried out in the present paper it is not restrictive to set $\alpha=\beta=1$.
Although the relevant space dimension for Image Analysis is $n=2$, we define our problems in a $n$-dimensional setting for the sake of generality, and also because in the case $n=3$ the Mumford-Shah functional has an important mechanical interpretation as it coincides with Griffith's fracture energy in the anti-plane case (see \cite{BFM}).
Problem \eqref{def:MS} is a weak formulation proposed by De Giorgi and Ambrosio of the original minimisation problem proposed by Mumford and Shah, where the minimisation is performed on pairs $(u, K)$, with $K$ piecewise-regular closed set and $u$ smooth function outside $K$. In the weak formulation \eqref{def:MS}-\eqref{intro:MS} the set $K$ is replaced by the discontinuity set of $u$, and a solution of the original problem is obtained by setting $K=\overline S_u$ and proving regularity properties of $K$ (see the recent review paper \cite{Focardi}).

The existence of solutions to \eqref{def:MS} following the direct methods of the Calculus of Variations is by now classical
\cite{ambrosio3}. However, the numerical treatment of \eqref{def:MS} presents major difficulties which are mainly due to the presence of the surface term $\HH^{n-1}(S_u)$. A way to circumvent these difficulties is to replace the Mumford-Shah functional in \eqref{def:MS} with an elliptic approximation studied by Ambrosio-Tortorelli \cite{AT1,AT2}, which provides one of the reference 
approximation argument used in the literature (see \textit{e.g.}~\cite{AK,ChSh,CEN,MoS,OsP,VeC}).
Following the Ambrosio-Tortorelli approximation argument, in place of \eqref{def:MS} one considers a family of scale-dependent problems
\begin{equation}\label{i:MS-e}
\min\bigg\{AT_\e(u,v)+\int_\Omega|u-g|^2dx \colon u,v\in W^{1,2}(\O)\bigg\},
\end{equation}
where  
\begin{equation}\label{i:AT}
AT_\e(u,v)=\int_\Omega (v^2+\eta_\e)|\nabla u|^2dx+\frac{1}{2}\int_\Omega\Bigl(\frac{(v-1)^2}{\e}+\e|\nabla v|^2\Bigr)dx.
\end{equation}
Formally, when the approximation parameter $\e>0$ is small, the first term in the second integral of \eqref{i:AT} forces $v$ to be close to the value $1$ except on a ``small'' set, which can be regarded as an approximation of $S_u$. Additionally, the presence of the term $v^2$ in the first integral allows $u$ to have a large gradient where $v$ is close to zero. Finally, the optimisation of the singular-perturbation term with $|\nabla v|^2$ produces a transition layer around $S_u$ giving exactly the surface term present in $M\!S$.
The parameter $\eta_\e>0$ is used in the numerical simulations in order to have well-posed minimisation problems in \eqref{i:MS-e}; it is taken much smaller than $\e$, but does not intervene in the mathematical analysis.  
It is interesting to note that the coefficient $v^2+\eta_\e$ can be also interpreted as a damage parameter (see e.g. \cite{FrMa}), so that, within Fracture Theory, $AT_\e$ can be seen as an approximation of Griffith's Fracture by concentrated damage. More in general the functionals $AT_\e$ are a prototype of phase-field models for free-discontinuity problems.

Since the functionals in \eqref{i:MS-e} are equi-coercive and $AT_\e$ converge to $M\!S$ in the sense of $\Gamma$-convergence \cite{AT2}, solving \eqref{i:MS-e} gives pairs $(u_\e,v_\e)$, where $u_\e$ approximates a solution $u$ to \eqref{def:MS} and $v_\e$ provides a diffuse approximation of the corresponding discontinuity set $S_u$. 
Moreover, since the functionals $AT_\e$ are elliptic, the difficulties arising in the discretisation of the free-discontinuity set are prevented and \emph{finite-elements} or \emph{finite-difference} schemes for $AT_\e$ can be implemented. 
From the $\Gamma$-convergence of the numerical approximations of $AT_\e$ with mesh size $\delta$ (at fixed $\e$) and the $\Gamma$-convergence of $AT_\e$ to $M\!S$, a diagonal argument shows that if the mesh size $\delta=\delta(\e)$ is fine enough then the numerical approximations of $AT_\e$ with mesh size $\delta(\e)$ $\Gamma$-converge to $M\!S$. For \emph{finite-elements} schemes Bellettini and Coscia (\cite{belcos94}; see also Bourdin \cite{Bou} for the numerical implementation) showed more precisely that this holds if the mesh-size $\delta$ is chosen such that $\delta(\e)\ll \e$. In other words, this assumption on $\delta$ allows a ``separation-of-scale'' argument and to regard separately the two limits as $\e$ and $\delta$ tend to $0$, respectively. Conversely, note that if $\delta>\!\!>\e$ then the second integral in \eqref{i:AT} diverges unless $v$ is uniformly close to $1$, which implies that the domain of the $\Gamma$-limit of numerical approximations of $AT_\e$ with mesh size $\delta$ is with $u\in W^{1,2}(\Omega)$, and hence the $\Gamma$-limit is not $M\!S$ (see also the arguments of Section \ref{sec:superc}).

In order to illustrate in general the combined effect of $\delta$ and $\e$, in particular when $\delta$ is of the same order of $\e$, we briefly recall some analyses which started from
a different discrete approximation scheme for \eqref{def:MS}. Chambolle in \cite{chambolle95}, 
considered a {\em finite-difference} approximation of $M\!S$  based on an earlier model by Blake and Zissermann \cite{BZ}: in the case of space-dimension $n=1,2$, Chambolle studied the asymptotic behaviour of the discrete functionals given by
\begin{equation}\label{i:cham-func}
F_\e(u)=\frac{1}{2}\sum_{\begin{smallmatrix}i,j\in\Omega\cap\e\Z^n\\|i-j|=\e\end{smallmatrix}}\e^n\min\bigg\{\left|\frac{u(i)-u(j)}{\e}\right|^2,\frac{1}{\e}\bigg\},
\end{equation}
where the energies depend on finite differences through a {\em truncated quadratic potential} with threshold energy ${1/\e}$.
If $n=1$ he showed that the functionals $F_\e$ $\Gamma$-converge to $M\!S$ with respect to an appropriate discrete-to-continuum convergence of lattice functions. In dimension $n=2$, however, the $\Gamma$-limit of $F_\e$ turns out to be anisotropic and given by
\begin{equation}\label{i:cham-F}
F(u)=\int_\Omega|\nabla u|^2dx+\int_{S_u}|\nu_u|_1d\HH^{1},
\end{equation}
where $\nu_u$ denotes the normal to $S_u$ and $|\nu|_1=|\nu_1|+|\nu_2|$ is the $1$-norm of the vector $\nu$, which appears in the limit due to the specific geometry of the underlying lattice $\e\Z^2$. 
Using the lattice energy \eqref{i:cham-F} as a model, some continuum approximations of the original isotropic Mumford-Shah energy have been obtained. Notably, a continuum finite-difference approximation was conjectured by De Giorgi and proved by Gobbino \cite{Go}, while a non-local version involving averages of gradients in place of finite differences was proved by Braides and Dal Maso \cite{BDM}. 

Various modifications of $F_\e$ have been studied, many of which in the direction of obtaining more general surface terms in the limit energies. In \cite{chambolle99} Chambolle introduced a variant of \eqref{i:cham-func} where arbitrary finite differences and truncated energy densities with variable threshold energies are considered. He showed that this new class of functionals provides discrete approximations of image-segmentation functionals where the anisotropy is ``reduced'' with respect to \eqref{i:cham-F} (see also the paper by Braides and Gelli \cite{BG02}).  
Braides and Piatnitski \cite {BPiat} examined random mixtures of truncated quadratic and simply quadratic interactions producing surface energies whose anisotropy can be described through percolation results,
whereas in the recent paper \cite{Ruf17} Ruf shows that the anisotropy in the limit functional can be prevented by considering discrete approximating functionals defined on \emph{statistically isotropic} lattices. The form of the surface energy can be studied separately by examining energies on lattice spin functions (see \textit{e.g.}~\cite{CdL,ABC} and \cite{BSeul} and the references therein); in particular patterns of interactions (corresponding to different threshold values in the truncated quadratic potentials) satisfying design constraints and giving arbitrary surface energies has been recently described by Braides and Kreutz \cite{BKr}.
As finite-difference schemes involving energies as in \eqref{i:cham-func} are concerned, in \cite{chamdal99} Chambolle and Dal Maso show that macroscopic anisotropy can be avoided by considering \emph{alternate finite-elements} of suitable local approximations of the Mumford-Shah functional. 

The finite-difference schemes described above suggest that in the numerical implementation of the Ambrosio-Tortorelli approximation, for general values 
of the mesh-size $\delta$ and the parameter $\e$  the anisotropy of the surface term cannot be ruled out as in
the case considered in \cite{belcos94}. 
In terms of $\Gamma$-convergence, we may expect that for a general dependence of $\delta$ on $\e$ a discretisation of $AT_\e$ with mesh size $\delta$
shall not converge to the Mumford-Shah functional but rather to some anisotropic functional of the form
\begin{equation}\label{genfo}
E(u)=\int_\Omega|\nabla u|^2dx+\int_{S_u}\varphi(\nu_u)d\HH^{n-1},
\end{equation}
where the surface integrand $\varphi$ reflects the geometry of the underlying lattice and may depend on the interaction between $\delta$ and $\e$. 
These considerations motivate the analysis carried out in the present paper. 

In the spirit of a recent paper by Braides and Yip \cite{BY12} in which the discretisation of the Modica-Mortola functional \cite{MoMo} is analysed, here we propose and analyse a \emph{finite-difference} discretisation of the Ambrosio-Tortorelli functionals\ie we consider the functionals defined as
\begin{equation}\label{def:energy}
E_\e(u,v)=\frac{1}{2}\Bigg(\sum_{\begin{smallmatrix}i,j\in\Omega\cap\delta\Z^n\\|i-j|=\delta\end{smallmatrix}}\hspace*{-1em}\delta^n(v^i)^2\left|\frac{u^i-u^j}{\delta}\right|^2+\sum_{i\in\Omega\cap\delta\Z^n}\hspace*{-1em}\delta^n\frac{(v^i-1)^2}{\varepsilon}+\frac{1}{2}\sum_{\begin{smallmatrix}i,j\in\Omega\cap\delta\Z^n\\|i-j|=\delta\end{smallmatrix}}\hspace*{-1em}\varepsilon\delta^n\left|\frac{v^i-v^j}{\delta}\right|^2\Bigg)
\end{equation}
and we study their limit behaviour as $\e$ and $\delta$ simultaneously tend to zero. Since the discrete functionals in \eqref{def:energy} are more explicit than the Bellettini-Coscia finite-elements discretisation, we are in a position to perform a rather detailed $\Gamma$-convergence analysis for $E_\e$ in all the three possible scaling regimes\ie $\delta\ll\e$ (subcritical regime), $\delta\sim\e$ (critical regime), and $\delta\gg \e$ (supercritical regime).    
 More precisely, if $\ell:=\lim_{\e}\frac{\delta}{\e}$, in Theorem \ref{t:main-result} we prove that for every $\ell \in [0,+\infty]$ and for $n=2$ the functionals $E_\e$ $\Gamma$-converge to 
\[E_\ell(u)=\int_\Omega|\nabla u|^2dx+\int_{S_u}\varphi_\ell(\nu_u)d\HH^{1},\]
for some surface integrand $\varphi_\ell \colon S^{1} \to [0,+\infty]$.  Furthermore, we show that in the subcritical regime $\varphi_0\equiv 1$ so that $E_0=M\!S$, in the critical regime $\varphi_\ell$ explicitly depends on the normal $\nu$ (see \eqref{i:phi-ell}, below), and finally in the supercritical regime $\varphi_\infty\equiv +\infty$, so that $E_\infty$ is finite only on the Sobolev Space $W^{1,2}(\O)$ and it coincides with the Dirichlet functional.  

It is worth mentioning that the convergence results in the extreme cases $\ell=0$ and $\ell=+\infty$ actually hold true in any space dimensions (see Section \ref{s:subcritical}  for the case $\ell=0$ and Section \ref{sec:superc} for the case $\ell=+\infty$), whereas the convergence result in the critical case is explicit only for $n=1,2$. In fact, for $\ell\in (0,+\infty)$ the surface integrand $\varphi_\ell$ can be  explicitly determined only for $n=1,2$ (see Theorem \ref{characterization:phi} and Remark \ref{rem:coordinate}), while for $n>2$ we can only prove an abstract compactness and integral representation result (see Theorem \ref{gamma:compactness} and Theorem \ref{int:rep}) which, in particular, does not allow us to exclude that the surface energy density may also depend on the jump opening. The main difference between the case $n=2$ and $n=3$ (and higher) is related to the problem of describing the structure of the sets of lattice sites where the parameter $v$ is close to $0$,
which approximates the set jump $S_u$. In principle, if that discrete set presents ``holes'' the limit surface energy may depend on the values $u^\pm$ of $u$ on both sides of $S_u$ (see \cite{BSigalotti}). In two dimensions this is ruled out by showing that such lattice sets can be locally approximated by a continuous line. In dimension $n>2$ deducing that such set is approximately described by a hypersurface seems more complex and in this case the difficulties are similar to those encountered in some lattice spin problems (\textit{e.g.}, when dealing with dilute lattice spin systems \cite{BPiat2}).

\bigskip

Below we briefly outline the analysis carried out in the present paper, in the three different scaling regimes.

\smallskip 

\emph{Subcritical regime: $\ell=0$.} In this regime the $\Gamma$-limit of the finite-difference discretisation $E_\e$ is the Mumford-Shah functional $M\!S$, as in the case of the finite-elements discretisation analysed by Bellettini and Coscia in \cite{belcos94}. Even if the scaling regime is the same as in \cite{belcos94}, the proof of the $\Gamma$-convergence result for $E_\e$ is substantially different. In particular, the most delicate part in the proof of the $\Gamma$-convergence result is to show that the lower-bound estimate holds true. Indeed, in our case the form (and the non-convexity) of the first term in $E_\e$ makes it impossible to have an inequality of the type   
\[E_\e(u,v)\geq AT_\e(\tilde{u},\tilde{v})+o(1),\]
where $\tilde{u}$ and $\tilde{v}$ denote suitable continuous interpolations  of $u$ and $v$, respectively.
Then, to overcome this difficulty we first prove a non-optimal \emph{asymptotic lower bound} for $E_\e$ which allows us to show that the domain of the $\Gamma$-limit is $(G)SBV(\O)$ (see Proposition \ref{compactness}). Subsequently, we combine this information with a careful blow-up analysis, which eventually provides us with the desired optimal lower bound (see Proposition \ref{prop:liminf}). Finally, the upper-bound inequality follows by an explicit construction (see Proposition \ref{prop:limsup}).   

\smallskip 

\emph{Critical regime: $\ell\in(0,\infty)$.} When the two scales $\e$ and $\delta$ are comparable, we appeal to the so-called ``direct methods'' of $\Gamma$-convergence to determine the $\Gamma$-limit of $E_\e$. Namely, we show that $E_\e$ admits a $\Gamma$-convergent subsequence whose limit is an integral functional of the form
\begin{equation}\label{i:Eell}
E_\ell(u)=\int_\Omega|\nabla u|^2dx+\int_{S_u}\phi_\ell([u],\nu_u)d\HH^{n-1},
\end{equation}
for some Borel function $\phi_\ell:\R\times S^{n-1}\to [0,+\infty)$. Here in general the surface integrand $\phi_\ell$ depends on the subsequence, on the jump-opening $[u]=u^+-u^-$, and on the normal $\nu_u$ to the jump-set $S_u$. The delicate part in the convergence result as above is to show that the abstract $\Gamma$-limit satisfies the assumptions needed to represent it in an integral form as in \eqref{i:Eell}. Specifically, a so-called ``fundamental estimate'' for the functionals $E_\e$ is needed (see Proposition \ref{fund:est}).   

For $n=2$, which is the most relevant case for the applications we have in mind, we are able to explicitly characterise the function $\phi_\ell$. In particular we prove that $\phi_\ell$ does not depend on the subsequence and on the jump-amplitude $[u]$.  Specifically, we show that $\phi_\ell\equiv\varphi_\ell$ where
\begin{multline}\label{i:phi-ell}
\varphi_\ell(\nu)=\lim_{T\to+\infty}\frac{1}{2T}\inf\Bigg\{\ell\hspace*{-.5em}\sum_{i\in TQ^\nu\cap\Z^2}\hspace*{-.5em}(v^i-1)^2+\frac{1}{2\ell}\hspace*{-1em}\sum_{\begin{smallmatrix}i,j\in TQ^\nu\cap\Z^2\\|i-j|=1\end{smallmatrix}}\hspace*{-1em}|v^i-v^j|^2\colon \exists\ \text{channel $\mathscr C$ in}\ TQ^\nu\cap\Z^2\colon 
\\
 \text{$v=0$ on $\C$},\ v=1\ \text{otherwise near}\ \partial TQ^\nu\Big\}.
\end{multline}
In \eqref{i:phi-ell} a channel $\mathscr C$ (see Definition \ref{def:channel} for a formal definition) is a path on the square lattice $\mathbb Z^2$ connecting two opposite sides of the square and can be interpreted as a ``discrete approximation'' of the discontinuity line $\{x\in \R^2 \colon \langle x,\nu \rangle=0\}$. 
When $\nu=e_1, e_2$ we show that the channel $\mathscr C$ in \eqref{i:phi-ell} is actually flat and it coincides with the discrete interface $\{x\in \R^2 \colon \langle x,\nu \rangle=0\}\cap \Z^2$. As a consequence, the minimisation problem defining $\varphi_\ell$ turns out to be one-dimensional (see Remark \ref{rem:coordinate}).  

For $n=1$ the function $\phi_\ell$ is also explicit and equal to a constant; the proof of this fact is a consequence of more elementary one-dimensional arguments and is briefly discussed in Remark \ref{rem:coordinate}. 

\smallskip

\emph{Supercritical regime: $\ell=+\infty$.} In this scaling regime discontinuities have a cost proportional to $\delta/\e \gg 1$ and are therefore forbidden. In fact the $\Gamma$-limit $E_\infty$ turns out to be finite only in $W^{1,2}(\Omega)$ (see Proposition \ref{compactness}) and 
\[E_\infty(u)=\int_\Omega|\nabla u|^2dx.\]
In order to allow for the development of discontinuities in the limit, in the spirit of Braides and Truskinovsky \cite{BT08}, in this case we also analyse the asymptotic behaviour of a suitably rescaled variant of $E_\e$ whose $\Gamma$-limit is still a functional of the form \eqref{genfo} with $\varphi(\nu)=|\nu|_\infty$ (see Theorem \ref{thm:rescaled}), so that in this supercritical regime we recover a crystalline surface energy.

\medskip

The paper is organised as follows. In Section \ref{sec:setting-statement} we introduce a few notation and state the main $\Gamma$-convergence result Theorem \ref{t:main-result}. In Section \ref{sec:domain} we determine the domain of the $\Gamma$-limit in the three scaling regimes, we prove an equicoercivity result for a suitable perturbation of the fuctionals $E_\e$, and study the convergence of the associated minimisation problems. In Sections \ref{s:subcritical}, \ref{sec:critical}, and \ref{sec:superc} we prove the $\Gamma$-convergence result Theorem \ref{t:main-result}, respectively, in the subcritical, critical, and supercritical regime. In Section \ref{sec:superc} we also analyse the asymptotic behaviour of a sequence of functionals which is equivalent to $E_\e$ in the sense of $\Gamma$-convergence (see \cite{BT08}).  Eventually, in Section \ref{sec:interp} we show that for $n=2$ and $\ell \in (0,+\infty)$, the surface integrand $\varphi_\ell$ interpolates the two extreme regimes $\ell=0$ and $\ell=+\infty$.

\section{Setting of the problem and statement of the main result}\label{sec:setting-statement}

%We start by introducing a few notation we are going to use throughout. 

%\medskip

\noindent \textbf{Notation.}  Let $n\geq 1$, we denote by $\Omega \subset \R^n$ an open bounded set of with Lipschitz boundary. We furthermore denote by $\mathscr A(\Omega)$ the family of all open subsets of $\Omega$ and by  $\mathscr A_L(\Omega)\subset\mathscr A(\Omega)$ the family of all open subsets of $\Omega$ with Lipschitz boundary. If $A',A\in\mathscr A(\Omega)$ are such that $A'\subset\subset A$, we say that $\varphi$ is a cut-off function between $A'$ and $A$ if $\varphi\in C_c^\infty(A)$, $0\leq\varphi\leq 1$ and $\varphi\equiv 1$ on $A'$. 

If $t\in\R$ we denote by $\lfloor t\rfloor$ its integer part.
If $\nu=(\nu_1,\ldots,\nu_n)\in\R^n$ we denote by $|\nu|$ the euclidian norm of $\nu$. Moreover, we set $|\nu|_1:=\sum_{k=1}^n|\nu_k|$ and $|\nu|_\infty:=\max_{1\leq k\leq n}|\nu_k|$. We use the notation $\left<\nu,\xi\right>$ for the scalar product between $\nu,\xi\in\R^n$. We set $S^{n-1}:=\{\nu\in\R^n \colon |\nu|=1\}$ and for every $\nu\in S^{n-1}$ we denote by $\Pi_\nu:=\{x\in \R^n \colon \langle x, \nu\rangle=0\}$ the hyperplane through $0$ and orthogonal to $\nu$. We also denote by  $\Pi_\nu^+$ and $\Pi_\nu^-$ the two half spaces defined, respectively, as $\Pi_\nu^+:=\{x\in \R^n \colon \langle x, \nu\rangle>0\}$ and $\Pi_\nu^-:=\{x\in \R^n \colon \langle x, \nu\rangle\leq 0\}$.
For every $\nu\in S^{n-1}$ we denote by $Q^\nu\subset\R^n$ a given cube centred at $0$ with side length $1$ and with one face orthogonal to $\nu$, and for all $x_0\in\R^n$ and $\rho>0$ we set $Q^\nu_\rho(x_0)=x_0+\rho Q^\nu$. If $\{e_1,\ldots,e_n\}$ denotes the standard basis in $\R^n$ and $\nu=e_k$ for some $1\leq k\leq n$ we choose $Q=Q^\nu$ the standard coordinate cube and simply write $Q_\rho(x_0)$.

By $\LL^n$ and $\HH^k$ we denote the Lebesgue measure and the $k$-dimensional Hausdorff measure in $\R^n$, respectively. For $p\in [1,+\infty]$ we use standard notation $L^p(\Omega)$ for the Lebesgue spaces and $W^{1,p}(\Omega)$ for the Sobolev spaces.
We denote by $SBV(\O)$ the space of special functions of bounded variation in $\O$ (for the general theory see \textit{e.g.}, \cite{AFP, braides98}).
If  $u\in SBV(\Omega)$ we denote by $\nabla u$ its approximate gradient, by $S_u$ the approximate discontinuity set of $u$, by $\nu_u$ for the generalised outer normal to $S_u$, and $u^+$ and $u^-$ are the traces of $u$ on both sides of $S_u$. We also set $[u]:=u^+-u^-$. Moreover, we consider the larger space $GSBV(\Omega)$, which consists of all functions $u\in L^1(\Omega)$ such that for each $m\in\N$ the truncation of $u$ at level $m$ defined as $u^m:=-m\vee(u\wedge m)$ belongs to $SBV(\Omega)$. Furthermore, we set
\[SBV^2(\Omega):=\{u\in SBV(\Omega): \nabla u\in L^2(\Omega)\ \text{and}\ \HH^{n-1}(S_u)<+\infty\}\]
and
\[GSBV^2(\Omega):=\{u\in GSBV(\Omega): \nabla u\in L^2(\Omega)\ \text{and}\ \HH^{n-1}(S_u)<+\infty\}.\]
It can be shown that $SBV^2(\Omega)\cap L^\infty(\Omega)=GSBV^2(\Omega)\cap L^\infty(\Omega)$.

Let $u,w$ be two measurable functions on $\R^n$ and let $A\subset \R^n$ be open, bounded and with Lipschitz boundary; by ``$u=w$ near $\partial A$'' we mean that there exists a neighbourhood $U$ of $\partial A$ in $\R^n$ such that $u=w$ $\mathcal{L}^n$-a.e.\ in $U \cap A$.

%Since we will frequently use them in what follows we also recall the definition of the Ambrosio Tortorelli functionals \cite{AT1, AT2} given by
%%
%%\begin{equation*}
%%M\!S(u,v):=
%%\begin{cases}
%%\displaystyle\int_\Omega |\nabla u|^2dx+\HH^{n-1}(S_u) &\text{if}\ u\in GSBV^2(\Omega),\ v=1\ \text{a.e.},\\
%%+\infty &\text{otherwise in}\ L^1(\Omega)\times L^1(\Omega),
%%\end{cases}
%%\end{equation*}
%%
%%and
%\begin{equation*}
%AT_\varepsilon(u,v):=
%\begin{cases}
%\displaystyle\int_\Omega v^2|\nabla u|^2dx &+\displaystyle\frac{1}{2}\int_\Omega\frac{(v-1)^2}{\varepsilon}+\varepsilon|\nabla v|^2dx\\
%&\text{if}\ 
%(u,v)\in W^{1,2}(\Omega)\times W^{1,2}(\Omega),\ 0\leq v\leq 1,\\
%+\infty &\text{otherwise in}\ L^1(\Omega)\times L^1(\Omega).
%\end{cases}
%\end{equation*}
%%

\medskip

\noindent {\bf Setting.} Throughout the paper $\varepsilon>0$ is a strictly positive parameter and  $\delta=\delta(\varepsilon)>0$ is a strictly increasing function of $\e$ such that such that $\delta(\varepsilon)\to 0$ decreasingly as $\varepsilon\to 0$ decreasingly. 
Set  
\begin{equation}\label{def:ell}
\ell:=\lim_{\e\to 0}\frac{\delta(\e)}{\e}.
\end{equation}
We now introduce the discrete functionals which will be analysed in this paper. To this end let $\O\subset \R^n$ be open, bounded, and with Lipschitz boundary. Let $\Omega_\delta:=\Omega\cap\delta\Z^n$ denote the portion of the square lattice of mesh-size $\delta$ contained in $\O$ and for every $u:\Omega_\delta\to\R$ set $u^i:=u(i)$, for $i\in\O_\delta$. It is customary to identify the discrete functions defined on the lattice $\Omega_\delta$ with their piecewise-constant counterparts belonging to the class
\[\A_\e(\Omega):=\{u\in L^1(\Omega):\ u\ \text{constant on}\ i+[0,\delta)^n\ \hbox{for all }i\in\Omega\cap\delta\Z^n\},\]
by simply setting
\begin{equation}\label{piec:const}
u(x):=u^i\quad \text{for every}\; x\in i+[0,\delta)^n\; \text{and for every}\; i \in \Omega_\delta.
\end{equation}
If $(u_\varepsilon)$ is a sequence of functions defined on the lattice $\Omega_\delta$ and $u\in L^1(\Omega)$, by $u_\varepsilon\to u$ in $L^1(\Omega)$ we mean that the piecewise-constant interpolation of $(u_\e)$ defined as in \eqref{piec:const} converges to $u$ in $L^1(\Omega)$.

We define the discrete functionals $E_\varepsilon:L^1(\Omega)\times L^1(\Omega)\to [0,+\infty]$ as
\begin{equation}\label{def:funct}
E_\varepsilon(u,v):=
\begin{cases}
\ds \frac{1}{2}\Bigg(\sum_{\begin{smallmatrix}i,j\in\Omega_\delta\\|i-j|=\delta\end{smallmatrix}}\delta^n(v^i)^2\Big|\frac{u^i-u^j}{\delta}\Big|^2+ 
\sum_{i\in\Omega_\delta}\delta^n\frac{(v^i-1)^2}{\varepsilon}+\frac{1}{2}\sum_{\begin{smallmatrix}i,j\in\Omega_\delta\\|i-j|=\delta\end{smallmatrix}}\varepsilon\delta^n\Big|\frac{v^i-v^j}{\delta}\Big|^2\Bigg) \\
\hspace{7.5cm}
\text{if}\ u,v\in \A_\e(\Omega),\ 0\leq v\leq 1,
\cr \cr
+\infty \hspace{6.8cm} \text{otherwise in}\ L^1(\Omega)\times L^1(\Omega),
\end{cases}
\end{equation}
It is also convenient to consider the functionals $F_\varepsilon$, $G_\varepsilon$ given by
\begin{equation}\label{def:bulk}
F_\varepsilon(u,v):=\frac{1}{2}\sum_{\begin{smallmatrix}i,j\in\Omega_\delta\\|i-j|=\delta\end{smallmatrix}}\delta^n(v^i)^2\Big|\frac{u^i-u^j}{\delta}\Big|^2
\end{equation}
and
\begin{equation}\label{def:surface}
G_\varepsilon(v):=\frac{1}{2}\bigg(\sum_{i\in\Omega_\delta}\delta^n\frac{(v^i-1)^2}{\varepsilon}+\frac{1}{2}\sum_{\begin{smallmatrix}i,j\in\Omega_\delta\\|i-j|=\delta\end{smallmatrix}}\varepsilon\delta^n\Big|\frac{v^i-v^j}{\delta}\Big|^2\bigg),
\end{equation}
so that in more compact notation we may write
\begin{equation*}%\label{def:funct}
E_\varepsilon(u,v):=
\begin{cases}
F_\varepsilon(u,v)+G_\varepsilon(v) &\text{if}\ u,v\in \A_\e(\Omega),\ 0\leq v\leq 1,\\
+\infty &\text{otherwise in}\ L^1(\Omega)\times L^1(\Omega).
\end{cases}
\end{equation*}
In what follows we will also make use of the following equivalent expressions for $F_\e$ and $G_\e$: 
\[F_\varepsilon(u,v)=\frac{1}{2}\sum_{i\in\Omega_\delta}\delta^n(v^i)^2\bigg(\sum_{\begin{smallmatrix}k=1\\i+\delta e_k\in\Omega_\delta\end{smallmatrix}}^n\bigg|\frac{u^i-u^{i+\delta e_k}}{\delta}\Big|^2+\sum_{\begin{smallmatrix}k=1\\i-\delta e_k\in\Omega_\delta\end{smallmatrix}}^n\bigg|\frac{u^i-u^{i-\delta e_k}}{\delta}\Big|^2\bigg)\]
and
\[G_\varepsilon(v)=\frac{1}{2}\bigg(\sum_{i\in\Omega_\delta}\delta^n\frac{(v^i-1)^2}{\varepsilon}+\sum_{i\in\Omega_\delta}\sum_{\begin{smallmatrix}k=1\\i+\delta e_k\in\Omega_\delta\end{smallmatrix}}^n\varepsilon\delta^n\Big|\frac{v^i-v^{i+\delta e_k}}{\delta}\Big|^2\bigg),\]
%
%We will also write $F_\e$ in more compact notation as
%%
%\[F_\varepsilon(u,v)=\frac{1}{2}\sum_{i\in\Omega_\delta}\delta^n(v^i)^2\sum_{\begin{smallmatrix}k=1\\i\pm\delta e_k\in\Omega_\delta\end{smallmatrix}}^n\Big|\frac{u^i-u^{i\pm\delta e_k}}{\delta}\Big|^2\]
%
For $U\in\mathscr A(\Omega)$ we will also need to consider the localised versions of $F_\e$ and $G_\e$\ie for every $U\in\mathscr A(\Omega)$ we set
\begin{equation}\label{def:mesh:int}
U_\delta:=U\cap\delta\Z^n,
\end{equation}
and
\[F_\varepsilon(u,v,U):=\frac{1}{2}\sum_{i\in U_\delta}\delta^n(v^i)^2\sum_{\begin{smallmatrix}k=1\\i\pm\delta e_k\in U_\delta\end{smallmatrix}}^n\Big|\frac{u^i-u^{i\pm\delta e_k}}{\delta}\Big|^2,\]
\[G_\varepsilon(v,U):=\frac{1}{2}\Bigg(\sum_{i\in U_\delta}\delta^n\bigg(\frac{(v^i-1)^2}{\varepsilon}+\hspace{-.3cm}\sum_{\begin{smallmatrix}k=1\\i+\delta e_k\in U_\delta\end{smallmatrix}}^n\hspace{-.3cm}\e\Big|\frac{v^i-v^{i+\delta e_k}}{\delta}\Big|^2\bigg)\Bigg),\]
so that finally
\begin{equation}\label{loc:funct}
E_\varepsilon(u,v,U):=
\begin{cases}
F_\varepsilon(u,v,U)+G_\varepsilon(v,U) &\text{if}\ u,v\in\A_\e(\Omega),\ 0\leq v\leq 1,\\
+\infty &\text{otherwise in}\ L^1(\Omega)\times L^1(\Omega).
\end{cases}
\end{equation}
\noindent Sometimes it will be useful to distinguish between points $i\in U_\delta $ such that all their nearest neighbours belong to $U$ and points $i\in U_\delta$ such that $i\pm\delta e_k\not\in U$ for some $1\leq k\leq n$. Then, for a given $U\in\A(\Omega)$ we set
\[\overset{\circ}{U}_\delta:=\{i\in U_\delta:\ j\in U\ \text{for every}\ j\in\delta\Z^n\ \text{s.t.}\ |i-j|=\delta\},\quad\text{and}\quad\partial U_\delta:=U_\delta\setminus\overset{\circ}{U}_\delta.\]

\medskip

\noindent With the identification above, we will describe the $\Gamma$-limits of energies $E_\e$ with respect to the strong $L^1(\O)\times L^1(\O)$-topology, in the spirit of recent discrete-to-continuum analyses (see e.g.~\cite{AC,BPiat3,AliGe,BCic} for some general results in different limit functional settings and \cite{Braides02,B-Handbook} for some introductory material).
 
In all that follows we use the standard notation for the $\G$-liminf and $\G$-limsup of the functionals $E_\e$ (see \cite{Braides02} Section 1.2)\ie for every $(u,v)\in L^1(\Omega)\times L^1(\Omega)$ and every $U\in \mathscr A(\Omega)$ we set
\begin{equation}\label{c:li-ls}
E'_\ell(u,v,U):=\Gamma \hbox{-}\liminf_{\varepsilon\to 0} E_\varepsilon(u,v,U) \quad \text{and}\quad E''_\ell(u,v,U):=\Gamma \hbox{-}\limsup_{\varepsilon\to 0} E_\varepsilon(u,v,U).
\end{equation}
When $U=\Omega$ we simply write $E'_\ell(u,v)$ and $E''_\ell(u,v)$ in place of $E'_\ell(u,v,\Omega)$ and $E''_\ell(u,v,\Omega)$, respectively.

\medskip

The following $\G$-convergence theorem is the main result of this paper.

\begin{theorem}[$\G$-convergence]\label{t:main-result}
Let $\ell$ be as in \eqref{def:ell} and let $E_\e$ be as in \eqref{def:funct}. Then,

\smallskip

$(i)$ {\rm (Subcritical regime)} If $\ell=0$ the functionals $E_\e$ $\Gamma$-converge to 
$E_0$ defined as
$$
E_0(u,v):=\begin{cases}
\ds\int_\O |\nabla u|^2\dx+\HH^{n-1}(S_u\cap \O) & \text{if} \; u\in GSBV^2(\O), \, v =1 \; \text{a.e. in}\; \O,
\cr
+\infty & \text{otherwise in}\; L^1(\O)\times L^1(\O); 
\end{cases}
$$

\smallskip

$(ii)$ {\rm (Critical regime)} If $\ell\in(0,+\infty)$ there exists a subsequence $(\e_j)$ such that the functionals $E_{\e_j}$ $\Gamma$-converge to 
$E_\ell$ defined as
$$
E_\ell(u,v):=\begin{cases}
\ds\int_\O |\nabla u|^2\dx + \int_{S_u\cap \O} \phi_\ell([u],\nu_u)\, d\mathcal H^{n-1} & \text{if} \; u\in GSBV^2(\O), \, v =1 \; \text{a.e. in}\; \O,
\cr
+\infty & \text{otherwise in}\; L^1(\O)\times L^1(\O), 
\end{cases}
$$
for some Borel function $\phi_\ell \colon \R\times S^{n-1} \to [0,+\infty)$ possibly depending on the subsequence $(\e_j)$.  
If moreover $n=2$ the function $\phi_\ell$ does not depend on the subsequence $(\e_j)$. Furthermore, for every $(t,\nu)\in \R\times S^{n-1}$ we have $\phi_\ell(t,\nu)=\varphi_\ell(\nu)$ where $\varphi_\ell \colon S^{n-1} \to [0,+\infty)$ is given by 
\begin{multline*}
\varphi_\ell(\nu):=\lim_{T\to+\infty}\frac{1}{2T}\inf\Bigg\{\ell\hspace*{-.5em}\sum_{i\in TQ^\nu\cap\Z^2}\hspace*{-.5em}(v^i-1)^2+\frac{1}{2\ell}\hspace*{-1em}\sum_{\begin{smallmatrix}i,j\in TQ^\nu\cap\Z^2\\|i-j|=1\end{smallmatrix}}\hspace*{-1em}|v^i-v^j|^2\colon v\in \mathcal A_1(TQ^\nu), \\
\exists\ \text{channel $\mathscr C$ in}\ TQ^\nu\cap\Z^2\colon \text{$v=0$ on $\C$},\ v=1\ \text{otherwise near}\ \partial TQ^\nu\bigg\},
\end{multline*}
(see Definition \ref{def:channel} for a precise definition of channel);   

\smallskip

$(iii)$ {\rm (Supercritical regime)} If $\ell=+\infty$ the functionals $E_\e$ $\Gamma$-converge to 
$E_\infty$ defined as
$$
E_\infty(u,v):=\begin{cases}
\ds\int_\O |\nabla u|^2\dx & \text{if} \; u\in W^{1,2}(\O), \, v =1 \; \text{a.e. in}\; \O,
\cr
+\infty & \text{otherwise in}\; L^1(\O)\times L^1(\O). 
\end{cases}
$$

\end{theorem}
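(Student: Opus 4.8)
The plan is to prove the three regimes by rather different arguments, as anticipated in the introduction. First I would dispose of the \emph{subcritical regime} $\ell=0$. The domain characterisation ($v=1$ a.e.\ on the domain of any sequence with bounded energy, and $u\in GSBV^2(\Omega)$) comes from the already-announced Proposition~\ref{compactness}, so the core is the two matching $\Gamma$-inequalities for the Mumford--Shah functional. For the $\liminf$ I would not try to dominate $E_\e$ from below by a continuum Ambrosio--Tortorelli functional (this is impossible because of the non-convexity of $F_\e$, as noted); instead I would follow the two-step route flagged in the text: use a crude lower bound to localise the energy and identify the target space, then run a blow-up argument at $\HH^{n-1}$-a.e.\ point of $S_u$, recovering $1$ for the surface density (by a one-dimensional optimal-profile estimate transverse to the approximating jump, where the discrete analogue of the Modica--Mortola $\int (v-1)^2/\e + \e|v'|^2$ integrates to $1$) and $|\nabla u|^2$ for the bulk density (where $v\approx 1$ makes $F_\e$ behave like the discrete Dirichlet energy, whose $\Gamma$-limit is the continuum Dirichlet energy since $\delta\ll\e\to 0$). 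The $\limsup$ is by explicit construction: near $S_u$ put $v$ equal to a discretised one-dimensional optimal transition profile in a $\delta$-neighbourhood (of width $\sim\e\gg\delta$, so the discretisation error is negligible) and extend $u$ across by a controlled interpolation; away from $S_u$ take $v\equiv 1$ and $u$ a mollification, using a density argument in $GSBV^2$ to reduce to $u$ with polyhedral jump set.

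Next the \emph{supercritical regime} $\ell=+\infty$. Here the key observation is that if $(u_\e,v_\e)$ has equibounded energy then the term $\sum \delta^n (v^i-1)^2/\e$ forces $v_\e\to 1$ in $L^2$; moreover, combined with the gradient term $\e\delta^n|v^i-v^j|^2/\delta^2$ and the fact that $\delta/\e\to\infty$, one shows that $v_\e$ cannot drop to small values along any path without paying a cost that diverges — more precisely, the one-dimensional slicing of $G_\e$ shows that the cost of a ``dip'' of $v$ from $1$ to some value $s$ and back over a discrete interval is bounded below by a quantity of order $\sqrt{\delta/\e}\,(1-s)^2\to\infty$ unless $s\to 1$. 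Hence $v_\e\to 1$ pointwise in a strong enough sense that $F_\e(u_\e,v_\e)$ is asymptotically bounded below by the discrete Dirichlet energy of $u_\e$, which rules out jumps and gives $u\in W^{1,2}(\Omega)$ with the Dirichlet lower bound; the $\limsup$ is immediate taking $v_\e\equiv 1$ and $u_\e$ the discretisation of a smooth $u$, using convergence of the discrete Dirichlet energy to $\int_\Omega|\nabla u|^2$. Most of this is already packaged into Proposition~\ref{compactness} plus the arguments of Section~\ref{sec:superc} referenced in the statement, so here I would mainly assemble those ingredients.

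The heart of the paper is the \emph{critical regime} $\ell\in(0,+\infty)$. I would argue by the localisation method of $\Gamma$-convergence: by the compactness theorem for $\Gamma$-convergence (Theorem~\ref{gamma:compactness}) extract a subsequence $(\e_j)$ along which $E_{\e_j}(\cdot,\cdot,U)$ $\Gamma$-converges for all $U\in\mathscr A(\Omega)$ to some $E_\ell(\cdot,\cdot,U)$; the domain is again pinned down to $\{v=1,\ u\in GSBV^2\}$ by Proposition~\ref{compactness}. Then I would verify that $U\mapsto E_\ell(u,1,U)$ is the restriction of a Borel measure — this is exactly where the \emph{fundamental estimate} (Proposition~\ref{fund:est}) enters, to get the needed subadditivity and inner regularity by a cut-off-gluing argument — and that $E_\ell$ satisfies the growth bounds $\int_U|\nabla u|^2\le E_\ell(u,1,U)\le C(\int_U|\nabla u|^2 + \HH^{n-1}(S_u\cap U))$ (lower bound from the supercritical-type argument applied locally where $v\approx 1$, i.e.\ from $F_\e$; upper bound from a competitor construction analogous to the subcritical $\limsup$). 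The integral representation theorem for free-discontinuity functionals (Theorem~\ref{int:rep}) then yields the form $\int_U|\nabla u|^2 + \int_{S_u\cap U}\phi_\ell([u],\nu_u)\,d\HH^{n-1}$ with $\phi_\ell$ Borel. Finally, for $n=2$, one identifies $\phi_\ell$ by a blow-up/cell-formula computation at a jump point with normal $\nu$: the blow-up of optimal sequences concentrates the energy of $G_\e$ on a thin discrete set where $v\approx 0$, which by the planar structure result (the text's claim that in $2$D such sets are well-approximated by a continuous line) must be a \emph{channel} connecting opposite faces of the rescaled cube; optimising $\ell\sum(v^i-1)^2 + \tfrac{1}{2\ell}\sum|v^i-v^j|^2$ over such channels and sending the cube size $T\to\infty$ gives precisely $\varphi_\ell(\nu)$, and the independence from $[u]$ follows because $u$ can be taken to jump arbitrarily across the channel at no extra cost once $v=0$ there. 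I expect the main obstacle to be precisely this last identification in $n=2$: proving that the ``defect set'' $\{v\approx 0\}$ of a nearly optimal sequence is, after blow-up, genuinely one-dimensional and connected across the cube (so that it is a channel and no cheaper disconnected configuration exists), and that the resulting cell formula is both well-posed (the $T\to\infty$ limit exists) and subadditive in $\nu$, which is what makes $\varphi_\ell$ a legitimate surface integrand independent of the subsequence.
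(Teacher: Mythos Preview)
Your outline is correct and mirrors the paper's own structure almost exactly: Proposition~\ref{compactness} for the domain, blow-up plus explicit recovery for $\ell=0$ (Propositions~\ref{prop:liminf} and~\ref{prop:limsup}), the pointwise bound $(v_\e^i-1)^2\le c\,\e/\delta\to 0$ for $\ell=+\infty$ (simpler than your ``dip'' heuristic, but to the same effect), and for $\ell\in(0,+\infty)$ the localisation method via the fundamental estimate, integral representation, and then the channel argument of Theorem~\ref{characterization:phi} in $n=2$. Your diagnosis of the main obstacle---forcing the near-optimal $\{v\approx 0\}$ set to contain a genuine channel so that $\phi_\ell$ sheds its $[u]$-dependence---is precisely where the paper's technical work in Step~2 of Theorem~\ref{characterization:phi} is concentrated.
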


\noindent The proof of Theorem \ref{t:main-result} will be divided into a number of intermediate steps and carried out in Sections \ref{s:subcritical}, \ref{sec:critical}, and \ref{sec:superc}.

%%%%%%%%%%%%%%%%%%%%%%%%%%%%

\section{Domain of the $\G$-limit and compactness}\label{sec:domain}

In this section we prove a compactness result for the functionals $E_\e$. This result is first obtained for $n=1$ and then extended to the case $n \geq 2$ by means of a slicing-procedure (see \cite{Braides02} Section 15).  

\medskip

The main result of this section is the following.

\begin{theorem}[Domain of the $\Gamma$-limit]\label{t:domain-G-limit}  Let $(u_\e,v_\e) \subset L^1(\O)\times L^1(\O)$ be such that 
$$
(u_\e,v_\e) \to (u,v) \quad \text{in}\quad L^1(\O)\times L^1(\O) \quad \text{and} \quad \sup_{\e>0} E_\e(u_\e,v_\e) < +\infty
$$
and let $\ell$ be as in \eqref{def:ell}.

%\smallskip

$(i)$ {\rm (Subcritical and critical regime)} If $\ell\in[0,+\infty)$ then $u\in GSBV^2(\O)$ and $v=1$ a.e. in $\O$.

\smallskip

$(ii)$ {\rm (Supercritical regime)} If $\ell=+\infty$ then $u\in W^{1,2}(\O)$ and $v=1$ a.e. in $\O$.

\end{theorem}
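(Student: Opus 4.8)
The natural strategy is to reduce everything to the one-dimensional case by slicing, so the first task is to establish the theorem for $n=1$. In one dimension a discrete function $u\colon\Omega_\delta\to\R$ with $\sup_\e E_\e(u_\e,v_\e)<+\infty$ has controlled energy
\[
\frac12\sum_{i}\delta(v_\e^i)^2\Big|\frac{u_\e^i-u_\e^{i+\delta}}{\delta}\Big|^2+\frac12\sum_i\delta\frac{(v_\e^i-1)^2}{\e}+\frac14\sum_i\e\delta\Big|\frac{v_\e^i-v_\e^{i+\delta}}{\delta}\Big|^2\le C .
\]
From the second term, $\sum_i\delta(v_\e^i-1)^2\le C\e\to0$, which (together with $0\le v_\e\le1$ and the $L^1$ bound implied by coercivity) forces the piecewise-constant interpolant $v_\e\to1$ in $L^1$, hence $v=1$ a.e.; this part is the same in every regime. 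The point where $\ell$ enters is the structure of $u$. I would argue as in the classical discrete-to-$SBV$ arguments (e.g.\ Chambolle, or the Braides--Yip analysis cited): split the lattice into ``good'' bonds, where $v_\e^i$ stays above a fixed threshold $\theta\in(0,1)$, and ``bad'' bonds, where $v_\e^i<\theta$. On good bonds the first term controls $\theta^2\sum\delta|(u_\e^i-u_\e^{i+\delta})/\delta|^2$, giving an equibounded $W^{1,2}$-type seminorm of an interpolant away from the bad set. On each maximal run of bad bonds, the Modica--Mortola-type term $G_\e$ pays a definite amount of energy: to bring $v_\e$ from $\ge\theta$ down below $\theta$ and back costs, by the discrete Young inequality $a^2/\e+\e b^2\ge 2ab$ applied bond-by-bond along the transition, an energy bounded below by a constant times $\min\{1,\delta/\e\}$ — more precisely a ``bulk'' contribution of order $\delta/\e$ when the run is short and order $1$ when it is long. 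Hence in the regime $\ell<+\infty$ the number of bad runs is at most $C/\min\{1,\delta/\e\}$, which may blow up, but the total length of bad runs and the associated jump of $u$ are controlled so that the limit $u$ lies in $GSBV^2$ with $\HH^0(S_u)<+\infty$ after truncation; whereas in the regime $\ell=+\infty$ each bad run costs at least a constant multiple of $\delta/\e\to+\infty$, so for $\e$ small there are \emph{no} bad runs, $v_\e\ge\theta$ everywhere, the full gradient term is controlled, and the limit is in $W^{1,2}$. A truncation argument (working with $u^m=-m\vee(u\wedge m)$, noting $E_\e$ decreases under truncation of $u$ since $(v^i)^2\ge0$) handles the passage from $SBV^2$ to $GSBV^2$ and the lack of an a priori $L^\infty$ bound.

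To pass to $n\ge2$ I would use the standard slicing machinery from \cite{Braides02}, Section~15. Fix a coordinate direction $e_k$; for $\HH^{n-1}$-a.e.\ line $\Omega^{y}_k$ parallel to $e_k$, the one-dimensional restrictions $u_\e^{y}$, $v_\e^{y}$ are discrete functions on the one-dimensional lattice, and a Fubini-type identity bounds $\sum_k\int E_\e^{1d}(u_\e^{y},v_\e^{y})\,d\HH^{n-1}(y)$ by $C\,E_\e(u_\e,v_\e)$ (up to the factor $\delta^{n-1}$ absorbed in the integral). By Fatou and the $L^1$ convergence of the slices (for a subsequence, for a.e.\ $y$), the one-dimensional result gives $u^{y}\in GSBV^2$ (resp.\ $W^{1,2}$) on a.e.\ line in every coordinate direction, with the integral of the one-dimensional energies uniformly bounded. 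The characterisation of $GSBV^2(\Omega)$ (resp.\ $W^{1,2}(\Omega)$) via slices — i.e.\ $u\in L^1$ whose truncations have $SBV$ one-dimensional sections in all coordinate directions with equibounded total sectional energy belong to $GSBV^2$, and an analogous statement for $W^{1,2}$ — then yields $u\in GSBV^2(\Omega)$ (resp.\ $u\in W^{1,2}(\Omega)$). The statement $v=1$ a.e.\ transfers immediately from the one-dimensional bound $\sum\delta^n(v_\e^i-1)^2\le C\e$, which does not even require slicing.

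The main obstacle I anticipate is the lower bound on $G_\e$ along a bad run and its correct bookkeeping in the critical regime $\ell\in(0,\infty)$: one must show that a jump of $u$ of a given height is compatible with bounded energy and, conversely, that only finitely many (in the $\HH^{n-1}$-a.e.\ sectional sense) such jumps can occur, uniformly in $\e$. The non-convex product $(v^i)^2|(u^i-u^j)/\delta|^2$ means one cannot simply compare $E_\e$ with an Ambrosio--Tortorelli interpolant; the remedy is to absorb all the surface cost into $G_\e$ alone, using that $G_\e$ is independent of $u$ and that the quantity $\min\{1,\delta/\e\}$ is precisely the energy scale distinguishing the three regimes. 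A secondary, more technical point is the interchange of slicing with the $L^1$-convergence for the piecewise-constant interpolants, which requires care because the lattice $\delta\Z^n$ depends on $\e$; this is handled by the now-standard argument of passing to a subsequence along which a.e.\ slice converges in $L^1$ and applying Fatou, exactly as in \cite{Braides02}.
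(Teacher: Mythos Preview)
Your overall architecture—reduce to $n=1$ and then slice in the coordinate directions—is exactly the paper's, and your treatment of the supercritical case $\ell=+\infty$ is essentially the same argument (the bound $\sum_i\delta(v_\e^i-1)^2/\e\le C$ forces $(v_\e^i-1)^2\le C\e/\delta\to 0$ pointwise, so $v_\e^i\ge 1-\eta$ uniformly and the affine interpolant of $u_\e$ is bounded in $W^{1,2}$). The $n\ge 2$ slicing step also matches the paper.

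The divergence is in the one-dimensional argument for $\ell\in[0,+\infty)$. You assert that ``one cannot simply compare $E_\e$ with an Ambrosio--Tortorelli interpolant'', but in dimension one this is precisely what the paper does, and it works. After symmetrising the bulk term as
\[
\sum_i\delta\,\frac{(v_\e^i)^2+(v_\e^{i+\delta})^2}{2}\Big|\frac{u_\e^i-u_\e^{i+\delta}}{\delta}\Big|^2,
\]
convexity of $z\mapsto z^2$ and $z\mapsto(z-1)^2$ on each segment gives directly
\[
E_\e(u_\e,v_\e)\ \ge\ AT_\e(\tilde u_\e,\tilde v_\e,(a+\eta,b-\eta))
\]
for the piecewise-\emph{affine} interpolants $\tilde u_\e,\tilde v_\e$; one then invokes the classical Ambrosio--Tortorelli result to obtain $u\in SBV^2$ and $v=1$. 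The obstruction to an inequality $E_\e\ge AT_\e(\tilde u,\tilde v)$ alluded to in the paper's introduction refers to the \emph{sharp} lower bound in dimension $n\ge 2$ (where the asymmetric coupling of $(v^i)^2$ with all $2n$ neighbouring difference quotients spoils the convexity trick), not to the one-dimensional domain identification.

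Your alternative good-bond/bad-bond route can in principle be completed, but not as written. The estimate ``each bad run costs at least $c\min\{1,\delta/\e\}$'' is simultaneously too pessimistic for the runs that matter and too generous for those that do not: a \emph{shallow} bad run where $v_\e$ dips just below the threshold $\theta$ indeed costs only $O(\delta/\e)$, but on such a run $(v_\e^i)^2$ is still bounded away from zero, so the gradient of $u_\e$ is controlled and no jump can form; conversely, a run on which $v_\e$ genuinely approaches $0$ forces a full transition of the affine interpolant $\tilde v_\e$, whose continuous Modica--Mortola cost is bounded below by a constant \emph{independent of} $\delta/\e$. A single threshold does not separate these two regimes, so the sentence ``the number of bad runs may blow up, but the total length of bad runs and the associated jump of $u$ are controlled'' does not yield $u\in SBV^2$ in the subcritical case: you have given no bound on $\#(S_u)$. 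A two-threshold argument, or—much more simply—passing to affine interpolants and using the continuous Young inequality as the paper does, closes this gap.
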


The proof of Theorem \ref{t:domain-G-limit} will be carried out in Proposition \ref{liminf:1d} and Proposition \ref{compactness} below. 

\subsection{The one-dimensional case}
In this subsection we deal with the case $n=1$. 

In what follows we only consider the case $\Omega=I:=(a,b)$ with $a,b\in\R$, $a<b$. The case of a general open set can be treated by repeating the proof below in each connected component of $\Omega$.
\begin{prop}\label{liminf:1d}
Let $(u_\e,v_\e) \subset L^1(I)\times L^1(I)$ be such that 
$$
(u_\e,v_\e) \to (u,v) \quad \text{in}\quad L^1(I)\times L^1(I) \quad \text{and} \quad \sup_{\e>0} E_\e(u_\e,v_\e) < +\infty
$$
and let $\ell$ be as in \eqref{def:ell}.

%\smallskip

$(i)$ {\rm (Subcritical and critical regime)} If $\ell\in[0,+\infty)$ then $u\in SBV^2(I)$ and $v=1$ a.e. in $I$. Moreover,
\[E'_\ell(u,v)\geq\int_I(u')^2dt+\#(S_u).\]
\smallskip

$(ii)$ {\rm (Supercritical regime)} If $\ell=+\infty$ then $u\in W^{1,2}(I)$ and $v=1$ a.e. in $I$. Moreover, 
\[E'_\infty(u,v)\geq\int_I(u')^2dt.\]
\end{prop}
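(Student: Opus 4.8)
The plan is to reduce everything to a one-dimensional problem and argue separately on the two terms of $E_\e$. First I would exploit the structure of $E_\e$ in $n=1$: writing $\Omega=I=(a,b)$ and setting $u^i=u_\e(i)$, $v^i=v_\e(i)$ on the lattice $I_\delta$, the energy splits as $F_\e(u_\e,v_\e)+G_\e(v_\e)$ with $F_\e$ a weighted sum of squared finite differences of $u$ and $G_\e$ the discrete Modica–Mortola energy in the variable $v$. The first step is to show $v=1$ a.e.: from $\sup_\e G_\e(v_\e)<+\infty$ and $\delta/\e\to\ell<+\infty$ (resp.\ $=+\infty$) one has $\sum_i \delta (v^i-1)^2\le C\e$, so the piecewise-constant interpolation of $v_\e$ converges to $1$ in $L^2(I)$, hence (up to the $L^1$-convergence hypothesis) $v=1$ a.e. In the supercritical case the same bound forces, in addition, that the ``bad'' set where $v^i$ is far from $1$ is empty for $\e$ small on any fixed subinterval, which is what will eventually give $u\in W^{1,2}$ rather than merely $SBV^2$.

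The heart of the argument is the lower bound for $E'_\ell$. I would fix a recovery sequence $(u_\e,v_\e)\to(u,v)$ with $\liminf E_\e(u_\e,v_\e)<+\infty$ and perform the standard one-dimensional ``competition'' analysis between the two contributions. At each lattice edge $(i,i+\delta)$ the energy contains $\tfrac12\delta (v^i)^2\bigl|\tfrac{u^i-u^{i+\delta}}{\delta}\bigr|^2$ together with the Modica–Mortola cost of $v$ near that edge. The key pointwise inequality is the Modica–Mortola/Young trick: for a discrete transition of $v$ from a value $\sim 1$ down to a small value $s$ and back, $G_\e$ pays at least (a discrete approximation of) $\int_s^1 2(1-t)\,dt=(1-s)^2$, i.e.\ essentially $1$ for a full transition to $s\approx 0$; meanwhile allowing $v^i\le s$ on a given edge reduces the Dirichlet cost there by the factor $s^2$. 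So either $v$ stays uniformly close to $1$ on an interval — in which case $F_\e$ there controls $\int (u')^2$ by lower semicontinuity of the (convex) Dirichlet energy under $L^1$-convergence of the piecewise-affine interpolants — or $v$ dips, and each dip contributes $\approx 1$ to $G_\e$, matching one unit of $\#(S_u)$. Making this rigorous is the usual localisation argument: cover $I$ by finitely many subintervals, on each one either detect a ``jump'' (a lattice point where $v_\e$ is bounded away from $1$, yielding via the finiteness of $G_\e$ a definite energy quantum and, via an interpolation/compactness argument, an actual discontinuity of the limit $u$) or show $u_\e$ is equibounded in $W^{1,2}$ of that subinterval; then sum and use that distinct jumps live in distinct subintervals once the mesh is fine. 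This simultaneously yields $u\in SBV^2(I)$ (finitely many jumps, $L^2$ derivative) in cases (i)–(ii) and the sharp inequality $E'_\ell(u,v)\ge \int_I (u')^2\,dt+\#(S_u)$, while in case (iii), since each jump would cost $\gtrsim \delta/\e\to+\infty$, no jump can survive, forcing $u\in W^{1,2}(I)$ with $E'_\infty(u,v)\ge\int_I(u')^2\,dt$.

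I expect the main obstacle to be exactly the interplay that the authors flag in the introduction: the weight $(v^i)^2$ multiplies $\bigl|\tfrac{u^i-u^{i+\delta}}{\delta}\bigr|^2$ \emph{before} squaring, so there is no clean lower bound of the form $E_\e(u,v)\ge AT_\e(\tilde u,\tilde v)+o(1)$ with continuous interpolants. One must instead argue directly on the discrete edges, carefully separating ``good'' edges (where $v^i$ is close to $1$, so the Dirichlet weight is $\ge 1-\eta$ and one recovers $\int(u')^2$ up to $\eta$) from ``bad'' edges (where $v^i$ is small, so $u$ can jump cheaply there but $G_\e$ has to pay the transition cost), and then to control the number and total length of bad edges using only the uniform bound on $G_\e(v_\e)$. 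A secondary technical point is passing from the discrete finite differences of $u_\e$ to the approximate gradient and jump set of the $L^1$-limit $u$: this is handled by the standard fact that equiboundedness of $\int(u_\e')^2$ on a subinterval plus $L^1$-convergence gives weak $W^{1,2}$-convergence there, and by identifying each isolated ``bad region'' with a point of $S_u$ via the jump in the traces; I would invoke the one-dimensional $SBV$ compactness/closure theory for this. The remaining steps — slicing to pass from $n=1$ to $n\ge 2$ and assembling Theorem~\ref{t:domain-G-limit} — are then routine and are deferred, as the excerpt indicates, to Proposition~\ref{compactness}.
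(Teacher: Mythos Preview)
Your outline would eventually succeed, but it takes a detour that the paper avoids, and your stated ``main obstacle'' is in fact not an obstacle in dimension one. The paper's proof of Step~1 is precisely the reduction you claim is impossible: after symmetrising the bulk term as
\[
\sum_{i}\delta\,\frac{(v_\e^i)^2+(v_\e^{i+\delta})^2}{2}\Big|\frac{u_\e^i-u_\e^{i+\delta}}{\delta}\Big|^2,
\]
and letting $\tilde u_\e,\tilde v_\e$ be the \emph{piecewise-affine} interpolations on $I_\delta$, convexity of $z\mapsto z^2$ and $z\mapsto(z-1)^2$ yields directly
\[
E_\e(u_\e,v_\e)\ge AT_\e(\tilde u_\e,\tilde v_\e,(a+\eta,b-\eta))
\]
for every $\eta>0$; one then simply invokes \cite[Theorem~2.1]{AT2}. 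The point is that in one dimension $\tilde u_\e'$ is constant on each edge, so the convexity bound on $(\tilde v_\e)^2$ is enough. The obstruction you quote from the introduction refers to the $n$-dimensional sharp lower bound (Proposition~\ref{prop:liminf}), where the vertex weight $(v^i)^2$ does not control the affine interpolant of $v$ over a simplex whose gradient of $\tilde u$ mixes several vertices.

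Your good-edge/bad-edge decomposition is effectively a re-derivation of the Ambrosio--Tortorelli liminf inequality by hand at the discrete level. It is not wrong, and it gives a self-contained argument, but it is longer and requires care you gloss over (matching bad-$v$ regions to actual jump points of the limit $u$, controlling number and separation of transitions, optimising the threshold $s$). The paper's route --- reduce to the continuous $AT_\e$ by one convexity estimate, then cite the known result --- is both shorter and cleaner for this proposition. For Step~2 ($\ell=+\infty$) your argument coincides with the paper's: the bound $(v_\e^i-1)^2\le c\,\e/\delta\to 0$ forces $v_\e^i>1-\eta$ uniformly, whence $F_\e\ge(1-\eta)^2\int(\tilde u_\e')^2$ and weak $W^{1,2}$-compactness concludes.
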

\begin{proof} The proof will be divided into two steps.

\medskip

\noindent  {\bf Step 1:} proof of $(i)$\ie the case $\ell\in[0,+\infty)$. 

\smallskip

Let $(u_\e,v_\e)\subset L^1(I)\times L^1(I)$ be as in the statement. We claim that $E_\e(u_\e,v_\e)$ can be bounded from below by $AT_\e(\tilde{u}_\e,\tilde{v}_\e)$ for suitable functions $\tilde{u}_\e$ and $\tilde{v}_\e$ with $(\tilde{u}_\e, \tilde{v}_\e) \to (u,v)$ in $L^1(I)\times L^1(I)$. Then the conclusion follows appealing to the classical Ambrosio and Tortorelli convergence result \cite[Theorem 2.1]{AT2}. 

For our purposes it is convenient to rewrite $E_\e$ as follows
\begin{equation*}
E_\e(u_\e,v_\e)=\sum_{\begin{smallmatrix}i\in I_\delta\\i+\delta\in I\end{smallmatrix}}\delta\frac{(v_\e^i)^2+(v_\e^{i+\delta})^2}{2}\left|\frac{u_\e^i-u_\e^{i+\delta}}{\delta}\right|^2\\
+\frac{1}{2}\bigg(\sum_{i\in I_\delta}\delta\frac{(v_\e^i-1)^2}{\e}+\sum_{\begin{smallmatrix}i\in I_\delta\\i+\delta\in I\end{smallmatrix}}\e\delta\left|\frac{v_\e^i-v_\e^{i+\delta}}{\delta}\right|^2\bigg).
\end{equation*}
We define moreover $\tilde{u}_\e,\tilde{v}_\e$ as the piecewise affine interpolations of $u_\e$, $v_\e$ on $I_\delta$, respectively\ie
\begin{align*}
\tilde{u}_\e(t)&:=u_\e^i+\frac{u_\e^{i+\delta}-u_\e^i}{\delta}(t-i)\quad\text{if}\ t\in[i,i+\delta),\quad i,i+\delta\in I_\delta,\\
\tilde{v}_\e(t)&:=v_\e^i+\frac{v_\e^{i+\delta}-v_\e^i}{\delta}(t-i)\quad\text{if}\ t\in[i,i+\delta),\quad i,i+\delta\in I_\delta.
\end{align*}
We note that $(\tilde{u}_\e, \tilde{v}_\e) \to (u,v)$ in $L^1(I)\times L^1(I)$. 

Let $\eta>0$ be fixed; for $\e$ sufficiently small we have 
\[(a+\eta,b-\eta)\subset\bigcup_{i\in\overset{\circ}{I}_\delta}[i,i+\delta),\]
therefore
\begin{equation}\label{est:01}
\sum_{i\in\overset{\circ}{I}_\delta}\e\delta\left|\frac{v_\e^i-v_\e^{i+\delta}}{\delta}\right|^2=\sum_{i\in\overset{\circ}{I}_\delta}\int_i^{i+\delta}\e(\tilde{v}_\e')^2dt \geq\int_{a+\eta}^{b-\eta}\e(\tilde{v}_\e')^2dt,
\end{equation}
for $\e$ small. Moreover, in view of the definition of $\tilde v_\e$ and the convexity of $z\to(z-1)^2$, for every $i\in\overset{\circ}{I}_\delta$ we get
\begin{align*}
\int_i^{i+\delta}\frac{(\tilde{v}_\e-1)^2}{\e}dt &=\frac{1}{\e}\int_i^{i+\delta}\left(\left(1-\frac{t-i}{\delta}\right)v_\e^i+\frac{t-i}{\delta}v_\e^{i+\delta}-1\right)^2dt\\
&\leq\frac{(v_\e^i-1)^2}{\e}\int_i^{i+\delta}\left(1-\frac{t-i}{\delta}\right)dt+\frac{(v_\e^{i+\delta}-1)^2}{\e}\int_i^{i+\delta}\frac{t-i}{\delta}dt\\
&=\frac{\delta}{2\e}\left((v_\e^i-1)^2+(v_\e^{i+\delta}-1)^2\right),
\end{align*}
from which we get
\begin{equation}\label{est:02}
\sum_{i\in \overset{\circ}{I}_\delta}\delta\frac{(v_\e^i-1)^2}{\e}\geq\int_{a+\eta}^{b-\eta}\frac{(\tilde{v}_\e-1)^2}{\e}dt,
\end{equation}
for $\e$ small. Finally, the definition of $\tilde u_\e$ together with the convexity of $z\to z^2$ yield 
\begin{align*}
\int_i^{i+\delta}(\tilde{v}_\e)^2(\tilde{u}_\e')^2dt &=\left|\frac{u_\e^i-u_\e^{i+\delta}}{\delta}\right|^2\int_i^{i+\delta}\left(\left(1-\frac{t-i}{\delta}\right)v_\e^i+\frac{t-i}{\delta}v_\e^{i+\delta}\right)^2dt\\
&\leq\left|\frac{u_\e^i-u_\e^{i+\delta}}{\delta}\right|^2\left((v_\e^i)^2\int_i^{i+\delta}\left(1-\frac{t-i}{\delta}\right)dt+(v_\e^{i+\delta})^2\int_i^{i+\delta}\frac{t-i}{\delta}dt\right)\\
&=\delta\frac{(v_\e^i)^2+(v_\e^{i+\delta})^2}{2}\left|\frac{u_\e^i-u_\e^{i+\delta}}{\delta}\right|^2,
\end{align*}
for every $i\in\overset{\circ}{I}_\delta$, and thus
\begin{equation}\label{est:03}
\sum_{i\in\overset{\circ}{I}_\delta}\delta\frac{(v_\e^i)^2+(v_\e^{i+\delta})^2}{2}\left|\frac{u_\e^i-u_\e^{i+\delta}}{\delta}\right|^2\geq\int_{a+\eta}^{b-\eta}(\tilde{v}_\e)^2(\tilde{u}_\e')^2dt,
\end{equation}
for $\e$ small. Eventually, gathering \eqref{est:01}-\eqref{est:03} we deduce 
\begin{equation}\label{est:ATE}
E_\e(u_\e,v_\e) \geq AT_\e(\tilde{u}_\e,\tilde{v}_\e,(a+\eta,b-\eta)),
\end{equation}
where $AT_\e$ denotes the Ambrosio-Tortorelli functional\ie
\begin{equation*}
AT_\varepsilon(u,v):=\int_\Omega v^2|\nabla u|^2d\,x +\frac{1}{2}\int_\Omega\frac{(v-1)^2}{\varepsilon}+\varepsilon|\nabla v|^2\,dx,
\end{equation*}
for every $(u,v)\in W^{1,2}(\Omega)\times W^{1,2}(\Omega)$ with  $0\leq v\leq 1$.
Hence the thesis follows first appealing to \cite[Theorem 2.1]{AT2} and then by letting $\eta\to0$.

\medskip

\noindent  {\bf Step 2:} proof of $(ii)$\ie the case $\ell=+\infty$. 

\smallskip

Let $(u_\e,v_\e)\subset L^1(I)\times L^1(I)$ be as in the statement, then in particular

$$
\sup_{\e>0} \sum_{i\in I_\delta} \frac{\delta}{\e}(v_\e^i-1)^2 < +\infty. 
$$
Hence there exists a constant $c>0$ such that for every $i\in I_\delta$ and for every $\e>0$
$$
(v_\e^i-1)^2 \leq c\,\frac{\e}{\delta}.
$$
Let $\eta\in (0,1)$ be arbitrary; since by assumption $\e/\delta \to 0$  there exists $\e_0=\e_0(\eta)>0$ such that $|v_\e^i-1| <\eta$ for every $i\in I_\delta$ and for every $\e\in(0,\e_0)$. Then, up to choosing $\e$ small enough, we have

\begin{equation}\label{supercrit:03}
\frac{1}{2}\sum_{i\in\overset{\circ}{I}_\delta}\delta(v_\e^i)^2\left|\frac{u_\e^i-u_\e^{i\pm\delta}}{\delta}\right|^2\geq (1-\eta)^2\int_{a+\eta}^{b-\eta}(\tilde{u}_\e')^2dt.
\end{equation}
Since $\tilde{u}_\e\to u$ in $L^1(I)$, in view of the bound on the energy, from \eqref{supercrit:03} we may deduce that $\tilde{u}_\e\rightharpoonup u$ in $W^{1,2}(a+\eta,b-\eta)$ so that in particular $u\in W^{1,2}(a+\eta,b-\eta)$. Moreover, \eqref{supercrit:03} entails
\begin{align*}
\liminf_{\e\to 0}E_\e(u_\e,v_\e)&\geq\liminf_{\e\to 0}\frac{1}{2}\sum_{i\in\overset{\circ}{I}_\delta}\delta(v_\e^i)^2\left|\frac{u_\e^i-u_\e^{i\pm\delta}}{\delta}\right|^2\geq(1-\eta)^2\liminf_{\e\to 0}\int_{a+\eta}^{b-\eta}(\tilde{u}_\e')^2dt\\
&\geq(1-\eta)^2\int_{a+\eta}^{b-\eta}(u')^2dt,
\end{align*}
so that the desired lower bound follows by letting $\eta\to 0$.
\end{proof}
\begin{rem}\label{rem:liminf}
{\rm Let $(u_\e,v_\e)\subset L^1(I)\times L^1(I)$ be a sequence such that $(u_\e,v_\e)\to(u,v)$ in $L^1(I)\times L^1(I)$ and $\sup_\e E_\e(u_\e,v_\e)<+\infty$; let moreover $\ell\in [0,+\infty)$. In view of \eqref{est:01}-\eqref{est:03}, arguing as in \cite[Lemma 2.1]{AT2} we note that the two inequalities
\[\liminf_{\e\to 0}F_\e(u_\e,v_\e)\geq\int_a^b(u')^2dt, \qquad \liminf_{\e\to 0}G_\e(v_\e)\geq\#(S_u)\]
also hold.} 
\end{rem}

\subsection{The $n$-dimensional case} 

In this section we deal with the case $n\geq 2$. The following proposition will be obtained by combining the one-dimensional result in Proposition \ref{liminf:1d} and a slicing procedure in the coordinate directions. 

To this end it is convenient to introduce the following notation. For every $k\in \{1,\ldots,n\}$ set $\Pi^k:=\{x\in\R^n \colon  x_k=0\}$ and let $p^k:\R^n\to\Pi^k$ be the orthogonal projection onto $\Pi^k$. For all $y\in\Pi^k$ let
\begin{equation}\label{def:slice1}
\Omega_{k,y}:=\{t\in\R: y+te_k\in\Omega\}
\end{equation}
and
\begin{equation}\label{def:slice2}
\O_k:=\{y\in\Pi^k:\ \Omega_{k,y}\neq\emptyset\}.
\end{equation}
For every $w\colon \O \to \R$, $t\in\Omega_{k,y}$, and $y\in \O_k$ we set 
\begin{equation}\label{def:slice-fun}
w^{k,y}(t):=w(y+te_k). 
\end{equation}
\begin{prop}\label{compactness}
Let $(u_\e,v_\e) \subset L^1(\O)\times L^1(\O)$ be such that 
$$
(u_\e,v_\e) \to (u,v) \quad \text{in}\quad L^1(\O)\times L^1(\O) \quad \text{and} \quad \sup_{\e>0} E_\e(u_\e,v_\e) < +\infty
$$
and let $\ell$ be as in \eqref{def:ell}.

%\smallskip

$(i)$ {\rm (Subcritical and critical regime)} If $\ell\in[0,+\infty)$ then $u\in GSBV^2(\O)$ and $v=1$ a.e. in $\O$. Moreover,
\begin{equation}\label{eq:n02}
E'_\ell(u,v)\geq\int_\Omega|\nabla u|^2dx+\int_{S_u\cap\Omega}|\nu_u|_\infty d\HH^{n-1}.
\end{equation}
\smallskip

$(ii)$ {\rm (Supercritical regime)} If $\ell=+\infty$ then $u\in W^{1,2}(\O)$ and $v=1$ a.e. in $\O$. Moreover, 
\begin{equation}\label{s:liminf-super}
E'_\infty(u,v)\geq\int_\O |\nabla u|^2\,dx.
\end{equation}
\end{prop}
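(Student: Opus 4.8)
The plan is to reduce the $n$-dimensional statement to the one-dimensional Proposition~\ref{liminf:1d} via a slicing argument in the $n$ coordinate directions, which is why only the $\ell_\infty$-norm $|\nu_u|_\infty$ (and not a finer anisotropy) appears in the lower bound \eqref{eq:n02}. The starting observation is that the discrete energy decomposes along coordinate lines: writing $F_\e(u_\e,v_\e)=\sum_{k=1}^n F_\e^k$ and $G_\e(v_\e)$ as (a weighted combination of) one-dimensional slice energies, one has, for each fixed direction $e_k$,
\begin{equation*}
E_\e(u_\e,v_\e)\;\geq\;\int_{\O_k}E_\e^{1d}\big(u_\e^{k,y},v_\e^{k,y},\O_{k,y}\big)\,d\HH^{n-1}(y),
\end{equation*}
where $E_\e^{1d}$ denotes the one-dimensional functional (on the slice $\O_{k,y}$) built from the differences in the $e_k$-direction only, and $u_\e^{k,y}$, $v_\e^{k,y}$ are the slices defined in \eqref{def:slice-fun}. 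This inequality holds with room to spare because we are simply discarding the nonnegative contributions of the differences in the other $n-1$ directions; the factor $\delta^n$ correctly becomes $\delta^{n-1}\,d\HH^{n-1}$ on integration over slices.

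Next I would apply the one-dimensional results. By Fubini and the $L^1$-convergence $(u_\e,v_\e)\to(u,v)$, for a.e.\ $y\in\O_k$ the slices $(u_\e^{k,y},v_\e^{k,y})$ converge in $L^1(\O_{k,y})$ to $(u^{k,y},v^{k,y})$ (after passing to a subsequence, which is harmless for a $\Gamma$-$\liminf$ bound). Using Fatou's lemma together with the bound $\sup_\e E_\e(u_\e,v_\e)<+\infty$, we get that for a.e.\ $y$ the slice energies are equibounded, so Proposition~\ref{liminf:1d} applies slice-wise:
in case $(i)$, $u^{k,y}\in SBV^2(\O_{k,y})$ and $v^{k,y}=1$ a.e., with the slice lower bound $E'^{1d}_\ell\big(u^{k,y},v^{k,y}\big)\geq\int_{\O_{k,y}}|(u^{k,y})'|^2\,dt+\#(S_{u^{k,y}})$; in case $(ii)$, $u^{k,y}\in W^{1,2}(\O_{k,y})$ with the slice Dirichlet bound. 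The slicing characterisations of $GSBV$ / $W^{1,2}$ (see \cite{AFP} or \cite{Braides02}, Section~15) then upgrade this to $u\in GSBV^2(\O)$ (resp.\ $u\in W^{1,2}(\O)$) and $v=1$ a.e.\ in $\O$; one typically truncates $u$ at level $m$ to stay in $SBV$ and removes the truncation at the end, since the slice energies only decrease under truncation.

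For the quantitative lower bounds, I would localise: for $A\in\mathscr A(\O)$, run the slicing inequality on $A$ to get $E'_\ell(u,v,A)\geq\int_{A_k}E'^{1d}_\ell(u^{k,y},v^{k,y},A_{k,y})\,d\HH^{n-1}(y)$ via Fatou, then insert the one-dimensional lower bound and use the slicing formulas $\int_{A_k}\int_{A_{k,y}}|(u^{k,y})'|^2\,dt\,d\HH^{n-1}=\int_A|\partial_k u|^2\,dx$ and $\int_{A_k}\#(S_{u^{k,y}})\,d\HH^{n-1}=\int_{S_u\cap A}|\langle\nu_u,e_k\rangle|\,d\HH^{n-1}$ (the latter being the standard co-area/slicing identity for the jump set of an $SBV$ function). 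This yields $E'_\ell(u,v,A)\geq\int_A|\partial_k u|^2\,dx+\int_{S_u\cap A}|\langle\nu_u,e_k\rangle|\,d\HH^{n-1}$ for every $k$. Since $E'_\ell(u,v,\cdot)$ is (the restriction of) a superadditive set function on disjoint open sets, and the integrands on the right are mutually singular as measures when one varies $k$ — the bulk parts are absolutely continuous w.r.t.\ $\LL^n$ and the surface parts w.r.t.\ $\HH^{n-1}\lfloor S_u$ — a standard measure-theoretic supremum argument (as in \cite{Braides02}, or the classical Ambrosio–Tortorelli/De~Giorgi–Letta scheme) gives $E'_\ell(u,v)\geq\int_\O\max_k|\partial_k u|^2\,dx+\int_{S_u\cap\O}\max_k|\langle\nu_u,e_k\rangle|\,d\HH^{n-1}$. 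Bounding $\max_k|\partial_k u|^2\geq\frac1n|\nabla u|^2$ would only give a dimensional constant; instead, since the bulk term is anyway not the delicate point (it is recovered optimally elsewhere, and here it suffices to have \emph{some} coercive bound for the compactness), one keeps $\int_\O|\nabla u|^2\,dx$ by the more careful observation that choosing, at $\HH^{n-1}$-a.e.\ point of $S_u$, the direction $k$ maximising $|\langle\nu_u,e_k\rangle|$ gives exactly $|\nu_u|_\infty$, while the bulk bound $\int_\O|\nabla u|^2\,dx$ follows from the full (non-sliced) lower bound for $F_\e$ obtained by keeping all $n$ directional contributions simultaneously and passing to the limit with lower semicontinuity. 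In case $(ii)$ the surface term is absent and one simply gets \eqref{s:liminf-super}.

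The main obstacle is the mutual-singularity/supremum step: turning the family of directional lower bounds into a single bound with $|\nu_u|_\infty$ on the surface part without losing constants, while simultaneously retaining the sharp Dirichlet term $\int_\O|\nabla u|^2\,dx$ on the bulk part. This requires treating $E'_\ell(u,v,\cdot)$ as an increasing set function, checking it is superadditive (which follows from the locality of $E_\e$ — disjoint open sets involve disjoint bonds, up to boundary bonds whose number is lower order), and then invoking the standard lemma that if a superadditive set function dominates $\int_A f_k\,d\mu_k$ for finitely many mutually singular measures $\mu_k$, it dominates $\sup_k\int_A f_k\,d\mu_k$; the delicate bookkeeping is to ensure the bulk and surface contributions in the one-dimensional slice bound can be separated cleanly (they can, because $SBV^2$ slices have $|(u^{k,y})'|^2$ and $\#(S_{u^{k,y}})$ as the two distinct pieces of the slice energy, exactly matching the two pieces of $E_\e$).
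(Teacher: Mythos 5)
Your proposal is correct and follows essentially the same route as the paper: slice in each coordinate direction, apply the one-dimensional Proposition \ref{liminf:1d} (and its refinement into separate bulk and surface bounds, cf.\ Remark \ref{rem:liminf}) via Fatou, recover the full Dirichlet term by summing the $n$ directional bulk bounds while taking only one directional surface bound at a time, and then pass to $|\nu_u|_\infty$ by a supremum-over-$k$ argument. Your treatment of that last step is in fact more explicit than the paper's one-line ``taking the sup on $k$''; the only imprecision is the phrase ``mutually singular when one varies $k$'' --- the relevant singularity is between the bulk measure $\LL^n$ and the surface measure $\HH^{n-1}\llcorner S_u$, while for the varying $k$ one uses the standard supremum-of-measures lemma for superadditive set functions with the $k$-independent bulk density kept whole, which is exactly what you end up invoking.
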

\begin{proof}
The proof will be divided into two steps.

\medskip

\noindent  {\bf Step 1:} proof of $(i)$\ie the case $\ell\in[0,+\infty)$. 

\smallskip

Let $(u_\e,v_\e)\subset L^1(\Omega)\times L^1(\Omega)$ be a sequence converging to $(u,v)$ in $L^1(\Omega)\times L^1(\Omega)$ and such that $\sup_\e E_\e(u_\e,v_\e)<+\infty$. Note that $v_\e\to 1$ in $L^2(\Omega)$, so that $v=1$ a.e. in $\Omega$. 

We now show that $u\in GSBV^2(\Omega)$. To this end let $k\in\{1,\cdots,n\}$ be fixed and for every $y\in \Omega_k$ consider the two sequences of functions $(u_\e^{k,y})$, $(v_\e^{k,y})$ defined on $\Omega_{k,y}$ as in \eqref{def:slice-fun} with $w$ replaced by $u_\e$ and $v_\e$, respectively. 
Let $\eta>0$ be fixed; set $\Omega^\eta:=\{x\in\Omega:\ \dist(x,\R^n\setminus\Omega)>\eta\}$ and let $\Omega^\eta_{k,y}$ be as in \eqref{def:slice1} with $\Omega$ replaced by $\Omega^\eta$. Moreover let $\Omega^\eta_{k}$ be defined according to \eqref{def:slice2}. 

Set $\Pi^k_\delta:=\Pi^k \cap \delta \Z^{n-1}$; a direct computation yields
\begin{align}\label{est:n01}
E_\e &(u_\e,v_\e)\nonumber\\
&\geq\delta^{n-1}\,\bigg(\frac{1}{2}\sum_{\begin{smallmatrix}i\in \O_\delta\\i\pm\delta e_k\in \O_\delta\end{smallmatrix}}\delta(v_\e^i)^2\left|\frac{u_\e^i-u_\e^{i\pm\delta e_k}}{\delta}\right|^2+\frac{1}{2}\bigg(\sum_{i\in\Omega_\delta} \delta\frac{(v_\e^i-1)^2}{\e}+\hspace{-.3cm}\sum_{\begin{smallmatrix}i\in \O_\delta\\i+\delta e_k\in \O_\delta\end{smallmatrix}}\e \delta\left|\frac{v_\e^i-v_\e^{i+\delta e_k}}{\delta}\right|^2\bigg)\bigg)\nonumber\\
&= \delta^{n-1}\,\sum_{j\in \Pi^k_\delta}\Bigg(\frac{1}{2}\sum_{\begin{smallmatrix}i\in \O_{k,j}\cap \delta\Z\\i\pm\delta e_k\in \O_{k,j}\end{smallmatrix}}\delta(v_\e^{k,j}(i))^2\left|\frac{u_\e^{k,j}(i)-u_\e^{k,j}({i\pm\delta e_k})}{\delta}\right|^2
\\
& \hspace{2.5cm} +\frac{1}{2}\bigg(\sum_{i\in\Omega_{k,j}\cap \delta\Z}\delta\frac{(v_\e^{k,j}(i)-1)^2}{\e}+\sum_{\begin{smallmatrix}i\in \O_{k,j}\cap \delta\Z\\i+\delta e_k\in \O_{k,j}\end{smallmatrix}}\e \delta\left|\frac{v_\e^{k,j}(i)-v_\e^{k,j}({i+\delta e_k})}{\delta}\right|^2\bigg)\Bigg)\nonumber
\\\nonumber
& \geq \int_{\O^\eta_k} \Big(F^k_\e(u_\e^{k,y},v_\e^{k,y},\O_{k,y})+G^k_\e(v_\e^{k,y},\O_{k,y})\Big)\,d\HH^{n-1}(y), 
\end{align}
with
$$
F_\e^k(u_\e^{k,y},v_\e^{k,y},\Omega_{k,y}):=\frac{1}{2}\sum_{\begin{smallmatrix}i\in \O_{k,y}\cap \delta\Z\\i\pm\delta e_k\in \O_{k,y}\end{smallmatrix}}\delta(v_\e^{k,y}(i))^2\left|\frac{u_\e^{k,y}(i)-u_\e^{k,y}({i\pm\delta e_k})}{\delta}\right|^2
$$
and
$$
G_\e^k(v_\e^{k,y},\Omega_{k,y}):=\frac{1}{2}\bigg(\sum_{i\in\Omega_{k,j}\cap \delta\Z}\delta\frac{(v_\e^{k,j}(i)-1)^2}{\e}+\sum_{\begin{smallmatrix}i\in \O_{k,j}\cap \delta\Z\\i+\delta e_k\in \O_{k,y}\end{smallmatrix}}\e\delta\left|\frac{v_\e^{k,y}(i)-v_\e^{k,y}({i+\delta e_k})}{\delta}\right|^2\bigg),
$$
where $u_\e$ and $v_\e$ are identified with their piecewise-constant interpolations. 

Then invoking Fatou's Lemma gives
\[\liminf_{\e\to 0}E_\e(u_\e,v_\e)\geq\int_{\Omega_k^\eta}\liminf_{\e\to 0}\left(F_\e^k(u_\e^{k,y},v_\e^{k,y},\Omega_{k,y})+G_\e^k(v_\e^{k,y},\Omega_{k,y})\right)d\HH^{n-1}(y).\]
Since for $\HH^{n-1}$-a.e $y\in \Omega_k$ there holds $u_\e^{k,y}\to u^{k,y}$ in $L^1(\Omega_{k,y})$,  Proposition \ref{liminf:1d}-$(i)$ together with Remark \ref{rem:liminf} yield $u\in SBV^2(\Omega_{k,y}^\eta)$ for $\HH^{n-1}$-a.e. $y\in\Omega_k^\eta$ and
\begin{align}\label{est:n03}
\liminf_{\e\to 0} E_\e(u_\e,v_\e)&\geq\int_{\Omega^\eta_k}\liminf_{\e\to 0} F_\e(u_\e^{k,y},v_\e^{k,y},\Omega_{k,y})+\liminf_{\e\to 0} G_\e(v_\e^{k,y},\Omega_{k,y})d\HH^{n-1}(y)\nonumber\\
&\geq\int_{\Omega^\eta_k}\left(\int_{\Omega^\eta_{k,y}}((u^{k,y})')^2dt+\#\left(S_{u^{k,y}}\cap\Omega^\eta_{k,y}\right)\right)d\HH^{n-1}(y).
\end{align}
Since \eqref{est:n03} holds for every $k\in\{1,\ldots,n\}$, applying \cite[Theorem 4.1 and Remark 4.2]{braides98} we deduce that $u\in GSBV^2(\Omega^\eta)$.\\
\indent In order to prove the lower bound \eqref{eq:n02} we notice that for every $k\in\{1,\ldots,n\}$ we also have
\begin{align*}
\liminf_{\e\to 0} &E_\e(u_\e,v_\e)\\
&\geq\sum_{l=1}^n\int_{\Omega^\eta_l}\liminf_{\e\to 0}F_\e^l(u_\e^{l,y},v_\e^{l,y},\Omega_{l,y})d\HH^{n-1}(y)+\int_{\Omega_k^\eta}\liminf_{\e\to 0}G_\e^k(v_\e^{k,y},\Omega_{k,y})d\HH^{n-1}(y)\\
&\geq\sum_{l=1}^n\int_{\Omega^\eta_l}\left(\int_{\Omega^\eta_{l,y}}((u^{l,y}(t))')^2dt\right)d\HH^{n-1}(y)+\int_{\Omega_k^\eta}\#\left(S_{u^{k,y}}\cap\Omega^\eta_{k,y}\right)d\HH^{n-1}(y)\\
&=\sum_{l=1}^n\int_{\Omega^\eta}\left(\frac{\partial u}{\partial x_l}\right)^2dx+\int_{S_u\cap\Omega^\eta}|\left<\nu_u,e_k\right>|d\HH^{n-1}\\
&=\int_{\Omega^\eta}|\nabla u|^2dx+\int_{S_u\cap\Omega^\eta}|\left<\nu_u,e_k\right>|d\HH^{n-1}.
\end{align*}
Then taking the sup on $k\in\{1,\ldots,n\}$ we get
\[\liminf_{\e\to 0}E_\e(u_\e,v_\e)\geq\int_{\Omega^\eta}|\nabla u|^2dx+\int_{S_u\cap\Omega^\eta}|\nu_u|_\infty d\HH^{n-1}.\]
Finally, by letting $\eta\to 0$ we both deduce that $u\in GSBV^2(\Omega)$ and \eqref{eq:n02}.
\medskip

\noindent  {\bf Step 2:} proof of $(ii)$\ie the case $\ell=+\infty$. 

\smallskip

Arguing as in Step $1$ and now appealing to Proposition \ref{liminf:1d}-$(ii)$ yield both $u\in W^{1,2}(\O)$ and the lower-bound estimate \eqref{s:liminf-super}.   
\end{proof}
\begin{rem}\label{rem:liminf-nd}
{\rm From the proof of \eqref{eq:n02} in Proposition \ref{compactness}-(i) we get that if $\ell \in [0,+\infty)$ and $(u_\e,v_\e)\subset L^1(\O)\times L^1(\O)$ is such that $(u_\e,v_\e)\to(u,v)$ in $L^1(\O)\times L^1(\O)$ with $\sup_\e E_\e(u_\e,v_\e)<+\infty$ then the two inequalities
\[\liminf_{\e\to 0}F_\e(u_\e,v_\e)\geq\int_\O|\nabla u|^2\dx, \qquad \liminf_{\e\to 0}G_\e(v_\e)\geq\int_{S_u\cap \O}|\nu_u|_\infty\,d\HH^{n-1}\]
hold also true.} 
\end{rem}

\begin{rem}\label{lowerbound:local}
{\rm For later use we notice that Proposition \ref{compactness} can be localised in the following sense. Let $\ell\in[0,+\infty)$, $U\in\mathscr A_L(\Omega)$ and let $E'_\ell$ be as in \eqref{c:li-ls}. Then
\[E'_\ell(u,1,U)\geq\int_U|\nabla u|^2dx+\int_{S_u\cap U}|\nu_u|_\infty d\HH^{n-1}\quad\text{for every }\; u\in GSBV^2(\Omega).\]
}
\end{rem}

\subsection{Convergence of minimisation problems}

On account of the $\G$-convergence result Theorem \ref{t:main-result} in this subsection we establish a convergence result for a class of minimisation problems associated to $E_\e$.
Specifically, we consider a suitable perturbation of $E_\e$ which will also satisfy the needed equi-coercivity property. To this end, having in mind applications to image-segmentation problems, for a given $g\in L^\infty(\Omega)$ we define 
\[g^i:=\frac{1}{\delta^n}\int_{i+[0,\delta)^n}g(x)dx\]
and consider the functionals
\begin{equation}\label{add:fidelity}
{E}^g_\e(u,v):=E_\e(u,v)+\sum_{i\in\Omega_\delta}\delta^n|u^i-g^i|^2. 
\end{equation}
Moreover, we will only focus on the subcritical and critical regimes\ie $\ell\in [0,+\infty)$, as these are the only regimes giving rise to a nontrivial $\Gamma$-limit.  

\medskip

The following result holds true.

\begin{prop}[Equicoercivity] \label{prop:equi-coer}
Let $\ell \in [0,+\infty)$ and $g \in L^\infty(\O)$. Then the functionals $E^g_\e$ defined as in \eqref{add:fidelity}
are equi-coercive with respect to the strong $L^1(\O)\times L^1(\O)$-topology. More precisely, for every sequence $(u_\e,v_\e) \subset L^1(\O)\times L^1(\O)$ satisfying $\sup_\e E^g_\e(u_\e,v_\e)<+\infty$ there exist a subsequence (not relabelled) and a function $u\in GSBV^2(\O)$ such that $(u_\e,v_\e) \to (u,1)$ in $L^1(\O)\times L^1(\O)$. 
\end{prop}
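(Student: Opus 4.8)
The plan is to exploit the non-negativity of the fidelity term together with the lower-bound estimate \eqref{eq:n02} from Proposition \ref{compactness}. Fix a sequence $(u_\e, v_\e)$ with $\sup_\e E^g_\e(u_\e,v_\e) =: M < +\infty$. Since $E_\e \geq 0$ and the fidelity term is non-negative, we immediately get $\sup_\e E_\e(u_\e,v_\e) \leq M$ and $\sup_\e \sum_{i\in\O_\delta}\delta^n|u_\e^i - g^i|^2 \leq M$. The first bound, via the term $\sum_{i\in\O_\delta}\delta^n (v_\e^i-1)^2/\e$ in $G_\e$, forces $v_\e \to 1$ in $L^2(\O)$ (hence in $L^1(\O)$) along the whole sequence, so the only real work is to extract a subsequence along which $u_\e$ converges in $L^1(\O)$ to some $u \in GSBV^2(\O)$.

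First I would handle the truncated functions. For $m\in\N$ set $u_\e^m := -m \vee (u_\e \wedge m)$, which is again in $\A_\e(\O)$ with the same $v_\e$. Since truncation does not increase finite differences, $F_\e(u_\e^m, v_\e) \leq F_\e(u_\e, v_\e)$, and $G_\e(v_\e)$ is untouched, so $\sup_\e E_\e(u_\e^m, v_\e) \leq M$ for every $m$. I would then want an $L^1$-compactness statement for the truncated sequence. The estimate \eqref{eq:n02} and the slicing machinery in the proof of Proposition \ref{compactness} already control, at the level of each coordinate direction, a one-dimensional $SBV$-type norm of the slices of $u_\e^m$; combined with the uniform $L^\infty$ bound $\|u_\e^m\|_\infty \leq m$, this should give — via a discrete Poincar\'e/Ambrosio-type compactness argument in $SBV$, applied slice-wise and then reassembled — that $(u_\e^m)_\e$ is precompact in $L^1(\O)$. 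Concretely, one can either quote the compactness in $(G)SBV$ built into \cite[Theorem 4.1 and Remark 4.2]{braides98} used in Proposition \ref{compactness}, or bound the total variation of suitable interpolants directly; in either case one gets, for each fixed $m$, a subsequence along which $u_\e^m \to w^m$ in $L^1(\O)$ with $w^m \in SBV^2(\O)$, $\|w^m\|_\infty \leq m$.

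Next I would run a diagonal argument over $m$: successively refine subsequences so that $u_\e^m \to w^m$ in $L^1(\O)$ for every $m\in\N$ simultaneously along one subsequence (not relabelled). The limits $w^m$ are consistent under truncation, $w^m = -m\vee(w^{m+1}\wedge m)$, so they define a measurable function $u$ on $\O$ with $u^m = w^m$, hence $u\in GSBV^2(\O)$ by definition of that space (each truncation lies in $SBV^2(\O)$, with $\nabla u \in L^2$ and $\HH^{n-1}(S_u) < +\infty$ inherited from the uniform lower-bound control and the fact that the slicing estimates are $m$-independent). It remains to upgrade $u_\e^m \to u^m$ to $u_\e \to u$ in $L^1(\O)$. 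Here the fidelity bound enters decisively: $\sum_{i\in\O_\delta}\delta^n |u_\e^i - g^i|^2 \leq M$ with $g\in L^\infty$ gives a uniform bound on $\|u_\e\|_{L^2(\O)}$, hence equi-integrability of $(u_\e)$; combined with $u_\e^m = u_\e$ on $\{|u_\e| \leq m\}$ and $\LL^n(\{|u_\e| > m\}) \leq \|u_\e\|_{L^2}^2/m^2 \leq M'/m^2$ uniformly in $\e$, a standard estimate $\|u_\e - u\|_{L^1(\O)} \leq \|u_\e^m - u^m\|_{L^1(\O)} + \|u_\e - u_\e^m\|_{L^1(\O)} + \|u^m - u\|_{L^1(\O)}$ with the middle term controlled by the tail bound and equi-integrability, and the last term $\to 0$ as $m\to\infty$, lets me send first $\e\to 0$ and then $m\to\infty$ to conclude $u_\e \to u$ in $L^1(\O)$.

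I expect the main obstacle to be the slice-wise-to-global compactness step for the truncated sequences: one must make sure the one-dimensional $SBV$ bounds obtained from Proposition \ref{liminf:1d} and the slicing in Proposition \ref{compactness} are genuinely uniform in $\e$ (and suitably uniform in $m$, up to the harmless $L^\infty$ bound) so that the $(G)SBV$-compactness criterion applies, and that the piecewise-constant interpolations — rather than the piecewise-affine ones used in the 1D proof — are the ones that converge in $L^1$. This is where a careful bookkeeping of the interpolation error (of order $\delta \to 0$) and an appeal to the uniform $L^\infty$ truncation bound are needed; once that precompactness is in hand, the diagonalisation and the fidelity-term equi-integrability argument are routine.
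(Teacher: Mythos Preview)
Your strategy is sound and would lead to the result, but the paper takes a much shorter route. Rather than building $L^1$-compactness of the discrete sequences from scratch, the paper reduces to the \emph{continuum} Ambrosio--Tortorelli functional: in dimension one, the interpolation estimate \eqref{est:ATE} already proved in Proposition~\ref{liminf:1d} gives $E_\e(u_\e,v_\e)\geq AT_\e(\tilde u_\e,\tilde v_\e,(a+\eta,b-\eta))$ for the piecewise-affine interpolants, so the fidelity-perturbed discrete energy dominates (up to a harmless $L^2$ comparison between piecewise-constant and piecewise-affine interpolants) a fidelity-perturbed continuum $AT_\e$, whose equi-coercivity is classical \cite[Theorem 1.2]{AT2}. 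For $n\geq 2$ the paper then invokes a slicing-to-global compactness criterion \cite[Theorem 6.6]{ABS98} to pass from the one-dimensional case to the general one in a single stroke.

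Your route---truncate at level $m$, obtain slice-wise compactness, reassemble, diagonalise over $m$, then use the $L^2$ bound from the fidelity term and equi-integrability to remove the truncation---is a legitimate, more self-contained alternative. Two cautions, however. First, your citations for the compactness step are off: Proposition~\ref{compactness} and \cite[Theorem 4.1, Remark 4.2]{braides98} identify the domain of the $\Gamma$-limit and characterise $GSBV$ via slices, respectively; neither furnishes \emph{sequential precompactness} of $(u_\e^m)_\e$ in $L^1(\Omega)$. For that you genuinely need a compactness statement---either Ambrosio's $SBV$ compactness applied to suitable interpolants, or the $AT$ equi-coercivity, together with the slice-to-global passage that you correctly flag as the main obstacle (and which is precisely what \cite[Theorem 6.6]{ABS98} packages). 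Second, once one goes through the reduction to continuum $AT_\e$ as the paper does, the truncation, the diagonal over $m$, and the separate equi-integrability argument all become unnecessary: the cited external results already deliver compactness directly, with the fidelity term providing the required $L^2$ control on $u_\e$ itself.
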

\begin{proof}
For $n=1$ the thesis directly follows by combining the estimate \eqref{est:ATE} with the equi-coercivity of the (perturbed) Ambrosio-Tortorelli functional \cite[Theorem 1.2]{AT2}.  

For $n\geq 2$ the proof is also standard and the equi-coercivity of $E^g_\e$ follows from the one-dimensional case invoking \textit{e.g.} \cite[Theorem 6.6]{ABS98} (see also \cite[Section 3.8]{Alberti00}).  
\end{proof}

We are now ready to prove the following result on the convergence of the associated minimisation problems.

\begin{corollary}[Convergence of minimisation problems]\label{conv:min}
For every fixed $\e>0$ the minimisation problem
\begin{equation*}%\label{min:prob:eps}
m_\e:=\min\left\{{E}^g_\e(u,v):\ (u,v)\in \A_\e(\Omega)\times\A_\e(\Omega)\right\}.
\end{equation*}
admits a solution $(\hat{u}_\e,\hat{v}_\e)$. 

Let $\ell\in [0,+\infty)$; then, up to subsequences, the pair $(\hat{u}_\e,\hat{v}_\e)$ converges in $L^1(\Omega)\times L^1(\Omega)$ to $(\hat{u},1)$ with $\hat{u}$ solution to 
\begin{equation}\label{min:problem}
m_\ell:=\min\left\{E_\ell(u,1)+\int_\Omega|u-g|^2dx:\ u\in SBV^2(\Omega), \; \|u\|_{L^\infty(\O)}\leq \|g\|_{L^\infty(\O)}\right\};
\end{equation}
moreover, $m_\e\to m_\ell$ as $\e \to 0$. 
\end{corollary}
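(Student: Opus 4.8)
The plan is to prove Corollary~\ref{conv:min} as a standard application of the direct method of the Calculus of Variations combined with the $\Gamma$-convergence result Theorem~\ref{t:main-result} and the equicoercivity Proposition~\ref{prop:equi-coer}. First I would establish existence of a minimiser $(\hat u_\e,\hat v_\e)$ for $m_\e$ at fixed $\e>0$: the functional $E^g_\e$ is defined on the finite-dimensional space of lattice functions $\A_\e(\Omega)\times\A_\e(\Omega)$ (which, for $\Omega$ bounded, amounts to $\R^{N}\times\R^{N}$ with $N=\#\Omega_\delta$), it is continuous there, nonnegative, and coercive in the $v$-variable thanks to the term $\sum_i\delta^n(v^i-1)^2/\e$ and in the $u$-variable thanks to the fidelity term $\sum_i\delta^n|u^i-g^i|^2$; hence sublevel sets are compact and a minimiser exists. (One could also observe that testing with $(g,1)$ — meaning $u^i=g^i$, $v^i=1$ — gives $m_\e\le G_\e(1)=0$, so $m_\e=0$ in fact, but more importantly the infimum is attained.)

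Next I would handle the passage to the limit. Observe first that by testing $m_\e$ with the pair $(g_\delta,1)$, where $g_\delta$ is the piecewise-constant function with values $g^i$, one has $\sup_\e m_\e<+\infty$ (indeed $E_\e(g_\delta,1)=F_\e(g_\delta,1)+G_\e(1)=F_\e(g_\delta,1)$, which stays bounded since $g\in L^\infty$ — or, if one prefers a cleaner bound, one notes $m_\e=0$). Consequently $\sup_\e E^g_\e(\hat u_\e,\hat v_\e)<+\infty$, and Proposition~\ref{prop:equi-coer} yields a subsequence (not relabelled) and $\hat u\in GSBV^2(\Omega)$ with $(\hat u_\e,\hat v_\e)\to(\hat u,1)$ in $L^1(\Omega)\times L^1(\Omega)$. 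By the $\Gamma$-liminf inequality (Theorem~\ref{t:main-result}) applied to $E_\e$, together with the continuity of the fidelity perturbation $u\mapsto\int_\Omega|u-g|^2\dx$ with respect to $L^1$-convergence of uniformly bounded functions, one gets
\[
E_\ell(\hat u,1)+\int_\Omega|\hat u-g|^2\dx\ \le\ \liminf_{\e\to0}E^g_\e(\hat u_\e,\hat v_\e)=\liminf_{\e\to0}m_\e .
\]
For the matching upper bound I would fix any competitor $u$ in the class of \eqref{min:problem} and use the $\Gamma$-limsup inequality of Theorem~\ref{t:main-result} to produce a recovery sequence $(u_\e,v_\e)\to(u,1)$ with $\limsup_\e E_\e(u_\e,v_\e)\le E_\ell(u,1)$; after truncating $u_\e$ at level $\|g\|_{L^\infty}$ (which does not increase $E_\e$, since truncation only decreases finite differences and the coefficients $(v^i)^2\ge0$, and which improves the fidelity term) and discretising, one obtains $\limsup_\e m_\e\le\limsup_\e E^g_\e(u_\e,v_\e)\le E_\ell(u,1)+\int_\Omega|u-g|^2\dx$. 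Taking the infimum over such $u$ and combining with the lower bound shows that $\hat u$ is a minimiser of $m_\ell$, that $m_\e\to m_\ell$, and — by a standard Urysohn argument — that the convergence holds for the whole sequence up to subsequences as stated. The truncation step also shows that restricting $m_\ell$ to functions with $\|u\|_{L^\infty}\le\|g\|_{L^\infty}$ does not change the infimum, which is what licenses writing the constrained minimum in \eqref{min:problem}; here one uses the identity $SBV^2\cap L^\infty=GSBV^2\cap L^\infty$ recorded in the Notation so that the limit $\hat u$, a priori only in $GSBV^2$, actually lies in $SBV^2$.

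The main obstacle — though a mild one, this being essentially bookkeeping — is the interplay between the truncation/boundedness constraint and the recovery sequence: one must check that the recovery sequences furnished by Theorem~\ref{t:main-result} can be taken bounded in $L^\infty$ (or else truncate them and verify the energy does not increase and that the discrete $L^\infty$-bound is inherited by the continuous limit), and that the class $SBV^2$ with the $L^\infty$-constraint is the right one in which to phrase $m_\ell$. In the critical regime $\ell\in(0,+\infty)$ there is the additional subtlety that the $\Gamma$-limit is a priori only along a subsequence $(\e_j)$ (unless $n=2$): one therefore first extracts a $\Gamma$-convergent subsequence, runs the argument above along it, and notes that the limit value $m_\ell$ and limit minimiser depend only on $E_\ell$; for $n=2$ Theorem~\ref{t:main-result} gives subsequence-independence and hence convergence of the full sequence $m_\e\to m_\ell$.
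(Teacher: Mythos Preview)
Your overall strategy coincides with the paper's: existence of a minimiser at fixed $\e$ by a direct (finite-dimensional) argument, truncation of $\hat u_\e$ at level $\|g\|_{L^\infty}$, equicoercivity to extract a limit pair $(\hat u,1)$, the $\Gamma$-liminf inequality for the lower bound, and a recovery sequence plus truncation for the upper bound. The paper also makes explicit the passage from the discrete fidelity term to the continuous one by introducing the piecewise-constant function $g_\e$ with $g_\e(x)=g^i$ on $i+[0,\delta)^n$ and noting that $g_\e\to g$ in $L^2(\Omega)$; you gloss over this step, but it is what justifies identifying $\sum_i\delta^n|u_\e^i-g^i|^2$ with $\int_\Omega|u_\e-g_\e|^2\,dx$ and passing to the limit.

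There is, however, a concrete error in your side computations. The claim that testing with $(g_\delta,1)$ gives $m_\e\le G_\e(1)=0$, hence $m_\e=0$, is wrong: $E^g_\e(g_\delta,1)=F_\e(g_\delta,1)$, and for a merely bounded $g$ the quantity $F_\e(g_\delta,1)=\tfrac12\sum\delta^{n-2}|g^i-g^j|^2$ need not even stay bounded as $\delta\to0$ (take $g$ highly oscillatory). So neither the assertion $m_\e=0$ nor the assertion ``$F_\e(g_\delta,1)$ stays bounded since $g\in L^\infty$'' holds in general. The easy fix --- which the paper's argument implicitly relies on via the upper-bound construction, but which you can state directly --- is to test with the pair $(0,1)$: then $E^g_\e(0,1)=\sum_i\delta^n|g^i|^2\le\|g\|_{L^\infty}^2\,\mathcal L^n(\Omega)$, giving the uniform bound $\sup_\e m_\e<+\infty$ needed to invoke Proposition~\ref{prop:equi-coer}. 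With this correction your argument goes through and matches the paper's.
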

\begin{proof}
The existence of a minimising pair $(\hat{u}_\e,\hat{v}_\e)$ follows by applying the direct methods. Indeed, let $\e>0$ be fixed and let $(u_k,v_k)$ be a minimising sequence  for $E^g_\e$. Then there exists a constant $c>0$ such that
\[\sum_{i\in\Omega_\delta}\delta^n|u_k^i-g^i|^2\leq c\,,\]
for every $k\in\N$. Since for every $i\in \O_\delta$ it holds $|g^i|\leq \|g\|_{L^\infty(\O)}$, we deduce that $|u_k^i|\leq c$ for every $i\in\Omega_\delta$ and every $k\in \N$ (where now the constant $c$ possibly depends on $\e$).
Hence, up to subsequences not relabelled, $\lim_{k}u_k^i =\hat{u}_\e^i$, for some $\hat{u}_\e^i\in\R$. Since moreover, $0\leq v_k^i\leq 1$ for every $k\in\N$ and every $i\in\Omega_\delta$, up to subsequences, we also have $\lim_k v_k^i = \hat{v}_\e^i$ for some $\hat{v}_\e^i\in[0,1]$. 
Since for fixed $\e>0$ the set $\left\{(u_k^i,v_k^i): i\in\Omega_\delta\right\}$ is finite, up to choosing a diagonal sequence we can always  assume that
\[
\lim_{k\to +\infty}(u_k^i,v_k^i)= (\hat{u}_\e^i,\hat{v}_\e^i)\quad\text{for every}\quad i\in\Omega_\delta.
\]
Then, to deduce that $(\hat{u}_\e,\hat{v}_\e)$ is a minimising pair for $E^g_\e$ it suffices to notice that
\[m_\e=\liminf_{k\to+\infty}E^g_\e(u_k,v_k)\geq E^g_\e(\hat{u}_\e,\hat{v}_\e).\]
Since $E_\e$ decreases by truncations in $u$, by definition of $E^g_\e$ it is not restrictive to assume that $\|\hat{u}_\e\|_{L^\infty(\O)}\leq \|g\|_{L^\infty(\O)}$.  
Moreover, invoking Proposition \ref{prop:equi-coer} gives the existence of a subsequence (not relabelled) $(\hat{u}_\e,\hat{v}_\e)$ and of a function $\hat u \in GSBV^2(\Omega)$ such that $(\hat{u}_\e,\hat{v}_\e)\to(\hat{u},1)$ in $L^1(\Omega)\times L^1(\Omega)$; further, the Dominated Convergence Theorem also yields $\hat u_\e \to \hat u$ in $L^2(\O)$.  Since clearly $\|\hat{u}\|_{L^\infty(\O)}\leq \|g\|_{L^\infty(\O)}$, we actually deduce that $\hat u \in SBV^2(\Omega)$. 
Then it only remains to show that $\hat{u}$ is a solution to \eqref{min:problem}. To this end, let $g_\e\in\A_\e(\Omega)$ be the piecewise-constant function defined by 
\[g_\e(x):=g^i\quad\text{for every}\; x\in i+[0,\delta)^n,\quad\text{for every}\; i\in \Omega_\delta.\]
Then $g_\e\to g$ a.e.\ in $\O$; since moreover $\|g_\e\|_{L^\infty(\O)}\leq\|g\|_{L^\infty(\O)}$ the Dominated Convergence Theorem guarantees that $g_\e\to g$ in $L^2(\Omega)$. 
Therefore Theorem \ref{t:main-result} gives
\begin{equation}\label{c:conv-min-1}
m_\ell \leq E_\ell(\hat u,1) + \int_{\O}|\hat u -g|^2\dx \leq \liminf_{\e \to 0}E_\e^g(\hat u_\e,\hat v_\e)= \liminf_{\e \to 0} m_\e. 
\end{equation}
On the other hand, for every $w\in GSBV^2(\O) \cap L^\infty(\O)$ with $\|w\|_{L^\infty(\O)}\leq \|g\|_{L^\infty(\O)}$ Theorem \ref{t:main-result} provides us with a sequence $(u_\e,v_\e)$ such that $(u_\e,v_\e) \to (w,1)$ in $L^1(\Omega)\times L^1(\Omega)$ and
$$
\limsup_{\e \to 0} E_\e(u_\e,v_\e) \leq E_\ell(w,1). 
$$ 
Then if $(\bar u_\e)$ is the sequence obtained by truncating $(u_\e)$ at level $\|w\|_{L^\infty(\O)}$, we clearly have $\bar u_\e \to w$ in $L^2(\O)$ and  
$$
\limsup_{\e \to 0} E_\e(\bar u_\e,v_\e) \leq E_\ell(w,1), 
$$ 
so that 
$$
\limsup_{\e \to 0} m_\e \leq \limsup_{\e \to 0} E^g_\e(\bar u_\e, v_\e) \leq E_\ell(w,1)+\int_{\O}|w -g|^2\dx.
$$
Hence by the arbitrariness of $w$ we get
\begin{equation}\label{c:conv-min-2}
\limsup_{\e \to 0} m_\e \leq m_\ell.
\end{equation}
Eventually, gathering \eqref{c:conv-min-1} and \eqref{c:conv-min-2} yields
$$
\lim_{\e\to 0}m_\e=m_\ell=E_\ell(\hat u,1) + \int_{\O}|\hat u -g|^2\dx
$$
and thus the thesis. 
\end{proof}
%

%\BBB
%
%See if we want to keep this rmk. 
%
%\begin{rem}
%We notice that for the Ambrosio-Tortorelli approximation of the Mumford-Shah functional it is necessary to add also a perturabtion term
%%
%\[\eta_\e\int_\Omega|\nabla u|^2dx\]
%%
%with $\eta\ll\e$ to ensure that for every $\e>0$ the functional
%%
%\[AT_\e(u,v)+\eta_\e\int_\Omega|\nabla u|^2dx+\int_\Omega|u-g|^2dx\]
%%
%is coercive (see \cite{AT2}). As we will see, this is not necessary in the discrete setting that we chose. This is due to the fact that for fixed $\e>0$, since $\Omega$ is bounded the functional $\tilde{E}_\e$ only consists of finite sums.
%\end{rem}
%%
%\EEE

%%%%%%%%%%%%%%%%%%%%%%%%%%%%%%%%%%%%%%%%%%%%%%%
\section{Proof of the $\G$-convergence result in the subcritical regime $\ell=0$}\label{s:subcritical} 

In this section we study the $\Gamma$-convergence of $E_\e$ in the subcritical regime\ie when $\ell=0$.
This regime corresponds to the case where the mesh-size $\delta$ is much smaller than the approximation parameter $\e$. 
We show that under such assumptions the discreteness of the problem does not play a role in the limit behaviour of the functionals $E_\e$ whose  $\Gamma$-limit is actually given by the Mumford-Shah functional, as in the continuous case.

We recall the following definition
$$
E_0(u,v):=M\!S(u,v)=\begin{cases}
\ds\int_\O |\nabla u|^2\dx+\HH^{n-1}(S_u\cap \O) & \text{if} \; u\in GSBV^2(\O), \, v =1 \; \text{a.e. in}\; \O,
\cr
+\infty & \text{otherwise in}\; L^1(\O)\times L^1(\O). 
\end{cases}
$$

\medskip

We deal with the lower-bound and upper-bound inequalities separately. 

\subsection{Lower-bound inequality} In this subsection we establish the liminf inequality for $E_\e$ when $\ell=0$. The main result of this subsection is as follows. 

\begin{prop}[Lower-bound for $\ell=0$]\label{prop:liminf}
Let $E_\e$ be as in \eqref{def:funct}  and $\ell=0$. Then for every $(u,v)\in L^1(\Omega)\times L^1(\Omega)$ and every $(u_\e,v_\e)\subset L^1(\Omega)\times L^1(\Omega)$ with $(u_\e,v_\e) \to (u,v)$ in $L^1(\Omega)\times L^1(\Omega)$ we have
\[
\liminf_{\e \to 0} E_\e(u_\e,v_\e)\geq E_0(u,v).\]
\end{prop}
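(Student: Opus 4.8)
The plan is to reduce the $n$-dimensional lower bound to the one-dimensional estimate of Proposition~\ref{liminf:1d} combined with the non-optimal bound already obtained in Proposition~\ref{compactness}, and then to recover the \emph{optimal} constant $1$ in front of $\HH^{n-1}(S_u)$ by a blow-up argument. First I would dispose of the trivial case: if $\liminf_\e E_\e(u_\e,v_\e)=+\infty$ there is nothing to prove, so I may assume, after passing to a subsequence realising the liminf, that $\sup_\e E_\e(u_\e,v_\e)<+\infty$. By Proposition~\ref{compactness}-(i) this forces $v=1$ a.e.\ and $u\in GSBV^2(\O)$, and moreover (Remark~\ref{rem:liminf-nd}) the bulk part is already controlled,
\[
\liminf_{\e\to0}F_\e(u_\e,v_\e)\geq\int_\O|\nabla u|^2\dx.
\]
So the whole difficulty is the surface term: I must upgrade the anisotropic bound $\int_{S_u}|\nu_u|_\infty\,d\HH^{n-1}$ of \eqref{eq:n02} to the isotropic $\HH^{n-1}(S_u\cap\O)$, using crucially that $\ell=0$ (i.e.\ $\delta\ll\e$), which is exactly what makes the lattice anisotropy disappear.

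The standard route is the \textbf{blow-up method} of Fonseca--M\"uller. I would introduce the sequence of measures $\mu_\e:=E_\e(u_\e,v_\e,\cdot)$ (the localised functionals \eqref{loc:funct}, viewed as superadditive set functions / measures on $\O$), which by the energy bound are equibounded; pass to a weak-$*$ limit $\mu$. It suffices to show $\mu\geq|\nabla u|^2\LL^n + \HH^{n-1}\res S_u$ as measures, and for that, by Radon--Nikod\'ym differentiation with respect to $\LL^n$ and $\HH^{n-1}\res S_u$, it suffices to check two pointwise inequalities: $\frac{d\mu}{d\LL^n}(x_0)\geq|\nabla u(x_0)|^2$ at $\LL^n$-a.e.\ $x_0$ (this is essentially Remark~\ref{rem:liminf-nd} localised), and
\[
\frac{d\mu}{d\HH^{n-1}\res S_u}(x_0)\geq 1\qquad\text{for }\HH^{n-1}\text{-a.e. }x_0\in S_u.
\]
The second is the heart of the matter. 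Fix such an $x_0$, a point of approximate continuity of $\nu_u$ where the blow-ups of $u$ converge to the jump function $u_{x_0}:=u^-(x_0)+\big(u^+(x_0)-u^-(x_0)\big)\mathbf 1_{\{\langle x-x_0,\nu\rangle>0\}}$. Rescaling $E_\e$ on small cubes $Q^\nu_\rho(x_0)$ and using the slicing representation \eqref{est:n01} in the coordinate directions, I would show that the rescaled energies still see, in the limit $\rho\to0$ and $\e\to0$, a one-dimensional transition of the $G_\e$-type across the hyperplane $\Pi_\nu$; the point is that since $\delta/\e\to0$, on the scale of the $\e$-transition layer the lattice looks infinitely fine, so the slicing in \emph{any} fixed direction $e_k$ with $\langle\nu,e_k\rangle\neq0$ already produces, via Proposition~\ref{liminf:1d}-(i) and Remark~\ref{rem:liminf} applied slice-by-slice, a contribution $\#(S_{u^{k,y}})\geq 1$ per slice, and integrating over the $(n-1)$-dimensional family of slices through a small disc in $\Pi_\nu$ and taking the sup over admissible $k$ together with the covering/area-formula relation $\int|\langle\nu,e_k\rangle|\,d\HH^{n-1}$ one recovers the full area $\HH^{n-1}(S_u\cap Q^\nu_\rho(x_0))$ in the limit. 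Letting $\rho\to0$ gives density $\geq1$.

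The \textbf{main obstacle} I anticipate is precisely making the last step rigorous: the anisotropic lower bound \eqref{eq:n02} by itself gives only $|\nu_u|_\infty$, not $1$, so one genuinely needs the extra room afforded by $\ell=0$, and the cleanest way to exploit it is to show that on the blow-up scale one can reduce to the \emph{continuum} Ambrosio--Tortorelli functional — i.e.\ to promote the one-dimensional comparison $E_\e\geq AT_\e(\tilde u_\e,\tilde v_\e)$ of Proposition~\ref{liminf:1d} to an $n$-dimensional one locally, \emph{after} knowing $u\in GSBV^2$. Concretely I would argue: having established $u\in GSBV^2(\O)$ and the bulk bound, I would build suitable $n$-dimensional interpolations $\tilde u_\e,\tilde v_\e$ (piecewise-affine on the simplices of a triangulation of $\delta\Z^n$, say, or direction-by-direction affine interpolants) and show, using $\delta\ll\e$, that $E_\e(u_\e,v_\e,U)\geq AT_\e(\tilde u_\e,\tilde v_\e,U)-o(1)$ on a neighbourhood $U$ of a generic point of $S_u$ — here the non-convexity of the term $(v^i)^2|u^i-u^j|^2/\delta^2$ prevents a global clean inequality (as the authors note in the introduction), which is why one works locally near $S_u$ where $v_\e$ is forced small and the offending term is itself small. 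Once such a local comparison is in force, the classical Ambrosio--Tortorelli liminf inequality yields $\mu\res U\geq |\nabla u|^2\LL^n\res U + \HH^{n-1}\res(S_u\cap U)$, and a covering argument over $\O$ finishes the proof. If this localisation turns out to be delicate, the safe fallback is the pure blow-up argument sketched above, which never needs a global $AT_\e$ comparison and uses only Proposition~\ref{liminf:1d} slice-wise together with the standard Fonseca--M\"uller differentiation machinery.
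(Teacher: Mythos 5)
Your overall strategy -- pass to a subsequence with bounded energy, invoke Proposition \ref{compactness} for the domain and the bulk bound, then run the Fonseca--M\"uller blow-up and reduce everything to the jump-density estimate $\mu_J(x_0)\geq 1$ -- is exactly the paper's route. But both of the mechanisms you offer for that decisive estimate have a gap. The ``safe fallback'' is not safe: slicing in a coordinate direction $e_k$ and counting one jump per slice produces, after integrating over the slices, only $\int_{S_u}|\langle\nu_u,e_k\rangle|\,d\HH^{n-1}$, and taking the supremum over $k$ yields $|\nu_u|_\infty$, i.e.\ precisely the non-optimal bound \eqref{eq:n02}. No combination of coordinate-direction slicings recovers the constant $1$ for a generic normal; the whole point of the blow-up step is to slice in the direction $\nu$ itself, and for that one needs continuum interpolants, not the raw lattice functions.

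Your main route is closer to the paper, but the inequality you assert, $E_\e(u_\e,v_\e,U)\geq AT_\e(\tilde u_\e,\tilde v_\e,U)-o(1)$ with a single piecewise-affine interpolant $\tilde v_\e$ appearing in both terms, is not justified and is in fact the crux you cannot skip. On a simplex the affine interpolant of $v$ can exceed the value $v^i$ at the vertex carrying the finite difference, so the discrete Dirichlet sum does not dominate $\int\tilde v_\e^2|\nabla\tilde u_\e|^2$ (in $1$D the paper rescues this by convexity/Jensen; in $n\geq2$ it cannot). The paper's actual comparison \eqref{est:ATn} uses \emph{two different} interpolants: the piecewise-constant $\hat v_j$ in the gradient term and the piecewise-affine $\tilde v_j$ in the Modica--Mortola term. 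Consequently the classical Ambrosio--Tortorelli liminf theorem cannot be quoted as a black box; one must redo the one-dimensional argument by hand on slices in the direction $\nu$, locate a point $s_j^y$ where $\hat v_j^{\nu,y}$ is small, and transfer this smallness to $\tilde v_j^{\nu,y}$ outside an exceptional set $N_j^d$ of slices. Controlling $\HH^1(N_j^d)\lesssim \tau_j/\sigma_j=\delta_j/\e_j\to 0$ is exactly where the hypothesis $\ell=0$ enters quantitatively, and it is the ingredient your proposal leaves entirely implicit. Without either a valid two-interpolant comparison plus this reconciliation step, or some substitute for it, the density estimate $\mu_J\geq1$ is not proved.
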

\begin{proof}
Up to subsequences we can always assume that $\sup_\e E_\e(u_\e,v_\e)<+\infty$, otherwise there is nothing to prove. Then Proposition \ref{compactness}-(i) gives $u\in GSBV^2(\Omega)$ and $v=1$ a.e. in $\Omega$. Thus, it remains to show that
\begin{equation}\label{liminf}
\liminf_{\e \to 0} E_\e(u_\e,v_\e)\geq\int_\Omega|\nabla u|^2dx+\HH^{n-1}(S_u\cap\Omega)
\end{equation}
holds true for every $u\in GSBV^2(\Omega)$. We first prove \eqref{liminf} for $u\in SBV^2(\Omega)$. To this end we use the Fonseca and M\"uller blow-up procedure \cite{FM}. 
For every $\e>0$ we define the discrete Radon measure
\[\mu_\e:=\sum_{i\in\overset{\circ}{\Omega}_\delta}\frac{\delta^n}{2}\left((v_\e^i)^2\sum_{k=1}^n\left|\frac{u_\e^i-u_\e^{i\pm\delta e_k}}{\delta}\right|^2+\frac{(v_\e^i-1)^2}{\e}+\sum_{k=1}^n\e\left|\frac{v_\e^i-v_\e^{i+\delta e_k}}{\delta}\right|^2\right)\mathbf{1}_i,\]
where $\mathbf{1}_i$ denotes the Dirac delta in $i$. Since 
$$
\sup_{\e>0}\mu_\e(\Omega)\leq \sup_{\e>0} E_\e(u_\e,v_\e)<+\infty
$$ 
we deduce that, up to subsequences, $\mu_\e\overset{*}{\rightharpoonup}\mu$ weakly* in the sense of measures, for some positive finite Radon measure $\mu$. Then, appealing to the Radon-Nikod\`ym Theorem we can write $\mu$ as the sum of three mutually orthogonal measures
\[\mu=\mu_a\LL^n\llcorner\Omega+\mu_J\HH^{n-1}\llcorner (S_u\cap\Omega)+\mu_s.\]
We claim that
\begin{equation}\label{est:abs}
\mu_a(x_0)\geq |\nabla u(x_0)|^2\quad\text{for}\ \LL^n\text{-a.e.}\ x_0\in\Omega
\end{equation}
and
\begin{equation}\label{est:jump}
\mu_J(x_0)\geq 1\quad\text{for}\ \HH^{n-1}\text{-a.e.}\ x_0\in S_u.
\end{equation}
Let us assume for the moment that we can prove \eqref{est:abs} and \eqref{est:jump}. Then, by choosing an increasing sequence of cut-off functions $(\varphi_k) \subset C_c^\infty(\Omega)$ such that $0\leq\varphi_k\leq 1$ and $\sup_k\varphi_k=1$ we get 
\begin{align*}
\liminf_{\e\to 0} E_\e(u_\e,v_\e) &\geq\liminf_{\e\to 0} \mu_\e(\Omega)\geq\liminf_{\e\to 0}\int_\Omega\varphi_kd\mu_\e=\int_\Omega\varphi_kd\mu\\
&\geq\int_\Omega\varphi_kd\mu_a+\int_{S_u\cap\Omega}\varphi_kd\mu_J\\
&\geq\int_\Omega|\nabla u|^2\varphi_k dx+\int_{S_u\cap\Omega}\varphi_k d\HH^1,
\end{align*}
hence the conclusion follows by letting $k\to+\infty$ and appealing to the Monotone Convergence Theorem.\\
\indent Let us now prove \eqref{est:abs} and \eqref{est:jump}. 

\medskip

\noindent {\bf Step 1:} proof of \eqref{est:abs}.

\smallskip

By virtue of the Besicovitch Derivation Theorem and the Calder\`on-Zygmund Lemma for $\LL^n$-a.e. $x_0\in\Omega$ we have 
\begin{equation}\label{leb:point}
\mu_a(x_0)=\lim_{\rho\to 0^+}\frac{\mu(Q_\rho(x_0))}{\LL^n(Q_\rho(x_0))}=\lim_{\rho\to 0^+}\frac{\mu(Q_\rho(x_0))}{\rho^n}
\end{equation}
and
\begin{equation}\label{appr:diff}
\lim_{\rho\to 0^+}\frac{1}{\rho^{n+1}}\int_{Q_\rho(x_0)}|u(x)-u(x_0)-\left<\nabla u(x_0),x-x_0\right>|dx=0.
\end{equation}
Let $x_0\in \O$ be fixed and such that both \eqref{leb:point} and \eqref{appr:diff} hold true.  
Since $\mu$ is a finite Radon measure, there holds $\mu(Q_\rho(x_0))=\mu(\overline{Q}_\rho(x_0))$ except for a countable family of $\rho$'s. Moreover, for $\rho$ sufficiently small the upper semicontinuous function $\chi_{\overline{Q}_\rho}$ has compact support in $\Omega$. Thus, appealing to \cite[Proposition 1.62(a)]{AFP} we deduce that for every $\rho_m\to0$ and every $\e_j\to0$ we have
\begin{align*}
\mu_a(x_0)&=\lim_{m\to +\infty}\frac{1}{\rho_m^n}\mu(Q_{\rho_m}(x_0))=\lim_{m\to +\infty}\frac{1}{\rho_m^n}\mu(\overline{Q}_{\rho_m}(x_0))=\lim_{m\to +\infty}\frac{1}{\rho_m^n}\int_\Omega\chi_{\overline{Q}_{\rho_m}(x_0)}d\mu\\
&\geq\lim_{m\to +\infty}\limsup_{j\to +\infty}\frac{1}{\rho_m^n}\int_\Omega\chi_{\overline{Q}_{\rho_m}(x_0)}d\mu_{\e_j}\geq\lim_{m\to +\infty}\limsup_{j\to +\infty}\frac{1}{\rho_m^n}\mu_{\e_j}(Q_{\rho_m}(x_0)).
\end{align*}
We now want to estimate $\frac{1}{\rho_m^n}\mu_{\e_j}(Q_{\rho_m}(x_0))$. To this end, we notice that for every $j$ and for every $m$ we can find $x_0^j \in\delta_j\Z^n$ and $\rho_{m,j}>0$ such that $x_0^j \to x_0$, $\rho_{m,j} \to \rho_m$, as $j\to +\infty$ and 
$$
\delta_j\Z^n\cap Q_{\rho_{m,j}}(x^j_0) = \delta_j\Z^n\cap Q_{\rho_m}(x_0)
$$
so that
\begin{align*}
&\frac{1}{\rho_m^n}\mu_{\e_j}(Q_{\rho_m}(x_0))
\\
&=\frac{1}{\rho_m^n}\hspace*{-0.5em}\sum_{i\in \delta_j\Z^n\cap Q_{\rho_{m,j}}(x^j_0)}\hspace*{-0.5em}\frac{\delta_j^n}{2}\left((v_{\e_j}^i)^2\sum_{k=1}^n\left|\frac{u_{\e_j}^i-u_{\e_j}^{i\pm\delta_j e_k}}{\delta_j}\right|^2+\frac{(v_{\e_j}^i-1)^2}{\e_j}+\sum_{k=1}^n\e_j\left|\frac{v_{\e_j}^i-v_{\e_j}^{i+\delta_j e_k}}{\delta_j}\right|^2\right)\\
&\geq\hspace*{-1em}\sum_{l\in\frac{\delta_j}{\rho_{m,j}}\Z^n\cap Q}\hspace*{-1em}\frac{1}{2}\bigg(\frac{\delta_j}{\rho_m}\bigg)^n\Bigg((v_{j,m}^l)^2\sum_{k=1}^n\bigg|\frac{u_{j,m}^l-u_{j,m}^{l\pm\frac{\delta_j}{\rho_{m,j}}e_k}}{\delta_j}\bigg|^2\Bigg)
%+\frac{(v_{j,m}^l-1)^2}{\e_j}+\sum_{k=1}^n\e_j\bigg|\frac{v_{j,m}^l-v_{j,m}^{l+\frac{\delta_j}{\rho_{m,j}} e_k}}{\delta_j}\bigg|^2\Bigg),
\end{align*}
where 
\begin{equation}\label{def:u-jm}
u_{j,m}^l:=u_{\e_j}^{x_0^j+\rho_{m,j} l}, v_{j,m}^l:=v_{\e_j}^{x_0^j+\rho_{m,j} l}\quad \text{for every}\quad l\in\frac{\delta_j}{\rho_{m,j}}\Z^n\cap Q.
\end{equation}
Now we define a new sequence $w_{j,m}$ on the lattice $\frac{\delta_j}{\rho_{m,j}}\Z^n$ by setting
\[w_{j,m}^l:=\frac{u_{j,m}^l-u(x_0)}{\rho_{m,j}}\quad\text{for every }\ l\in\frac{\delta_j}{\rho_{m,j}}\Z^n\cap Q.\]
Since $u_{j,m}\to u(x_0+\rho_m\,\cdot)$ in $L^1(Q)$ as $j\to +\infty$, appealing to \eqref{appr:diff} we deduce that by letting first $j\to +\infty$ and then $m\to +\infty$ we get $w_{j,m}\to w_0$ in $L^1(Q)$, where $w_0(x):=\langle \nabla u(x_0),x\rangle$ for every $x\in Q$. Moreover by definition 
\[\frac{u_{j,m}^l-u_{j,m}^{l\pm\frac{\delta_j}{\rho_{m,j}}e_k}}{\delta_j}=\frac{w_{j,m}^l-w_{j,m}^{l\pm\frac{\delta_j}{\rho_{m,j}}e_k}}{\frac{\delta_j}{\rho_{m,j}}}\quad\text{for every }\ l\in\frac{\delta_j}{\rho_{m,j}}\Z^n\cap Q.\]
Hence
\begin{align*}
\frac{1}{\rho_m^n}\mu_{\e_j}(Q_{\rho_m}(x_0))&\geq \hspace*{-0.5em}\sum_{l\in\frac{\delta_j}{\rho_{m,j}}\Z^n\cap Q}\hspace*{-0.5em}\frac{1}{2}\Big(\frac{\delta_j}{\rho_m}\Big)^n\Bigg((v_{j,m}^l)^2\sum_{k=1}^n\bigg|\frac{w_{j,m}^l-w_{j,m}^{l\pm\frac{\delta_j}{\rho_{m,j}}e_k}}{\frac{\delta_j}{\rho_{m,j}}}\bigg|^2\Bigg)\\
& = \bigg(\frac{\rho_{m,j}}{\rho_m}\bigg)^n\, F_{\frac{\e_j}{\rho_{m,j}}}(w_{j,m},v_{j,m},Q),
\end{align*}
then, a standard diagonalisation argument yields the existence of a sequence $m_j \to +\infty $ as $j\to +\infty$ such that $\sigma_j:=\frac{\e_j}{\rho_{{m_j,j}}}\to 0$, $(w_{j,m_j},v_{j,m_j})\to(w_0,1)$ in $L^1(Q)\times L^1(Q)$, as $j\to+\infty$, and
\begin{align*}
\mu_a(x_0)
&\geq\liminf_{j\to+\infty}F_{\sigma_j}(w_{j,m_j},v_{j,m_j},Q).
\\
&\geq\int_Q|\nabla w_0|^2dx=|\nabla u(x_0)|^2,
\end{align*}
where the second inequality follows  by Remark \ref{rem:liminf-nd} in view of the equiboundedness of the total energy $E_{\sigma_j}(w_{j,m_j},v_{j,m_j},Q)$.

\medskip

\noindent {\bf Step 2:} proof of \eqref{est:jump}.

\smallskip

To prove \eqref{est:jump} let $x_0\in S_u\cap\Omega$ be such that
\begin{equation}\label{leb:point2}
\mu_J(x_0)=\lim_{\rho\to 0^+}\frac{\mu\left(Q_\rho^\nu(x_0)\right)}{\HH^{n-1}\left(Q_\rho^\nu(x_0)\cap S_u\right)}=\lim_{\rho\to 0^+}\frac{\mu(Q_\rho^\nu(x_0))}{\rho^{n-1}}
\end{equation}
and
\begin{equation}\label{approx:dis}
\lim_{\rho\to 0^+}\frac{1}{\rho^n}\int_{(Q_\rho^\nu(x_0))^\pm}|u(x)-u^\pm(x_0)|dx=0,
\end{equation}
where $\nu:=\nu_u(x_0)$ and $(Q_\rho^\nu(x_0))^\pm:=\{x\in Q_\rho^\nu(x_0):\ \pm\left<x-x_0,\nu\right>>0\}$. We notice that \eqref{leb:point2} and \eqref{approx:dis} hold true for $\HH^{n-1}$-a.e. $x_0\in S_u\cap\Omega$.  

Then, arguing as in Step 1 we deduce that for every $\rho_m\to0$ and every $\e_j\to0$ we have
\begin{align*}
\mu_J(x_0)\geq\lim_{m\to +\infty}\limsup_{j\to +\infty}\frac{1}{\rho_m^{n-1}}\mu_{\e_j}(Q^\nu_{\rho_m}(x_0)),
\end{align*}
hence we are now led to estimate $\frac{1}{\rho_m^{n-1}}\mu_{\e_j}(Q_{\rho_m}^\nu(x_0))$ from below. To this end, arguing as in Step 1, by a change of variables we have
\begin{align*}
\frac{1}{\rho_m^{n-1}}\mu_{\e_j}(Q_{\rho_m}^\nu(x_0))\geq \bigg(\frac{\rho_{m,j}}{\rho_m}\bigg)^{n-1}\, E_{\frac{\e_j}{\rho_{m,j}}}(u_{j,m},v_{j,m},Q^\nu),
\end{align*}
where $u_{j,m}$ and $v_{j,m}$ are as in \eqref{def:u-jm}. Set
\[u_0(x):=
\begin{cases}
u^+(x_0) &\text{if}\ \left<x,\nu\right>\geq 0\\
u^-(x_0) &\text{if}\ \left<x,\nu\right><0;
\end{cases}\]
in view of \eqref{approx:dis} we deduce that $u_{j,m}\to u_0$ in $L^1(Q^\nu)$ if we first let $j\to +\infty$ and then $m\to +\infty$.
Then appealing to a diagonalisation argument we may find a sequence $m_j \to +\infty$ as $j\to +\infty$ such that
$\sigma_j:=\frac{\e_j}{\rho_{{m_j,j}}}\to 0$, $(u_j,v_j):=(u_{j,m_j},v_{j,m_j})\to(u_0,1)$ in $L^1(Q^\nu)\times L^1(Q^\nu)$, as $j\to+\infty$, and
\begin{align*}
\mu_J(x_0)\geq\liminf_{j\to+\infty}E_{\sigma_j}(u_{j},v_{j},Q^\nu).
\end{align*}
Therefore to prove \eqref{est:jump} we now need to show that 
\begin{equation}\label{diag:seq}
\liminf_{j\to+\infty}E_{\sigma_j}(u_{j},v_{j},Q^\nu)\geq 1.
\end{equation}
For the sake of clarity in what follows we only consider the case $n=2$; the proof for $n>2$ can be obtained by means of analogous constructions and arguments. 

Upon possibly extracting a subsequence, we assume that the liminf in \eqref{diag:seq} is actually a limit. 

We now define suitable continuous counterparts of $u_j$ and $v_j$. To this end, set $\tau_j:=\delta_j/\rho_{{m_j,j}}$ and consider the triangulation $\T_j$ of $Q^\nu$ defined as follows: For every $i\in\tau_j\Z^2\cap Q^\nu$ set
\[T_i^+:=\conv\{i,i+\tau_j e_1,i+\tau_j e_2\}\quad\text{and}\quad T_i^-:=\conv\{i,i-\tau_j e_1,i-\tau_j e_2\},\]
and $\T_j:=\{T_i^+,T_i^-:i\in\tau_j\Z^2\cap Q^\nu\}$. Then we denote by $\tilde{u}_j$, $\tilde{v}_j$ the piecewise affine interpolations of $u_j$ and $v_j$ on $\T_j$, respectively. Moreover, we also consider the piecewise-constant function
\[\hat{v}_j(x):=v_j^i\quad\text{if}\ x\in \overset{\circ}{T_i^+}\cup T_i^-.\]
We clearly have $\tilde{u}_j\to u_0$, $\hat{v}_j\to 1$, and $\tilde{v}_j\to 1$  in $L^1(Q^\nu)$. 

Let $\eta>0$ be fixed; we now claim that 
\begin{equation}\label{est:ATn}
E_{\sigma_j}(u_j,v_j,Q^\nu)\geq\int_{Q^\nu_{1-\eta}}(\hat{v}_j)^2|\nabla\tilde{u}_j|^2dx+\frac{1}{2}\int_{Q^\nu_{1-\eta}}\frac{(\tilde{v}_j-1)^2}{\sigma_j}+\sigma_j|\nabla\tilde{v}_j|^2\,dx
\end{equation} 
for $j$ large. 

For $j$ sufficiently large there holds
\begin{equation}\label{est:surface1}
\int_{Q^\nu_{1-\eta}}\sigma_j|\nabla\tilde{v}_j|^2dx\leq\sum_{i\in\tau_j\Z^2\cap Q^\nu}\sum_{k=1}^2\sigma_j\tau_j^2\left|\frac{v_j^i-v_j^{i+\tau_j e_k}}{\tau_j}\right|^2.
\end{equation}
Moreover, for every $i\in\tau_j\Z^2\cap Q^\nu$ there holds
\[\tilde{v}_j(x)=\lambda_0(x)v_j^i+\lambda_1(x)v_j^{i+\tau_h e_1}+\lambda_2(x)v_j^{i+\tau_he_2}\quad\forall\ x\in T_i^+,\]
for some $\lambda_0(x),\lambda_1(x),\lambda_2(x)\in[0,1]$ satisfying $\lambda_0(x)+\lambda_1(x)+\lambda_2(x)=1$ for every $x\in T_i^+$ and
\[\int_{T_i^+}\lambda_k(x)dx=\frac{1}{3}\mathcal L^2(T_i^+)=\frac{1}{6}\tau_j^2, \quad  \text{for every}\quad k=0,1,2.\]
Then, the convexity of $z\to(z-1)^2$ yields
\begin{align*}
&\int_{T_i^+}\frac{(\tilde{v}_j-1)^2}{\sigma_j}dx=\frac{1}{\sigma_j}\int_{T_i^+}\left(\lambda_0(x)v_j^i+\lambda_1(x)v_j^{i+\tau_j e_1}+\lambda_2(x)v_j^{i+\tau_j e_2}-1\right)^2dx\\
&\leq\frac{1}{\sigma_j}\left((v_j^i-1)^2\int_{T_i^+}\lambda_0(x)dx+(v_j^{i+\tau_j e_1}-1)^2\int_{T_i^+}\lambda_1(x)dx+(v_j^{i+\tau_j e_2}-1)^2\int_{T_i^+}\lambda_2(x)dx\right)\\
&=\frac{1}{6}\tau_j^2\left(\frac{(v_j^i-1)^2}{\sigma_j}+\frac{(v_j^{i+\tau_j e_1}-1)^2}{\sigma_j}+\frac{(v_j^{i+\tau_j e_2}-1)^2}{\sigma_j}\right),
\end{align*}
and analogously for $T_i^-$. Therefore summing up on all triangles $T_i^+,T_i^-\in\T_j$ yields
\begin{equation}\label{est:surface2}
\int_{Q^\nu_{1-\eta}}\frac{(\tilde{v}_j-1)^2}{\sigma_j}dx\leq\sum_{i\in\tau_j\Z^2\cap Q^\nu}\tau_j^2\frac{(v_j^i-1)^2}{\sigma_j},
\end{equation}
for $j$ sufficiently large. 

Further, for every $i\in\tau_j\Z^2\cap Q$ there holds
\[\int_{T_i^+}(\hat{v}_j)^2|\nabla\tilde{u}_j|^2dx=\frac{\tau_j^2}{2}(v_j^i)^2\left(\left|\frac{u_j^i-u_j^{i+\tau_j e_1}}{\tau_j}\right|^2+\left|\frac{u_j^i-u_j^{i+\tau_j e_2}}{\tau_j}\right|^2\right)\]
and analogously on $T_i^-$ so that we may deduce
\begin{equation}\label{est:surface3}
\int_{Q^\nu_{1-\eta}}(\hat{v}_j)^2|\nabla\tilde{u}_j|^2dx\leq\frac{1}{2}\sum_{i\in \tau_j\Z^2\cap Q^\nu}\tau_j^2(v_j^i)^2\sum_{k=1}^2\left|\frac{u_j^i-u_j^{i\pm\tau_j e_k}}{\tau_j}\right|^2
\end{equation}
for $j$ sufficiently large. Finally, gathering \eqref{est:surface1}-\eqref{est:surface3} entails \eqref{est:ATn}. 

Now let $\Pi_\nu:=\{x\in\R^2 \colon \langle x, \nu \rangle=0\}$, $y\in Q^\nu\cap \Pi_\nu$ and set 
$$
\tilde{u}_j^{\nu,y}(t):=\tilde{u}_j(y+t\nu),\quad \hat{v}_j^{\nu,y}(t):=\hat{v}_j(y+t\nu).
$$ 
Clearly, $\tilde{u}_j^{\nu,y}\in W^{1,2}(\frac{-1+\eta}{2},\frac{1-\eta}{2})$ for $\HH^1$-a.e. $y\in Q^\nu_{1-\eta}\cap \Pi_\nu$, moreover
there holds
\begin{equation}\label{eq:n06}
\tilde{u}_h^{\nu,y}\to u_0^{\nu,y}\quad\text{in}\ L^1(-1/2,1/2)\quad \text{with}\quad S_{u_0^{\nu,y}}=\{0\},
\end{equation}
for $\HH^1$-a.e. $y\in Q^\nu\cap \Pi_\nu$. By Fubini's Theorem we have
\begin{align*}
\int_{Q^\nu_{1-\eta}}(\hat{v}_j)^2|\nabla\tilde{u}_j|^2dx &=\int_{Q^\nu_{1-\eta}\cap \Pi_\nu}\left(\int_{\frac{-1+\eta}{2}}^{\frac{1-\eta}{2}}(\hat{v}_j(y+t\nu))^2|\nabla\tilde{u}_j(y+t\nu)|^2dt\right)d\HH^1(y)\\
&\geq\int_{Q^\nu_{1-\eta}\cap \Pi_\nu}\left(\int_{\frac{-1+\eta}{2}}^{\frac{1-\eta}{2}}(\hat{v}_j^{\nu,y})^2((\tilde{u}_j^{\nu,y})')^2dt\right)d\HH^1(y),
\end{align*}
thus, from the bound on the energy we deduce the existence of a set $N\subset \Pi_\nu$ with $\HH^1(N)=0$ such that
\begin{equation}\label{est:n05}
\sup_j\int_{\frac{-1+\eta}{2}}^{\frac{1-\eta}{2}}(\hat{v}_j^{\nu,y})^2((\tilde{u}_j^{\nu,y})')^2dt<+\infty
\end{equation}
for every $y\in (Q^\nu_{1-\eta}\cap \Pi_\nu)\setminus N$. Further, it is not restrictive to assume that $\tilde{u}_j^{\nu,y}\in W^{1,2}(\frac{-1+\eta}{2},\frac{1-\eta}{2})$ for every $y\in (Q^\nu_{1-\eta}\cap \Pi_\nu)\setminus N$.
Therefore, in view of \eqref{eq:n06}-\eqref{est:n05}, appealing to a classical one-dimensional argument (see \textit{e.g.} the proof of \cite[Theorem 3.15]{braides98}) for every $y\in (Q^\nu_{1-\eta}\cap \Pi_\nu)\setminus N$ we can find a sequence $(s_j^y)\subset (\frac{-1+\eta}{2},\frac{1-\eta}{2})$ satisfying  
\begin{equation}\label{c:sj}
\hat{v}_j^{\nu,y}(s_j^y)\to 0\; \text{ as }\; j \to +\infty. 
\end{equation}

\noindent Now let $\tilde{v}_j^{\nu,y}$ be the one-dimensional slice of $\tilde v_j$ in the direction $\nu$\ie $\tilde{v}_j^{\nu,y}(t):=\tilde{v}_j(y+t\nu)$. For every $y\in  (Q^\nu_{1-\eta}\cap \Pi_\nu)\setminus N$ let  $s_j^y$ be as in \eqref{c:sj} and consider $\tilde v_j^{\nu,y}(s_j^y)$. Let moreover $d>0$ be fixed; we want to exhibit a set $N_j^d \subset \Pi_\nu$ with $\HH^1(N_j^d)\to 0$ as $j\to+\infty$ with the following property: for every $y\in  (Q^\nu_{1-\eta}\cap \Pi_\nu)\setminus (N\cup N^d_j)$ there exists $j_0:=j_0(d,y)\in \N$ satisfying
$$
\tilde{v}_j^{\nu,y}(s_j^y)\leq d \quad\text{for every $j\geq j_0$}.
$$
To this end, for every $i\in\tau_j\Z^2\cap Q^\nu$ set
\begin{equation}\label{def:mi}
M_j^i:=\max\{|v_j^i-v_j^l|:\ j\in\tau_j\Z^2\cap Q^\nu,\ |i-l|=\tau_j\};
\end{equation}
set moreover
\begin{equation}\label{def:Jhd}
\mathcal I_j^d:=\left\{i\in \tau_j\Z^2\cap Q^\nu \colon M_j^i\geq\frac{d}{2}\right\}.
\end{equation}
From the energy bound we deduce the existence of a constant $c>0$ such that
\[c\geq\sum_{i\in \mathcal I_j^d}\sum_{\begin{smallmatrix}j\in\tau_j\Z^2\cap Q^\nu\\|i-l|=\tau_j\end{smallmatrix}}\sigma_j\tau_j^2\bigg|\frac{v_j^i-v_j^l}{\tau_j}\bigg|^2\geq\sum_{i\in \mathcal I_j^d}\sigma_j(M_j^i)^2\geq\#(\mathcal I_j^d)\frac{d^2}{4}\sigma_j\]
for every $j$. Hence, there exists a constant $c(d)>0$ such that
\begin{equation}\label{est:Jh}
\#(\mathcal I_j^d)\leq \frac{c(d)}{\sigma_j}\quad\text{for every $j$}.
\end{equation}
Let $p^\nu:\R^n\to \Pi_\nu$ be the orthogonal projection onto the hyperplane $\Pi_\nu$ and set 
$$
N_j^d:=\bigcup_{i\in \mathcal I_j^d}p^\nu(\overset{\circ}{T_i^+}\cup T_i^-); 
$$
then in view of \eqref{est:Jh} we have
\begin{equation}\label{c:Njd}
\HH^1\big(N_j^d\big)\leq 2\sqrt{2}\tau_j\,\#(\mathcal I_j^d)\leq 2\sqrt{2}\,c(d)\,\frac{\tau_j}{\sigma_j}\to 0\quad\text{as}\ j\to+\infty,
\end{equation}
where the convergence to zero comes from the identity ${\tau_j}/{\sigma_j}={\delta_j}/{\e_j}$.

Now let $j\in \N$ be large, let $y\in (Q^\nu_{1-\eta}\cap \Pi_\nu)\setminus (N\cup N^d_j)$, and consider the corresponding $s^y_j$ as in 
\eqref{c:sj}. By definition of $\hat v_j$ we deduce the existence of $i_0:=i_0(y) \in (\tau_j\Z^2\cap Q^\nu)\setminus \mathcal I^d_j$ such that 
$y+s_j^y\nu\in T_{i_0}^+\cup T_{i_0}^-$ and $v_j^{i_0}\to 0$ as $j\to +\infty$. Therefore for every $d>0$ and every $y\in (Q^\nu_{1-\eta}\cap \Pi_\nu)\setminus (N\cup N^d_j)$ there exists $j_0:=j_0(d,y)\in \N$ such that $v_j^{i_0} < d/2$ for every $j\geq j_0$.  
Moreover, since $i_0\in (\tau_j\Z^2\cap Q^\nu)\setminus \mathcal I_h^d$ we also have
$$
v_h^{i_0\pm\tau_j e_k}<v_j^{i_0}+\frac{d}{2}<d\,,  \quad  \text{for $k=1,2$\; and for every $j\geq j_0$.}
$$
Therefore, since $\tilde{v}_j(y+s_j^y\nu)$ is a convex combination of either the triple $(v_j^{i_0},v_j^{i_0+\tau_j e_1},v_j^{i_0+\tau_j e_2})$ or of the triple $(v_j^{i_0},v_j^{i_0-\tau_j e_1},v_j^{i_0-\tau_j e_2})$ we finally get 
$$
\tilde{v}_j(y+s_j^y\nu)<d\,, \quad  \text{for every $j\geq j_0$.}
$$
Since on the other hand (up to a possible extraction) $\tilde{v}_j^{\nu,y}\to 1$ a.e., we can find $r_j^y,\tilde{r}_j^y\in (\frac{-1+\eta}{2},\frac{1-\eta}{2})$ such that $r_j^y<s_j^y<\tilde{r}_j^y$ and 
$$
\tilde{v}_j^{\nu,y}(r_j^y) > 1-d\,, \quad \tilde{v}_j^{\nu,y}(\tilde{r}_j^y) > 1-d
$$ 
for $j$ sufficiently large. 

Hence, for every fixed $y\in (Q^\nu_{1-\eta}\cap \Pi_\nu)\setminus (N\cup N^d_j)$ using the so-called ``Modica-Mortola trick'' as follows we get that
\begin{align}\label{est:n07}
\frac{1}{2}\int_{\frac{-1+\eta}{2}}^{\frac{1-\eta}{2}}\frac{(\tilde{v}_j^{\nu,y}-1)^2}{\sigma_j} +\sigma_j((\tilde{v}_j^{\nu,y})')^2dt 
&\geq\int_{r_j^y}^{s_j^y}(1-\tilde{v}_j^{\nu,y})|(\tilde{v}_j^{\nu,y})'|dt+\int_{s_j^y}^{\tilde{r}_j^y}(1-\tilde{v}_j^{\nu,y})|(\tilde{v}_j^{\nu,y})'|dt\nonumber\\
&\geq 2\int_d^{1-d}(1-z)dz=(1-2d)^2,
\end{align}
for every $j\geq j_0$. 
Moreover, by \eqref{c:Njd} we deduce that (up to subsequences) 
$$
\chi_{(Q^\nu_{1-\eta}\cap \Pi_\nu)\setminus (N \cup N_j^d)}\to 1 \quad \text{$\HH^{1}$-a.e. in $Q^\nu_{1-\eta}\cap \Pi_\nu$}
$$ 
so that
\begin{equation}\label{est:n06}
\liminf_{j \to +\infty}\left(\frac{1}{2}\int_{\frac{-1+\eta}{2}}^{\frac{1-\eta}{2}}\frac{(\tilde{v}_j^{\nu,y}-1)^2}{\sigma_j}+\sigma_h((\tilde{v}_j^{\nu,y})')^2dt\right)\chi_{(Q^\nu_{1-\eta}\cap \Pi_\nu)\setminus (N \cup N_j^d)}\geq(1-2d)^2
\end{equation}
for $\HH^{1}$-a.e. $y$ in $Q^\nu_{1-\eta}\cap \Pi_\nu$.
Thus, in view of \eqref{est:ATn}, the Fatou's Lemma together with \eqref{est:n06} give
\begin{align*}
\lim_{j\to+\infty} &E_{\sigma_j}(u_j,v_j,Q^\nu)\geq\liminf_{j\to+\infty}\int_{Q^\nu_{1-\eta}}\frac{(\tilde{v}_j-1)^2}{\sigma_j}+\sigma_j|\nabla\tilde{v}_j|^2dx\\
&\geq\int_{Q^\nu_{1-\eta}\cap \Pi^\nu}\liminf_{j\to+\infty}\left(\int_{\frac{-1+\eta}{2}}^{\frac{1-\eta}{2}}\frac{(\tilde{v}_j^{\nu,y}-1)^2}{\sigma_j}+\sigma_j((\tilde{v}_j^{\nu,y})')^2dt\right)\chi_{(Q^\nu_{1-\eta}\cap \Pi_\nu)\setminus (N \cup N_j^d)}d\HH^1(y)\\
&\geq (1-2d)^2\HH^1(Q^\nu_{1-\eta}\cap \Pi_\nu)=(1-2d)^2(1-\eta),
\end{align*}
so that we deduce \eqref{est:jump} by first letting $d\to 0$ and then $\eta\to 0$.

\medskip

\noindent {\bf Step 3:} extension to the case $u\in GSBV^2(\O)$.

\smallskip

Let $u\in GSBV^2(\Omega)$ and for $m\in\N$ let $u^m$ be the truncation of $u$ at level $m$. Clearly, $u^m\to u$ in $L^1(\Omega)$ as $m\to+\infty$. Therefore, since $E_0=M\!S$ is lower semicontinuous with respect to the strong $L^1(\Omega)$-topology and $E'_0(\cdot,v)$) decreases by truncations, we deduce 
\[E'_0(u,1)\geq\liminf_{m\to+\infty}E'_0(u^m,1)\geq\liminf_{m\to +\infty}M\!S(u^m,1)\geq M\!S(u,1)=E_0(u,1).\]
and hence the thesis.
\end{proof}
\subsection{Upper-bound inequality} In this subsection we prove the limsup inequality for $E_\e$ when $\ell=0$. To this end we start by recalling some well-known facts about the so-called optimal profile problem for the Ambrosio-Tortorelli functional.  
We define
\begin{equation}\label{min:prob}
\mathbf{m}:=\min\left\{\int_0^{+\infty}(f-1)^2+(f')^2dt:\ f\in W_{\rm loc}^{1,2}(0,+\infty),\ f(0)=0,\ \lim_{t\to+\infty}f(t)=1\right\},
\end{equation}
a straightforward computation shows that $\mathbf{m}=1$ and that this minimum value is attained at the function
$f(t)=1-e^{-t}$.

%%
%\begin{multline*}
%\mathbf{\hat{m}}:=\inf\Bigg\{\int_0^{+\infty}(f-1)^2+(f')^2dt:\ f\in W^{1,2}_{\rm loc}(0,+\infty),\\
%f(0)=0,\ f(t)=1\ \forall\ t\geq M\ \text{for some}\ M>0\Bigg\}.
%\end{multline*}
%%
For our purposes it is also convenient to notice that $1=\mathbf{m}=\mathbf{\tilde{m}}$ where
\begin{equation}\label{def:op:reg}
\mathbf{\tilde{m}}:=\inf_{T>0}\inf\Bigg\{\int_0^{T}(f-1)^2+(f')^2dt\colon  f\in C^2([0,T]),\,
f(0)=0,\, f(T)=1, f'(T)=f''(T)=0 \Bigg\}.
\end{equation}
We are now ready to prove the following proposition.
\begin{prop}[Upper-bound for $\ell=0$]\label{prop:limsup}
Let $E_\e$ be as in \eqref{def:funct}  and $\ell=0$. Then for every $(u,v)\in L^1(\Omega)\times L^1(\Omega)$ there exists $(u_\e,v_\e)\subset L^1(\Omega)\times L^1(\Omega)$ with $(u_\e,v_\e) \to (u,v)$ in $L^1(\Omega)\times L^1(\Omega)$ such that
\[
\limsup_{\e \to 0} E_\e(u_\e,v_\e)\leq E_0(u,v).\]
\end{prop}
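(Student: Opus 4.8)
The plan is to reduce to the case of a bounded function $u\in SBV^2(\O)$ with a polyhedral jump set, exhibit an explicit recovery sequence there, and then remove the simplifying assumptions by density. Since $E_0(u,v)=+\infty$ unless $v=1$ a.e. and $u\in GSBV^2(\O)$, we only need to treat such pairs; moreover, since $E_\e$ decreases under truncation of $u$ and $E_0$ is continuous along truncations on $GSBV^2$, a diagonal argument lets us assume $u\in SBV^2(\O)\cap L^\infty(\O)$. By the density result of Cortesani--Toader (or \cite{braides98}), we may further assume that $S_u$ is (essentially contained in) a finite union of pairwise disjoint $(n-1)$-dimensional simplices and that $u$ is $C^1$ (indeed Lipschitz) on each side of $S_u$; a further diagonalisation then yields the general case. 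So the core of the argument is the construction for such a ``nice'' $u$.

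For the construction, I would build $v_\e$ to be $0$ on the lattice sites within distance $O(\delta)$ of $S_u$ and to follow the one-dimensional optimal profile $f(t)=1-e^{-t}$ (truncated at some large $T$, using $\tilde{\mathbf m}=1$ from \eqref{def:op:reg}) as a function of the rescaled distance $\dist(x,S_u)/\e$ away from $S_u$; set $u_\e$ equal to the discretisation of $u$ itself, i.e.\ $u_\e^i:=u(i)$ (or the average of $u$ on the cell, to be safe). Because $\delta\ll\e$, the discrete sums defining $E_\e$ are Riemann sums: the surface term $G_\e(v_\e)$ converges to $\HH^{n-1}(S_u\cap\O)$ exactly as in the Ambrosio--Tortorelli construction — each connected component of $S_u$ contributes a transition layer of energy $\tilde{\mathbf m}\,\HH^{n-1}=\HH^{n-1}$ — using that the profile is evaluated on a tube of thickness $O(\e)$ which contains $O(\e/\delta)$ lattice layers. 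For the bulk term $F_\e(u_\e,v_\e)$, on lattice bonds away from the tube around $S_u$ one has $v_\e\equiv 1$ and $|(u_\e^i-u_\e^j)/\delta|^2$ is a difference quotient of a $C^1$ function, so $\tfrac12\sum\delta^n(v_\e^i)^2|\cdots|^2\to\int_\O|\nabla u|^2\,dx$; on bonds crossing $S_u$ the factor $(v_\e^i)^2$ is small (it is $0$ at the sites hugging $S_u$), and one checks that the total contribution of those $O(\delta^{-(n-1)})$ bad bonds, each carrying $\delta^n\cdot\delta^{-2}\cdot |[u]|^2$ times a vanishing $v^2$ factor, is $o(1)$. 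Combining, $\limsup_\e E_\e(u_\e,v_\e)\le \int_\O|\nabla u|^2\,dx+\HH^{n-1}(S_u\cap\O)=E_0(u,1)$, and clearly $u_\e\to u$, $v_\e\to 1$ in $L^1$.

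The main obstacle is the interaction between the two length scales $\delta\ll\e$ along the tube surrounding $S_u$: one must make sure the discrete surface sum $G_\e(v_\e)$ reproduces exactly the continuum optimal-profile energy with \emph{no} lattice anisotropy surviving (this is where $\ell=0$ is essential — at the critical scale it would not), and simultaneously that the ``screening'' factor $(v^i)^2$ in $F_\e$ kills the otherwise divergent contribution of bonds straddling the jump. Concretely this requires choosing the profile's spatial argument to be the Euclidean distance to $S_u$ (not a lattice distance), controlling the error between the discrete profile $v_\e^i$ and its continuous interpolation, and handling the geometry near the lower-dimensional edges/corners of the polyhedral set $S_u$ — a standard but slightly delicate set of estimates, analogous to the ones in \cite{AT2} and in \cite{BY12}. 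Once the polyhedral case is done, the passage to general $u\in GSBV^2(\O)$ via truncation and density is routine by lower semicontinuity of $E_0$ and a diagonal argument.
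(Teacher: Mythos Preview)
Your plan is correct and follows essentially the paper's approach: reduce to $u\in\mathcal W(\Omega)$ by density, build $v_\e$ from the truncated optimal profile of \eqref{def:op:reg} as a function of $\dist(\cdot,S_u)/\e$, and estimate $F_\e$ and $G_\e$ separately using $\delta/\e\to 0$. The only minor variation is that the paper first replaces $u$ by the smooth function $u_\e(x):=u(x)(1-\varphi_\e(x))$ (with $\varphi_\e$ a cutoff supported in a $\xi_\e$-tube around $S_u$, $\xi_\e\ll\e$) before sampling on the lattice---so no bond carries a divergent difference quotient---whereas you sample $u$ directly and rely on $v_\e^i=0$ at sites within $O(\delta)$ of $S_u$ to annihilate those bonds; both devices work.
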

\begin{proof}
We can consider only those target functions $u\in GSBV^2(\Omega)$ and $v=1$ a.e.\ in $\Omega$, otherwise there is nothing to prove. 

By virtue of \cite[Theorem 3.9 and Corollary 3.11]{cortesani97}, using a standard density and diagonalisation argument it suffices to approximate those functions $u$ which belong to the space $\W(\Omega)$ defined as the space of all the $SBV(\Omega)$-functions satisfying the following conditions:
\begin{enumerate}
\item $S_{u}$ is essentially closed\ie $\HH^{n-1}(\overline{S_{u}}\setminus S_{u})=0$,
\item $\overline{S_{u}}$ is the intersection of $\Omega$ with the union of a finite number of pairwise disjoint closed and convex sets each contained in an $(n-1)$ dimensional hyperplane, and whose (relative) boundaries are $C^\infty$,
\item $u\in W^{l,\infty}(\Omega\setminus\overline{S_{u}})$ for all $l\in\N$.
\end{enumerate}
%
%In order to prove \eqref{est:limsup} it is convenient to restrict $u$ to a dense subset $\W(\Omega)\subset GSBV^2(\Omega)$. To this end we recall the density result \citep[Theorem 3.9 and Corollary 3.11]{cortesani97}. Let $u\in GSBV^2(\Omega)$. Then there exists a sequence $(u_j)\subset SBV^2(\Omega)$ satisfying the following properties:
%%
%\begin{enumerate}
%\item $S_{u_j}$ is essentially closed, i.e., $\HH^{n-1}(\overline{S_{u_j}}\setminus S_{u_j})=0$,
%\item $\overline{S_{u_j}}$ is the intersection of $\Omega$ with the union of a finite number of pairwise disjoint closed and convex sets each contained in an $(n-1)$ dimensional hyperplane, and whose (relative) boundaries are $C^\infty$,
%\item $u_j\in W^{l,\infty}(\Omega\setminus\overline{S_{u_j}})$ for all $l\in\N$,
%\item $\|u_j-u\|_{L^2}\to 0$ as $j\to+\infty$,
%\item $\|\nabla u_j-\nabla u\|_{L^2}\to 0$ as $j\to +\infty$,
%\item $|\HH^{n-1}(S_{u_j})-\HH^{n-1}(S_u)|\to 0$ as $j\to +\infty$.
%\end{enumerate}
%%
%We denote by $\W(\Omega)$ the class of all functions satisfying (1)-(3). Let us assume that we can prove \eqref{est:limsup} for all $u\in\W(\Omega)$. Then, if $u\in GSBV^2(\Omega)$, we may choose a sequence $(u_j)\subset\W(\Omega)$ satisfying (4)-(6) and deduce that
%%
%\[E''(u,1)\leq\liminf_{j\to+\infty}E''(u_j,1)\leq\liminf_{j\to+\infty}M\!S(u_j,1)=\lim_{j\to+\infty}M\!S(u_j,1)=M\!S(u,1),\]
%%
%where in the first inequality we have used the lower semicontinuity of the $\Gamma\hbox{-}\limsup$, while in the last step we have used (5) and (6).\\
%
We prove the limsup inequality only in the case $\overline{S_u}=\Omega\cap K$ with $K\subset \Pi_\nu$, $K$ closed and convex, the proof in the general case being analogous.

%\smallskip

Let $p^\nu$ denote as usual the orthogonal projection onto $\Pi_\nu$ and for $x\in \R^n$ set $d(x):=\dist(x,\Pi_\nu)$. For every $h>0$ define $K_h:=\{x\in\Pi_\nu \colon \dist(x,K)\leq h\}$. Let $\eta>0$ be fixed; by \eqref{def:op:reg} there exist $T_\eta>0$ and $f_\eta\in C^2([0,T_\eta])$ such that $f_\eta(0)=0$, $f_\eta(T_\eta)=1$, $f'_\eta(T_\eta)=f''_\eta(T_\eta)=0$, and
\[\int_0^{T_\eta}(f_\eta-1)^2+(f_\eta')^2dt\leq 1+\eta.\]
Clearly, up to setting $f_\eta(t)=1$ for every $t\geq T_\eta$ we can always assume that $f_\eta\in C^2([0,+\infty))$. 

Let $T>T_\eta$ and choose $\xi_\e>0$ such that $\xi_\e/\e\to 0$ as $\e\to 0$. We set
\begin{align*}
A_\e &:=\{x\in\R^n:\ p^\nu(x)\in K_{\e+\sqrt{n}\delta},\ d(x)\leq\xi_\e+\sqrt{n}\delta\},\\
B_\e &:=\{x\in\R^n:\ p^\nu(x)\in K_{2\e+\sqrt{n}\delta},\ d(x)\leq\xi_\e+\sqrt{n}\delta+\e T\},\\
C_\e &:=\{x\in\R^n:\ p^\nu(x)\in K_{\e/2},\ d(x)\leq\xi_\e/2\},\\
D_\e &:=\{x\in\R^n:\ p^\nu(x)\in K_\e,\ d(x)\leq\xi_\e\},
\end{align*}
and according to \eqref{def:mesh:int} we denote by $A_{\e,\delta}$, $B_{\e,\delta}$, $C_{\e,\delta}$, $D_{\e,\delta}$ the corresponding discretised sets. Let $\varphi_\e$ be a smooth cut-off function between $C_\e$ and $D_\e$ and set
\[u_\e(x):=u(x)(1-\varphi_\e(x)).\]
Since $u\in W^{l,\infty}(\Omega\setminus\overline{S_u})$ for every $l\in\N$, we can assume in particular that $u\in C^2(\Omega\setminus D_\e)$ so that $u_\e\in C^2(\Omega)$. We notice that $u_\e\to u$ in $L^1(\Omega)$ by the Lebesgue Dominated Convergence Theorem. Moreover, choose a smooth cut-off function $\gamma_\e$ between $K_{\e+\sqrt{n}\delta}$ and $K_{2\e+\sqrt{n}\delta}$ and define 
\[v_\e(x):=\gamma_\e(p^\nu(x))h_\e(d(x))+(1-\gamma_\e(p^\nu(x))),\]
where $h_\e:[0,+\infty)\to \R$ is given by
\begin{equation}\label{def:heps}
h_\e(t):=
\begin{cases}
0 &\text{if}\ t<\xi_\e+\sqrt{n}\delta,\\
f_\eta\left(\frac{t-\xi_\e-\sqrt{n}\delta}{\e}\right) &\text{if}\ \xi_\e+\sqrt{n}\delta\leq t<\xi_\e+\sqrt{n}\delta+\e T,\\
1 &\text{if}\ t\geq\xi_\e+\sqrt{n}\delta+\e T.
\end{cases}
\end{equation}
By construction $v_\e\in W^{1,\infty}(\Omega)\cap C^0(\Omega)\cap C^2(\Omega\setminus A_\e)$ and $v_\e\to 1$ in $L^1(\Omega)$. We then define the recovery sequence $(\bar u_{\e}, \bar v_{\e})\subset\A_\e(\Omega)\times\A_\e(\Omega)$ by setting
\[\bar u_{\e}^i:=u_\e(i),\quad \bar v_\e^i:=0\vee (v_\e(i)\wedge 1)\quad\text{for every}\quad i\in\Omega_\delta.\]
We clearly have $\bar v_{\e}\to 1$ in $L^1(\Omega)$ as $\e\to 0$. Moreover,
\begin{align}\nonumber
\|\bar u_{\e}-u_\e\|_{L^1(\Omega)} &\leq\sum_{i\in\Omega_\delta}\int_{i+[0,\delta)^n}|u_\e(i)-u_\e(x)|dx\\\nonumber
&=\hspace*{-1em}\sum_{\begin{smallmatrix}i\in\Omega_\delta\\i+[0,\delta)^n\subset(\Omega\setminus D_\e)\end{smallmatrix}}\hspace*{-2em}\int_{i+[0,\delta)^n}|u(i)-u(x)|dx+\hspace*{-1em}\sum_{\begin{smallmatrix}i\in\Omega_\delta\\i+[0,\delta)^n\cap D_\e\neq\emptyset\end{smallmatrix}}\hspace*{-2em}\int_{i+[0,\delta)^n}|u_\e(i)-u_\e(x)|dx\\\label{c:conv-rec}
&\leq c\left(\|\nabla u\|_{L^\infty}\delta+\|u\|_{L^\infty}\xi_\e\right),
\end{align}
where to estimate the first term in the second line we have used the mean-value Theorem while to estimate the second term we used the fact that 
$$
\#(\{i\in\Omega_\delta:\ i+[0,\delta)^n\cap D_\e\neq\emptyset\})=\mathcal{O}\Big(\frac{\xi_\e}{\delta^n}\Big).
$$ 
Hence, the convergence $\bar u_{\e}\to u$ in $L^1(\Omega)$ follows from \eqref{c:conv-rec} and the fact that $u_\e\to u$ in $L^1(\Omega)$.

Thus it remains to prove that
\[\limsup_{\e\to 0}E_\e(\bar u_{\e},\bar v_{\e})\leq \int_\Omega|\nabla u|^2dx+\HH^{n-1}(S_u\cap\Omega).\]
We clearly have
\begin{align}\label{est:n08}
\limsup_{\e\to 0}E_\e(\bar u_{\e},\bar v_{\e}) &\leq\limsup_{\e\to 0}F_\e(\bar u_{\e},\bar v_{\e})+\limsup_{\e\to 0}G_\e(\bar v_{\e}).
\end{align}
We now estimate the two terms in the right-hand-side of \eqref{est:n08} separately. We start with $F_\e(\bar u_{\e},\bar v_{\e})$. 
Since $\bar v_\e^i=v_\e(i)=0$ for all $i\in A_{\e,\delta}$, we have
\begin{equation}\label{eq:n09}
F_\e(\bar u_{\e},\bar v_{\e})=\frac{1}{2}\sum_{i\in\Omega_\delta\setminus A_\e}\delta^n (v_\e^i)^2\sum_{\begin{smallmatrix}k=1\\i\pm\delta e_k\in\Omega_\delta\end{smallmatrix}}^n\left|\frac{u_\e(i)-u_\e({i\pm\delta e_k})}{\delta}\right|^2.
\end{equation}
Let $i\in\Omega_\delta\setminus A_\e$. By construction $i\pm\delta e_k\in\Omega_\delta\setminus D_\e$ for every $k\in \{1,\ldots, n\}$ and since $u_\e=u$ on $\Omega\setminus D_\e$, using Jensen's inequality we deduce that
\begin{equation*}
\left|\frac{u_\e({i\pm\delta e_k})-u_\e(i)}{\delta}\right|^2=\left|\frac{1}{\delta}\int_0^\delta\left<\nabla u(i\pm te_k),e_k\right>dt\right|^2\leq\frac{1}{\delta}\int_0^\delta|\left<\nabla u(i\pm t e_k),e_k\right>|^2dt,
\end{equation*}
for every $k\in \{1,\ldots,n\}$. Therefore, thanks to the regularity of $u$ and to the fact that $0\leq \bar v_{\e}\leq 1$, the mean-value Theorem gives
\begin{align}\label{eq:n10}
F_\e(\bar u_{\e},\bar v_{\e}) &\leq\sum_{i\in\Omega_\delta\setminus A_\e}\sum_{k=1}^n\int_{i+[0,\delta)^{n-1}}\left(\int_0^\delta|\left<\nabla u(i\pm t e_k),e_k\right>|^2dt\right)d\HH^{n-1}(y)\nonumber\\
&=\sum_{i\in\Omega_\delta\setminus A_\e}\sum_{k=1}^n\int_{i+[0,\delta)^{n-1}}\left(\int_0^\delta|\left<\nabla u(y\pm t e_k),e_k\right>|^2dt\right)d\HH^{n-1}(y)+\mathcal{O}(\delta)\nonumber\\
&\leq\int_{\Omega}|\nabla u(x)|^2dx+\mathcal{O}(\delta),
\end{align}
so that
\begin{equation}\label{est:n09}
\limsup_{\e\to 0}F_\e(\bar u_{\e},\bar v_{\e})\leq\int_\Omega |\nabla u|^2dx.
\end{equation}
We now turn to estimate the term $G_\e(\bar v_{\e})$. We have
\begin{align}\label{eq:n10b}
G_\e(\bar v_\e) \leq G(v_\e) &=\frac{1}{2}\sum_{i\in\Omega_\delta\setminus B_\e}\delta^n\left(\frac{(v_\e^i-1)^2}{\e}+\e\sum_{k=1}^n\left|\frac{v_\e^i-v_\e^{i+\delta e_k}}{\delta}\right|^2\right)\nonumber\\
&+\frac{1}{2}\sum_{i\in B_{\e,\delta}\setminus A_\e}\delta^n\left(\frac{(v_\e^i-1)^2}{\e}+\e\sum_{k=1}^n\left|\frac{v_\e^i-v_\e^{i+\delta e_k}}{\delta}\right|^2\right)\nonumber\\
&+\frac{1}{2}\sum_{i\in A_{\e,\delta}}\delta^n\left(\frac{(v_\e^i-1)^2}{\e}+\e\sum_{k=1}^n\left|\frac{v_\e^i-v_\e^{i+\delta e_k}}{\delta}\right|^2\right).
\end{align}
We start noticing that
\begin{equation}\label{eq:n09a}
\sum_{i\in\Omega_\delta\setminus B_\e}\delta^n\left(\frac{(v_\e^i-1)^2}{\e}+\e\sum_{k=1}^n\left|\frac{v_\e^i-v_\e^{i+\delta e_k}}{\delta}\right|^2\right)=0.
\end{equation}
Indeed, for $i\in\Omega_\delta\setminus B_\e$ we have $d(i)\geq\xi_\e+\sqrt{n}\delta+\e T$. Then, since $T>T_\eta$, for $\e$ sufficiently small we deduce that $d(i+\delta e_k)\geq d(i)-\delta\geq\xi_\e+\sqrt{n}\delta+\e T_\eta$ for every $k\in\{1,\ldots, n\}$ so that by definition of $h_\e$ we get $v_\e^i=v_\e^{i+\delta e_k}=1$, hence \eqref{eq:n09a}.

We now claim that
\begin{equation}\label{est:n10}
\sum_{i\in A_{\e,\delta}}\delta^n\left(\frac{(v_\e^i-1)^2}{\e}+\e\sum_{k=1}^n\left|\frac{v_\e^i-v_\e^{i+\delta e_k}}{\delta}\right|^2\right)\leq c\,\frac{\xi_\e+\delta}{\e}\HH^{n-1}(K_{\e+\sqrt{n}\delta})\;\to 0,
\end{equation}
as $\e\to 0$. To prove the claim we observe that $v_\e^i=v_\e^{i+\delta e_k}=0$ for every $i\in A_{\e,\delta}$ and every $k\in\{1,\ldots,n\}$ such that $i+\delta e_k\in A_{\e,\delta}$. Hence we have
\begin{align}\nonumber
& \sum_{i\in A_{\e,\delta}}\delta^n\bigg(\frac{(v_\e^i-1)^2}{\e}+\e\sum_{\begin{smallmatrix}k=1\\i+\delta e_k\in A_{\e,\delta}\end{smallmatrix}}^n\left|\frac{v_\e^i-v_\e^{i+\delta e_k}}{\delta}\right|^2\bigg)
\\\nonumber
& =\sum_{i\in A_{\e,\delta}}\delta^n\bigg(\frac{1}{\e}\bigg) = \#(A_{\e,\delta})\,\frac{\delta^n}{\e}
\\\label{c:invece-serve}
&\leq c\,\frac{\xi_\e+\delta}{\e}\HH^{n-1}(K_{\e+\sqrt{n}\delta}).  
\end{align}
Then, it only remains to estimate the energy for those $i\in A_{\e,\delta}$ and $k\in\{1,\ldots,n\}$ such that $i+\delta e_k\in B_{\e,\delta}\setminus A_{\e,\delta}$\ie to estimate the term
\begin{equation*}
\sum_{i\in A_{\e,\delta}}\delta^n\sum_{\begin{smallmatrix}k=1\\i+\delta e_k\in B_{\e,\delta}\setminus A_{\e,\delta}\end{smallmatrix}}^n\e\left|\frac{v_\e^i-v_\e^{i+\delta e_k}}{\delta}\right|^2.
\end{equation*}
To this end, we observe that in general, for every $i\in\overset{\circ}{\Omega}_\delta$ and every $k\in\{1,\ldots,n\}$, thanks to the regularity of $v_\e$, by Jensen's inequality we have
\begin{equation}\label{est:n11}
\left|\frac{v_\e^i-v_\e^{i+\delta e_k}}{\delta}\right|^2\leq\left|\frac{v_\e(i)-v_\e(i+\delta e_k)}{\delta}\right|^2\leq\frac{1}{\delta}\int_0^\delta|\left<\nabla v_\e(i+te_k),e_k\right>|^2dt.
\end{equation}
Since
\[\nabla v_\e(x)=\left(h_\e(d(x))-1\right)\nabla\gamma_\e(p^\nu(x))D_p^\nu(x)\pm\gamma_\e(p^\nu(x))h_\e'(d(x))\nu,\]
where $D_p^\nu(x)$ denotes the Jacobian of $p^\nu$ evaluated at $x$, using that $h_\e\in W^{1,\infty}(0,+\infty)$ satisfies $\|h_\e'\|_{L^\infty}\leq\frac{C}{\e}$, while $\|\nabla\gamma_\e\|_{L^\infty}\leq\frac{C}{\e}$ and $\|D_p^\nu\|_{L^\infty}\leq 1$, from \eqref{est:n11} we obtain
\begin{equation}\label{eq:n12a}
\e\left|\frac{v_\e^i-v_\e^{i+\delta e_k}}{\delta}\right|^2=\mathcal{O}\left(\frac{1}{\e}\right)\quad\text{for every}\; i\in\overset{\circ}{\Omega}_\delta,\quad\text{for every}\; k\in \{1,\ldots, n\}
\end{equation}
thus, consequently 
\[\sum_{i\in A_{\e,\delta}}\delta^n\sum_{\begin{smallmatrix}k=1\\i+\delta e_k\in B_{\e,\delta}\setminus A_{\e,\delta}\end{smallmatrix}}^n\e\left|\frac{v_\e^i-v_\e^{i+\delta e_k}}{\delta}\right|^2\leq c\#(\partial A_{\e,\delta})\frac{\delta^n}{\e}\leq c\frac{\delta}{\e}\HH^{n-1}\left(K_{\e+\sqrt{n}\delta}\right),\]
which together with \eqref{c:invece-serve} gives \eqref{est:n10}.

We finally come to estimate
\[\frac{1}{2}\sum_{i\in B_{\e,\delta}\setminus A_{\e,\delta}}\delta^n\left(\frac{(v_\e^i-1)^2}{\e}+\e\sum_{k=1}^n\left|\frac{v_\e^i-v_\e^{i+\delta e_k}}{\delta}\right|^2\right).\]
To this end, it is convenient to write $B_\e\setminus A_\e$ as the union of the pairwise disjoint sets $M_\e$, $V_\e$, $W_\e$ defined as:
\begin{align*}
M_\e &:=\{x\in\R^n:\ p^\nu(x)\in K_{\e+\sqrt{n}\delta},\ \xi_\e+\sqrt{n}\delta< d(x)\leq\xi_\e+\sqrt{n}\delta+\e T\},\\
V_\e &:=\{x\in\R^n:\ p^\nu(x)\in K_{2\e+\sqrt{n}\delta}\setminus K_{\e+\sqrt{n}\delta},\ d(x)\leq\xi_\e+\sqrt{n}\delta\},\\
W_\e &:=\{x\in\R^n:\ p^\nu(x)\in K_{2\e+\sqrt{n}\delta}\setminus K_{\e+\sqrt{n}\delta},\ \xi_\e+\sqrt{n}\delta< d(x)\leq\xi_\e+\sqrt{n}\delta+\e T\}. 
\end{align*}
Further, we denote with $M_{\e,\delta}$, $V_{\e,\delta}$, $W_{\e,\delta}$ their discrete counterparts as in \eqref{def:mesh:int}. We now estimate the energy along the recovery sequence in the three sets as above, separately. 
%
%\begin{align}\label{eq:n13}
%\frac{1}{2}\sum_{i\in B_{\e,\delta}\setminus A_\e}\delta^n\left(\frac{(v_\e^i-1)^2}{\e}+\e\sum_{k=1}^n\left|\frac{v_\e^i-v_\e^{i+\delta e_k}}{\delta}\right|^2\right) &=\frac{1}{2}\sum_{i\in M_{\e,\delta}}\left(\delta^n\frac{(v_\e^i-1)^2}{\e}+\sum_{k=1}^n\e\left|\frac{v_\e^i-v_\e^{i+\delta e_k}}{\delta}\right|^2\right)\nonumber\\
%&+\frac{1}{2}\sum_{i\in V_{\e,\delta}}\left(\delta^n\frac{(v_\e^i-1)^2}{\e}+\sum_{k=1}^n\e\left|\frac{v_\e^i-v_\e^{i+\delta e_k}}{\delta}\right|^2\right)\nonumber\\
%&+\frac{1}{2}\sum_{i\in W_{\e,\delta}}\left(\delta^n\frac{(v_\e^i-1)^2}{\e}+\sum_{k=1}^n\e\left|\frac{v_\e^i-v_\e^{i+\delta e_k}}{\delta}\right|^2\right).
%\end{align}
%%
%We estimate the terms on the right hand side of \eqref{eq:n13} separately. 
To this end we start noticing that $\#(V_{\e,\delta})=\mathcal{O}\left(\frac{\e\xi_\e}{\delta^n}\right)$ and $\#(W_{\e,\delta})=\mathcal{O}\left(\frac{\e^2}{\delta^n}\right)$. Thus, appealing to \eqref{eq:n12a} we deduce that
\[\frac{1}{2}\sum_{i\in V_{\e,\delta}}\left(\delta^n\frac{(v_\e^i-1)^2}{\e}+\sum_{k=1}^n\e\left|\frac{v_\e^i-v_\e^{i+\delta e_k}}{\delta}\right|^2\right)=\mathcal{O}(\xi_\e)\]
and
\[\frac{1}{2}\sum_{i\in W_{\e,\delta}}\left(\delta^n\frac{(v_\e^i-1)^2}{\e}+\sum_{k=1}^n\e\left|\frac{v_\e^i-v_\e^{i+\delta e_k}}{\delta}\right|^2\right)=\mathcal{O}(\e).\]
Finally, again using \eqref{eq:n12a} we get
\begin{align*}
\sum_{i\in M_{\e,\delta}}\left(\delta^n\frac{(v_\e^i-1)^2}{\e}+\sum_{k=1}^n\e\left|\frac{v_\e^i-v_\e^{i+\delta e_k}}{\delta}\right|^2\right) &\leq\sum_{i\in\overset{\circ}{M}_{\e,\delta}}\left(\delta^n\frac{(v_\e^i-1)^2}{\e}+\sum_{k=1}^n\e\left|\frac{v_\e^i-v_\e^{i+\delta e_k}}{\delta}\right|^2\right)\\
&+c\frac{\delta^n}{\varepsilon}\#(\partial M_{\e,\delta});
\end{align*}
moreover we notice that
$$
\#(\partial M_{\e,\delta})\leq \frac{c}{\delta^{n-1}}\left(\HH^{n-1}(K_{\e+\sqrt{n}\delta})+\e\right).
$$
Let now $i\in\overset{\circ}{M}_{\e,\delta}$; then $i+[0,\delta)^n\subset M_\e$. Hence, by definition of $v_\e$ we have
\[v_\e(x)=f_\eta\left(\frac{d(x)-\xi_\e-\sqrt{n}\delta}{\e}\right)\quad\text{and}\quad\nabla v_\e(x)=\pm\frac{1}{\e} f_\eta'\left(\frac{d(x)-\xi_\e-\sqrt{n}\delta}{\e}\right)\nu\quad\text{in}\ i+[0,\delta)^n.\]
Since $f_\eta\in C^2([0,T])$, appealing to the Mean-Value Theorem we deduce that for every $x\in  i +[0,\delta)^{n}$
$$
\Big|f_\eta\Big(\frac{d(x)-\xi_\e-\sqrt{n}\delta}{\e}\Big)-f_\eta\Big(\frac{d(i)-\xi_\e-\sqrt{n}\delta}{\e}\Big)\Big| \leq \frac{\delta}{\e}\, \|f'_\eta\|_{L^\infty(0,T)},
$$
while for every $y\in i +[0,\delta)^{n-1}$, every $t\in (0,\delta)$, and every $k\in\{1,\ldots,n\}$ 
$$
\bigg|\Big\langle\bigg(f'_\eta\Big(\frac{d(y+te_k)-\xi_\e-\sqrt{n}\delta}{\e}\Big)-f'_\eta\Big(\frac{d(i+t e_k)-\xi_\e-\sqrt{n}\delta}{\e}\Big)\bigg)\nu,e_k\Big\rangle\bigg| \leq \frac{\delta}{\e}\, \|f''_\eta\|_{L^\infty(0,T)}.
$$
So that it follows 
\begin{align*}
&\frac{1}{2}\sum_{i\in M_{\e,\delta}}\delta^n\Bigg(\frac{(v_\e^i-1)^2}{\e}+\sum_{k=1}^n\e\left|\frac{v_\e^i-v_\e^{i+\delta e_k}}{\delta}\right|^2\Bigg)\\
%&\leq\frac{1}{2}\sum_{i\in\overset{\circ}{M}_{\e,\delta}}\Bigg(\int_{i+[0,\delta)^n}\frac{1}{\e}\left(f_\eta\left(\frac{d(i)-\xi_\e-\sqrt{n}\delta}{\e}\right)-1\right)^2dx\\
%&+\sum_{k=1}^n\frac{1}{\e}\int_{i+[0,\delta)^{n-1}}\left(\int_0^\delta\left|\left<f_\eta'\left(\frac{d(i+te_k)-\xi_\e-\sqrt{n}\delta}{\e}\right)\nu,e_k\right>\right|^2\right)d\HH^{n-1}(y)\Bigg)+c\frac{\delta}{\e}\left(\HH^{n-1}(K_{\e+\sqrt{n}\delta})+\e\right)\\
&\leq\frac{1}{2}\sum_{i\in\overset{\circ}{M}_{\e,\delta}}\Bigg(\int_{i+[0,\delta)^n} \frac{1}{\e}\left(f_\eta\left(\frac{d(x)-\xi_\e-\sqrt{n}\delta}{\e}\right)-1\right)^2dx\\
&+\sum_{k=1}^n\frac{1}{\e}\int_{i+[0,\delta)^{n-1}}\hspace*{-0.5em}\bigg(\int_0^\delta\left|\left<f_\eta'\Big(\frac{d(y+te_k)-\xi_\e-\sqrt{n}\delta}{\e}\Big)\nu,e_k\right>\right|^2dt\bigg)d\HH^{n-1}(y)\Bigg)\\
&+c\frac{\delta}{\e}\left(\HH^{n-1}(K_{\e+\sqrt{n}\delta})+\e\right)\\
&\leq\frac{1}{2}\int_{M_\e} \frac{1}{\e}\bigg(\Big(f_\eta\Big(\frac{d(x)-\xi_\e-\sqrt{n}\delta}{\e}\Big)-1\bigg)^2\hspace*{-0.5em}+\left|f_\eta'\left(\frac{d(x)-\xi_\e-\sqrt{n}\delta}{\e}\right)\nu\right|^2\bigg)dx
\\
&+c\frac{\delta}{\e}\left(\HH^{n-1}(K_{\e+\sqrt{n}\delta})+\e\right)\\
&=\int_{K_{\e+\sqrt{n}\delta}}\Bigg(\frac{1}{\e}\int_{\xi_\e+\sqrt{n}\delta}^{\xi_\e+\sqrt{n}\delta+\e T}\left(f_\eta\left(\frac{t-\xi_\e-\sqrt{n}\delta}{\e}\right)-1\right)^2\\
&+\left(f_\eta'\left(\frac{t-\xi_\e-\sqrt{n}\delta}{\e}\right)\right)^2dt\Bigg)d\HH^{n-1}(y)+c\frac{\delta}{\e}\left(\HH^{n-1}(K_{\e+\sqrt{n}\delta})+\e\right)\\
&=\int_{K_{\e+\sqrt{n}\delta}}\Bigg(\int_0^T(f_\eta(t)-1)^2+(f_\eta'(t))^2dt\Bigg)d\HH^{n-1}(y)+c\frac{\delta}{\e}\left(\HH^{n-1}(K_{\e+\sqrt{n}\delta})+\e\right)\\
&\leq \left(1+\eta+c\frac{\delta}{\e}\right)\HH^{n-1}(K_{\e+\sqrt{n}\delta})+c\delta.
\end{align*}
Thus, gathering \eqref{est:n08}, \eqref{est:n09}-\eqref{est:n10} gives
\[\limsup_{\e \to 0} E_\e(\bar u_{\e},\bar v_{\e})\leq\int_\Omega|\nabla u|^2dx+(1+\eta)\HH^{n-1}(S_u\cap\Omega).\]
and hence the thesis.
\end{proof}

\begin{rem}\label{upperbound:local}
{\rm For later use we notice that the upper-bound inequality in Proposition \ref{prop:limsup} can be easily generalised in the following way: let $\ell\in(0,+\infty)$ then there exists $\alpha_\ell \in (0,+\infty)$ such that 
%
%\[E_\ell''(u,1)\leq\int_\Omega|\nabla u|^2dx+(1+\alpha_\ell)\HH^{n-1}(S_u\cap\Omega)\quad\text{for every}\; u\in GSBV^2(\Omega).\]
%
%Further, the above estimate can be also localised; that is, arguing as in Proposition \ref{prop:limsup} it is immediate to deduce that
%
\[E_\ell''(u,1,U)\leq\int_U|\nabla u|^2dx+(1+\alpha_\ell)\HH^{n-1}(S_u\cap U),\]
for every $u\in GSBV^2(\Omega)$ and for every $U\in\mathscr A_L(\Omega)$.
}
\end{rem}

\section{Proof of the $\G$-convergence result in the critical regime $\ell \in (0,+\infty)$}\label{sec:critical} 

In this section we study the $\Gamma$-limit of the functionals $E_\e$ in the case where $\varepsilon$ and $\delta$ are of the same order.
We show that in this case the presence of the underlying lattice affects the $\Gamma$-limit, which in particular turns out
to be anisotropic. 

The treatment of this case presents some additional difficulties with respect to the subcritical case investigated in Section \ref{s:subcritical}. In particular, when the space-dimension $n$ is larger than two we can only show that the $\Gamma$-convergence of $E_\e$ takes place up to a subsequence and that the $\Gamma$-limit can be represented as a free-discontinuity functional of the form    
\begin{equation}\label{int:rep01}
E_\ell(u)=\int_\Omega|\nabla u|^2dx+\int_{S_u\cap \O}\phi_\ell([u],\nu_u)d\HH^{n-1},
\end{equation} 
for some $\phi_\ell$ which is not explicit and possibly depends on the amplitude of the jump $[u]$. We observe that this dependence cannot be excluded \emph{a priori} because in general we cannot exclude the possibility for the term $F_\e$ to enter in the definition of $\phi_\ell$ (as opposite to the case $\ell=0$ where the surface energy is only determined by $G_\e$). 
When $n=2$, however, we are able to characterise $\phi_\ell$ showing, in particular, that it does not depend on $[u]$ and on the subsequence so that in this case the $\Gamma$-convergence of the functionals $E_\e$ takes place for the whole sequence.

\medskip

The convergence result for $E_\e$ in this critical regime will be achieved by means of the so-called localisation method of $\Gamma$-convergence (see \emph{e.g.} \cite[Chapters 14-20]{DalMaso93}, \cite[Chapters 16]{Braides02}). The latter consists of two main steps: proving the existence of a subsequence $(E_{\varepsilon_j})$ $\Gamma$-converging to some abstract functional $E$ when localised to all open subsets of $\Omega$ and, subsequently, showing that $E$ can be represented in an integral form as in \eqref{int:rep01}. 

In order to apply the localisation method we need to consider the functionals $E_\varepsilon$ as functions defined on triples $(u,v,U)\in L^1(\Omega)\times L^1(\Omega)\times\mathscr A(\Omega)$ using the localised definition of the energies as in \eqref{loc:funct}. Moreover, for every $\varepsilon_j\to 0$ we also need to consider $E'_\ell,E''_\ell:L^1(\Omega)\times L^1(\Omega)\times\mathscr A(\Omega)\to [0,+\infty]$ the localised versions of the $\Gamma$-liminf and $\Gamma$-limsup functionals\ie
\begin{align*}
E'_\ell(u,v,U):=\Gamma\hbox{-}\liminf_{j\to+\infty}E_{\varepsilon_j}(u,v,U),\quad E''_\ell(u,v,U):=\Gamma\hbox{-}\limsup_{j\to +\infty}E_{\varepsilon_j}(u,v,U).
\end{align*}
Finally, we define the inner regular envelopes of $E'_\ell$ and $E''_\ell$, respectively\ie
\begin{align*}
(E'_\ell)_{-}(\cdot,\cdot,U)&:=\sup\{E'_\ell(\cdot,\cdot,V):\ V\subset\subset U,\ V\in\mathscr A(\Omega)\},\\
(E''_\ell)_{-}(\cdot,\cdot,U)&:=\sup\{E''_\ell(\cdot,\cdot,V):\ V\subset\subset U,\ V\in\mathscr A(\Omega)\}.
\end{align*}
\begin{rem}\label{rem:crit01}
{\rm
%We remark that for every $\varepsilon>0$ the set function $E_\varepsilon(u,v,\cdot)$ is increasing for every $(u,v)\in L^1(\Omega)\times L^1(\Omega)$ and that for every $(v,U)\in L^1(\Omega)\times \A(\Omega)$, $E_\varepsilon(\cdot,v,U)$ decreases by truncation in $u$. Moreover, for every $U\in\A(\Omega)$, $E_\varepsilon(\cdot,\cdot,U)$ is local in the following sense: $E_\varepsilon(u,v,U)=E_\varepsilon(\tilde{u},\tilde{v},U)$ whenever $(u,v),(\tilde{u},\tilde{v})\in (L^1(\Omega)\cap\A_\varepsilon(U))\times(L^1(\Omega)\cap\A_\varepsilon(U))$ are such that $(u^i,v^i)=(\tilde{u}^i,\tilde{v}^i)$ for every $i\in U\cap\delta
%\Z^n$. Thus, given a positive sequence $(\varepsilon_j)$ converging to $0$, 
The functionals $E'_\ell$ and $E''_\ell$ are increasing \cite[Proposition 6.7]{DalMaso93}, lower semicontinuous \cite[Proposition 6.8]{DalMaso93}, and local \cite[Proposition 16.15]{DalMaso93}. Note that they both decrease by truncation. Moreover, $E'_\ell$ is also superadditive \cite[Proposition 16.12]{DalMaso93}. 
%
%i they decrease by truncation in $u$, and using a similar argument as in \cite[Proposition 16.15]{DalMaso93} it can be shown that $E'$ and $E''$ are local in the sense that $E''(u,v,U)=E''(\tilde{u},\tilde{v},U)$, $E'(u,v,U)=E'(\tilde{u},\tilde{v},U)$, whenever $(u,v),(\tilde{u},\tilde{v})\in L^1(\Omega)\times L^1(\Omega)$, $U\in\A(U)$ and $(u,v)=(\tilde{u},\tilde{v})$ a.e. in $U$. Moreover, in view of Proposition \ref{compactness} and Remark \ref{lowerbound:local}, there holds
%%
%\begin{equation}\label{est:crit01}
%E'(u,1,U)\geq\int_U|\nabla u|^2dx+\frac{1}{\sqrt{n}}\HH^{n-1}(S_u\cap U)\quad\forall\ (u,U)\in GSBV^2(\Omega)\times\A_L(\Omega).
%\end{equation}
%% 
%\noindent Eventually, the computations in the proof of Proposition \ref{prop:limsup} together with Remark \ref{upperbound:local} yield the existence of a constant $\alpha_K>0$ such that
%%
%\begin{equation}\label{est:crit02}
%E''(u,1,U)\leq\int_U|\nabla u|^2dx+\alpha_K\HH^{n-1}(S_u\cap U)\quad\forall\ (u,U)\in GSBV^2(\Omega)\times\A_L(\Omega).
%\end{equation}
%%
%\noindent We notice moreover that in general $E'$ and $E''$ are not inner regular. For this reason, we also introduce their inner regular envelopes defined by
%%
%\begin{align*}
%E'_{-}(\cdot,\cdot,U)&:=\sup\{E'(\cdot,\cdot,V):\ V\subset\subset U,\ V\in\A(\Omega)\},\\
%E''_{-}(\cdot,\cdot,U)&:=\sup\{E''(\cdot,\cdot,V):\ V\subset\subset U,\ V\in\A(\Omega)\}.
%\end{align*}
%
Further, the functionals $(E'_\ell)_{-}$ and $(E''_\ell)_{-}$ are inner regular by definition, increasing, lower semicontinuous \cite[Remark 15.10]{DalMaso93}, local \cite[Remark 15.25]{DalMaso93}, and $(E'_\ell)_{-}$ is superadditive \cite[Proposition 16.12]{DalMaso93}.}
\end{rem}
The next proposition shows that the functionals $E_\e$ satisfy the so-called fundamental estimate, uniformly in $\e$. 
\begin{prop}[Fundamental estimate]\label{fund:est}
For every $\varepsilon>0$, $\eta>0$, $U,U',V\in\mathscr A(\Omega)$ with $U\subset\subset U'$,  and for every $(u,v),(\tilde{u},\tilde{v})\in L^1(\Omega)\times L^1(\Omega)$, with $0\leq v,\tilde{v}\leq 1$ there exists $(\hat{u},\hat{v})\in L^1(\Omega)\times L^1(\Omega)$ such that
\[E_\varepsilon(\hat{u},\hat{v},U\cup V)\leq(1+\eta)\left(E_\varepsilon(u,v,U')+E_\varepsilon(\tilde{u},\tilde{v},V)\right)+\sigma_\varepsilon(u,v,\tilde{u},\tilde{v},U,U',V),\]
where $\sigma_\varepsilon:L^1(\Omega)^4\times\mathscr A(\Omega)^3\to [0,+\infty)$ depends only on $\varepsilon$ and $E_\varepsilon$ and is such that
\begin{equation}\label{cond:sigma}
\lim_{\varepsilon\to 0}\sigma_\varepsilon(u_\varepsilon,v_\varepsilon,\tilde{u}_\varepsilon,\tilde{v}_\varepsilon,U,U',V)=0,
\end{equation}
for all $U,U',V\in\mathscr A(\Omega)$ with $U\subset\subset U'$ and every $(u_\varepsilon,v_\varepsilon)$ and $(\tilde{u}_\varepsilon,\tilde{v}_\varepsilon)$ which have the same limit as $\varepsilon\to 0$ in $L^2((U'\setminus U)\cap V)\times L^2((U'\setminus U)\cap V)$ and satisfy
\[\sup_\e\left(E_\e(u_\e,v_\e,U')+E_\e(\tilde{u}_\e,\tilde{v}_\e,V)\right)<+\infty.\]
\end{prop}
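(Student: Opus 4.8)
The plan is to adapt the classical cut-off gluing to the lattice setting, the only genuinely new point being the treatment of the non-convex bulk term $F_\e$. Fix $\eta>0$, set $D:=\dist(U,\partial U')>0$, choose an integer $N=N(\eta)$ to be specified at the end, and introduce nested open sets $U=A_0\subset\subset A_1\subset\subset\cdots\subset\subset A_N\subset\subset U'$ with $\dist(A_{k-1},\partial A_k)\geq D/(2N)$ (e.g.\ $A_k:=\{x\in\Omega:\dist(x,U)<kD/(2N)\}$), together with smooth cut-off functions $\varphi_k$ between $A_{k-1}$ and $A_k$ satisfying $\|\nabla\varphi_k\|_{L^\infty}\leq cN/D$. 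For each $k$ we define candidate competitors $\hat u_k,\hat v_k\in\A_\e(\Omega)$ by
\[
\hat u_k^i:=\varphi_k(i)\,u^i+(1-\varphi_k(i))\,\tilde u^i,\qquad
\hat v_k^i:=\begin{cases} v^i & i\in (A_{k-1})_\delta,\\ v^i\wedge\tilde v^i & i\in (A_k\setminus A_{k-1})_\delta,\\ \tilde v^i & i\in\Omega_\delta\setminus (A_k)_\delta,\end{cases}
\]
so that $0\leq\hat v_k\leq 1$. We then decompose $E_\e(\hat u_k,\hat v_k,U\cup V)$ according to the position of the lattice bonds (resp.\ sites): bonds with both endpoints in $(A_{k-1})_\delta$ carry $\hat u_k=u$, $\hat v_k=v$ and, since $A_{k-1}\subset U'$, contribute only to $F_\e(u,v,U')+G_\e(v,U')$; bonds with both endpoints where $\varphi_k\equiv0$ carry $\hat u_k=\tilde u$, $\hat v_k=\tilde v$ and contribute only to $F_\e(\tilde u,\tilde v,V)+G_\e(\tilde v,V)$. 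The transition bonds are those lying in the shell $A_k\setminus A_{k-1}$, which, since $A_0=U$, is contained in $U'\setminus U$ and — for bonds actually present in $U\cup V$ — in $V$ as well, so these bonds may charge \emph{both} $U'$-energy and $V$-energy.

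For the $F_\e$-part of a transition bond we use the splitting
\[
\hat u_k^i-\hat u_k^j=\big(\varphi_k^i(u^i-u^j)+(1-\varphi_k^i)(\tilde u^i-\tilde u^j)\big)+(\varphi_k^i-\varphi_k^j)(u^j-\tilde u^j),
\]
together with $|a+b|^2\leq(1+\eta)|a|^2+(1+\eta^{-1})|b|^2$, the convexity of $z\mapsto z^2$, and — crucially — the bound $(\hat v_k^i)^2\leq (v^i)^2\wedge(\tilde v^i)^2$ valid on the shell, to estimate the contribution by $(1+\eta)$ times the corresponding pieces of $F_\e(u,v,U')+F_\e(\tilde u,\tilde v,V)$ plus an error $C_\eta\|\nabla\varphi_k\|_{L^\infty}^2\sum_j\delta^n|u^j-\tilde u^j|^2$ summed over shell sites. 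For the $G_\e$-part of the transition bonds the elementary identities $(v^i\wedge\tilde v^i-1)^2=(v^i-1)^2\vee(\tilde v^i-1)^2\leq(v^i-1)^2+(\tilde v^i-1)^2$ and $|v^i\wedge\tilde v^i-v^j\wedge\tilde v^j|^2\leq|v^i-v^j|^2\vee|\tilde v^i-\tilde v^j|^2$ split the contribution into legitimate pieces of $G_\e(v,U')$ and $G_\e(\tilde v,V)$.

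The only excess energy then sits on the two thin interface layers of bonds straddling $\partial A_{k-1}$ and $\partial A_k$, where $\hat v_k$ passes from $v$ (resp.\ from $v\wedge\tilde v$) to $v\wedge\tilde v$ (resp.\ to $\tilde v$). A triangle-inequality estimate bounds this excess by a constant times $\e\delta^{n-2}\sum_{S_k}\big((v^i-1)^2+(\tilde v^i-1)^2\big)$, where $S_k$ is a $\delta$-neighbourhood of $\partial A_{k-1}\cup\partial A_k$; since in the critical regime $\e/\delta\to1/\ell\in(0,+\infty)$, this is at most $C_\ell\big(G_\e(v,S_k)+G_\e(\tilde v,S_k)\big)$. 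The sets $S_1,\dots,S_N$ are pairwise disjoint once $\e$ is small (the $A_k$'s being at mutual distance $\gg\delta$), so their total energy is bounded by $C\big(E_\e(u,v,U')+E_\e(\tilde u,\tilde v,V)\big)$, whence there exists $k^\ast\in\{1,\dots,N\}$ for which the excess is $\leq \frac{C_\ell}{N}\big(E_\e(u,v,U')+E_\e(\tilde u,\tilde v,V)\big)$. Choosing $N=N(\eta)$ so large that $C_\ell/N\leq\eta$ and setting $(\hat u,\hat v):=(\hat u_{k^\ast},\hat v_{k^\ast})$, the collected estimates give
\[
E_\e(\hat u,\hat v,U\cup V)\leq(1+\eta)\big(E_\e(u,v,U')+E_\e(\tilde u,\tilde v,V)\big)+\sigma_\e,
\]
with $\sigma_\e:=C_\eta\,\dfrac{N(\eta)^2}{D^2}\displaystyle\int_{(U'\setminus U)\cap V}|u-\tilde u|^2\dx$, which is determined by $E_\e$ alone and satisfies \eqref{cond:sigma}: along sequences with equibounded energy and the same $L^2((U'\setminus U)\cap V)$-limit one has $\|u_\e-\tilde u_\e\|_{L^2((U'\setminus U)\cap V)}\to0$ while $N$ and $D$ stay fixed.

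The main obstacle — and the reason the naive convex combination $\hat v_k=\varphi_k v+(1-\varphi_k)\tilde v$ fails — is that in $F_\e(\hat u_k,\hat v_k)$ it produces cross terms of the form $\varphi_k(1-\varphi_k)\,(\tilde v^i)^2|u^i-u^j|^2$ (and symmetrically with $v,\tilde v$ interchanged), which cannot be reabsorbed into $F_\e(u,v,U')$ or $F_\e(\tilde u,\tilde v,V)$ precisely when the damage variable $v^i$ is small — exactly the situation near the jump set. Replacing $\hat v_k$ by $v\wedge\tilde v$ on the transition shell annihilates these cross terms (via $\hat v_k^i\leq v^i\wedge\tilde v^i$), at the price of the small, $\tfrac1N$-controllable interface cost described above; making this trade-off work, and bookkeeping carefully which bonds are allowed to charge $U'$-energy versus $V$-energy, is the crux of the argument.
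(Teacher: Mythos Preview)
Your argument is correct and follows the same overall cut-off/averaging template as the paper, but the construction of $\hat v$ is genuinely different. The paper does \emph{not} set $\hat v_k$ equal to $v\wedge\tilde v$ sharply on a single shell; instead it spreads the transition over seven consecutive layers, passing \emph{smoothly} from $v$ to $w:=v\wedge\tilde v$ via the cut-off $\varphi_{l-2}$, keeping $w$ on the central layers where $\hat u_l$ interpolates, and then passing smoothly from $w$ to $\tilde v$ via $\varphi_{l+2}$. Because these transitions are smooth, the extra gradient cost is controlled pointwise by $M^2$ and produces an error of size $M^2\,\e\,\delta^n\#((U'\setminus U)\cap V_\delta)=O(\e)$, which vanishes in \emph{every} scaling regime; the averaging over $l$ is then used to absorb the constant multiplicative factors (the ``$6$'' and ``$3$'' in the layer estimates) into the prefactor $1+\eta$.

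Your sharper three-piece $\hat v_k$ is cleaner and makes the key inequality $(\hat v_k^i)^2\le (v^i)^2\wedge(\tilde v^i)^2$ immediate, but it creates genuine jumps of $\hat v_k$ across $\partial A_{k-1}$ and $\partial A_k$. You control the resulting interface cost by the bound $\e\delta^{n-2}\sum_{S_k}\big((v^i-1)^2+(\tilde v^i-1)^2\big)\le C(\e/\delta)^2\big(G_\e(v,S_k)+G_\e(\tilde v,S_k)\big)$ and then average over $k$. This works precisely when $\e/\delta$ stays bounded, i.e.\ in the critical and supercritical regimes; in the subcritical regime $\ell=0$ the constant $C_\ell$ blows up, forcing $N=N(\eta,\e)\to\infty$ and spoiling the control on $\sigma_\e$. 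Since the proposition sits in Section~5 and is only invoked for $\ell\in(0,+\infty)$, your proof is adequate for its actual use, but the paper's version is regime-independent. What you gain is a shorter construction (one shell instead of seven) and a transparent reason for taking the minimum $v\wedge\tilde v$; what you lose is the uniformity in $\ell$ that the smooth transitions buy.
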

\begin{proof}
Fix $\e>0$ and $\eta>0$ and let $U,U',V\in\mathscr A(\Omega)$ with $U\subset\subset U'$. Let $N\in\N$ and choose $U_1,\ldots, U_{N+1}\in\mathscr A(\Omega)$ such that
\[U_0:=U\subset\subset U_1\subset\subset\ldots\subset\subset U_{N+1}\subset\subset U'.\]
For every $ l\in \{2,\ldots, N\}$ let $\varphi_l$ be a cut-off function between $U_{l-1}$ and $U_l$ and set
\[M:=\max_{2\leq l\leq N}\|\nabla \varphi_l\|_{L^\infty}.\]
Let $(u,v)$ and $(\tilde{u},\tilde{v})$ be as in the statement and define $w\in\A_\e(\Omega)$ by setting
\[w^i:=\min\{v^i,\tilde{v}^i\},\]
for every $i\in\Omega\cap\delta\Z^n$. Notice that by definition of $w^i$ we have that for every $i\in\Omega\cap\delta\Z^n$
\begin{equation}\label{est:w1}
(w^i-1)^2\leq(v^i-1)^2+(\tilde{v}^i-1)^2
\end{equation} 
and
\begin{equation}\label{est:w2}
\left|\frac{w^i-w^{i+\delta e_k}}{\delta}\right|^2\leq\left|\frac{v^i-v^{i+\delta e_k}}{\delta}\right|^2+\left|\frac{\tilde{v}^i-\tilde{v}^{i+\delta e_k}}{\delta}\right|^2\quad\text{for every}\ k\in\{1,\ldots, n\}.
\end{equation}
For every $l\in\{4,\ldots, N-2\}$ we define the functions
\[\hat{u}_l^i:=\varphi_l(i)u^i+(1-\varphi_l(i))\tilde{u}^i,\quad i\in\Omega\cap\delta\Z^n,\]
and
\[\hat{v}_l^i:=
\begin{cases}
\varphi_{l-2}(i)v^i+(1-\varphi_{l-2}(i))w^i &\text{if}\ i\in U_{l-2}\cap\delta\Z^n,\\
w^i &\text{if}\ i\in (U_{l+1}\setminus\overline{U}_{l-2})\cap\delta\Z^n,\\
\varphi_{l+2}(i)w^i+(1-\varphi_{l+2}(i))\tilde{v}^i &\text{if}\ i\in(\Omega\setminus\overline{U}_{l+1})\cap\delta\Z^n.
\end{cases}\]
Then we have
\begin{align}\label{fund:est:01}
E_\e(\hat{u}_l,\hat{v}_l,U\cup V) &\leq\tilde{E}_\e(u,v,U_{l-4})+ \tilde{E}_\e(u,\hat{v}_l,(U_{l-3}\setminus\overline{U}_{l-4})\cap V)+\tilde{E}_\e(u,\hat{v}_l,(U_{l-2}\setminus\overline{U}_{l-3})\cap V)\nonumber\\
&+\tilde{E}_\e(\hat{u}_l,\hat{v}_l,(U_{l+1}\setminus\overline{U}_{l-2})\cap V)+\tilde{E}_\e(\tilde{u},\hat{v}_l,(U_{l+2}\setminus\overline{U}_{l+1})\cap V)\nonumber\\
&+\tilde{E}_\e(\tilde{u},\hat{v}_l(U_{l+3}\setminus\overline{U}_{l+2})\cap V)+\tilde{E}_\e(\tilde{u},\tilde{v},V\setminus\overline{U}_{l+3}),
\end{align}
where to shorten notation for every $(u,v)\in\A_\e(\Omega)\times\A_\e(\Omega)$ and $U\in\A(\Omega)$ we set
\[\tilde{E}_\e(u,v,U):=\frac{1}{2}\sum_{i\in U_\delta}\delta^n\bigg((v^i)^2\hspace*{-1.5em}\sum_{\begin{smallmatrix}k=1\\i\pm\delta e_k\in U\cup V\end{smallmatrix}}^n\hspace*{-1em}\left|\frac{u^i-u^{i\pm\delta e_k}}{\delta}\right|^2+\frac{(v^i-1)^2}{\e}+\e\hspace*{-1.5em}\sum_{\begin{smallmatrix}k=1\\i+\delta e_k\in U\cup V\end{smallmatrix}}^n\hspace*{-1em}\left|\frac{v^i-v^{i+\delta e_k}}{\delta}\right|^2\bigg).\]
Analogously, we set
\[\tilde{F}_\e(u,v,U):=\frac{1}{2}\sum_{i\in U_\delta}\delta^n(v^i)^2\hspace*{-1.5em}\sum_{\begin{smallmatrix}k=1\\i\pm\delta e_k\in U\cup V\end{smallmatrix}}^n\hspace*{-1em}\left|\frac{u^i-u^{i\pm\delta e_k}}{\delta}\right|^2\]
and
\[\tilde{G}_\e(v,U):=\frac{1}{2}\sum_{i\in U_\delta}\delta^n\bigg(\frac{(v^i-1)^2}{\e}+\e\hspace*{-1.5em}\sum_{\begin{smallmatrix}k=1\\i+\delta e_k\in U\cup V\end{smallmatrix}}^n\hspace*{-1em}\left|\frac{v^i-v^{i+\delta e_k}}{\delta}\right|^2\bigg).\]
We now estimate the different terms on the right hand side of \eqref{fund:est:01} separately. Taking into account the definition of $\hat{v}_l$ we have
\begin{align*}
\tilde{E}_\e(u,\hat{v}_l,(U_{l-3}\setminus\overline{U}_{l-4})\cap V) &=\tilde{F}_\e(u,v,(U_{l-3}\setminus\overline{U}_{l-4})\cap V)+G_\e(v,(U_{l-3}\setminus\overline{U}_{l-4})\cap V)\\
&+\hspace*{-2em}\sum_{i\in (U_{l-3}\setminus\overline{U}_{l-4})\cap V_\delta}\hspace*{-2em}\delta^n\e\sum_{\begin{smallmatrix}k=1\\i+\delta e_k\in(U_{l-2}\setminus\overline{U}_{l-3})\cap V\end{smallmatrix}}^n\hspace*{-2.5em}\left|\frac{\hat{v}_l^i-\hat{v}_l^{i+\delta e_k}}{\delta}\right|^2.
\end{align*}
On $U_{l-2}$ we have
\begin{align}\label{fund:est02}
\left|\frac{\hat{v}_l^i-\hat{v}_l^{i+\delta e_k}}{\delta}\right|^2 &\leq 3\Bigg(\varphi_{l-2}(i+\delta e_k)^2\left|\frac{v^i-v^{i+\delta e_k}}{\delta}\right|^2+(1-\varphi_{l-2}(i))^2\left|\frac{w^i-w^{i+\delta e_k}}{\delta}\right|^2\nonumber\\
&\hspace{3em}+\left|\frac{\varphi_{l-2}(i)-\varphi_{l-2}(i+\delta e_k)}{\delta}\right|^2|v^i-w^i|^2\Bigg)\nonumber\\
&\leq 3\left(\left|\frac{v^i-v^{i+\delta e_k}}{\delta}\right|^2+\left|\frac{w^i-w^{i+\delta e_k}}{\delta}\right|^2+2M^2\right).
\end{align}
Thus, thanks to \eqref{est:w2} we may also deduce that
\begin{align}\label{fund:est03}
\tilde{E}_\e(u,\hat{v}_l,(U_{l-3}\setminus\overline{U}_{l-4})\cap V) &\leq \tilde{E}_\e(u,v,(U_{l-3}\setminus\overline{U}_{l-4})\cap V)\nonumber\\
&+6\left(\tilde{G}_\e(v,(U_{l-3}\setminus\overline{U}_{l-4})\cap V)+\tilde{G}_\e(\tilde{v},(U_{l-3}\setminus\overline{U}_{l-4}\cap V)\right)\nonumber\\
&+3M^2n\delta^n\e\#((U_{l-3}\setminus\overline{U}_{l-4})\cap V_\delta)).
\end{align}
Moreover, on $U_{l-2}$ we have
\[(\hat{v}_l^i-1)^2\leq 2\left((v^i-1)^2+(w^i-1)^2\right)\]
and $(\hat{v}_l^i)^2\leq (v^i)^2$, which together with \eqref{est:w1},\eqref{est:w2}, and \eqref{fund:est02} give
\begin{align}\label{fund:est04}
\tilde{E}_\e(u,\hat{v}_l,(U_{l-2}\setminus\overline{U}_{l-3})\cap V) &\leq\tilde{F}_\e(u,v,(U_{l-2}\setminus\overline{U}_{l-3})\cap V)\nonumber\\
&+6\left(\tilde{G}_\e(v,(U_{l-2}\setminus\overline{U}_{l-3})\cap V)+\tilde{G}_\e(\tilde{v},(U_{l-2}\setminus\overline{U}_{l-3}\cap V)\right)\nonumber\\
&+3M^2n\delta^n\e\#((U_{l-2}\setminus\overline{U}_{l-3})\cap V_\delta)).
\end{align}
Analogously, we get
\begin{align}\label{fund:est05}
\tilde{E}_\e(\tilde{u},\hat{v}_l,(U_{l+2}\setminus\overline{U}_{l+1})\cap V) &\leq\tilde{F}_\e(\tilde{u},\tilde{v},(U_{l+2}\setminus\overline{U}_{l+1})\cap V)\nonumber\\
&+6\left(\tilde{G}_\e(v,(U_{l+2}\setminus\overline{U}_{l+1})\cap V)+\tilde{G}_\e(\tilde{v},(U_{l+2}\setminus\overline{U}_{l+1}\cap V)\right)\nonumber\\
&+3M^2n\delta^n\e\#((U_{l+2}\setminus\overline{U}_{l+1})\cap V_\delta))
\end{align}
and
\begin{align}\label{fund:est06}
\tilde{E}_\e(\tilde{u},\hat{v}_l,(U_{l+3}\setminus\overline{U}_{l+2})\cap V) &\leq \tilde{E}_\e(\tilde{u},\tilde{v},(U_{l+3}\setminus\overline{U}_{l+2})\cap V)\nonumber\\
&+6\left(\tilde{G}_\e(v,(U_{l+3}\setminus\overline{U}_{l+2})\cap V)+\tilde{G}_\e(\tilde{v},(U_{l+3}\setminus\overline{U}_{l+2}\cap V)\right)\nonumber\\
&+3M^2n\delta^n\e\#((U_{l+3}\setminus\overline{U}_{l+2})\cap V_\delta)).
\end{align}
Then it remains to estimate
\[\tilde{E}_\e(\hat{u}_l,\hat{v}_l,(U_{l+1}\setminus\overline{U}_{l-2})\cap V)=\tilde{F}_\e(\hat{u}_l,w,(U_{l+1}\setminus\overline{U}_{l-2})\cap V))+\tilde{G}_\e(\hat{v}_l,(U_{l+1}\setminus\overline{U}_{l-2})\cap V)).\]
To this end, we observe that by definition of $\hat{v}_l$ we have
\begin{align*}
\tilde{G}_\e(\hat{v}_l,(U_{l+1}\setminus\overline{U}_{l-2})\cap V)) &=G_\e(w,(U_{l+1}\setminus\overline{U}_{l-2})\cap V))\\
&+\frac{1}{2}\hspace*{-2em}\sum_{i\in (U_{l-1}\setminus\overline{U}_{l-2})\cap V_\delta}\hspace*{-2em}\delta^n\e\sum_{\begin{smallmatrix}k=1\\i+\delta e_k\in U_{l-2}\cap V\end{smallmatrix}}^n\hspace*{-1.5em}\left|\frac{\hat{v}_l^i-\hat{v}_l^{i+\delta e_k}}{\delta}\right|^2\\
&+\frac{1}{2}\hspace*{-2em}\sum_{i\in (U_{l+1}\setminus\overline{U}_l)\cap V_\delta}\hspace*{-2em}\delta^n\e\sum_{\begin{smallmatrix}k=1\\i+\delta e_k\in(U_{l+2}\setminus\overline{U}_{l+1})\cap V\end{smallmatrix}}^n\hspace*{-2em}\left|\frac{\hat{v}_l^i-\hat{v}_l^{i+\delta e_k}}{\delta}\right|^2.
\end{align*}
From \eqref{est:w1}-\eqref{est:w2} we deduce that
\[G_\e(w,(U_{l+1}\setminus\overline{U}_{l-2})\cap V))\leq G_\e(v,(U_{l+1}\setminus\overline{U}_{l-2})\cap V))+G_\e(\tilde{v},U_{l+1}\setminus\overline{U}_{l-2})\cap V)),\]
while the same computations as in \eqref{fund:est02} yield
\begin{align*}
\frac{1}{2}\hspace*{-2em} \sum_{i\in (U_{l-1}\setminus\overline{U}_{l-2})\cap V_\delta}\hspace*{-2em} \delta^n\e &\sum_{\begin{smallmatrix}k=1\\i+\delta e_k\in U_{l-2}\cap V\end{smallmatrix}}^n\hspace*{-1.5em}\left|\frac{\hat{v}_l^i-\hat{v}_l^{i+\delta e_k}}{\delta}\right|^2\\
&\leq\frac{3}{2}\hspace*{-2em}\sum_{i\in (U_{l-1}\setminus\overline{U}_{l-2})\cap V_\delta}\hspace*{-2em}\delta^n\e\sum_{\begin{smallmatrix}k=1\\i+\delta e_k\in U_{l-2}\cap V\end{smallmatrix}}^n\hspace*{-2em}\left(\left|\frac{v^i-v^{i+\delta e_k}}{\delta}\right|^2+\left|\frac{w^i-w^{i+\delta e_k}}{\delta}\right|^2+2M^2\right)
\end{align*}
and
\begin{align*}
\frac{1}{2}\hspace*{-2em}\sum_{i\in (U_{l+1}\setminus\overline{U}_l)\cap V_\delta}\hspace*{-2em}\delta^n\e &\sum_{\begin{smallmatrix}k=1\\i+\delta e_k\in(U_{l+2}\setminus\overline{U}_{l+1})\cap V\end{smallmatrix}}^n\hspace*{-2em}\left|\frac{\hat{v}_l^i-\hat{v}_l^{i+\delta e_k}}{\delta}\right|^2\\
&\leq\frac{3}{2}\hspace*{-2em}\sum_{i\in (U_{l+1}\setminus\overline{U}_l)\cap V_\delta}\hspace*{-2em}\delta^n\e\sum_{\begin{smallmatrix}k=1\\i+\delta e_k\in(U_{l+2}\setminus\overline{U}_{l+1})\cap V\end{smallmatrix}}^n\hspace*{-2em}\left(\left|\frac{\tilde{v}^i-\tilde{v}^{i+\delta e_k}}{\delta}\right|^2+\left|\frac{w^i-w^{i+\delta e_k}}{\delta}\right|^2+2M^2\right).
\end{align*}
Hence, again using \eqref{est:w1} we get
\begin{align}\label{fund:est07}
\tilde{G}_\e(\hat{v}_l,(U_{l+1}\setminus\overline{U}_{l-2})\cap V)) &\leq 6\left(\tilde{G}_\e(v,(U_{l+1}\setminus\overline{U}_{l-2})\cap V))+\tilde{G}_\e(\tilde{v},(U_{l+1}\setminus\overline{U}_{l-2})\cap V))\right)\nonumber\\
&+3M^2n\delta^n\e\left(\#((U_{l-1}\setminus\overline{U}_{l-2})\cap V_\delta)+\#((U_{l+1}\setminus\overline{U}_l)\cap V_\delta)\right).
\end{align}
Finally, arguing as in \eqref{fund:est02} for every $i\in (U_{l+1}\setminus\overline{U}_{l-2})\cap V_\delta$ and for every $k\in \{1,\ldots, n\}$ we get
\begin{align*}
\left|\frac{\hat{u}_l^i-\hat{u}_l^{i\pm\delta e_k}}{\delta}\right|^2\leq 3\left(\left|\frac{u^i-u^{i\pm\delta e_k}}{\delta}\right|^2+\left|\frac{\tilde{u}^i-\tilde{u}^{i\pm\delta e_k}}{\delta}\right|^2+M^2|u^i-\tilde{u}^i|^2\right),
\end{align*}
and thus
\begin{align}\label{fund:est08}
\tilde{F}_\e(\hat{u}_l,w,(U_{l+1}\setminus\overline{U}_{l-2})\cap V) &\leq\frac{3}{2}\hspace*{-2em}\sum_{i\in(U_{l+1}\setminus\overline{U}_{l-2})\cap V_\delta}\hspace*{-2em}\delta^n(w^i)^2\sum_{k=1}^n\left(\left|\frac{u^i-u^{i\pm\delta e_k}}{\delta}\right|^2+\left|\frac{\tilde{u}^i-\tilde{u}^{i\pm\delta e_k}}{\delta}\right|^2\right)\nonumber\\
&\hspace{2em}+3M^2n\hspace*{-2em}\sum_{i\in (U_{l+1}\setminus\overline{U}_{l-2})\cap V_\delta}\hspace*{-2em}\delta^n(w^i)^2|u^i-\tilde{u}^i|^2\nonumber\\
&\leq 3\left(\tilde{F}_\e(u,v,(U_{l+1}\setminus\overline{U}_{l-2})\cap V)+\tilde{F}_\e(\tilde{u},\tilde{v},(U_{l+1}\setminus\overline{U}_{l-2})\cap V)\right)\nonumber\\
&\hspace{2em}+3M^2n\hspace*{-2em}\sum_{i\in (U_{l+1}\setminus\overline{U}_{l-2})\cap V_\delta}\hspace*{-2em}\delta^n|u^i-\tilde{u}^i|^2,
\end{align}
where in the last estimate we have used the fact that $w\leq v$, $w\leq\tilde{v}$ and $w\leq 1$. \\
\indent Gathering \eqref{fund:est:01} and \eqref{fund:est03}-\eqref{fund:est08}, summing up over $l$ we get
\begin{align*}
\sum_{l=4}^{N-2}E_\e(\hat{u}_l,\hat{v}_l,U\cup V)&\leq\left(N-6+42\right)\left(E_\e(u,v,U')+E_\e(\tilde{u},\tilde{v},V)\right)\\
&+21M^2n\delta^n\e\#((U'\setminus U)\cap V_\delta))\\
&+9M^2n\hspace*{-1.5em}\sum_{i\in (U'\setminus U)\cap V_\delta}\hspace*{-1.5em}\delta^n|u^i-\tilde{u}^i|^2.
\end{align*}
Hence we can find an index $\hat{l}\in\{4,\ldots,N-2\}$ such that
\begin{align*}
E_\e(\hat{u}_{\hat{l}},\hat{v}_{\hat{l}},U\cup V) &\leq\frac{1}{N-6}\sum_{l=4}^{N-2}E_\e(\hat{u}_l,\hat{v}_l,U\cup V)\\
&\leq\left(1+\frac{42}{N-6}\right)\left(E_\e(u,v,U')+E_\e(\tilde{u},\tilde{v},V)\right)\\
&\hspace{2em}+\frac{21M^2n}{N-6}\delta^n\e\#((U'\setminus U)\cap V_\delta))\\
&\hspace{2em}+\frac{9M^2n}{N-6}\sum_{i\in (U'\setminus U)\cap V_\delta}\hspace*{-1.5em}\delta^n|u^i-\tilde{u}^i|^2.
\end{align*}
We now choose $N$ sufficiently large such that $\frac{42}{N-6}\leq\eta$. Then the pair $(\hat{u},\hat{v}):=(\hat{u}_{\hat{l}},\hat{v}_{\hat{l}})$ satisfies
\[E_\varepsilon(\hat{u},\hat{v},U\cup V)\leq(1+\eta)\left(E_\varepsilon(u,v,U')+E_\varepsilon(\tilde{u},\tilde{v},V)\right)+\sigma_\varepsilon(u,v,\tilde{u},\tilde{v},U,U',V),\]
where
\begin{align*}
\sigma_\e(u,v,\tilde{u},\tilde{v},U,U',V):=\frac{21M^2n}{N-6}\delta^n\e\#((U'\setminus U)\cap V_\delta))+\frac{9M^2n}{2(N-6)}\sum_{i\in (U'\setminus U)\cap V_\delta}\delta^n|u^i-\tilde{u}^i|^2.
\end{align*}
Eventually, note that $\sigma_\e$ satisfies \eqref{cond:sigma}.
\end{proof}
On account of Proposition \ref{fund:est} we are now in a position to prove the following compactness result for the sequence $E_\e$.  

\begin{theorem}[Compactness by $\Gamma$-convergence and properties of the $\Gamma$-limit]\label{gamma:compactness}
Let $E_\varepsilon$ be as in \eqref{def:funct}. Then, for every sequence of positive numbers converging to $0$ there exist a subsequence $(\varepsilon_j)$ and a functional $E_\ell:L^1(\Omega)\times L^1(\Omega)\times\mathscr A(\Omega)\to [0,+\infty]$ such that for all $U\in\mathscr A(\Omega)$
\begin{equation}\label{eq:crit:01}
E_\ell(\cdot,1,U)=(E'_\ell)_{-}(\cdot,1,U)=(E''_\ell)_{-}(\cdot,1,U)\quad\text{on}\ GSBV^2(\Omega).
\end{equation}
Moreover, $E_\ell$ satisfies the following properties:
\begin{enumerate}
\item For every $U\in\mathscr A(\Omega)$, the functional $E_\ell(\cdot,1,U)$ is local and lower semicontinuous with respect to the strong $L^1(\Omega)$-topology;
\item for every $u\in GSBV^2(\Omega)$ and every $U\in\mathscr A(\Omega)$ there holds
\[\int_U|\nabla u|^2dx+\frac{1}{\sqrt{n}}\HH^{n-1}(S_u\cap U)\leq E_\ell(u,1,U)\leq\int_U|\nabla u|^2dx+(1+\alpha_\ell)\HH^{n-1}(S_u\cap U),\]
where $\alpha_\ell\in(0,+\infty)$ is as in Remark \ref{upperbound:local};
\item for every $u\in GSBV^2(\Omega)$, the set function $E_\ell(u,1,\cdot)$ is the restriction to $\mathscr A(\Omega)$ of a Radon measure;
\item for every $U\in \mathscr A_L(\Omega)$ there holds
\[E_\ell(\cdot,1,U)=E'_\ell(\cdot,1,U)=E''_\ell(\cdot,1,U)\quad\text{on}\ GSBV^2(\Omega).\]
\item $E_\ell$ is invariant under translations $x$ and in $u$.
\end{enumerate}
\end{theorem}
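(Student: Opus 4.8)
The plan is to obtain Theorem~\ref{gamma:compactness} as a rather standard application of the localisation method of $\Gamma$-convergence, using as the three main ingredients: the fundamental estimate Proposition~\ref{fund:est}, the lower bound of Remark~\ref{lowerbound:local} (with the constant $1/\sqrt n$ coming from $|\nu|_\infty\ge |\nu|/\sqrt n$), and the upper bound of Remark~\ref{upperbound:local}.

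First I would invoke the general compactness theorem for $\Gamma$-convergence (see e.g.~\cite[Theorem 16.9]{Braides02} or \cite[Theorem 8.5, Proposition 16.4]{DalMaso93}): since $\mathscr A(\Omega)$ has a countable dense subclass and $L^1(\Omega)\times L^1(\Omega)$ is separable, from any sequence $\varepsilon\to 0$ one can extract a subsequence $(\varepsilon_j)$ such that $E'_\ell(\cdot,\cdot,U)=E''_\ell(\cdot,\cdot,U)$ for every $U$ in the countable class; passing to the inner-regular envelopes one then gets $(E'_\ell)_-=(E''_\ell)_-=:E_\ell$ for \emph{all} $U\in\mathscr A(\Omega)$, which is exactly \eqref{eq:crit:01} once we restrict attention to $u\in GSBV^2(\Omega)$, $v=1$ (recall by Theorem~\ref{t:domain-G-limit} the $\Gamma$-limit is $+\infty$ outside this set). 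Properties (1) and the lower semicontinuity/locality of $E'_\ell,E''_\ell$ and their envelopes are recorded in Remark~\ref{rem:crit01}, so (1) is immediate. For (2), the upper bound is Remark~\ref{upperbound:local} (note $E_\ell\le (E''_\ell)_-\le E''_\ell\le$ the bound there, using inner regularity of the right-hand side as a measure), while the lower bound follows from Remark~\ref{lowerbound:local} together with $|\nu|_\infty\ge |\nu|/\sqrt n=1/\sqrt n$ on $S^{n-1}$, after taking the sup over $V\subset\subset U$ to pass to the envelope.

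The heart of the argument is property (3): that $U\mapsto E_\ell(u,1,U)$ is (the trace of) a Radon measure. Here I would apply the De Giorgi--Letta measure criterion \cite[Theorem 14.23]{DalMaso93}: a non-negative, increasing, inner-regular set function which is superadditive and subadditive on disjoint/arbitrary open sets is the restriction of a Borel measure, and finiteness on bounded sets is guaranteed by the upper bound in (2). Increasing-ness and inner regularity hold by construction of $E_\ell=(E'_\ell)_-$; superadditivity is inherited from $E'_\ell$ (Remark~\ref{rem:crit01}). The only non-trivial point is \emph{subadditivity} $E_\ell(u,1,U\cup V)\le E_\ell(u,1,U)+E_\ell(u,1,V)$, and this is precisely where the fundamental estimate enters: given $U'\subset\subset U\cup V$ one writes $U'=U_0\cup V_0$ with $U_0\subset\subset U$, $V_0\subset\subset V$, picks recovery sequences for $(u,1,U_1)$ and $(u,1,V)$ with $U_0\subset\subset U_1\subset\subset U$, glues them via Proposition~\ref{fund:est} with parameter $\eta$ — the error term $\sigma_\varepsilon\to 0$ since both sequences converge to the same $u$ on the overlap — and obtains $E'_\ell(u,1,U_0\cup V_0)\le (1+\eta)(E'_\ell(u,1,U_1)+E'_\ell(u,1,V))$; taking the sup over $U_0,V_0$ and then $U_1$, and letting $\eta\to 0$, gives subadditivity of the envelope. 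I expect this gluing/envelope bookkeeping to be the main obstacle, mostly because one must be careful that the inner-regular envelopes interact well with the almost-subadditivity constant $1+\eta$.

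Finally, property (4) follows from (1)--(3) by a now-classical argument (see \cite[Proposition 18.4]{DalMaso93} or \cite[Remark 16.5]{Braides02}): when $U\in\mathscr A_L(\Omega)$ has Lipschitz boundary, $\HH^{n-1}(\partial U)$ carries no mass for the measure $E_\ell(u,1,\cdot)$ up to the measure-zero argument on a one-parameter family of sets, so there is no ``concentration on the boundary'' and one upgrades $(E'_\ell)_-(\cdot,1,U)=E'_\ell(\cdot,1,U)$, and likewise for $E''_\ell$; combined with \eqref{eq:crit:01} this yields $E_\ell(\cdot,1,U)=E'_\ell(\cdot,1,U)=E''_\ell(\cdot,1,U)$ on $GSBV^2(\Omega)$. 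Property (5), translation invariance in $x$ and in $u$, is inherited directly from the corresponding invariance of the discrete functionals $E_\varepsilon$ defined on the lattice $\delta\Z^n$ (invariance in $x$ holding in the limit $\varepsilon\to0$ because lattice translations become dense), and passes to $\Gamma$-liminf, $\Gamma$-limsup and their envelopes. Assembling these points completes the proof.
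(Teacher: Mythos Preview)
Your proposal is correct and follows essentially the same route as the paper's own proof: compactness via \cite[Proposition 16.9]{DalMaso93}, property (1) from Remark~\ref{rem:crit01}, the bounds in (2) from Remarks~\ref{lowerbound:local} and~\ref{upperbound:local} together with the inner regularity of the Mumford--Shah functional, (3)--(4) from the fundamental estimate Proposition~\ref{fund:est} combined with the De Giorgi--Letta criterion, and (5) from the translation invariance of $E_\varepsilon$. You have in fact spelled out more detail than the paper does (particularly the subadditivity step and the remark that invariance in $x$ only holds for lattice translations at finite $\varepsilon$ and is recovered in the limit), but the underlying strategy is identical.
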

\begin{proof}
The existence of the subsequence $(\e_j)$ and of the functional $E_\ell$ satisfying \eqref{eq:crit:01} directly follows from \cite[Proposition 16.9]{DalMaso93}. Further, Remark \ref{rem:crit01} gives (1). The estimates as in (2) are a consequence of Remarks \ref{lowerbound:local}  and \ref{upperbound:local}, and of the inner regularity of $M\!S(u,1,\cdot)$, where $M\!S$ denotes the Mumford-Shah functional. 
The proof of (3) and (4) is standard and appeals to the fundamental estimate Proposition \ref{fund:est} and to the De Giorgi and Letta measure property criterion (see \emph{e.g.} \cite[Proposition 14.23]{DalMaso93}). Finally, the proof of (5) is a consequence of the fact that $E_\e$ is invariant under translations in $x$ and $u$. 
\end{proof}
For later use we now prove that the abstract $\Gamma$-limit $E_\ell$ provided by Theorem \ref{gamma:compactness} is stable under addition of suitable boundary conditions.
To this end, for every $(t,\nu)\in\R\times S^{n-1}$ we localise $E_\ell$ to the oriented unit cube $Q^\nu$ and set
\[E^t_\ell(u,1,Q^\nu):=
\begin{cases}
E_\ell(u,1,Q^\nu) &\text{if}\ u=u_t^\nu\ \text{near}\ \partial Q^\nu,\ \|u\|_{L^\infty}\leq t,\\
+\infty &\text{otherwise in}\ L^1(Q^\nu),
\end{cases}\]
where $u_t^\nu:=t\chi_{\{\left<x,\nu\right>>0\}}$. Moreover, we define
\[E^t_{\e_j}(u,v,Q^\nu):=
\begin{cases}
E_{\e_j}(u,v,Q^\nu) &\text{if}\ (u,v)=(\hat{u}_{t,\nu,j},\hat{v}_{\nu,j})\ \text{near}\ \partial Q^\nu,\ \|u\|_{L^\infty}\leq t,\\
+\infty &\text{otherwise in}\ L^1(Q^\nu)\times L^1(Q^\nu),
\end{cases}\]
where
\begin{equation}\label{bound:cond01}
\hat{u}_{t,\nu,j}^i:=
\begin{cases}
t &\text{if}\ \left<i,\nu\right>>0,\\
0 &\text{if}\ \left<i,\nu\right>\leq 0,
\end{cases}
\end{equation}
and
\begin{equation}\label{bound:cond02}
\hat{v}_{\nu,j}^i:=
\begin{cases}
0 &\text{if}\quad i\in S_j^\nu,\\
1 &\text{otherwise},
\end{cases}
\end{equation}
for every $i\in Q^\nu\cap\delta_j\Z^n$, with
\begin{equation}\label{def:sj}
S_j^\nu:=\{i\in Q^\nu:\ \exists\ l\in Q^\nu\cap\delta_j\Z^n\ \text{such that}\ |i-l|=\delta_j\ \text{and}\quad {\rm sign}\left<i,\nu\right>\neq{\rm sign}\left<l,\nu\right>\}.
\end{equation}
%
%We notice that the expression as above ``$w=w_0$ in a neighbourhood of $\partial Q^\nu$" means that the support of the function $w-w_0$ is compactly contained in $Q^\nu$.

\medskip

The following result holds true. 
\begin{theorem}[$\Gamma$-convergence with boundary data]\label{prop:boundary}
Let $(\e_j)$ and $E_\ell$ be as in the statement of Theorem \ref{gamma:compactness}, and let $(t,\nu)\in\R\times S^{n-1}$; then for every and $u\in L^1(Q^\nu)$ there holds
\[\Gamma\hbox{-}\lim_{j\to+\infty}E^t_{\e_j}(u,1,Q^\nu)=E^t_\ell(u,1,Q^\nu).\]
\end{theorem}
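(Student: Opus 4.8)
The plan is to deduce the statement from the abstract $\G$-convergence established in Theorem \ref{gamma:compactness} together with the fundamental estimate of Proposition \ref{fund:est}, following the standard device for adding a Dirichlet datum. We work with $Q^\nu$ in place of $\O$ throughout; since $Q^\nu$ is a cube, $Q^\nu\in\mathscr A_L$, so Theorem \ref{gamma:compactness}(4) gives $E_\ell(\cdot,1,Q^\nu)=E'_\ell(\cdot,1,Q^\nu)=E''_\ell(\cdot,1,Q^\nu)$ on $GSBV^2(Q^\nu)$. In particular, for every $u\in GSBV^2(Q^\nu)$ there is a sequence $(\tilde u_j,\tilde v_j)\to(u,1)$ in $L^1(Q^\nu)\times L^1(Q^\nu)$ with $\limsup_j E_{\e_j}(\tilde u_j,\tilde v_j,Q^\nu)\le E_\ell(u,1,Q^\nu)$; since $E_\e$ decreases under truncation in $u$, when $u\in L^\infty(Q^\nu)$ we may additionally require $\|\tilde u_j\|_{L^\infty}\le\|u\|_{L^\infty}$ and hence (being equibounded) $\tilde u_j\to u$ also in $L^2(Q^\nu)$.

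\textbf{The $\liminf$ inequality} is the easy direction. Let $(u_j,v_j)\to(u,v)$ in $L^1(Q^\nu)\times L^1(Q^\nu)$; we may assume $\liminf_j E^t_{\e_j}(u_j,v_j,Q^\nu)<+\infty$, so along a subsequence $E^t_{\e_j}(u_j,v_j,Q^\nu)=E_{\e_j}(u_j,v_j,Q^\nu)$ is bounded, $\|u_j\|_{L^\infty}\le t$, and $(u_j,v_j)=(\hat u_{t,\nu,j},\hat v_{\nu,j})$ near $\partial Q^\nu$. Proposition \ref{compactness} yields $u\in GSBV^2(Q^\nu)$ and $v=1$; moreover $\|u\|_{L^\infty}\le t$ and, since $\hat u_{t,\nu,j}\to u^\nu_t$ in $L^1$ while the $u_j$ coincide with $\hat u_{t,\nu,j}$ on a fixed neighbourhood of $\partial Q^\nu$, also $u=u^\nu_t$ near $\partial Q^\nu$; hence $u$ is admissible for $E^t_\ell$ and $E^t_\ell(u,1,Q^\nu)=E_\ell(u,1,Q^\nu)$. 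By the definition of the $\G$-$\liminf$ and Theorem \ref{gamma:compactness}(4),
\[
\liminf_{j\to+\infty}E^t_{\e_j}(u_j,v_j,Q^\nu)=\liminf_{j\to+\infty}E_{\e_j}(u_j,v_j,Q^\nu)\ge E'_\ell(u,1,Q^\nu)=E^t_\ell(u,1,Q^\nu).
\]
(If $v\neq1$ the energy bound would force $v_j\to1$, a contradiction, so the liminf is $+\infty$.)

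\textbf{The $\limsup$ inequality} is the core of the proof. We may assume $E^t_\ell(u,1,Q^\nu)<+\infty$, hence $u\in GSBV^2(Q^\nu)$, $\|u\|_{L^\infty}\le t$, and $u=u^\nu_t$ in some neighbourhood $W$ of $\partial Q^\nu$. Fix $\eta>0$ and choose $w>0$ so small that $Q^\nu\setminus\overline{Q^\nu_{1-4w}}\subset W$; set $U:=Q^\nu_{1-3w}$, $U':=Q^\nu_{1-2w}$ and $V:=Q^\nu\setminus\overline{Q^\nu_{1-4w}}$, so that $U\subset\subset U'$, $U\cup V=Q^\nu$, and $(U'\setminus U)\cap V=Q^\nu_{1-2w}\setminus\overline{Q^\nu_{1-3w}}\subset W$. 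Apply Proposition \ref{fund:est} with $(u,v)=(\tilde u_j,\tilde v_j)$ (the truncated recovery sequence for $u$) on $U'$ and $(\tilde u,\tilde v)=(\hat u_{t,\nu,j},\hat v_{\nu,j})$ on $V$. Because the gluing functions used in the proof of Proposition \ref{fund:est} are supported in $U'=Q^\nu_{1-2w}$, the resulting pair $(u_j,v_j)$ coincides with $(\hat u_{t,\nu,j},\hat v_{\nu,j})$ near $\partial Q^\nu$ and with $(\tilde u_j,\tilde v_j)$ on $U$; moreover $0\le v_j\le1$, $\|u_j\|_{L^\infty}\le t$ (a convex combination of functions bounded by $t$), and $(u_j,v_j)\to(u,1)$ in $L^1(Q^\nu)\times L^1(Q^\nu)$ (on $Q^\nu\setminus U'$ it tends to $(u^\nu_t,1)=(u,1)$ since $u=u^\nu_t$ on $W$, and on the transition layer it is a convex combination of two sequences with the same limit $u=u^\nu_t$). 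The fundamental estimate gives
\[
E_{\e_j}(u_j,v_j,Q^\nu)\le(1+\eta)\big(E_{\e_j}(\tilde u_j,\tilde v_j,Q^\nu)+E_{\e_j}(\hat u_{t,\nu,j},\hat v_{\nu,j},V)\big)+\sigma_{\e_j},
\]
with $\sigma_{\e_j}\to0$ (on $(U'\setminus U)\cap V\subset W$ both $\tilde u_j$ and $\hat u_{t,\nu,j}$ converge in $L^2$ to $u^\nu_t$, both $\tilde v_j$ and $\hat v_{\nu,j}$ converge in $L^2$ to $1$, and $\e_j\to0$; we used monotonicity of the localised energy to replace $U'$ by $Q^\nu$). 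The decisive observation is that the boundary datum is negligible on $V$: since $\hat v^i_{\nu,j}=0$ for $i\in S^\nu_j$ and $\hat u^{\,i}_{t,\nu,j}\ne\hat u^{\,i\pm\delta_j e_k}_{t,\nu,j}$ only when $i\in S^\nu_j$, one has $F_{\e_j}(\hat u_{t,\nu,j},\hat v_{\nu,j},V)=0$, while
\[
G_{\e_j}(\hat v_{\nu,j},V)\le\Big(\frac{\delta_j^{\,n}}{\e_j}+C\,\e_j\,\delta_j^{\,n-2}\Big)\,\#(S^\nu_j\cap V_{\delta_j})\le C_\ell\,w+o_j(1),
\]
because $S^\nu_j$ is contained in the $\delta_j$-neighbourhood of $\Pi_\nu$, $\LL^n(\{\dist(\cdot,\Pi_\nu)\le\delta_j\}\cap V)\le C\,w\,\delta_j$ (hence $\#(S^\nu_j\cap V_{\delta_j})\le C\,w\,\delta_j^{1-n}$), and $\delta_j/\e_j\to\ell<+\infty$. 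Passing to the limit we get $\limsup_j E_{\e_j}(u_j,v_j,Q^\nu)\le(1+\eta)\big(E_\ell(u,1,Q^\nu)+C_\ell w\big)$, and a diagonal argument letting $w\to0$ and $\eta\to0$ produces a sequence admissible for $E^t_{\e_j}(\cdot,\cdot,Q^\nu)$, converging to $(u,1)$ in $L^1(Q^\nu)\times L^1(Q^\nu)$, with $\limsup_j E^t_{\e_j}\le E_\ell(u,1,Q^\nu)=E^t_\ell(u,1,Q^\nu)$.

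\textbf{Expected main obstacle.} Everything apart from the $\limsup$ inequality is routine; within it, the delicate point is the verification that $(\hat u_{t,\nu,j},\hat v_{\nu,j})$ carries asymptotically no energy on the thin shell $V$ adjacent to $\partial Q^\nu$. This rests precisely on the definition \eqref{def:sj} of $S^\nu_j$, which forces the $F$-part of the boundary datum to vanish identically and confines the $v$-transition to the $\delta_j$-neighbourhood of $\Pi_\nu$, whose trace on $V$ has $\LL^n$-measure of order $w$. The remaining care goes into choosing the regions $U,U',V$ so that the gluing region lies inside $\{u=u^\nu_t\}$ — which is what makes the error term $\sigma_{\e_j}$ of Proposition \ref{fund:est} vanish — and into controlling the dependence on the constant $M$ of Proposition \ref{fund:est} during the final diagonalisation in $w$ and $\eta$.
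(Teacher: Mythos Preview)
Your proof is correct and follows essentially the same route as the paper's: the liminf is immediate from the inclusion $E^t_{\e_j}\ge E_{\e_j}$ and the identification of the admissible class for the limit, while the limsup is obtained by gluing a (truncated) recovery sequence for $E_\ell(u,1,Q^\nu)$ to the boundary pair $(\hat u_{t,\nu,j},\hat v_{\nu,j})$ via Proposition~\ref{fund:est}, using that $F_{\e_j}(\hat u_{t,\nu,j},\hat v_{\nu,j},V)=0$ and $G_{\e_j}(\hat v_{\nu,j},V)\le c\,\HH^{n-1}(V\cap\Pi_\nu)+o(1)$. Your parametrisation with $w$ and the explicit count $\#(S^\nu_j\cap V_{\delta_j})\le Cw\,\delta_j^{1-n}$ are just a more detailed version of the paper's estimate $\limsup_j G_{\e_j}(\hat v_{\nu,j},Q^\nu\setminus\overline{Q^\nu_\rho})\le c\,\HH^{n-1}((Q^\nu\setminus\overline{Q^\nu_\rho})\cap\Pi_\nu)$; the ``obstacle'' you flag about the constant $M$ is harmless, since $M$ enters only through $\sigma_{\e_j}$, which vanishes for each fixed $w,\eta$ before the diagonalisation.
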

\begin{proof}

The liminf inequality is straightforward. Indeed, let $u\in L^1(Q^\nu)$ and let $(u_j,v_j)\subset L^1(Q^\nu)\times L^1(Q^\nu)$ be such that $(u_j,v_j)\to (u,1)$ in $L^1(Q^\nu)\times L^1(Q^\nu)$ and $\sup_jE^t_{\e_j}(u_j,v_j,Q^\nu)<+\infty$. Then, in particular, $u_j=\hat{u}_{t,\nu,j}$ in a neighbourhood of $\partial Q^\nu$, hence $u=u_t^\nu$ in a neighbourhood of $\partial Q^\nu$. Since moreover $\|u_j\|_{L^\infty}\leq t$ for every $j\in\N$ we also deduce that $\|u\|_{L^\infty}\leq t$.

\medskip

To prove the limsup inequality let $u\in GSBV^2(Q^\nu)\cap L^\infty(Q^\nu)$ be such that $u=u_t^\nu$ in $Q^\nu\setminus\overline{Q_\rho^\nu}$ for some $\rho\in (0,1)$ and $\|u\|_{L^\infty}\leq t$. Let $(u_j,v_j)\subset L^1(Q^\nu)\times L^1(Q^\nu)$ be a sequence converging to $(u,1)$ in $L^1(Q^\nu)\times L^1(Q^\nu)$ and such that
\[\limsup_{j\to+\infty}E_{\e_j}(u_j,v_j,Q^\nu)\leq E(u,1,Q^\nu).\]
Since $E_{\e_j}$ decreases by truncation we can always assume that $\|u_j\|_{L^\infty}\leq\|u\|_{L^\infty}\leq t$ so that, in particular, $u_j\to u$ in $L^2(Q^\nu)$. We now modify the sequence $(u_j,v_j)$ in order to attain the boundary condition. To do so we make use of  the fundamental estimate Proposition \ref{fund:est}. We then fix $\rho'\in (\rho,1)$ and set $U:=Q_\rho^\nu$, $U':=Q_{\rho'}^\nu$ and $V:=Q^\nu\setminus\overline{Q_{\rho}^\nu}$. We notice that clearly $u_j\to u_t^\nu$ in $L^2(V)$ and $v_j\to 1$ in $L^2(U')$.
We then let $\eta>0$ be fixed and arbitrary. Hence, invoking Proposition \ref{fund:est}, with $U$ and $V$ chosen as above, we can find a sequence $(\tilde{u}_j,\tilde{v}_j)\subset L^1(Q^\nu)\times L^1(Q^\nu)$, such that $(\tilde{u}_j,\tilde{v}_j)=(u_j,v_j)$ in $Q_\rho^\nu$, $(\tilde{u}_j,\tilde{v}_j)=(\hat{u}_{t,\nu,j},\hat{v}_{\nu,j})$ in $Q^\nu\setminus\overline{Q_{\rho'}^\nu}$, $(\tilde{u}_j,\tilde{v}_j)\to (u,1)$ in $L^2(Q^\nu)\times L^2(Q^\nu)$, $\|u_j\|_{L^\infty}\leq t$, and also satisfying 
\[\limsup_{j\to+\infty}E_{\e_j}(\tilde{u}_j,\tilde{v}_j,Q^\nu)\leq (1+\eta)\Big(\limsup_{j\to+\infty}E_{\e_j}(u_j,v_j,Q_{\rho'}^\nu)+\limsup_{j\to +\infty}E_{\e_j}(\hat{u}_{t,\nu,j},\hat{v}_{\nu,j},Q^\nu\setminus\overline{Q_\rho^\nu})\Big).\]
Moreover, we notice that $F_{\e_j}(\hat{u}_{t,\nu,j},\hat{v}_{\nu,j},Q^\nu\setminus\overline{Q_\rho^\nu})=0$, while 
\[\limsup_{j\to +\infty}G_{\e_j}(\hat{v}_{\nu,j},Q^\nu\setminus\overline{Q_\rho^\nu})\leq c\,\HH^{n-1}((Q^\nu\setminus\overline{Q_\rho^\nu})\cap\Pi_\nu),\]
for some $c>0$.
Therefore we obtain
\[\limsup_{j\to +\infty}E_{\e_j}(\tilde{u}_j,\tilde{v}_j,Q^\nu)\leq (1+\eta)\left(E(u,1,Q^\nu)+c\,\HH^{n-1}((Q^\nu\setminus\overline{Q_\rho^\nu})\cap \Pi_\nu)\right).\]
Then we conclude by the arbitrariness of $\eta>0$ and $\rho\in (0,1)$.
\end{proof}
We are now ready to prove the following integral-representation result for the $\Gamma$-limit $E_\ell$. 
\begin{theorem}[Integral representation]\label{int:rep}
Let $E_\ell$ be as in Theorem \ref{gamma:compactness}, then there exists a Borel function $\phi_\ell:\R\times S^{n-1}\to [0,+\infty)$ such that
\[E_\ell(u,v)=\begin{cases}
\ds \int_\O|\nabla u|^2dx+\int_{S_u\cap \O}\phi_\ell([u],\nu_u)d\HH^{n-1} & \text{if}\; u\in GSBV^2(\O)\; \text{and}\; v=1\, \text{a.e.} \; \text{in}\; \O,
\cr
+\infty & \text{otherwise in}\; L^1(\O)\times L^1(\O).
\end{cases}
\]
Moreover $\phi_\ell$ is given by the following asymptotic formula
\begin{align}\label{form:phi}
\phi_\ell(t,\nu)=\limsup_{\rho\to 0^+}\frac{1}{\rho^{n-1}}\lim_{j\to+\infty}\inf\{E_{\e_j}(u,v,Q_\rho^\nu) \colon (u,v)=(\hat{u}_{t,\nu,j},\hat{v}_{\nu,j})\; \text{near}\; \partial Q_\rho^\nu\},
\end{align}
where $\hat{u}_{t,\nu,j}$ and $\hat{v}_{\nu,j}$ are as in \eqref{bound:cond01} and \eqref{bound:cond02}, respectively.
\end{theorem}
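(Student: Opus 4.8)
The strategy is to apply an abstract integral-representation theorem for functionals on $GSBV^2$ of Braides--Chambolle--Solci type (see \cite{BFLM} or \cite{Braides02}, Section 16), using the structural properties of $E_\ell(\cdot,1,\cdot)$ already collected in Theorem \ref{gamma:compactness}. First I would recall that, by Theorem \ref{gamma:compactness}-(2), the functional $E_\ell(\cdot,1,\cdot)$ is bounded from above and below by surface-type functionals with the $\HH^{n-1}$-measure of the jump set, and has no bulk contribution beyond the Dirichlet term; by (1) it is $L^1$-lower semicontinuous and local; by (3) the set function $E_\ell(u,1,\cdot)$ is (the restriction of) a Radon measure; by (4) it is inner regular on $\mathscr A_L(\Omega)$; and by (5) it is translation invariant in $x$ and $u$. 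These are precisely the hypotheses under which the abstract theorem yields the existence of Carath\'eodory integrands, one for the bulk and one for the surface part, such that $E_\ell(u,1,U)=\int_U f(\nabla u)\,dx+\int_{S_u\cap U}\phi_\ell([u],\nu_u)\,d\HH^{n-1}$. Then I would identify the bulk integrand: from the two-sided estimate in (2) and the fact that on Sobolev functions $E_\ell$ is squeezed exactly between two copies of $\int_U|\nabla u|^2\,dx$, one gets $f(\xi)=|\xi|^2$, while for $u$ with nontrivial jump set the upper bound in (2) forces the bulk density to remain $|\nabla u|^2$ and a finite surface density $\phi_\ell$; the lower bound $\tfrac{1}{\sqrt n}\HH^{n-1}(S_u)$ guarantees $\phi_\ell>0$ and the upper bound $(1+\alpha_\ell)\HH^{n-1}(S_u)$ gives $\phi_\ell\le 1+\alpha_\ell<+\infty$, so $\phi_\ell\colon\R\times S^{n-1}\to[0,+\infty)$ as claimed.

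Having the representation, the second part is to derive the asymptotic formula \eqref{form:phi}. Here the key tool is Theorem \ref{prop:boundary} ($\Gamma$-convergence with the prescribed jump boundary data $(\hat u_{t,\nu,j},\hat v_{\nu,j})$ on the oriented cube). Fix $(t,\nu)$ and $\rho\in(0,1)$. By the integral representation, $E_\ell(u_t^\nu,1,Q_\rho^\nu)=\rho^{n-1}\phi_\ell(t,\nu)$ since $S_{u_t^\nu}\cap Q_\rho^\nu$ is the flat piece of $\Pi_\nu$ of $\HH^{n-1}$-measure $\rho^{n-1}$ with normal $\nu$ and opening $t$ (the bulk term vanishes). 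On the other hand, by Theorem \ref{prop:boundary} applied on $Q_\rho^\nu$ (using translation/scaling invariance of $E_\ell$ and the structure of the functionals), $E^t_\ell(u_t^\nu,1,Q_\rho^\nu)$ equals the $\Gamma$-limit of $E^t_{\e_j}(\cdot,1,Q_\rho^\nu)$, and since the boundary datum $u_t^\nu$ is itself admissible and minimal, the $\Gamma$-limit of the constrained minima coincides with $\Gamma\hbox{-}\lim E^t_{\e_j}(u_t^\nu,1,Q_\rho^\nu)=E_\ell(u_t^\nu,1,Q_\rho^\nu)$. Combining with the standard fact that $\Gamma$-convergence together with equicoercivity (which follows from Proposition \ref{compactness} after localisation, or can be bypassed by a direct compactness argument for the constrained problems) implies convergence of the constrained infima, one obtains
\[
\lim_{j\to+\infty}\inf\{E_{\e_j}(u,v,Q_\rho^\nu)\colon(u,v)=(\hat u_{t,\nu,j},\hat v_{\nu,j})\ \text{near}\ \partial Q_\rho^\nu\}=E_\ell(u_t^\nu,1,Q_\rho^\nu)=\rho^{n-1}\phi_\ell(t,\nu).
\]
Dividing by $\rho^{n-1}$ and taking $\limsup_{\rho\to0^+}$ (which is in fact a genuine limit here, the ratio being constant in $\rho$) gives \eqref{form:phi}.

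Two technical points need care. First, in applying the abstract integral-representation theorem one must make sure the functional is finite precisely on $GSBV^2(\Omega)$ with $v=1$ and $+\infty$ elsewhere; this is exactly the content of Theorem \ref{t:domain-G-limit}-(i) together with the two-sided bound (2), so the domain issue is already settled. Second, and this is the main obstacle, one has to verify that $\lim_{j}\inf(\cdots)$ exists and equals $E_\ell(u_t^\nu,1,Q_\rho^\nu)$ \emph{without} a full equicoercivity statement for the sequences satisfying the discrete boundary condition $\hat v_{\nu,j}$: the datum $\hat v_{\nu,j}$ forces $v_j=0$ on the discrete interface $S_j^\nu$, so the usual compactness argument (from $\sup_j G_{\e_j}(v_j)<+\infty$, hence $v_j\to1$ in $L^2$) applies on $Q_\rho^\nu\setminus\Pi_\nu$, but one must check that truncation of $u_j$ at level $t$ (allowed since $E_{\e_j}$ decreases by truncations and the boundary datum is bounded by $t$) together with the $\HH^{n-1}$-bound on the jump set from Proposition \ref{compactness} gives $L^1$-precompactness of the minimising sequences. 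Granting this — which is routine given the slicing estimates of Section \ref{sec:domain} — the liminf inequality of $\Gamma$-convergence gives $\liminf_j\inf\ge E_\ell(u_t^\nu,1,Q_\rho^\nu)$, while the limsup inequality applied to the recovery sequence (which can be taken to satisfy the boundary condition by the cut-off construction in Theorem \ref{prop:boundary}) gives the reverse, completing the identification.
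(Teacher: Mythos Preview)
Your overall strategy---apply the global integral-representation theorem of \cite{BFLM} using the structural properties from Theorem \ref{gamma:compactness}, then pass to the discrete cell formula via Theorem \ref{prop:boundary}---is the same as the paper's. However, there are two genuine gaps.

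\textbf{First gap (applicability of BFLM).} You assert that the properties in Theorem \ref{gamma:compactness} ``are precisely the hypotheses'' of the abstract theorem. They are not: \cite[Theorem 1]{BFLM} requires a lower bound of the type $c\int_{S_u}(1+|[u]|)\,d\HH^{n-1}$ on the surface part, whereas Theorem \ref{gamma:compactness}-(2) only gives $\tfrac{1}{\sqrt n}\HH^{n-1}(S_u)$, with no control on $|[u]|$. The paper handles this by a perturbation: it applies \cite{BFLM} to $E_\ell^\sigma:=E_\ell+\sigma\int_{S_u}|[u]|\,d\HH^{n-1}$ for each $\sigma>0$, obtains densities $\phi_\ell^\sigma$ monotone in $\sigma$, and then lets $\sigma\downarrow 0$ via Monotone Convergence. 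This also forces an additional step showing that the perturbed cell formula and the unperturbed one agree in the limit, which the paper carries out using the bound $\HH^{n-1}(S_u)\le\sqrt n\,E_\ell$ on a quasi-minimiser.

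\textbf{Second gap (minimality of $u_t^\nu$).} Your derivation of \eqref{form:phi} rests on the claim that ``the boundary datum $u_t^\nu$ is itself admissible and minimal'', so that $\inf\{E_\ell(u,1,Q_\rho^\nu):u=u_t^\nu\text{ near }\partial Q_\rho^\nu\}=E_\ell(u_t^\nu,1,Q_\rho^\nu)=\rho^{n-1}\phi_\ell(t,\nu)$, and hence the ratio is ``constant in $\rho$''. This is not justified: nothing in Theorem \ref{gamma:compactness} rules out competitors $u$ with, say, a nonflat jump set or several smaller jumps that could in principle have lower energy than $u_t^\nu$. Establishing minimality of $u_t^\nu$ is essentially equivalent to BV-ellipticity of $\phi_\ell$, which is a nontrivial consequence of lower semicontinuity and is not what you invoke. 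The paper avoids this entirely: it never claims $u_t^\nu$ is minimal on $Q_\rho^\nu$. Instead, the BFLM representation \emph{itself} delivers the identity
\[
\phi_\ell(t,\nu)=\limsup_{\rho\to 0^+}\frac{1}{\rho^{n-1}}\inf\bigl\{E_\ell(u,1,Q_\rho^\nu):u=u_t^\nu\text{ near }\partial Q_\rho^\nu\bigr\}
\]
as a \emph{cell formula} (after the $\sigma\to 0$ argument above). Only then is $\Gamma$-convergence with boundary data (Theorem \ref{prop:boundary}) together with the equicoercivity coming from the truncation $\|u\|_\infty\le t$ used to replace the $E_\ell$-infimum by the $E_{\e_j}$-infimum for each fixed $\rho$. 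The $\limsup_\rho$ stays because the ratio is not known to be independent of $\rho$.

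In short: your route to \eqref{form:phi} would work if you could prove $u_t^\nu$ is a minimiser, but you have not, and the paper's argument sidesteps this by reading the cell formula directly off \cite{BFLM}.
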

\begin{proof}

We first observe that Theorem \ref{t:domain-G-limit}-(i) together with Remark \ref{upperbound:local} ensure that the domain of $E_\ell$ coincides with $GSBV^2(\O)\times \{1\}$.
Moreover, in view of Theorem \ref{gamma:compactness} $E_\ell$ satisfies all the assumptions of \cite[Theorem 1]{BFLM} except for the lower-bound estimate which can though be recovered by using a standard perturbation argument. That is, let $\sigma>0$ and for every $u\in SBV^2(\Omega)$ set
\[E^\sigma_\ell(u,1,U):=E_\ell(u,1,U)+\sigma\int_{S_u\cap U}|[u]|d\HH^{n-1}.\]
Then, for every $\sigma >0$, $E^\sigma_\ell$ satisfies all the assumptions of Theorem \cite[Theorem 1]{BFLM} which ensures the existence of two Borel functions $f_\ell^\sigma:\Omega\times\R\times\R^n\to [0,+\infty)$ and $g_\ell^\sigma:\Omega\times\R\times\R\times S^{n-1}\to [0,+\infty)$ such that
\[E^\sigma_\ell(u,1,U)=\int_Uf_\ell^\sigma(x,u,\nabla u)dx+\int_{S_u\cap U}g_\ell^\sigma(x,u^+,u^-,\nu_u)d\HH^{n-1},\]
for every $u\in SBV^2(\Omega)$, $U\in\mathscr A(\Omega)$. Since $E_\ell$ is invariant under translations in $u$ and in $x$, by (2) and (3) in \cite[Theorem 1]{BFLM} we have that $f_\ell^\sigma$ does not depend on $x$ and $u$
and that $g_\ell^\sigma$ is independent of $x$ and depends on $u^+$ and $u^-$ only through their difference $[u]$\ie $g_\ell^\sigma(x,a,b,\nu)=\phi_\ell^\sigma(a-b,\nu)$, for some $\phi_\ell^\sigma \colon\R\times S^{n-1}\to [0,+\infty)$. Moreover, appealing to Theorem \ref{gamma:compactness}-(2) gives $f_\ell^\sigma(\xi)=|\xi|^2$ for every $\sigma >0$ and every $\xi \in \R^n$.
Further, formula (3) in \cite[Theorem 1]{BFLM} implies that $\phi_\ell^\sigma$ is decreasing as $\sigma$ decreases. Thus, setting $\phi_\ell(t,\nu):=\lim_{\sigma\to 0^+}\phi_\ell^\sigma (t,\nu)=\inf_{\sigma>0}\phi_\ell^\sigma(t,\nu)$, letting $\sigma\to 0^+$, by the pointwise convergence of $E_\ell^\sigma$ to $E_\ell$ and the Monotone Convergence Theorem we get
\begin{equation}\label{eq:crit03}
E_\ell(u,1,U)=\int_U|\nabla u|^2dx+\int_{S_u\cap U}\phi_\ell([u],\nu_u)d\HH^{n-1},
\end{equation}
for every $u\in SBV^2(\Omega)$, $U\in\mathscr A_L(\Omega)$. Hence, choosing $U=\O$ gives the desired integral representation on $SBV^2(\O)$.

\medskip

We now show that the integral representation \eqref{eq:crit03} also extends to the case $u\in GSBV^2(\Omega)$. 
To this end, for every $u\in GSBV^2(\Omega)$ set
\[\tilde{E}_\ell(u,1):=\int_\Omega|\nabla u|^2dx+\int_{S_u\cap\Omega}\phi_\ell([u],\nu)d\HH^{n-1}.\]
Since $\tilde{E}_\ell$ coincides with ${E}_\ell$ on $GSBV^2(\Omega)\cap L^\infty(\Omega)$ we can deduce that $\tilde{E}_\ell$ is lower semicontinuous with respect to the strong $L^1(\Omega)$-convergence on $GSBV^2(\Omega)\cap L^\infty(\Omega)$. Hence, in particular, $\tilde{E}_\ell$ is lower semicontinuous on the space of finite partitions. Then, necessarily, $\phi_\ell$ is subadditive in the first variable; moreover, for every $t\in\R$ the $1$-homogeneous extension of $\phi_\ell(t,\cdot)$ to $\R^n$ is convex (see \cite{AB902}). Therefore, in view of  the lower bound $\phi_\ell \geq 1/\sqrt{n}$ we can apply \cite[Theorem 5.22]{AFP} to deduce that $\tilde{E}_\ell$ is lower semicontinuous with respect to the strong $L^1(\Omega)$-convergence on the whole $GSBV^2(\Omega)$. 
Since both $E_\ell$ and $\tilde{E}_\ell$ decrease by truncation, by virtue of their lower semicontinuity we can actually deduce that they are continuous by truncations. Therefore,
if $u^m$ denotes the truncation of $u\in GSBV^2(\Omega)$ at level $m\in \N$ we have
$$
E_\ell(u,1)=\lim_{m\to +\infty} E_\ell(u^m,1)=\lim_{m\to +\infty} \tilde E_\ell(u^m,1)=\tilde E_\ell(u,1),
$$
and thus the desired extension. 

\medskip

We now prove \eqref{form:phi}. We start observing that combining the definition of $\phi_\ell$ with formula (3) of \cite[Theorem 1]{BFLM}, for every $(t,\nu)\in\R\times S^{n-1}$ we have
\begin{multline*}
\phi_\ell(t,\nu)=\inf_{\sigma>0}\limsup_{\rho\to 0^+}\frac{1}{\rho^{n-1}}\inf\bigg\{E_\ell(u,1,Q_\rho^\nu)+\sigma\int_{S_u\cap Q_\rho^\nu}|[u]|d\HH^{n-1}\colon \\ u\in SBV^2(Q_\rho^\nu), u=u_t^\nu\; \text{near}\; \partial Q_\rho^\nu\Big\}.
\end{multline*}
For $(t,\nu)\in\R\times S^{n-1}$ set
\[\tilde{\phi}_\ell(t,\nu):=\limsup_{\rho\to 0^+}\frac{1}{\rho^{n-1}}\inf\left\{E_\ell(u,1,Q_\rho^\nu)\colon u\in SBV^2(Q_\rho^\nu),  u=u_t^\nu\; \text{near}\; \partial Q_\rho^\nu\right\};
\]
we now claim that 
$$
\phi_\ell(t,\nu)=\tilde{\phi}_\ell(t,\nu) \quad \text{for every\; $(t,\nu)\in \R\times S^{n-1}$.}
$$ 
Since we trivially have ${\phi_\ell}\geq\tilde\phi_\ell$, we only need to prove the opposite inequality.
To this end, we notice that up to a truncation argument, in the definition of $\phi_\ell(t,\nu)$ and $\tilde{\phi}_\ell(t,\nu)$ we can restrict to test functions $u$ satisfying $\|u\|_{L^\infty}\leq t$. 

Let $\rho>0$ be fixed; for every $(t,\nu)\in \R\times S^{n-1}$ set
\[\tilde{\phi}_{\ell, \rho}(t,\nu):=\frac{1}{\rho^{n-1}}\inf\left\{E_\ell(u,1,Q_\rho^\nu)\colon u\in SBV^2(Q_\rho^\nu),  u=u_t^\nu\; \text{near}\; \partial Q_\rho^\nu,\ \|u\|_{L^\infty}\leq t\right\},\]
and
\begin{multline*}
\phi_{\ell,\rho}^\sigma(t,\nu):=\frac{1}{\rho^{n-1}}\inf\bigg\{E_\ell(u,1,Q_\rho^\nu)+\sigma\int_{S_u\cap Q_\rho^\nu}|[u]|d\HH^{n-1}\colon\\
u\in SBV^2(Q_\rho^\nu), u=u_t^\nu\; \text{near}\; \partial Q_\rho^\nu,\ \|u\|_{L^\infty}\leq t\Big\}.
\end{multline*}
For $\sigma>0$ and $\rho>0$ fixed let $u_{\rho,\sigma}\in SBV^2(Q_\rho^\nu)$ be such that $u_{\rho,\sigma}=u_t^\nu$ in a neighbourhood of $\partial Q_\rho^\nu$, $\|u_{\rho,\sigma}\|_{L^\infty}\leq t$ and such that
\begin{equation}\label{eq:crit05}
E_\ell(u_{\rho,\sigma},1,Q_\rho^\nu)\leq\rho^{n-1}(\tilde{\phi}_{\ell,\rho}(t,\nu)+\sigma).
\end{equation}
We then have
\begin{align}\nonumber
\phi_{\ell,\rho}^\sigma(t,\nu)&\leq\frac{1}{\rho^{n-1}}\left(E_\ell(u_{\rho,\sigma},1,Q_\rho^\nu)+\sigma\int_{S_{u_{\rho,\sigma}}}|[u_{\rho,\sigma}]|d\HH^{n-1}\right)\\\label{c:thu}
&\leq\frac{1}{\rho^{n-1}}\left(\rho^{n-1}(\tilde{\phi}_{\ell,\rho}(t,\nu)+\sigma)+2\sigma|t|\HH^{n-1}(S_{u_{\rho,\sigma}}\cap Q_\rho^\nu)\right).
\end{align}
By Theorem \ref{gamma:compactness}-(2) we also get
\[\HH^{n-1}(S_{u_{\rho,\sigma}}\cap Q_\rho^\nu)\leq\sqrt{n}\,E_\ell(u_{\rho,\sigma},1,Q_\rho^\nu),\]
hence combining \eqref{eq:crit05} and \eqref{c:thu} gives
\[\phi_{\ell,\rho}^\sigma (t,\nu)\leq(1+2\sigma|t|\sqrt{n})\,(\tilde{\phi}_{\ell,\rho}(t,\nu)+\sigma).\]
Thus, passing first to the limsup in $\rho$ and then to the limit in $\sigma$ we deduce that $\phi_\ell\leq\tilde{\phi}_\ell$ and hence the claim. 

Eventually, to deduce the asymptotic formula \eqref{form:phi} we notice that also in \eqref{form:phi} we can restrict the minimisation to those functions  $u$ also satisfying the bound $\|u\|_{L^\infty}\leq t$. Therefore,  \eqref{form:phi} follows from Theorem \ref{prop:boundary} and the fundamental property of $\Gamma$-convergence once noticed that Theorem \ref{t:domain-G-limit} and the constraint $\|u\|_{L^\infty}\leq t$ ensure the needed equi-coercivity.
 \end{proof}
\begin{rem}\label{rem:coordinate}
{\rm If $n=1$ the proof of the $\Gamma$-convergence result in this critical regime simplifies. In fact, by adapting carefully the arguments in \cite[Lemma 2.1]{AT2} to the discrete setting, one can explicitly compute the $\Gamma$-limit as
\[E_\ell(u,1)=\int_\Omega(u')^2dt+c_\ell\#(S_u),\]
where
\[c_\ell:=\min\left\{\sum_{i\in\N}\left(\ell(v^i-1)^2+\frac{1}{\ell}(v^{i+1}-v^i)^2\right):\ v^0=0,\ \lim_{i\to +\infty} v^i=1\right\}.\]
Now let $n>1$; using a slicing argument it can be also shown that
\[\phi_\ell(t,\pm e_k)=c_\ell,\quad\text{for every}\ t\in\R,\quad\text{for every}\ k\in\{1,\ldots,n\},\]
that is, in the coordinate directions, there is a one-dimensional optimal profile.

Eventually, if $\nu\in S^{n-1}$ is of the form
\[\nu=\frac{1}{\sqrt{n}}\sum_{k=1}^n a_ke_k,\quad\text{with}\ a_k\in\{-1,1\};\]
{\it i.e.}, $\nu$ is a symmetry axis of the underlying lattice, then, again by slicing, it can be shown that
\[\phi_\ell(t,\nu)=\sqrt{n}c_{\ell,n},\quad\text{for every}\ t\in\R,\]
where
\begin{equation}\label{def:celln}
c_{\ell,n}:=\min\left\{\sum_{i\in\N}\left(\frac{\ell}{n}(v^i-1)^2+\frac{1}{\ell}(v^{i+1}-v^i)^2\right):\ v^0=0,\ \lim_{i\to+\infty}v^i=1\right\}.
\end{equation}
A solution $\bar{v}$ to \eqref{def:celln} can be explicitly computed by solving the associated Euler-Lagrange equation, whose solution is
\[\bar{v}^i=1-\left(\frac{\ell}{2n}\left(\sqrt{\ell^2+4n}+\ell\right)-1\right)^{-i},\]
and corresponding energy
\[c_{\ell,n}=\frac{\ell}{n}+\frac{4n+\left(\sqrt{\ell^2+4n}+\ell\right)^2}{\ell\left(\sqrt{\ell^2+4n}+\ell\right)^2+4n\sqrt{\ell^2+4n}+4n\ell}\]
so that
\[c_\ell=c_{\ell,1}=\ell+\frac{4+\left(\sqrt{\ell^2+4}+\ell\right)^2}{\ell\left(\sqrt{\ell^2+4}+\ell\right)^2+4\sqrt{\ell^2+4}+4\ell}.\]}
\end{rem}

\subsection{Characterisation of $\phi_\ell$ for $n=2$}
In this subsection we characterise the surface energy density $\phi_\ell$ when $n=2$. In particular we prove that in this case the function $\phi_\ell$ does not depend on $t$\ie for every $(t,\nu)\in \R\times S^1$ we have $\phi_\ell(t,\nu)=\varphi_\ell(\nu)$ where $\varphi_\ell$ is given by the following formula
\begin{multline}\label{as:form}
\varphi_\ell(\nu):=\lim_{T\to+\infty}\frac{1}{2T}\inf\Bigg\{\ell\hspace*{-.5em}\sum_{i\in TQ^\nu\cap\Z^2}\hspace*{-.5em}(v^i-1)^2+\frac{1}{2\ell}\hspace*{-1em}\sum_{\begin{smallmatrix}i,j\in TQ^\nu\cap\Z^2\\|i-j|=1\end{smallmatrix}}\hspace*{-1em}|v^i-v^j|^2\colon v\in \mathcal A_1(TQ^\nu), \\
\exists\ \text{channel $\mathscr C$ in}\ TQ^\nu\cap\Z^2\colon \text{$v=0$ on $\C$},\ v=1\ \text{otherwise near}\ \partial TQ^\nu\Big\},
\end{multline}
where we refer to Definition \ref{def:channel} below for the precise definition of a channel $\C$. An immediate consequence of \eqref{as:form} is that for $n=2$ the $\Gamma$-limit does not depend on the subsequence $(\e_j)$ so that the whole sequence $(E_\e)$ $\Gamma$-converges to $E_\ell$. 

%\indent We now briefly explain the heuristic idea behind the characterization of $\varphi$. Let $(t,\nu)\in \R\times S^1$. 

We note that the definition of $\phi_\ell$ provided by the general asymptotic formula \eqref{form:phi} involves the whole functional $E_{\e_j}$, whereas the minimisation problem in \eqref{as:form} depends only on a scaled version of the functional $G_{\e_j}$, the latter being as in \eqref{def:surface}. Then, to prove that for $n=2$ \eqref{form:phi} and \eqref{as:form} actually coincide the idea is to show that for every $(t,\nu)\in \R\times S^1$ a test pair $(u,v)$ for $\phi_\ell(t,\nu)$ can be suitably modified in a way such that $F_{\e_j}(u,v,Q^\nu_\rho)=0$ (with $F_{\e_j}$ as in \eqref{def:bulk}). Since in \eqref{form:phi} a test function $u$ has to satisfy $u^i=u_t^\nu(i)$ in a neighbourhood of $\partial Q_\rho^\nu$, intuitively one would like to choose $u\equiv u_t^\nu$ in $Q^\nu_\rho$, which would carry no energy contribution far from the line $\Pi_\nu$. On the other hand, close to $\Pi_\nu$ this choice would lead to a diverging $F_{\e_j}$. Therefore to find a pair $(u,v)$ satisfying $F_{\e_j}(u,v,Q_\rho^\nu)=0$ we are going to show that we can modify $v$ in a way such that the discrete level set $\{v=0\}$ contains a ``path'' which is ``close'' to $\Pi_\nu$ and connects the two opposite sides of $Q_\rho^\nu$ which are parallel to $\nu$. Then, along this path, the function $u$ can be discontinuous (jumping from the value $0$ to the value $t$) and at the same time, when coupled with $v$, it can satisfy the desired equality $F_{\e_j}(u,v,Q_\rho^\nu)=0$. 

Note that this last argument can be generalised to any space dimension, so that an analog of formula \eqref{as:form} always 
provides an upper bound (see Step 3 in the proof of Theorem \ref{characterization:phi}), upon suitably defining an $n$-dimensional analog of a channel, which will be a ``discrete hypersurface'' disconnecting $TQ^\nu$. We do not make this remark precise further for the sake of brevity.

In what follows we make precise the heuristic idea as above. To this end we need to introduce some further notation.

\begin{definition}
A sequence $p:=\{i_1,\ldots,i_{l_p}\}\subset\delta\Z^2$ is called a {\em path of cardinality} $l_p$ if
\[|i_{k+1}-i_k|=\delta\quad\text{for every}\; k\in\{1,\ldots,l_p-1\}.\]
%
%We will alwas use $l_p$ to denote the cardinality of a path $p$. We also introduce a suitable notion of disjoint paths. 
We say that two paths $p=\{i_1,\ldots,i_{l_p}\}$ and $p'=\{i_1',\ldots,i'_{l_{p'}}\}$ are {\em disjoint} if either $p \cap p' = \emptyset$ or if
for every $k\in \{1,\dots, l_p-1\}$ and for every $j\in\{1,\ldots, l_{p'}-1\}$ there holds
\[i_k=i_j'\qquad\Longrightarrow\qquad 
\begin{cases}
i_{k+1}\neq i_{j+1}'\cr 
i_{k+1}\neq i_{j-1}' \cr 
i_{k-1}\neq i_{j+1}',
\end{cases}
\]
\textit{i.e.}, if $i_k \in p \cap p'$, then the vertical and the horizontal $\delta$-segments departing from $i_k$ can only ``belong'' to one of the two paths (see Figure \ref{fig:path}). 
\end{definition}

The definition of disjoint paths as above is motivated by the following fact: if $p,p'\subset Q_\rho^\nu$ are two disjoint paths we have 
\[F_{\e}(u,v,Q_\rho^\nu)\geq F_{\e}(u,v,p)+F_{\e}(u,v,p').\]

\begin{definition}
We say that a discrete set $A\subset\delta\Z^2$ is {\em path connected} if for every pair $(i,j)\in A\times A$ there exists a path $p=\{i_1,\ldots,i_{l_p}\}\subset A$ with $i_1=i$ and $i_{l_p}=j$.

We say that $p$ is a {\em strong path} if for every $i_k\in p$ and every $k\in \{1,\ldots, l_p-1\}$ there holds
\begin{align*}
\begin{cases}
i_k+\delta e_1\in p\quad\text{or}\quad i_k-\delta e_1\in p
\cr\cr
i_k+\delta e_2\in p\quad\text{or}\quad i_k-\delta e_2\in p,
\end{cases}
\end{align*}
\textit{i.e.}, for each $i_k\in p$ (except for $i_1$ and $i_{l_p}$) at least one horizontal and one vertical nearest neighbouring points belong to $p$ (See Figure \ref{fig:strongpath}). 
\begin{figure}[h]
\begin{subfigure}[t]{0.49\textwidth}
\centering
\def\svgwidth{0.7\columnwidth}
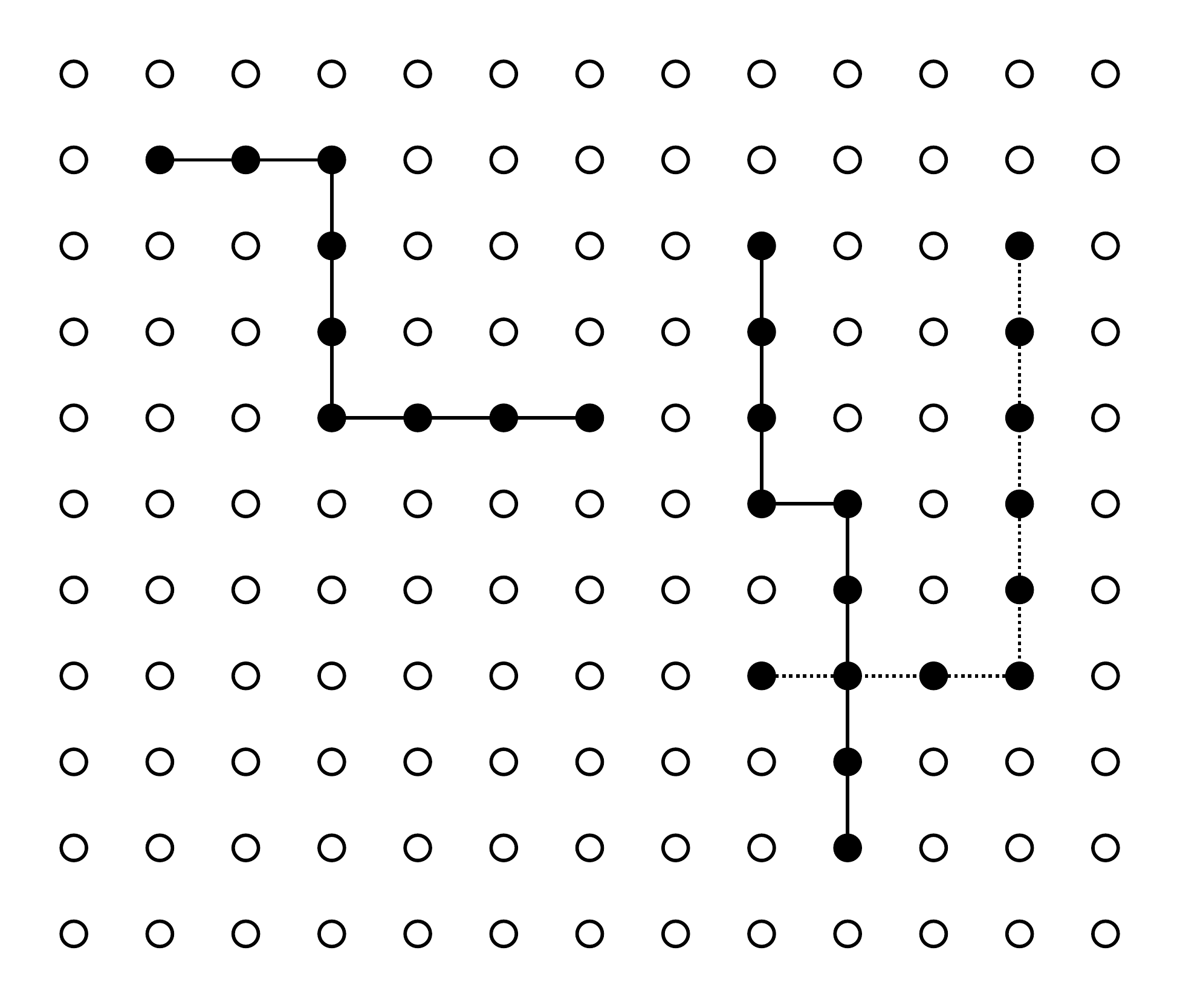
\caption{{\footnotesize Example of a path and of two disjoint paths.}}
\label{fig:path}
\end{subfigure}
\begin{subfigure}[t]{0.49\textwidth}
\centering
\def\svgwidth{0.7\columnwidth}
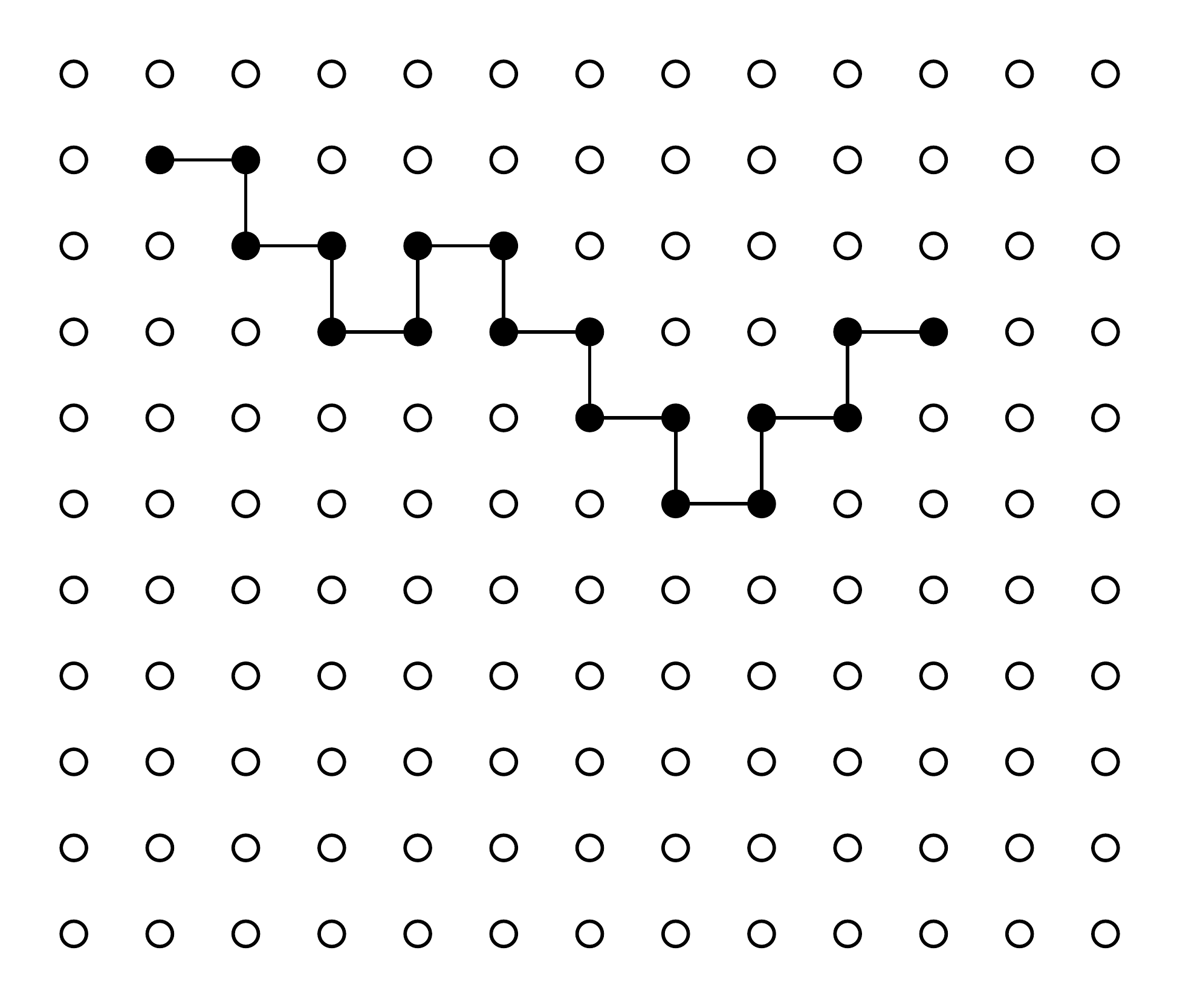
\caption{{\footnotesize Example of a strong path.}}
\label{fig:strongpath}
\end{subfigure}
\caption{{\footnotesize Examples of a path and of a strong path.}}
%\label{fig:path}
\end{figure}

In analogy to the definition of path-connected set, we say that a discrete set $A\subset\delta\Z^2$ is {\em strongly connected} if for every pair $(i,j)\in A\times A$ there exists a strong path $p=\{i_1,\ldots,i_{l_p}\}\subset A$ with $i_1=i$ and $i_{l_p}=j$.
\end{definition}

We note that the notion of strong path is motivated by the form of our energy and in particular by the form of the term $F_\e$. Indeed, if $p$ is a strong path connecting the two opposite sides of $Q_\rho^\nu$ which are parallel to $\nu$ and $v\in\A_\e(\Omega)$ is such that $v^i=0$ for all $i\in p$, then it is possible to construct a function $u:Q_\rho^\nu\cap\delta\Z^2\to\{0,t\}$ which is discontinuous across $p$ and satisfies $F_\e(u,v,Q_\rho^\nu)=0$ (see Figure \ref{fig:strongchannel}). 
\begin{figure}[h]
\begin{subfigure}[t]{0.49\textwidth}
\centering
\def\svgwidth{\columnwidth}
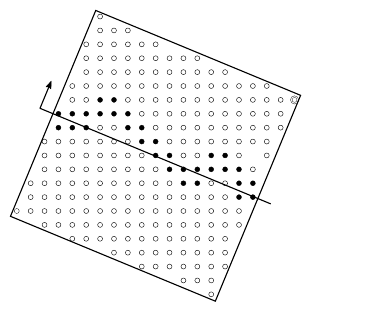
\caption{{\footnotesize Example of a strong path connecting the left and the right side of a cube $Q_\rho^\nu$ and a pair $(u,v)$ with $F(u,v,Q_\rho^\nu)=0$.}}
\label{fig:strongchannel}
\end{subfigure}
\begin{subfigure}[t]{0.49\textwidth}
\centering
\def\svgwidth{1.1\columnwidth}
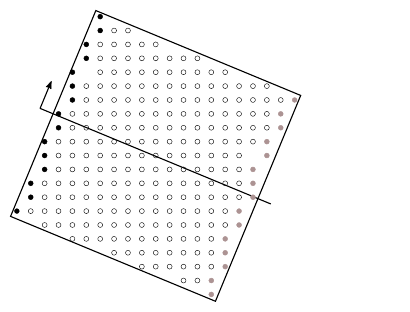
\caption{{\footnotesize $\partial_L^\delta Q_\rho^\nu$, $\partial_R^\delta Q_\rho^\nu$ and $S_\delta^\nu$.}}
\label{fig:discreteboundary}
\end{subfigure}
\caption{{\footnotesize Example of a strong path and discretisation of the ``left'' and ``right'' boundary.}}
\end{figure}
In order to make the above construction precise, we finally introduce the notion of channel. 

\begin{definition}\label{def:channel}
We say that $\C\subset Q_\rho^\nu$ is a {\em channel} in $Q_\rho^\nu\cap\delta\Z^2$ if $\C$ is a strong path connecting $S_\delta^\nu\cap\partial_L^\delta Q_\rho^\nu$ and $S_\delta^\nu\cap\partial_R^\delta Q_\rho^\nu$, where $S_\delta^\nu$ is as in \eqref{def:sj} with $\delta_j$ replaced by $\delta$ and
\begin{align*}
\partial_L^\delta Q_\rho^\nu &:=\left\{i\in Q_\rho^\nu\cap\delta\Z^2:\ \exists\ j\in\delta\Z^2\ \text{such that}\ |i-j|=\delta\ \text{and}\ \langle j,\nu^\perp\rangle\leq -\rho/2\right\},\\
\partial_R^\delta Q_\rho^\nu &:=\left\{i\in Q_\rho^\nu\cap\delta\Z^2:\ \exists\ j\in\delta\Z^2\ \text{such that}\ |i-j|=\delta\ \text{and}\ \langle j,\nu^\perp\rangle\geq \rho/2\right\},
\end{align*}
\textit{i.e.}, $\partial_L^\delta Q_\rho^\nu$ and $\partial_R^\delta Q_\rho^\nu$ are the discretised ``left'' and ``right'' boundary of $Q_\rho^\nu$, respectively (see Figure \ref{fig:discreteboundary}).
We also set
\[\partial^\delta Q_\rho^\nu:=\left\{i\in Q_\rho^\nu\cap\delta\Z^2:\ \exists\ j\in\delta\Z^2\setminus Q_\rho^\nu\ \text{such that}\ |i-j|=\delta\right\}.\]
\end{definition}

We are now ready to state the main result of this section.
\begin{theorem}\label{characterization:phi}
Let $n=2$; let $(t,\nu)\in \R\times S^1$, $\phi_\ell(t,\nu)$ be as in \eqref{form:phi}, and $\varphi_\ell(\nu)$ be as in \eqref{as:form}. Then $\varphi_\ell$ is well-defined, moreover $\phi_\ell(t,\nu)=\varphi_\ell(\nu)$ for every $(t,\nu)\in \R\times S^1$.
\end{theorem}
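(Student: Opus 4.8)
The strategy is to prove the two inequalities $\phi_\ell(t,\nu) \le \varphi_\ell(\nu)$ and $\phi_\ell(t,\nu) \ge \varphi_\ell(\nu)$ separately, working with the asymptotic formula \eqref{form:phi}, and splitting the argument into the steps: (Step~1) well-posedness of $\varphi_\ell$, i.e.\ existence of the limit in $T$ defining it; (Step~2) the upper bound $\phi_\ell \le \varphi_\ell$; (Step~3) the lower bound $\phi_\ell \ge \varphi_\ell$, which is the genuinely hard part and where the two-dimensional topology enters.

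\textbf{Step 1 (Well-posedness of $\varphi_\ell$).} I would show that the quantity
\[
g(T):=\inf\Big\{\ell\!\!\sum_{i\in TQ^\nu\cap\Z^2}\!\!(v^i-1)^2+\tfrac{1}{2\ell}\!\!\sum_{\substack{i,j\in TQ^\nu\cap\Z^2\\|i-j|=1}}\!\!|v^i-v^j|^2 : v\in\mathcal A_1(TQ^\nu),\ \exists\ \text{channel }\mathscr C,\ v=0\text{ on }\mathscr C,\ v=1\text{ near }\partial TQ^\nu\Big\}
\]
is subadditive up to a controlled error in $T$: given near-optimal competitors on two cubes of sizes $T_1, T_2$ one concatenates the channels and glues the $v$'s (setting $v=1$ in the gluing strip, which is admissible since both competitors are $=1$ near their boundaries), paying only a surface error of order $T^{n-2}=O(1)$ relative to the scaling $\tfrac{1}{2T}$. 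Then the standard subadditivity lemma (Fekete / the Ambrosio--Braides argument, cf.\ \cite{AB902}) yields $\lim_{T\to\infty}\tfrac{1}{2T}g(T)=\inf_T \tfrac{1}{2T}g(T)=:\varphi_\ell(\nu)<+\infty$; finiteness is clear because the flat channel $\{i : \langle i,\nu\rangle \text{ changes sign}\}$ together with an exponentially decaying profile in the direction $\nu$ is always an admissible competitor with energy $O(T)$.

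\textbf{Step 2 (Upper bound $\phi_\ell(t,\nu)\le\varphi_\ell(\nu)$).} Fix $\rho>0$. I would rescale a near-optimal competitor $v$ for $g(T)$ with $T=\rho/\delta_j$ onto the lattice $\delta_j\Z^2\cap Q^\nu_\rho$, obtaining $v_j$ with a channel $\mathscr C_j$. Since $\mathscr C_j$ is a \emph{strong} path connecting the left and right faces, it disconnects $Q^\nu_\rho$ into an ``upper'' and a ``lower'' component; I define $u_j$ to be $t$ on the component containing $\Pi_\nu^+\cap\partial Q^\nu_\rho$ and $0$ on the other, arranged so that $u_j=\hat u_{t,\nu,j}$ near $\partial Q^\nu_\rho$ (here I use that the channel touches $S^\nu_\delta$ at both ends, matching the discretised jump set of the boundary datum). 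The key point, which is exactly the motivation stated after Definition~\ref{def:channel}: because the channel is a \emph{strong} path, every nearest-neighbour bond across which $u_j$ jumps has at least one endpoint on $\mathscr C_j$, hence carries a factor $(v_j)^2=0$, so $F_{\e_j}(u_j,v_j,Q^\nu_\rho)=0$. Thus $E_{\e_j}(u_j,v_j,Q^\nu_\rho)=G_{\e_j}(v_j,Q^\nu_\rho)$, and a direct computation (using $\delta_j/\e_j\to\ell$, so that the scaling $\delta_j^2\cdot\tfrac{1}{\e_j}$ in $G$ and the weight $\e_j\delta_j^2/\delta_j^2=\e_j$ in the gradient term become, after the change of variables $x=\delta_j i$, respectively $\delta_j/\e_j\to\ell$ and $\e_j/\delta_j\to 1/\ell$) shows $\tfrac{1}{\rho}E_{\e_j}(u_j,v_j,Q^\nu_\rho)\to \tfrac{1}{\rho}\cdot\tfrac{\rho}{T}g(T)\big|_{T=\rho/\delta_j}$, which tends to $2\varphi_\ell(\nu)$ as $j\to\infty$; combined with the $\tfrac{1}{\rho^{n-1}}=\tfrac1\rho$ prefactor and the $\limsup_\rho$ in \eqref{form:phi} one obtains $\phi_\ell(t,\nu)\le\varphi_\ell(\nu)$. (A small error from rescaling the boundary layer and from the fact that $\mathscr C_j$ need not be exactly flat near the faces is absorbed into an $o(1)$ and sent to zero.)

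\textbf{Step 3 (Lower bound $\phi_\ell(t,\nu)\ge\varphi_\ell(\nu)$) — the main obstacle.} Given an almost-optimal pair $(u_j,v_j)$ for the inner infimum in \eqref{form:phi} with boundary datum $\hat u_{t,\nu,j},\hat v_{\nu,j}$ on $Q^\nu_\rho$, I must produce from it an admissible competitor for $g(T)$, $T=\rho/\delta_j$, with not more energy up to $o(1)$. The difficulty is that $F_{\e_j}(u_j,v_j,Q^\nu_\rho)$ may be large and the set $\{v_j\approx 0\}$ need not contain any channel or even be connected — it could have ``holes.'' The plan, mirroring \cite{chambolle95, BSigalotti}, is: (i) fix a small threshold $\theta>0$ and consider the discrete superlevel set $A_j:=\{i : v_j^i>\theta\}$ and its complement; since $u_j$ jumps by $t$ between the two boundary faces, a discrete isoperimetric / connectedness argument in \emph{two} dimensions shows that $\{v_j\le\theta\}$ must contain a path separating the two faces parallel to $\nu$ — otherwise $u_j$ would be forced to have a large Dirichlet energy $F_{\e_j}$ contradicting the energy bound (this is where $n=2$ is essential: in the plane, a connected region separating two sides of a square is precisely what blocks a continuous path between the other two sides, so a finite energy forbids ``tunnels''). (ii) Fatten and ``regularise'' this path into a genuine \emph{strong} path, hence a channel $\mathscr C$, by adding at each site the missing horizontal/vertical neighbour; this costs only finitely many extra sites per unit length, contributing $O(\ell\,T\,\theta^2)+O(\tfrac1\ell T)$ — but I must be careful that the added surface energy is $o(1)$ after dividing by $2T$, which forces $\theta$ and the fattening to be chosen in a coupled way. (iii) Set $v=0$ on $\mathscr C$ and keep $v=v_j$ elsewhere, modifying near $\partial Q^\nu_\rho$ so that $v=1$ there; since lowering $v$ to $0$ on $\mathscr C$ only decreases the Dirichlet term relative to a bounded cost and decreases the potential term by at most $O(\ell\,T)$ when $\mathscr C$ has few sites, the resulting $G$-energy of $v$ is at most $G_{\e_j}(v_j,Q^\nu_\rho)+o(\delta_j T)\le E_{\e_j}(u_j,v_j,Q^\nu_\rho)+o(\delta_j T)$, and rescaling back gives $g(T)\le \tfrac{\rho}{\delta_j}\big(\tfrac{1}{\rho^{n-1}}E_{\e_j}(u_j,v_j,Q^\nu_\rho)+o(1)\big)$, i.e.\ $\tfrac{1}{2T}g(T)\le\tfrac{1}{2\rho}E_{\e_j}+o(1)$. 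Passing to the limit in $j$, then using Step~1 and $\limsup_{\rho\to0}$ in \eqref{form:phi}, yields $\varphi_\ell(\nu)\le\phi_\ell(t,\nu)$. The independence of $\phi_\ell$ from $t$ and from the subsequence $(\e_j)$ then follows since the right-hand side of the resulting identity $\phi_\ell(t,\nu)=\varphi_\ell(\nu)$ is manifestly independent of both. The hardest point is step (i)–(ii) of this last argument: turning the soft topological fact ``a low-energy $v$-profile must obstruct the jump'' into a quantitative statement producing a strong path at controlled surface cost; this is precisely the two-dimensional phenomenon the authors flag in the introduction as failing to generalise to $n\ge 3$.
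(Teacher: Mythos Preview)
Your overall plan matches the paper's (with the two inequalities in the opposite order), and your Step~2 is essentially the paper's Step~3: the mechanism of building a two-valued $u_j$ from the channel disconnection so that $F_{\e_j}=0$ is exactly right. For Step~1 the paper adds one ingredient you omit: it proves uniform continuity of $\nu\mapsto\varphi_\ell^T(\nu)$ (in the $\liminf$ and $\limsup$ sense) and combines this with existence of the limit for \emph{rational} $\nu$ to get existence for all $\nu\in S^1$; the continuity is then reused to reduce the upper bound to rational directions.

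There is, however, a genuine gap in your Step~3. Your claim~(i), that the sublevel set $\{v_j\le\theta\}$ must already \emph{contain} a path separating the two faces of $Q^\nu_\rho$ parallel to $\nu$, is false: the energy bound does not rule out holes. For $\nu=e_2$, take $v_j=0$ on the discretised line $\{x_2=0\}$ except on a single vertical column where $v_j\approx 1$; threading $u_j$ from $0$ to $t$ through this column of height $\sim\rho/\delta$ costs only $F_{\e_j}\sim t^2\delta/\rho$, so $\tfrac1\rho F_{\e_j}\sim t^2\delta/\rho^2\to0$. In fact up to $M\sim\rho^2/(\theta^2 t^2\delta)$ such columns are permitted before $F_{\e_j}$ becomes large. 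Your fattening step~(ii) has an analogous defect: promoting an ordinary path to a \emph{strong} path may require one extra site per site of the path, i.e.\ $O(T)$ additional points, and the resulting surplus in the $G$-energy does not vanish after dividing by $2T$ (as you yourself note, the $O(\tfrac1\ell T)$ term is fatal).

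The paper resolves both issues simultaneously by a cluster-growing argument. One works from the outset with the maximal \emph{strongly} connected components of $\mathcal I_\e^\eta=\{v\le\eta\}$, starts from the cluster $C'$ touching the left side (which exists by the boundary condition), and iteratively enlarges it by adjoining its $1$-neighbourhood $\mathcal J_m$ together with all clusters that thereby become strongly connected to it, reaching the right-side cluster after some $M$ steps. At each step the boundary of the grown region contains a path $p_m$ running from $\partial^\delta Q^\nu_\rho\cap\Pi_\nu^+$ to $\partial^\delta Q^\nu_\rho\cap\Pi_\nu^-$ on which every bond has at least one endpoint outside $\mathcal I_\e^\eta$; since $u$ runs from $t$ to $0$ along $p_m$ (by the boundary condition on $u$), Jensen's inequality gives $\tfrac1\rho F_\e(u,v,p_m)\ge\tfrac{\eta^2 t^2}{2\rho(l_m-1)}\ge c\,\eta^2 t^2\delta/\rho^2$, and because the $p_m$ can be chosen pairwise \emph{disjoint} the total energy bound forces $M\le c\,\rho^2/(\eta^2 t^2\delta)$. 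Only $2M$ bridge points (two per step) are then needed to complete a channel, at $G$-cost $O(M\delta/\rho)=O(\rho/(\eta^2 t^2))\to0$ as $\rho\to0$. Working with strongly connected clusters from the start is what makes the ``fattening'' free; counting the disjoint crossing paths is what bounds the number of holes. This is the quantitative core your sketch is missing.
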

\begin{proof}
We divide the proof into three main steps. 

\medskip 

\noindent \textbf{Step 1:} $\varphi_\ell$ is well-defined and continuous.

\smallskip

For $T>0$ and $\nu\in S^1$ set
\begin{multline}\label{def:phiT}
\varphi^T_\ell(\nu):=\frac{1}{2T}\inf\Bigg\{\ell\hspace*{-0.5em}\sum_{i\in TQ^\nu\cap\Z^2}\hspace*{-0.5em}(v^i-1)^2+\frac{1}{2\ell}\hspace*{-0.5em}\sum_{\begin{smallmatrix}i,j\in TQ^\nu\cap\Z^2\\|i-j|=1\end{smallmatrix}}\hspace*{-1em}|v^i-v^j|^2: v\in \mathcal A_1(TQ^\nu),
\\ 
\exists\ \text{strong channel} \;
\C\subset TQ^\nu\cap\Z^2\colon \text{$v=0$ on $\C$},\ v=1\ \text{otherwise near}\ \partial TQ^\nu\Big\}.
\end{multline}
Let $\Xi$ denote the set of unit rational vectors\ie $\Xi:=\{\nu\in S^1\colon \exists \lambda \in \R\; \text{s.t.}\; \lambda \nu\in \mathbb Q^2\}$. 

Arguing as in \cite[Proposition 14.4]{BDf} we obtain that $\lim_{T\to +\infty}\varphi^T_\ell(\nu)$ exists for every $\nu\in \Xi$.  

We now claim that for every $\eta \in (0,1)$ there exists $\sigma=\sigma(\eta)>0$ such that for every $\nu,\nu'\in S^1$ satisfying $|\nu-\nu'|\leq\sigma$ there hold
\begin{equation}\label{cont:linf-phi}
\big|\liminf_{T\to+\infty}\varphi^T_\ell(\nu)-\liminf_{T\to+\infty}\varphi^T_\ell(\nu')\big|\leq c\,\eta
\end{equation}
and
\begin{equation}\label{cont:lsup-phi}
\big|\limsup_{T\to+\infty}\varphi^T_\ell(\nu)-\limsup_{T\to+\infty}\varphi^T_\ell(\nu')\big| \leq c\,\eta,
\end{equation}
for some constant $c>0$ independent of $\eta,\sigma,\nu,\nu'$; hence, in particular, both the functions $\nu \mapsto \liminf_{T}\varphi^T_\ell(\nu)$ and $\nu \mapsto\limsup_{T}\varphi^T_\ell(\nu)$ are continuous. 

To prove the claim we start observing that for fixed $\eta \in (0,1)$ we can find $\sigma\in (0,\eta)$ such that for all $\nu,\nu'\in S^1$ satisfying $|\nu-\nu'|\leq\sigma$ there holds
\begin{enumerate}%[label=(\roman*)]
\item $(1-2\eta)TQ^\nu\subset(1-\eta)TQ^{\nu'}\subset TQ^\nu$;

\smallskip

\item $\HH^{1}\left(\Pi_{\nu'}\cap \left(TQ^\nu\setminus (1-\eta)TQ^{\nu'}\right)\right)\leq cT\eta$;

\smallskip

\item $\HH^{1}\left(\partial TQ^\nu\cap(\Pi^+_\nu\Delta \Pi^+_{\nu'})\right)\leq cT\eta$;
\end{enumerate}
for some $c>0$ independent of $\sigma,\eta,\nu,\nu'$. 

Let now $v_T$ be a test function for $\varphi^{(1-\eta)T}_\ell(\nu')$ with
\begin{equation}\label{c:test-phi}
\ell\hspace*{-2em}\sum_{i\in (1-\eta)TQ^{\nu'}\cap\Z^2}\hspace*{-2em}(v_T^i-1)^2+\frac{1}{2\ell}\hspace*{-2em}\sum_{\begin{smallmatrix}i,j\in (1-\eta)TQ^{\nu'}\cap\Z^2\\|i-j|=1\end{smallmatrix}}\hspace*{-2em}|v_T^i-v_T^j|^2\leq 2(1-\eta)T\varphi^{(1-\eta)T}_\ell(\nu')+1.
\end{equation}
We suitably modify $v_T$ to obtain a test function $\tilde v_T$ for $\varphi^T_\ell(\nu)$. By definition there exists a strong channel in $(1-\eta)TQ^{\nu'}\cap\Z^2$ along which $v_T=0$. Moreover, in view of (1)-(3) we may check that there exist a strong path $p_L \subset \Z^2$ of cardinality $l_L$ connecting
\[S_1^{\nu'}\cap\partial_L^1(1-\eta)TQ^{\nu'}\quad\text{and}\quad S_1^\nu\cap\partial_L^1TQ^\nu\]
and a strong path $p_R \subset \Z^2$ of cardinality $l_R$ connecting
\[S_1^{\nu'}\cap\partial_R^1(1-\eta)TQ^{\nu'}\quad\text{and}\quad S_1^\nu\cap\partial_R^1TQ^\nu\]
such that 
\[l_L,l_R\leq c\left(\HH^{1}\left(\Pi_{\nu'}\cap \left(TQ^\nu\setminus (1-\eta)TQ^{\nu'}\right)\right)+\HH^{1}\left(\partial TQ^\nu\cap(\Pi^+_{\nu}\Delta \Pi^+_{\nu'})\right)\right)\leq cT\eta.\]
Then we define the function $\tilde{v}_T$ as follows
\[\tilde{v}_T^i:=
\begin{cases}
v_T^i &\text{if}\ i\in (1-\eta)TQ^{\nu'},\\
0 &\text{if}\ i\in p_L\cup p_R,\\
1 &\text{otherwise in}\ TQ^\nu\setminus (1-\eta)TQ^{\nu'}.
\end{cases}\]
Clearly $\tilde{v}_T$ is admissible for $\varphi^T_\ell(\nu)$; therefore in view of \eqref{c:test-phi} we have
\begin{align*}
\varphi_\ell^T(\nu) &\leq\frac{1}{2T}\bigg(\ell\hspace*{-0.5em}\sum_{i\in TQ^\nu\cap\Z^2}\hspace*{-0.5em}(\tilde{v}_T^i-1)^2+\frac{1}{2\ell}\hspace*{-1em}\sum_{\begin{smallmatrix}i,j\in TQ^\nu\cap\Z^2\\|i-j|=1\end{smallmatrix}}\hspace*{-1em}|\tilde{v}_T^i-\tilde{v}_T^j|^2\bigg)\\
&=\frac{1}{2T}\bigg(\ell\hspace*{-0.8em}\sum_{i\in (1-\eta)TQ^{\nu'}\cap\Z^2}\hspace*{-2em}(v_T^i-1)^2+\frac{1}{2\ell}\hspace*{-2em}\sum_{\begin{smallmatrix}i,j\in (1-\eta)TQ^{\nu'}\cap\Z^2\\|i-j|=1\end{smallmatrix}}\hspace*{-2em}|v_T^i-v_T^j|^2\bigg)\\
&+\frac{1}{2T}\bigg(\ell\hspace*{-1em}\sum_{i\in (TQ^\nu\setminus (1-\eta)TQ^{\nu'})\cap\Z^2}\hspace*{-2em}(\tilde{v}_T^i-1)^2+\frac{1}{2\ell}\hspace*{-2em}\sum_{\begin{smallmatrix}i,j\in (TQ^\nu\setminus(1-\eta)TQ^{\nu'})\cap\Z^2\\|i-j|=1\end{smallmatrix}}\hspace*{-2em}|\tilde{v}_T^i-\tilde{v}_T^j|^2\bigg)\\
&\leq (1-\eta)\varphi^{(1-\eta)T}_\ell(\nu')+\frac{1}{2T}+\frac{1}{T}c(l_L+l_R)\\
&\leq \varphi^{(1-\eta)T}_\ell(\nu')+\frac{1}{2T}+c\eta.
\end{align*}
Thus, passing to the liminf as $T\to+\infty$ we get
\begin{equation*}\label{cont:phi1}
\liminf_{T\to+\infty}\varphi^T_\ell(\nu)-c\eta\leq\liminf_{T\to+\infty}\varphi^T_\ell(\nu'),
\end{equation*}
hence by exchanging the role of $\nu$ and $\nu'$ we obtain \eqref{cont:linf-phi}. Then, an analogous argument also gives \eqref{cont:lsup-phi}.

By combining \eqref{cont:linf-phi} and \eqref{cont:lsup-phi} we obtain that $\varphi_\ell(\nu)$ is well-defined for every $\nu \in S^1$. Indeed let $\nu \in S^1$, since $\Xi$ is dense in $S^1$, for every $\sigma>0$ we can find $\nu_\sigma\in \Xi$ such that $|\nu-\nu_\sigma|\leq\sigma$. Then, by virtue of \eqref{cont:linf-phi} and \eqref{cont:lsup-phi} and in view of the equality $\lim_T \varphi^T_\ell(\nu_\sigma)=\varphi_\ell(\nu_\sigma)$ we get
$$
-c\eta+\limsup_{T\to +\infty} \varphi^T_\ell(\nu) \leq \varphi_\ell (\nu_\sigma) \leq \liminf_{T\to +\infty} \varphi^T_\ell(\nu)+ c\eta,
$$
hence the existence of $\lim_T \varphi^T_\ell(\nu)$ for every $\nu \in S^1$ follows by the arbitrariness of $\eta$.

We eventually notice that the existence of $\lim_T \varphi^T_\ell(\nu)$ together with \eqref{cont:linf-phi} and \eqref{cont:lsup-phi} yields the continuity of $\nu \mapsto \varphi_\ell(\nu)$. 

\medskip

We now turn to the proof of the equality $\phi_\ell=\varphi_\ell$. This proof will be carried out in two steps. 

To simplify the notation, in what follows we denote by $E_\e$ the $\Gamma$-converging subsequence provided by Theorem \ref{gamma:compactness}. 

Let $(t,\nu)\in \R\times S^1$; for $\e,\rho>0$ set
\[\phi_{\e,\rho}(t,\nu):=\frac{1}{\rho}\inf\{E_{\e}(u,v,Q_\rho^\nu):\ (u,v)=(\hat{u}_{t,\nu,\e},\hat{v}_{\nu,\e})\ \text{in a neighbourhood of}\ \partial Q_\rho^\nu\},\]
where $\hat{u}_{t,\nu,\e}$ and $\hat{v}_{\nu,\e}$ are given respectively by \eqref{bound:cond01} and \eqref{bound:cond02} with $\e_j$ replaced by $\e$.

To simplify the proof we only consider the special case $\delta=\ell\e$, the general case $\delta/\e \to \ell$ being a straightforward consequence of this special one. 

Under the assumption $\delta=\ell\e$ we then have
\[\frac{1}{\rho}E_\e(u,v,Q_\rho^\nu)=\frac{1}{2\rho}\hspace*{-0.5em}\sum_{\begin{smallmatrix}i,j\in Q_\rho^\nu\cap\delta\Z^2\\|i-j|=\delta\end{smallmatrix}}\hspace*{-1em}(v^i)^2|u^i-u^j|^2+\frac{\delta}{2\rho}\bigg(\ell\hspace*{-0.5em}\sum_{i\in Q_\rho^\nu\cap\delta\Z^2}\hspace*{-1em}(v^i-1)^2+\frac{1}{2 \ell}\hspace*{-0.5em}\sum_{\begin{smallmatrix}i,j\in Q_\rho^\nu\cap\delta\Z^2\\|i-j|=\delta\end{smallmatrix}}\hspace*{-1em}|v^i-v^j|^2\bigg).\]

\medskip

\noindent {\bf Step 2:} $\phi_\ell(t,\nu)\geq\varphi_\ell(\nu)$ for every $(t,\nu)\in\R\times S^1$.

\smallskip 

Let $(t,\nu)\in\R\times S^{1}$ be such that $\phi_\ell(t,\nu)<+\infty$, otherwise there is nothing to prove. Since by definition
$$
\phi_\ell(t,\nu):=\limsup_{\rho \to 0^+}\lim_{\e \to 0}\phi_{\e,\rho}(t,\nu),
$$
up to a subsequence we can assume that
\begin{equation}\label{un:bound}
\sup_{\e,\rho>0}\phi_{\e,\rho}(t,\nu)<+\infty.
\end{equation}
Let $N\in\N$ be fixed and let $(u_{\e,\rho},v_{\e,\rho})$ be a test pair for $\phi_{\e,\rho}(t,\nu)$ such that
\begin{equation}\label{est:crit:03}
\frac{1}{\rho}E_\e(u_{\e,\rho},v_{\e,\rho},Q_\rho^\nu)\leq\phi_{\e,\rho}(t,\nu)+\frac{1}{N}.
\end{equation}
We now claim that we can replace $v_{\e,\rho}$ with a function $\tilde{v}_{\e,\rho}$ which is equal to zero along a channel in $Q_\rho^\nu\cap\delta\Z^2$ without essentially increasing the energy. To this end, fix $\eta \in (0,1)$ and set
\[\mathcal I_\e^\eta:=\{i\in Q_\rho^\nu\cap\delta\Z^2:\ v_{\e,\rho}^i<\eta\}.\]
We notice that thanks to the boundary conditions satisfied by $v_{\e,\rho}$ we have $\mathcal I_\e^\eta\neq \emptyset$. 
Moreover, in view of \eqref{un:bound} and \eqref{est:crit:03} we get
\begin{equation}\label{bound:I:eta}
\#(\mathcal I_\e^\eta)\leq\frac{c}{(1-\eta)^2}\frac{\rho}{\delta},
\end{equation}
for some $c>0$ independent of $\e,\rho$, and $\eta$. 

The general strategy to construct $\tilde{v}_{\e,\rho}$ is as follows: For every $i\in \mathcal I_\e^\eta$ we set $\tilde{v}_{\e,\rho}^i:=0$. Thanks to \eqref{bound:I:eta} this modification to $v_{\e,\rho}$ increases the energy only by an error proportional to $\eta$. Then, if the discrete set $\mathcal I_\e^\eta$ is strongly connected we are done. If instead $\mathcal I_\e^\eta$ consists of more than one strongly connected component we show that we can connect the strongly connected components of $\mathcal I_\e^\eta$ to one another in a way such that  if we replace $v_{\e,\rho}$ by zero along the ``channel'' obtained in this way, we essentially do not increase the energy. 

We now illustrate in detail the multi-step strategy leading to the construction of the channel as above. To this end set
\[\mathcal{C}_\e^\eta:=\{C\subset \mathcal I_\e^\eta:\ C\ \text{cluster in}\ \mathcal I_\e^\eta\},\]
where by ``cluster'' in $\mathcal I_\e^\eta$ we mean a maximal strongly connected component in $\mathcal I_\e^\eta$. Thanks to the boundary condition satisfied by $v_{\e,\rho}$ we can find clusters $C',C''\in\mathcal{C}_\e^\eta$ such that $C'\cap\partial_L^\delta Q_\rho^\nu\cap S_\delta^\nu\neq\emptyset$ and $C''\cap\partial_R^\delta Q_\rho^\nu\cap S_\delta^\nu\neq\emptyset$. We now denote by
\[\mathcal J_0:=\{i\in Q_\rho^\nu\cap\delta\Z^2:\ \exists\ j\in C'\ \text{s.t.}\ |i-j|=\delta\}\]
the set of all lattice points in $Q^\nu_\rho$ which have distance $\delta$ from $C'$.
Then, by the maximality of $C'$ there exists a path $p_0=\{i_1,\ldots,i_{l_0}\}\subset C'\cup \mathcal J_0$ connecting $\partial^\delta Q_\rho^\nu\cap\Pi_\nu^+$ and $\partial^\delta Q_\rho^\nu\cap\Pi_\nu^-$ such that
\begin{equation}\label{cond:eta}
\frac{\left(v_{\e,\rho}^{i_k}\right)^2+\big(v_{\e,\rho}^{i_{k+1}}\big)^2}{2}\geq\frac{\eta^2}{2}\quad\text{for every}\; k\in\{1,\ldots, l_0-1\}.
\end{equation}
In fact, since by assumption $\mathcal I_\e^\eta$ consists of more than one strongly connected component, $p_0$ can be chosen in a way such that for every $k\in\{1,\ldots, l_0-1\}$ either $i_k$ or $i_{k+1}$ belongs to $\mathcal J_0\setminus I_\e^\eta$, which already gives \eqref{cond:eta}. Then, due to the boundary conditions satisfied by $u_{\e,\rho}$ there holds $u_{\e,\rho}^{i_1}=t$ and $u_{\e,\rho}^{i_{l_0}}=0$. Thus, using Jensen's inequality from \eqref{cond:eta} we deduce
\begin{align}\label{est:crit:04}
\frac{1}{\rho}F_\e(u_{\e,\rho},v_{\e,\rho},p_0) &=\frac{1}{\rho}\sum_{k=1}^{l_0-1}\frac{\left(v_{\e,\rho}^{i_k}\right)^2+\big(v_{\e,\rho}^{i_{k+1}}\big)^2}{2}\left|u_{\e,\rho}^{i_k}-u_{\e,\rho}^{i_{k+1}}\right|^2\geq\frac{\eta^2}{2}\frac{t^2}{\rho (l_0-1)}.
\end{align}
Let us now define
\[\mathcal{C}_\e^{\eta,0}:=\left\{C\in\mathcal{C}_\e^{\eta}:\ C'\cup \mathcal J_0\cup C\ \text{is strongly connected}\right\}.\]
If $C''\in\mathcal{C}_\e^{\eta,0}$, we can find two points $i_0^1,i_0^2\in \mathcal J_0$ such that $C'\cup\{i_0^1,i_0^2\}\cup C''$ is the desired channel $\C$ in $Q_\rho^\nu\cap\delta\Z^2$. 
If instead this is not the case, we proceed as follows. We set
\[C_1:=C'\cup \mathcal J_0\cup\left\{i\in C:\ C\in \mathcal C_\e^{\eta,0}\right\},\]
and we notice that $C_1$ is strongly connected. Moreover, we set
\[\mathcal J_1:=\{i\in Q_\rho^\nu\cap\delta\Z^2:\ \exists\ j\in C_1\ \text{s.t.}\ |i-j|=\delta\}.\]
Since by assumption $C''\not\in \mathcal C_\e^{\eta,0}$, arguing as above we can find a path $p_1=\{i_1,\ldots i_{l_1}\}\subset C_1\cup \mathcal J_1$ connecting $\partial^\delta Q_\rho^\nu\cap\Pi^+_\nu$ and $\partial^\delta Q_\rho^\nu\cap\Pi^-_\nu$ such that \eqref{cond:eta} and \eqref{est:crit:04} are satisfied with $p_1,l_1$ in place of $p_0,l_0$. Moreover, $p_1$ can be chosen in such a way that it is disjoint from $p_0$. If now $C''$ is such that $C_1\cup \mathcal J_1\cup C''$ is strongly connected, we can find two points $i_0^1,i_0^2\in \mathcal J_0$ and two points $i_1^1,i_1^2\in \mathcal J_1$ such that $C'\cup\bigcup\mathcal{C}_\e^{\eta,0}\cup\{i_0^1,i_0^2,i_1^1,i_1^2\}\cup C''$ contains a channel $\C$ in $Q_\rho^\nu\cap\delta\Z^2$. If this is not the case we can iterate the procedure as above to define $\mathcal{C}_\e^{\eta,m}$, $C_{m+1}$, and $\mathcal J_{m+1}$ where for every $m\geq 1$ we have
\[\mathcal{C}_\e^{\eta,m}:=\left\{C\in\mathcal{C}_\e^\eta:\ C_m\cup \mathcal J_m\cup C\ \text{is strongly connected}\right\},\]
\[C_{m+1}:=C_m\cup \mathcal J_m\cup\bigcup\mathcal{C}_\e^{\eta,m},\]
and
\[\mathcal J_{m+1}:=\{i\in Q_\rho^\nu\cap\delta\Z^2:\ \exists\ j\in C_{m+1}\ \text{s.t.}\ |i-j|=\delta\}.\]
Let now $M\in\N$ be such that $C''\in\mathcal{C}_\e^{\eta,M}$. Then there exist $2M$ points $i_m^1,i_m^2\in \mathcal J_m$ for $0\leq m\leq M-1$ such that $C'\cup\{i_0^1,i_0^2,\ldots,i_{M-1}^1,i_{M-1}^2\}\cup\bigcup\mathcal{C}_\e^{\eta,M-1}\cup C''$ contains a channel $\C$ in $Q_\rho^\nu\cap\delta\Z^2$. Let us now define $\tilde{v}_{\e,\rho}$ by setting
\[\tilde{v}_{\e,\rho}^i:=
\begin{cases}
0 &\text{if}\ i\in \mathcal I_\e^\eta\ \text{or}\ i=i_m^l\ \text{for some}\; m\in \{0,\ldots, M-1\},\, l=1,2,\\
v_{\e,\rho}^i &\text{otherwise in}\ Q_\rho^\nu\cap\delta\Z^2.
\end{cases}\]
Then by definition $\tilde{v}_{\e,\rho}=0$ along a channel $\C$ in $Q_\rho^\nu\cap\delta\Z^2$. Moreover, we have
\begin{equation}\label{est:crit:05}
\frac{1}{\rho}G_\e(u_{\e,\rho},v_{\e,\rho},Q_\rho^\nu)\geq\frac{1}{\rho}G_\e(u_{\e,\rho},\tilde{v}_{\e,\rho},Q_\rho^\nu)-c\frac{\eta}{(1-\eta)^2}-cM\frac{\delta}{\rho},
\end{equation}
for some constant $c=c(\ell)>0$ independent of $\e$, $\rho$ and $\eta$. Indeed, to prove \eqref{est:crit:05} we observe that   \eqref{bound:I:eta} gives
\begin{align*}
\frac{\delta}{2\rho}\sum_{i\in \mathcal I_\e^\eta} &\Bigg(\ell (v_{\e,\rho}^i-1)^2+\frac{1}{\ell }\sum_{k=1}^2|v_{\e,\rho}^i-v_{\e,\rho}^{i+\delta e_k}|^2\Bigg)\\
&\geq\frac{\delta}{2\rho}\sum_{i\in \mathcal I_\e^\eta}\left(\ell (\tilde{v}_{\e,\rho}^i-1)^2+\frac{1}{\ell}\sum_{k=1}^2|\tilde{v}_{\e,\rho}^i-\tilde{v}_{\e,\rho}^{i+\delta e_k}|^2\right)-c\left(\ell +\frac{3}{\ell}\right)\frac{\eta}{(1-\eta)^2}.
\end{align*}
Then, since $0\leq\tilde{v}_{\e,\rho}\leq 1$ we get
\begin{align*}%\label{est:crit:08}
\frac{1}{\rho}G_\e({v}_{\e,\rho},Q_\rho^\nu)\geq \frac{1}{\rho}G_\e(\tilde{v}_{\e,\rho},Q_\rho^\nu)-c\left(\ell+\frac{3}{\ell}\right)\frac{\eta}{(1-\eta)^2}-\left(\ell+\frac{2}{\ell}\right)M\frac{\delta}{\rho},
\end{align*}
and hence \eqref{est:crit:05}. 

Therefore, to conclude the proof of this step it only remains to estimate $M$ in \eqref{est:crit:05}. To this end, we notice that by construction there exist $M$ pairwise disjoint paths $p_m\subset C_m\cup \mathcal J_m$, $0\leq m\leq M-1$ of length $l_m$ (where we have set $C_0:=C'$) connecting $\partial^\delta Q_\rho^\nu\cap\Pi^+_\nu$ and $\partial^\delta Q_\rho^\nu\cap\Pi^-_\nu$ such that \eqref{cond:eta} and \eqref{est:crit:04} are satisfied with $p_0,l_0$ replaced by $p_m,l_m$. Further, in view of \eqref{bound:I:eta} there exists $c>0$ such that $l_m\leq c \rho/\delta$, for every $0\leq m\leq M-1$. Then, since the paths $p_m$ are pairwise disjoint, summing up \eqref{est:crit:04} over $m$ yields
\[\frac{1}{\rho}F_\e(u_{\e,\rho},v_{\e,\rho},Q_\rho^\nu)\geq\frac{1}{\rho}\sum_{m=0}^{M-1}F_\e(u_{\e,\rho},v_{\e,\rho},p_m)\geq\frac{\eta^2}{2c}\frac{t^2\delta}{\rho^2}M,\]
which in view of \eqref{un:bound} implies that there exists $c>0$ such that
\begin{equation}\label{c:stima-M}
M\leq \frac{c}{\eta^2t^2}\frac{\rho^2}{\delta}.
\end{equation}
Thus, thanks to \eqref{c:stima-M} estimate \eqref{est:crit:05} becomes
\begin{align*}
\frac{1}{\rho}G_\e(v_{\e,\rho},Q_\rho^\nu)\geq\frac{1}{\rho}G_\e(\tilde{v}_{\e,\rho},Q_\rho^\nu)-c\frac{\eta}{(1-\eta)^2}-\frac{c}{\eta^2t^2}\rho.
\end{align*}
Finally, setting $T(\e):=\frac{\rho}{\delta}$ and $w_{\e,\rho}^i:=\tilde{v}_{\e,\rho}^{\delta i}$ for all $i\in T(\e)Q^\nu\cap\Z^2$ from the above inequality we deduce that
\begin{align}\label{est:crit:06}
\frac{1}{\rho} &E_\e(u_{\e,\rho},v_{\e,\rho},Q_\rho^\nu)\nonumber\\
&\geq\frac{\delta}{2\rho}\Big(\ell\sum_{i\in Q_\rho^\nu\cap\delta\Z^2}(\tilde{v}_{\e,\rho}^i-1)^2+\frac{1}{2\ell}\sum_{\begin{smallmatrix}i,j\in Q_\rho^\nu\cap\delta\Z^2\\|i-j|=\delta\end{smallmatrix}}\left|\tilde{v}_{\e,\rho}^i-\tilde{v}_{\e,\rho}^j\right|^2\Big)-c\frac{\eta}{(1-\eta)^2}-\frac{c}{\eta^2t^2}\rho\nonumber\\
&=\frac{1}{2T(\e)}\Big(\ell\sum_{i\in T(\e)Q^\nu\cap\Z^2}(w_{\e,\rho}^i-1)^2+\frac{1}{2\ell}\sum_{\begin{smallmatrix}i,j\in T(\e)Q^\nu\cap\Z^2\\|i-j|=1\end{smallmatrix}}\left|w_{\e,\rho}^i-w_{\e,\rho}^j\right|^2\Big)-c\frac{\eta}{(1-\eta)^2}-\frac{c}{\eta^2t^2}\rho.
\end{align}
Then, since $w_{\e,\rho}$ is a competitor for $\varphi_\ell(\nu)$, gathering \eqref{est:crit:03} and \eqref{est:crit:06} and passing to the limit as $\e\to 0$, we deduce 
\begin{align*}
\lim_{\e\to 0}\phi_{\e,\rho}(t,\nu) \geq\varphi_\ell(\nu)-c\frac{\eta}{(1-\eta)^2}-\frac{c}{\eta^2t^2}\rho-\frac{1}{N}.
\end{align*}
Eventually, letting $\rho\to 0^+$ we get
\[\phi_\ell(t,\nu)\geq\varphi_\ell(\nu)-c\frac{\eta}{(1-\eta)^2}-\frac{1}{N},\]
hence the desired inequality follows by first letting $\eta\to 0$ and then $N\to+\infty$.

\medskip

\noindent{\bf Step 3:} $\phi_\ell(t,\nu)\leq\varphi_\ell(\nu)$ for every $(t,\nu)\in\R\times S^1$.

\smallskip

Since $\varphi_\ell$ is continuous by Step 1 and for every $t\in\R$ the function $\nu \mapsto \phi_\ell(t,\nu)$ is continuous by lower semicontinuity arguments, it suffices to prove the desired inequality for $\nu\in \Xi$. To simplify the exposition we prove this inequality only in the special case $\nu=e_2$. 

Let $T\in\N$ and let $v_T$ be a test function for $\varphi^T_\ell(e_2)$ such that
\begin{equation}\label{est:crit:06a}
\ell\sum_{i\in TQ\cap\Z^2}(v_T^i-1)^2+\frac{1}{2\ell}\sum_{\begin{smallmatrix}i,j\in TQ\cap\Z^2\\|i-j|=1\end{smallmatrix}}|v_T^i-v_T^j|^2\leq 2T\varphi^T_\ell(e_2)+1,
\end{equation}
where $\varphi^T_\ell(e_2)$ is as in \eqref{def:phiT}. Let now $\C$ be a channel in $TQ\cap \Z^2$ along which $v_T=0$. Then $\C$ divides $TQ$ into two parts defined as
\[TQ^+:=\{i\in TQ\cap\Z^2:\ \exists\ l\in\N\ \text{such that}\ i-le_2\in \C\}\quad\text{and}\quad TQ^-:=(TQ\cap\Z^2)\setminus TQ^+.\]
Now we define a discrete function $u_T$ in $TQ\cap \Z^2$ by setting
\[u_T^i:=
\begin{cases}
t &\text{if}\ i\in TQ^+,\\
0 &\text{if}\ i\in TQ^-.
\end{cases}\]
We extend the pair $(u_T,v_T)$ to $\Z^2$ by periodicity in direction $e_1$ and by setting $(u_T^i,v_T^i):=(u_t^{e_2}(i),1)$ outside the discrete strip $\{x\in \R^2 \colon \left<x,e_2\right><T/2\}\cap \Z^2$. For every $\e,\rho>0$ set
\[I_{\e,\rho}:=\{z\in\Z\times\{0\}:\ (\delta TQ+\delta Tz)\subset Q_\rho\}\]
We now define a pair of competitors $(u_{\e,\rho},v_{\e,\rho})$ for $\phi_{\e,\rho}(t,e_2)$ by setting
\begin{align*}
u_{\e,\rho}^i &:=
\begin{cases}
u_T^{\frac{i}{\delta}} &\text{if}\ i\in\bigcup_{z\in I_{\e,\rho}}(\delta TQ+\delta Tz),\\
u_t^{e_2}(i) &\text{otherwise in}\ Q_\rho\cap\delta\Z^2,
\end{cases}\\
v_{\e,\rho}^i &:=
\begin{cases}
v_T^{\frac{i}{\delta}} &\text{if}\ i\in\bigcup_{z\in I_{\e,\rho}}(\delta TQ+\delta Tz),\\
\hat{v}_{e_2,\e}^i &\text{otherwise in}\ Q_\rho\cap\delta\Z^2,
\end{cases}
\end{align*}
where $\hat{v}_{e_2,\e}^i$ is as in \eqref{bound:cond02} with $\e_j$ replaced by $\e$.
Then the pair $(u_{\e,\rho},v_{\e,\rho})$ is admissible for $\phi_{\e,\rho}$ and by construction we have
\begin{equation}\label{est:crit:07}
F_\e(u_{\e,\rho},v_{\e,\rho},Q_\rho)=0.
\end{equation}
Moreover, since $\#(I_{\e,\rho})\leq{\rho}/{\lfloor\delta T\rfloor}$,
in view of the periodicity of $(u_T,v_T)$ we get
\begin{align}\nonumber
\frac{\delta}{\rho}\sum_{i\in Q_\rho\cap\delta\Z^2}(v_{\e,\rho}^i-1)^2 &=\#(I_{\e,\rho})\frac{\delta}{\rho}\hspace*{-0.5em}\sum_{i\in TQ\cap\Z^2}\hspace*{-0.75em}(v_T^i-1)^2+\frac{\delta}{\rho}\sum_{i\in (Q_\rho\cap\delta\Z^2)\setminus\bigcup_{z\in I_{\e,\rho}}(\delta TQ+\delta Tz)}\hspace*{-3em}(v_{\e,\rho}^i-1)^2\\\nonumber
&\leq\frac{\delta}{\lfloor\delta T\rfloor}\hspace*{-0.25em}\sum_{i\in TQ\cap\Z^2}\hspace*{-0.75em}(v_T^i-1)^2+\frac{\delta}{\rho}\#\bigg(\Big\{i\in \Big(Q_\rho\setminus\bigcup_{z\in I_{\e,\rho}}(\delta TQ+\delta Tz)\Big)\cap S_\delta(e_2)\Big\}\bigg)\\\label{c:uno}
&\leq\frac{\delta}{\lfloor\delta T\rfloor}\hspace*{-0.25em}\sum_{i\in TQ\cap\Z^2}\hspace*{-0.75em}(v_T^i-1)^2+c\frac{\delta}{\rho}T,
\end{align}
and analogously
\begin{align}\label{c:due}
\frac{\delta}{\rho}\sum_{\begin{smallmatrix}i,j\in Q_\rho\cap\delta\Z^2\\|i-j|=\delta\end{smallmatrix}}|v_{\e,\rho}^i-v_{\e,\rho}^j|^2\leq \frac{\delta}{\lfloor\delta T\rfloor}\sum_{\begin{smallmatrix}i,j\in TQ\cap\Z^2\\|i-j|=1\end{smallmatrix}}|v_T^i-v_T^j|^2+c\frac{\delta}{\rho}T.
\end{align}
Hence gathering \eqref{c:uno} and \eqref{c:due} gives
\begin{align*}
\frac{1}{\rho}G_\e(v_{\e,\rho},Q_\rho)\leq\frac{\delta}{2\lfloor\delta T\rfloor}\bigg(\ell\sum_{i\in TQ\cap\Z^2}(v_T^i-1)^2+\frac{1}{2\ell}\sum_{\begin{smallmatrix}i,j\in TQ\cap\Z^2\\|i-j|=1\end{smallmatrix}}|v_T^i-v_T^j|^2\bigg)+c\frac{\delta}{\rho}T.
\end{align*}
Combining the latter estimate with \eqref{est:crit:07} and \eqref{est:crit:06a} implies
\begin{align*}
\phi_{\e,\rho}(t,e_2) &\leq\frac{1}{\rho}E_\e(u_{\e,\rho},v_{\e,\rho},Q_\rho)\\
&\leq\frac{\delta}{2\lfloor\delta T\rfloor}\bigg(\ell\sum_{i\in TQ\cap\Z^2}(v_T^i-1)^2+\frac{1}{2\ell}\sum_{\begin{smallmatrix}i,j\in TQ\cap\Z^2\\|i-j|=1\end{smallmatrix}}|v_T^i-v_T^j|^2\bigg)+c\frac{\delta}{\rho}T\\
&\leq\frac{\delta T}{\lfloor\delta T\rfloor}\left(\varphi^T_\ell(e_2)+\frac{1}{2T}\right)+c\frac{\delta}{\rho}T.
\end{align*}
Therefore letting first $\e$ and then $\rho$ go to zero we get
\[\phi_\ell(t,e_2)\leq\varphi^T_\ell(e_2)+\frac{1}{2T},\]
thus finally the desired inequality follows by letting $T\to+\infty$.
\end{proof}

\section{Proof of the $\G$-convergence result in the supercritical regime $\ell=+\infty$}\label{sec:superc} 
In this section we study the asymptotic behaviour of the functionals $E_\e$ when $\varepsilon$ is much smaller than $\delta$. 

We start recalling that Proposition \ref{compactness}-(ii) gives that in this case the domain of the $\Gamma$-limit is $W^{1,2}(\O)\times \{1\}$ and that for every $u\in W^{1,2}(\O)$ and for every $(u_\e,v_\e) \to (u,1)$ in $L^1(\Omega)\times L^1(\Omega)$ we have
\begin{equation}\label{c:sup-li}
\liminf_{\e \to 0}E_\e(u_\e,v_\e)\geq \int_\O |\nabla u|^2\,dx. 
\end{equation}
On the other hand, the upper-bound inequality is also straightforward. Indeed if $u\in C^\infty(\overline\O)$, then a recovery sequence is simply given by 
\[u_\e^i:=u(i),\quad v_\e^i:=1\quad\text{for every}\quad i\in\Omega_\delta.\]
In fact, $u_\e\to u$ in $L^1(\Omega)$ and $G_\e(v_\e)=0$, while Jensen's inequality together with the mean-value theorem gives
\begin{equation}\label{c:sup-ls}
\limsup_{\e\to 0}E_\e(u_\e,v_\e)=\limsup_{\e\to 0}F_\e(u_\e,v_\e)\leq\int_\Omega|\nabla u|^2dx
\end{equation}
as in the proof of \eqref{eq:n10} in Proposition \ref{prop:limsup}. Then, the general case $u\in W^{1,2}(\O)$ follows by a standard density argument. 

Therefore gathering \eqref{c:sup-li} and \eqref{c:sup-ls} proves that for $\ell=+\infty$ the functionals $E_\e$ $\Gamma$-converge to
\[E_\infty(u,v)=
\begin{cases}
\displaystyle\int_\Omega|\nabla u|^2dx &\text{if}\ u\in W^{1,2}(\Omega),\ v=1\ \text{a.e. in}\ \Omega,\\
+\infty &\text{otherwise in}\ L^1(\Omega)\times L^1(\Omega).
\end{cases}\]
The $\Gamma$-convergence result as above implies, in particular, that if  the pair $(u_\e,v_\e)$ converges in $L^1(\Omega)\times L^1(\Omega)$ to a pair $(u,1)$ for some $u\in GSBV^2(\Omega)\setminus W^{1,2}(\Omega)$ then $E_\e(u_\e,v_\e) \to +\infty$. Then, in the spirit of \cite{BT08}, the purpose of the following subsection is to study the asymptotic behaviour of a suitable scaling of $E_\e$ leading to a limit functional which is finite on the whole $GSBV^2(\O)$. 

\subsection{Formulation of an equivalent energy}
We start noticing that the analysis performed in the scaling regimes $\ell=0$ and $\ell\in (0,+\infty)$ suggests that the development of a discontinuity for $u$ is penalised by a factor proportional to ${\delta}/{\e}$, which is divergent in this supercritical regime.
This observation leads us to consider the following functionals:
\begin{align*}
H_\e(w,v)&:= \frac{1}{2}\sum_{i\in\Omega_\delta}\delta^n(v^i)^2\hspace*{-1em}\sum_{\begin{smallmatrix} k=1\\i\pm\delta e_k\in\Omega_\delta\end{smallmatrix}}^n\hspace*{-0.5em}\left|\frac{w^i-w^{i\pm\delta e_k}}{\delta}\right|^2
\\
&+\frac{1}{2}\bigg(\sum_{i\in \Omega_\delta}\delta^{n-1}(v^i-1)^2
+\sum_{i\in\Omega_\delta}\sum_{\begin{smallmatrix}k=1\\i+\delta e_k\in\Omega_\delta\end{smallmatrix}}^n\hspace*{-1em}\delta^{n-1}\frac{\e^2}{\delta^2}\left|v^i-v^{i+\delta e_k}\right|^2\bigg).
\end{align*}
We notice that $H_\e$ is obtained by $E_\e$ by scaling at the same time the energy and the variable $u$; in fact we have  
$$
H_\e(w,v)=\frac{\e}{\delta}E_\e(\sqrt{\delta/\e}\,w,v).
$$
With the following theorem we establish a $\Gamma$-convergence result for the scaled functionals $H_\e$. 
\begin{theorem}\label{thm:rescaled}
Let $\ell=+\infty$ and let $H_\e \colon L^1(\Omega)\times L^1(\Omega)\longrightarrow [0,+\infty]$ be defined as
\[H_\e(w,v):=
\begin{cases}
\dfrac{\e}{\delta}\,E_\e\Big(\sqrt{\frac{\delta}{\e}}\,w,v\Big) &\text{if}\ w,v\in\A_\e(\Omega),\\
+\infty &\text{otherwise in}\ L^1(\Omega)\times L^1(\Omega).
\end{cases}\]
Then the functionals $H_\e$ $\Gamma$-converge to $H\colon L^1(\Omega)\times L^1(\Omega)\longrightarrow [0,+\infty]$ defined as 
\begin{equation}\label{def:lim:res}
H(w,v):=
\begin{cases}
\displaystyle\int_\Omega|\nabla w|^2dx+\int_{S_w\cap\Omega}|\nu_w|_\infty d\HH^{n-1} &\text{if}\ w\in GSBV^2(\Omega),\ v=1\ \text{a.e. in}\ \Omega,\\
+\infty &\text{otherwise in}\ L^1(\Omega)\times L^1(\Omega).
\end{cases}
\end{equation}
\end{theorem}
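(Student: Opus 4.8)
The plan is to reduce the $\Gamma$-convergence of $H_\e$ to results already established for $E_\e$ and to the compactness result Proposition \ref{compactness}, exploiting the exact identity $H_\e(w,v)=\tfrac{\e}{\delta}E_\e(\sqrt{\delta/\e}\,w,v)$. First I would deal with the liminf inequality. Let $(w_\e,v_\e)\to(w,v)$ in $L^1(\Omega)\times L^1(\Omega)$ with $\sup_\e H_\e(w_\e,v_\e)<+\infty$; otherwise there is nothing to prove. Setting $u_\e:=\sqrt{\delta/\e}\,w_\e$ we have $E_\e(u_\e,v_\e)=\tfrac{\delta}{\e}H_\e(w_\e,v_\e)$, so $G_\e(v_\e)\le\tfrac{\delta}{\e}H_\e(w_\e,v_\e)$ stays bounded after multiplication by $\e/\delta$; more importantly $v_\e\to 1$ in $L^2(\Omega)$ since $\sum_i\delta^{n-1}(v_\e^i-1)^2$ is bounded and $\delta\to 0$, hence $v=1$ a.e. For the bulk term I would slice in the coordinate directions exactly as in the proof of Proposition \ref{compactness}-(i): along $\HH^{n-1}$-a.e.\ line in direction $e_k$ the one-dimensional energy controls the interpolated gradient, which combined with Remark \ref{rem:liminf} and the lower-semicontinuity of the $L^1$-convergence gives $w\in GSBV^2(\Omega)$ with $\liminf_\e\tfrac{\e}{\delta}F_\e(\sqrt{\delta/\e}\,w_\e,v_\e)=\liminf_\e F_\e(w_\e,v_\e)\ge\int_\Omega|\nabla w|^2\dx$. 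The surface term requires a Modica--Mortola-type estimate on $G_\e$ after rescaling: the coefficient $\delta^{n-1}\e^2/\delta^2$ in the gradient part of $H_\e$ and the coefficient $\delta^{n-1}$ in the potential part are exactly balanced so that on each coordinate line the product estimate $\tfrac12\big(\tfrac{\delta}{\e}\tfrac{\e^2}{\delta^2}|v^i-v^{i+\delta e_k}|^2\cdot\tfrac{\e}{\delta}+\dots\big)$ reproduces the one-dimensional jump cost $1$; iterating the sharper lower bound from Proposition \ref{compactness} (now applied to the rescaled $G_\e$) and taking the supremum over $k$ yields $\liminf_\e H_\e(w_\e,v_\e)\ge\int_\Omega|\nabla w|^2\dx+\int_{S_w\cap\Omega}|\nu_w|_\infty\,d\HH^{n-1}$.

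For the limsup inequality, by the density argument of \cite[Theorem 3.9, Corollary 3.11]{cortesani97} together with a diagonalisation it suffices to construct a recovery sequence for $w\in\W(\Omega)$ with $\overline{S_w}=\Omega\cap K$, $K\subset\Pi_\nu$ closed and convex, the general case being analogous. The construction is a simplified version of the one in Proposition \ref{prop:limsup}: away from $\Pi_\nu$ take $w_\e^i:=w(i)$, $v_\e^i:=1$, which contributes $\int_\Omega|\nabla w|^2\dx+o(1)$ to the bulk energy by Jensen and the mean-value theorem as in \eqref{eq:n10}. Near $\Pi_\nu$ I would cut a strip of width $\mathcal O(\delta)$ around the discretisation of $K$, set $v_\e=0$ on the lattice points in that strip so that the bulk term $F_\e$ vanishes there (the variable $w$ may then jump at no cost), and let $v_\e$ relax from $0$ to $1$ across $\mathcal O(1)$ further lattice layers. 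Because the optimal profile for the rescaled surface energy is now \emph{discrete} — one minimises $\sum_i\big(\tfrac{\delta}{\e}\cdot\tfrac{\e^2}{\delta^2}(v^i-v^{i+1})^2+(v^i-1)^2\big)$ times $\tfrac{\e}{\delta}$, i.e.\ $\sum_i\big((v^i-v^{i+1})^2\cdot 0+\dots\big)$ in the limit $\e/\delta\to 0$ — the cheapest transition is the sharp one $v=0$ on one layer and $v=1$ on the next, whose cost per site is exactly $1$; counting the number of such columns crossing $\Pi_\nu$ gives the anisotropic surface density $|\nu|_\infty$, precisely matching \eqref{def:lim:res}. One must choose the orientation of the discretised strip relative to $\nu$ so that the count of ``broken columns'' is asymptotically $\HH^{n-1}(S_w\cap\Omega)\,|\nu|_\infty$; for $\nu$ a coordinate direction this is immediate and the general direction follows by the slicing/approximation used in Remark \ref{rem:coordinate} and Step 1 of Theorem \ref{characterization:phi}.

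The main obstacle I anticipate is the sharp matching of constants in the surface term, on both inequalities simultaneously. For the liminf, one must check that the rescaled $G_\e$, whose two pieces carry the anisotropic weights $\delta^{n-1}$ and $\delta^{n-1}\e^2/\delta^2$, still produces a lower bound of exactly $|\nu_w|_\infty$ and not something strictly smaller — this is where the ``Modica--Mortola trick'' of Proposition \ref{compactness} has to be re-run carefully with the new scaling, verifying that $\e/\delta\to 0$ forces the discrete profile to make a full $0\to1$ jump and that the slicing in the $n$ coordinate directions combined with the sup over $k$ recovers the $\ell^\infty$-norm of the normal. For the limsup, the delicate point is the combinatorial count of the lattice columns intersecting the (tilted, discretised) hypersurface $K$ and showing this count times $\delta^{n-1}$ converges to $\HH^{n-1}(S_w\cap\Omega)\,|\nu|_\infty$; this is exactly the geometric fact underlying the crystalline integrand and is handled by a slicing argument along the coordinate directions as in the proof of the lower bound in Proposition \ref{compactness}, together with a covering of $K$ by small cubes on which the surface is nearly flat. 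Once these two estimates are in place, the theorem follows by combining them.
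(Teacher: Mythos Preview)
Your liminf argument has a genuine gap. You invoke a Modica--Mortola-type estimate on the rescaled surface part of $H_\e$, but examine the one-dimensional version: it reads $\tfrac12\sum_i\big[(v_\e^i-1)^2+\tfrac{\e^2}{\delta^2}|v_\e^i-v_\e^{i+\delta}|^2\big]$, with $\e/\delta\to 0$. The Young-inequality product gives only $\sum_i\tfrac{\e}{\delta}\,|v_\e^i-1|\,|v_\e^i-v_\e^{i+\delta}|$, which tends to $0$ and yields no lower bound whatsoever. Proposition~\ref{compactness} cannot be invoked either: its proof rests on the balanced coefficients $1/\e$ and $\e$ in $G_\e$, whereas after rescaling the coefficients become $1$ and $(\e/\delta)^2$, so the balance is destroyed. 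The ``re-running with the new scaling'' you propose simply fails.

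The paper's mechanism is entirely different and combinatorial. In one dimension the bound $\sum_i(v_\e^i-1)^2\le C$ (no $\delta$ prefactor!) forces $\#\{i:v_\e^i<\tfrac12\}\le N$ uniformly in $\e$. The finitely many limit points of this set contain $S_w$; on the complement $v_\e\ge\tfrac12$, so the bulk term bounds $\|\tilde w_\e'\|_{L^2}$ there, giving $w\in SBV^2$. Then near each jump $t_l\in S_w$ one shows that \emph{two} adjacent lattice points $i_l(\e)$ and $i_l(\e)+\delta$ satisfy $v_\e\to 0$ (if $\tfrac12\big[(v_\e^{i})^2+(v_\e^{i+\delta})^2\big]$ stayed bounded below near $t_l$, the bulk energy would control $\tilde w_\e$ in $W^{1,2}$ there, a contradiction). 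Hence $\liminf_\e\tfrac12\sum_i(v_\e^i-1)^2\ge\tfrac12\cdot 2\,\#S_w=\#S_w$, with the gradient piece contributing nothing. The $n$-dimensional liminf then follows by slicing in the coordinate directions as you indicate.

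Your limsup is closer to the paper's, but the ``two-point'' phenomenon also dictates the recovery sequence: to make $F_\e(w_\e,v_\e)=0$ you need $v_\e^i=v_\e^{i+\delta e_k}=0$ whenever the segment $[i,i+\delta e_k]$ crosses $\overline{S_w}$, because both $(v^i)^2$ and $(v^{i+\delta e_k})^2$ multiply that difference quotient in $F_\e$. Hence two full lattice layers of zeros around the discretised interface are needed, not one; the surface cost is then $\tfrac12\cdot\delta^{n-1}\cdot 2\big\lfloor\HH^{n-1}(K_{\sqrt n\delta}\cap\Omega)|\nu_n|/\delta^{n-1}\big\rfloor+o(1)\to\HH^{n-1}(S_w\cap\Omega)|\nu|_\infty$. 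No relaxation layers are used. For the density step the anisotropic result of \cite{CT} is what is needed, rather than \cite{cortesani97}.
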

%
%As a first step towards the proof of Theorem \ref{thm:rescaled} we prove the following lower bound for the rescaled functionals in the case $n=1$. The general case will then follow from Proposition \ref{compactness}. As in the previous section we assume that $\Omega=I=(a,b)\subset\R$ is an open interval.
%%
%\begin{prop}\label{thm:rescaled1d}
%Let $\e\ll\delta$ and let the functional $H_\e:L^1(I)\times L^1(I)\to[0,+\infty]$ be given by
%%
%\[H_\e(w,v):=
%\begin{cases}
%\dfrac{\e}{\delta}E_\e\left(\dfrac{\sqrt{\delta}}{\sqrt{\e}}w,v\right) &\text{if}\ w,v\in\A_\delta(I),\\
%+\infty &\text{otherwise in}\ L^1(I)\times L^1(I).
%\end{cases}\]
%%
%Let $(w,v)\in L^1(I)\times L^1(I)$ be such that $H'(w,v):=\Gamma\hbox{-}\liminf_{\e\to 0} H_\e(w,v)<+\infty$. Then $v=1$ a.e. in $I$, $w\in SBV^2(I)$ and
%%
%\[H'(w,v)\geq \int_I(w')^2dt+\#(S_w\cap I).\]
%%
%\end{prop}
%
\begin{proof}
The proof is divided into two main steps.

\medskip

\noindent{\bf Step 1}: liminf inequality.

\smallskip 

The liminf inequality is proven in two substeps first considering the case $n=1$ and then the case $n\geq2$. 

\smallskip 

\noindent{\bf Substep 1.1}: the case $n=1$.

\smallskip 

Let $\Omega=I:=(a,b)$ be an open bounded interval. We start showing that in the one-dimensional case the domain of the $\Gamma$-limit is $SBV^2(I)\times \{1\}$. 
To this end let $(w,v)\in L^1(I)\times L^1(I)$ and let $(w_\e,v_\e)\subset L^1(I)\times L^1(I)$ be a sequence such that $(w_\e,v_\e)\to (w,v)$ strongly in $L^1(I)\times L^1(I)$ and satisfying 
\begin{equation}\label{bound:rescaled1}
\sup_\e H_\e(w_\e,v_\e)<+\infty.
\end{equation}
Let $\tilde{v}_\e$ and $\tilde{w}_\e$ be the piecewise affine interpolations of $v_\e$, $w_\e$ on $I_\delta:=I\cap \delta\Z$, respectively. From \eqref{bound:rescaled1} we deduce that $\tilde{v}_\e\to 1$ in $L^2(I)$; hence $v=1$ a.e. in $I$.

We now prove that $w\in SBV^2(I)$. To this end we consider the discrete set
\[J_\e:=\{i\in I_\delta:\ v_\e^i<1/2\}.\]
Again appealing to \eqref{bound:rescaled1} we deduce that 
\[c\geq\sum_{i\in J_\e}(v_\e-1)^2>\frac{1}{4}\#(J_\e).\]
for some $c>0$, uniformly in $\e$. Thus, we deduce that there exists $N\in\N$ such that
\[\#(J_\e)\leq N\quad\text{for every}\; \e>0.\]
Without loss of generality we then write
\[J_\e=\{i_1(\e),\ldots,i_N(\e)\},\quad a<i_1(\e)\leq i_2(\e)\leq\ldots \leq i_N(\e)<b,\]
where $N$ is independent of $\e$. Then for every $1\leq j\leq N$ the sequence $i_j(\e)$ is bounded and thus there exists $t\in [a,b]$ such that (up to subsequences) $i_j(\e)\to t$. Denote by
\[J:=\{t\in[a,b]:\ \exists\ j\in\{1,\ldots,N\}\ {\rm s.t.}\ t=\lim_{\e\to 0}i_j(\e)\}\]
the set of these limit points. Then we may write
\[J=\{t_1,\ldots,t_M\},\quad a\leq t_1<t_2<\ldots<t_M\leq b,\quad M\leq N.\]
We set $d_0:=\min\{t_{l+1}-t_l:\ 1\leq l\leq M-1\}$. Let $\eta\in (0,d_0)$ and $j\in\{1,\ldots,N\}$ be arbitrary. By definition of $J$ there exist $l\in\{1,\ldots,M\}$ and $\e(j)>0$ such that
\[i_j(\e)\in(t_l-\eta/2,t_l+\eta/2)\quad\text{for every}\; \e\in(0,\e(j)).\]
By the arbitrariness of $j\in\{1,\ldots,N\}$, setting $\e_0:=\min_{1\leq j\leq N}\e(j)$ we thus deduce that
\[v_\e^i\geq\frac{1}{2}\quad\forall\ i\in I_\delta\setminus(J+[-\eta/2,\eta/2])\quad\text{for every}\; \e\in(0,\e_0).\]
Hence, for $\e\leq\e_0$ we get
\[\sum_{i\in I_\delta\setminus(J+[-\eta/2,\eta/2])}\hspace*{-2em}\delta(v_\e^i)^2\left|\frac{w_\e^i-w_\e^{i\pm\delta}}{\delta}\right|^2\geq\frac{1}{4}\sum_{i\in I_\delta\setminus(J+[-\eta/2,\eta/2])}\hspace*{-0.5em}\delta\left|\frac{w_\e^i-w_\e^{i\pm\delta}}{\delta}\right|^2\geq\frac{1}{2}\int_{(a+\eta,b-\eta)\setminus(J+[-\eta,\eta])}(\tilde{w}_\e')^2dt,\]
thus in view of \eqref{bound:rescaled1} we deduce that the $L^2((a+\eta,b-\eta)\setminus(J+[-\eta,\eta]))$-norm of $\tilde w'_\e$ is equibounded. Therefore, since $w_\e \to w$ in $L^1(I)$ the Poincar\'e-Wirtinger inequality implies that $\tilde{w}_\e$ is equibounded in $W^{1,2}((a+\eta,b-\eta))\setminus(J+[-\eta,\eta]))$ and $\tilde{w}_\e\rightharpoonup w$ weakly in $W^{1,2}((a+\eta,b-\eta))\setminus(J+[-\eta,\eta])$. Moreover, since $\tilde{v}_\e\to 1$ in $L^2(I)$, we have $\tilde{v}_\e\tilde{w}_\e'\rightharpoonup w'$ weakly in $L^1((a+\eta,b-\eta)\setminus(J+[-\eta,\eta]))$.
Hence, using estimate \eqref{est:03} in the proof of Proposition \ref{liminf:1d} entails
\begin{align}\nonumber
\liminf_{\e\to 0}\frac{1}{2}\sum_{i\in\overset{\circ}{I}_\delta}\delta(v_\e^i)^2\left|\frac{w_\e^i-w_\e^{i\pm\delta}}{\delta}\right|^2 &\geq\liminf_{\e\to 0}\int_{(a+\eta,b-\eta)\setminus(J+[-\eta,\eta])}(\tilde{v}_\e)^2(\tilde{w}_\e')^2dt
\\\label{est:super:01}
&\geq\int_{(a+\eta,b-\eta)\setminus(J+[-\eta,\eta])}(w')^2dt.
\end{align}
Then, by the arbitrariness of $\eta\in (0,d_0)$ we deduce both that $w\in SBV^2(I)$ and $S_w\cap I\subset J$. We therefore set
\[S_w\cap I:=\{t_1,\ldots,t_L\},\quad L\leq M.\]
It only remains to prove that
\begin{equation}\label{est:super:02}
\liminf_{\e\to 0}\frac{1}{2}\left(\sum_{i\in I_\delta}(v_\e^i-1)^2+\sum_{i\in\overset{\circ}{I}_\delta}\frac{\e^2}{\delta^2}|v_\e^i-v_\e^{i+\delta}|^2\right)\geq L.
\end{equation}
To this end, we show that the number of lattice points $i\in I_\delta$ such that $v_\e^i\to 0$ is at least $2L$. Let $1\leq l\leq L$, set $I'_l:=(t_l-\eta/2,t_l+\eta/2)$ and define
\[m_l:=\liminf_{\e\to 0}\inf\left\{\frac{(v_\e^{i+\delta})^2+(v_\e^i)^2}{2}:\ i\in (I'_l+(-\delta,\delta))\cap\delta\Z\right\}.\]
We claim that $m_l=0$. Let us assume by contradiction that $m_l>0$. Then there exists a subsequence $\e_j$ such that for $j$ sufficiently large we have
\[\frac{1}{m_l}\frac{(v_{\e_j}^{i+\delta_j})^2+(v_{\e_j}^i)^2}{2}\geq 1\quad\text{for every}\;
i\in (I'_l+(-\delta_j,\delta_j))\cap\delta_j\Z.\]
Therefore, we get
\begin{align*}
\int_{I_l'}(\tilde{w}_{\e_j}')^2dt &\leq\hspace*{-1em}\sum_{i\in(I_l'+(-\delta_j,\delta_j))\cap\delta_j\Z}\hspace*{-1em}\delta_j\left|\frac{w_{\e_j}^i-w_{\e_j}^{i+\delta_j}}{\delta_j}\right|^2\\
&\leq\frac{1}{m_l}\sum_{i\in(I_l'+(-\delta_j,\delta_j))\cap\delta_j\Z}\hspace*{-2em}\delta_j\frac{(v_{\e_j}^{i+\delta_j})^2+(v_{\e_j}^i)^2}{2}\left|\frac{w_{\e_j}^i-w_{\e_j}^{i+\delta_j}}{\delta_j}\right|^2\leq c
\end{align*}
uniformly in $j$. Thus, since $(\tilde{w}_{\e_j})\subset W^{1,2}(I)$ and $\tilde{w}_{\e_j}\to w$ in $L^1(I)$ as $j\to+\infty$ we would deduce that $\tilde{w}_{\e_j}\rightharpoonup w$ in $W^{1,2}(I_l')$ and hence $w\in W^{1,2}(I_l')$, which contradicts the fact $t_l\in I_l'$. Hence, we may deduce that $m_l=0$. Consequently we can find a sequence of lattice points $i_l(\e)\in (I_l'+(-\delta,\delta))\cap\delta\Z$ such that
\[\frac{(v_\e^{i_l(\e)})^2+(v_\e^{i_l(\e)+\delta})^2}{2}\to 0\quad\text{as}\ \e\to 0,\]
the latter implies
\begin{equation*}
v_\e^{i_l(\e)},v_\e^{i_l(\e)+\delta}\to 0\quad\text{as}\ \e\to 0,
\end{equation*}
which in its turn gives
\[\liminf_{\e\to 0}\hspace*{-0.5em}\sum_{i\in I_\delta\cap(t_l-\eta,t_l+\eta)}\hspace*{-2em}(v_\e^i-1)^2\geq 2.\]
Thus, since $\eta<d_0$ we get
\[\liminf_{\e\to 0}\sum_{i\in I_\delta}(v_\e^i-1)^2\geq\sum_{l=1}^L\liminf_{\e\to 0}\hspace*{-0.5em}\sum_{i\in I_\delta\cap(t_l-\eta,t_l+\eta)}\hspace*{-2em}(v_\e^i-1)^2\geq 2L,\]
from which we deduce \eqref{est:super:02}. Gathering \eqref{est:super:01} and \eqref{est:super:02} finally yields the liminf-inequality by the arbitrariness of $\eta>0$.

\smallskip 

\noindent{\bf Substep 1.2}: the case $n\geq 2$.

\smallskip 

In this case the proof of the liminf inequality directly follows from the previous substep arguing as in the proof of Proposition \ref{compactness}.

%Indeed, suppose that $(w,v)\in L^1(\Omega)\times L^1(\Omega)$ and $(w_\e,v_\e)\subset L^1(\Omega)\times L^1(\Omega)$ is a sequence converging to $(w,v)$ in $L^1(\Omega)\times L^1(\Omega)$ as $\e\to 0$ with 
%%
%\[\sup_\e H_\e(w_\e,v_\e)<+\infty.\]
%%
%Then, if for $1\leq k\leq n$ we set $R_\delta^k:=\{i\in\Omega_\delta:\ i\pm\delta e_k\in\Omega\}$ and
%%
%\[J_\e^k(w_\e,v_\e):=\sum_{i\in R_\delta^k}(v_\e^i-1)^2+\frac{\e^2}{\delta^2}|v_\e^i-v_\e^{i+\delta e_k}|^2,\]
%%
%it suffices to repeat the computations in the proof of Proposition \ref{compactness} with $J_\e^k(w_\e,v_\e)$ in place of $G_\e^k(u_\e,v_\e)$ to obtain that $w\in GSBV^2(\Omega)$ and
%%
%\[\liminf_{\e\to 0} H_\e(w_\e,v_\e)\geq\int_\Omega|\nabla w|^2dx+\int_{S_w\cap\Omega}|\nu_w|_\infty d\HH^{n-1}.\]
%%
%It remains to prove the limsup-inequality. 

\medskip

\noindent {\bf Step 2:} limsup inequality.

\smallskip
It is enough to show that 
\begin{equation}\label{est:res01}
\Gamma\hbox{-}\limsup_{\e\to 0}H_\e(w,1)\leq H(w,1)\quad\text{for every}\; w\in\W(\Omega),
\end{equation}
where $\W(\Omega)$ is the space of functions introduced in the proof of Proposition \ref{prop:limsup}. Indeed, if \eqref{est:res01} holds than \cite[Theorem 3.1, Remark 3.2 and Remark 3.3]{CT}  allow us to apply a standard density and lower-semicontinuity argument to deduce that
$$\Gamma\hbox{-}\limsup_{\e\to 0}H_\e(w,1)\leq H(w,1)\quad\text{for every}\;  w\in SBV^2(\Omega)\cap L^\infty(\Omega).$$
Finally, the case $w\in GSBV^2(\Omega)$ follows by a standard truncation argument. 

Therefore we now turn to the proof of \eqref{est:res01}. Let $w\in\W(\Omega)$. To simplify the argument we only discuss the case $\overline{S_w}=K\cap\Omega$, where $K$ is a closed and convex set contained in $\Pi_\nu$, with $\nu:=(\nu_1,\ldots,\nu_n)\in S^{n-1}$; then the general case follows as in the proof of Proposition \ref{prop:limsup}, Step 2.  For every $x,y\in\R^n$ we denote by $\mathcal S_{(x,y)}$ the open segment joining $x$ and $y$; moreover, for every $h\geq 0$ set $K_h:=\{x\in\Pi_\nu:\ \dist(x,K)\leq h\}$. Without loss of generality we suppose that $|\nu|_\infty=|\nu_n|=\nu_n$. Further, upon considering a shifted lattice $\delta\Z^n+\xi_\e$ for a suitable sequence $(\xi_\e)\subset\R^n$ converging to $0$ as $\e\to0$ we may assume that
\begin{equation*}%\label{cond:lattice}
\overline{S_w}\cap\delta\Z^n=\emptyset\quad\text{for every}\; \e>0
\end{equation*}
and we define
\[w_\e^i:=w(i)\quad\quad\text{for every}\;\ i\in\Omega_\delta.\]
Finally, we set
\[v_\e^i:=
\begin{cases}
0 &\text{if}\ \mathcal S_{(i-\delta e_n,i+\delta e_n)}\cap K_{\sqrt{n}\delta}\neq\emptyset,\\
1 &\text{otherwise in}\ \Omega_\delta.
\end{cases}\]
We clearly have $(w_\e,v_\e)\to (w,1)$ in $L^1(\Omega)\times L^1(\Omega)$. Moreover, we claim that there holds
\begin{equation}\label{est:res05}
\limsup_{\e\to 0}\frac{1}{2}\bigg(\sum_{i\in\Omega_\delta}\delta^{n-1}(v_\e^i-1)^2+\sum_{i\in\Omega_\delta}\sum_{\begin{smallmatrix}k=1\\i+\delta e_k\in\Omega_\delta\end{smallmatrix}}^n\delta^{n-1}\frac{\e^2}{\delta^2}|v_\e^i-v_\e^{i+\delta e_k}|^2\bigg)\leq\HH^{n-1}(S_w\cap\Omega)|\nu|_\infty.
\end{equation}
Indeed, since $\#\{i\in\Omega_\delta:\ v_\e^i=0\}=\mathcal{O}\left(\frac{1}{\delta^{n-1}}\right)$, we get
\[\sum_{i\in\Omega_\delta}\sum_{\begin{smallmatrix}k=1\\i+\delta e_k\in\Omega_\delta\end{smallmatrix}}^n\delta^{n-1}\frac{\e^2}{\delta^2}|v_\e^i-v_\e^{i+\delta e_k}|^2\leq n\,\#\{i\in\Omega_\delta:\ v_\e^i=0\}\delta^{n-1}\frac{\e^2}{\delta^2}\to 0\quad\text{as}\ \e\to 0,\]
while the fact that
\[\#\{i\in\Omega_\delta:\ \mathcal S_{(i-\delta e_n,i+\delta e_n)}\cap K_{\sqrt{n}\delta}\neq\emptyset\}=2\left\lfloor\frac{\HH^{n-1}(\Omega\cap K_{\sqrt{n}\delta})\left<\nu,e_n\right>}{\delta^{n-1}}\right\rfloor\]
yields
\begin{align*}
\limsup_{\e\to 0}\frac{1}{2}\sum_{i\in\Omega_\delta}\delta^{n-1}(v_\e^i-1)^2 &=\limsup_{\e\to 0}\frac{\delta^{n-1}}{2}\#\{i\in\Omega_\delta:\ \mathcal S_{(i-\delta e_n,i+\delta e_n)}\cap K_{\sqrt{n}\delta}\neq\emptyset\}\\
&=\limsup_{\e\to 0}\delta^{n-1}\left\lfloor\frac{\HH^{n-1}(\Omega\cap K_{\sqrt{n}\delta})|\nu_n|}{\delta^{n-1}}\right\rfloor\nonumber\\
&=\HH^{n-1}(S_w\cap\Omega)|\nu_n|=\HH^{n-1}(S_w\cap\Omega)|\nu|_\infty.
\end{align*}
Then, it remains to show that
\begin{equation}\label{est:res04}
\limsup_{\e\to 0}\frac{1}{2}\sum_{i\in\Omega_\delta}(v_\e^i)^2\sum_{\begin{smallmatrix}k=1\\i\pm\delta e_k\end{smallmatrix}}^n\delta^n\left|\frac{w_\e^i-w_\e^{i\pm\delta e_k}}{\delta}\right|^2\leq\int_\Omega|\nabla w|^2dx.
\end{equation}
To do so, we first notice that for $i\in\Omega_\delta$ and $k\in\{1,\ldots,n\}$ with $\mathcal S_{(i,i+\delta e_k)}\cap\overline{S_w}=\emptyset$ by Jensen's inequality we have
\[\left|\frac{w_\e^i-w_\e^{i+\delta e_k}}{\delta}\right|^2\leq\frac{1}{\delta}\int_0^\delta|\left<\nabla w(i+te_k),e_k\right>|^2dt\quad\forall\ 1\leq k\leq n.\]
Hence, thanks to the mean-value theorem, using Fubini's Theorem we get
\begin{align}\label{est:res03}
\sum_{i\in\Omega_\delta}\delta^n\hspace*{-1.3em}\sum_{\begin{smallmatrix}k=1\\i+\delta e_k\in\Omega_\delta\\\mathcal{S}_{(i,i+\delta e_k)}\cap\overline{S_w}=\emptyset\end{smallmatrix}}^n\hspace*{-2em}\frac{(v_\e^i)^2+(v_\e^{i+\delta e_k})^2}{2}\left|\frac{w_\e^i-w_\e^{i+\delta e_k}}{\delta}\right|^2\leq\int_\Omega|\nabla w(x)|^2dx+\mathcal{O}(\delta).
\end{align}
We now claim that 
\begin{equation}\label{eq:res01}
\sum_{i\in\Omega_\delta}\delta^n\hspace*{-1.3em}\sum_{\begin{smallmatrix}k=1\\i+\delta e_k\in\Omega_\delta\\\mathcal{S}_{(i,i+\delta e_k)}\cap\overline{S_w}\neq\emptyset\end{smallmatrix}}^n\frac{(v_\e^i)^2+(v_\e^{i+\delta e_k})^2}{2}\left|\frac{w_\e^i-w_\e^{i+\delta e_k}}{\delta}\right|^2=0,
\end{equation}
so that combining \eqref{est:res03} and \eqref{eq:res01} entails \eqref{est:res04}.
 
To prove the claim let $i\in\overset{\circ}{\Omega}_\delta$ and $1\leq k\leq n$ be such that $\mathcal S_{(i,i+\delta e_k)}\cap\overline{S}_w\neq\emptyset$. This implies $\langle \nu,e_k\rangle = \nu_k \neq 0$. Without loss of generality we assume that $\nu_k>0$. We now show that 
\begin{equation}\label{eq:res02}
v_\e^i=v_\e^{i+\delta e_k}=0.
\end{equation}
If $k=n$ then \eqref{eq:res02} follows directly from the definition of the sequence $(v_\e)$. Let us now prove \eqref{eq:res02} when $k\in\{1,\ldots,n-1\}$. We have to show that 
\[\mathcal S_{(i-\delta e_n,i+\delta e_n)}\cap K_{\sqrt{n}\delta}\neq\emptyset\quad\text{and}\quad \mathcal S_{(i+\delta e_k-\delta e_n,i+\delta e_k+\delta e_n)}\cap K_{\sqrt{n}\delta}\neq\emptyset.\]
To do so, we choose $t\in(0,\delta)$ such that $i+te_k\in\overline{S_w}$. Then
\[i+t\frac{\sin\alpha_k}{\sin\alpha_n}e_n\in K_{\sqrt{n}\delta},\]
where $\alpha_k$ and $\alpha_n$ denote the angle between $e_k$ and $\Pi_\nu$ and between $e_n$ and $\Pi_\nu$, respectively; we note that
\[\sin\alpha_k=\left<\nu,e_k\right>=\nu_k\quad\text{and}\quad\sin\alpha_n=\left<\nu,e_n\right>=\nu_n.\]
Thus, since $\nu_n=|\nu|_\infty\geq\nu_k>0$
we deduce that $t\frac{\sin\alpha_k}{\sin\alpha_n}\in (0,\delta)$ and hence
\[i+t\frac{\sin\alpha_k}{\sin\alpha_n}e_n\in \mathcal S_{(i,i+\delta e_n)},\]
which yields $\mathcal S_{(i,i+\delta e_n)}\cap K_{\sqrt{n}\delta}\neq\emptyset$ and then $v_\e^i=0$ by definition. Moreover, we have
\[(i+\delta e_k)+(t-\delta)\frac{\sin\alpha_k}{\sin\alpha_n}e_n\in K_{\sqrt{n}\delta}.\]
Since $(t-\delta)\frac{\sin\alpha_k}{\sin\alpha_n}\in(-\delta,0)$, we get
\[(i+\delta)e_k+(t-\delta)\frac{\sin\alpha_k}{\sin\alpha_n}e_n\in \mathcal S_{(i+\delta e_k-\delta e_n,i+\delta e_k)},\]
from which we deduce that $\mathcal S_{(i+\delta e_k-\delta e_n,i+\delta e_k)}\cap K_{\sqrt{n}\delta}\neq\emptyset$ and thus $v_\e^{i+\delta e_k}=0$. The latter in its turn implies \eqref{eq:res01} and eventually \eqref{est:res04}.
%Therefore, gathering \eqref{est:res03} and \eqref{eq:res01} we achieve
%%
%\begin{align*}
%\limsup_{\e\to 0} &\frac{1}{2}\sum_{i\in\overset{\circ}{\Omega}_\delta}(v_\e^i)^2\sum_{k=1}^n\delta^n\left|\frac{w_\e^i-w_\e^{i\pm\delta e_k}}{\delta}\right|^2\nonumber\\
%&=\limsup_{\e\to 0}\sum_{i\in\overset{\circ}{\Omega}_\delta}\delta^n\sum_{k=1}^n\frac{(v_\e^i)^2+(v_\e^{i+\delta e_k})^2}{2}\left|\frac{w_\e^i-w_\e^{i+\delta e_k}}{\delta}\right|^2\nonumber\\
%&=\limsup_{\e\to 0}\sum_{i\in\overset{\circ}{\Omega}_\delta}\delta^n\hspace*{-1.3em}\sum_{\begin{smallmatrix}k=1\\(i,i+\delta e_k)\cap\overline{S_w}=\emptyset\end{smallmatrix}}^n\hspace*{-2em}\frac{(v_\e^i)^2+(v_\e^{i+\delta e_k})^2}{2}\left|\frac{w_\e^i-w_\e^{i+\delta e_k}}{\delta}\right|^2\leq\int_\Omega|\nabla w|^2dx.
%\end{align*}
%
Thus gathering \eqref{est:res05} and \eqref{est:res04} gives the limsup inequality.
\end{proof}

\begin{rem}[a $\Gamma$-expansion of $E_\e$]
For every fixed $w\in GSBV^2(\Omega)$ set $u:=\sqrt\frac{\delta}{\e}\, w$, then using the fact that $S_u=S_w$ and $\nu_u=\nu_w$ we get 
\[\frac{\delta}{\e} H(w,1)=\int_\Omega|\nabla u|^2dx+\frac{\delta}{\e}\int_{S_u\cap\Omega}|\nu_u|_\infty d\HH^{n-1}\]
which for $\ell=+\infty$ is $\Gamma$-equivalent to $E_\e(u,v)$ in the sense of \cite{BT08}.
\end{rem}
%
%%%%%%%%%%%%Last section%%%%%%%
%
\section{Interpolation properties of $\varphi_\ell$}\label{sec:interp} 
In this last section we show that when $n=2$ the surface energy density $\varphi_\ell$ satisfies the following interpolation properties. 
\begin{prop}
Let $\ell\in (0,+\infty)$ and for $\nu \in S^1$ let $\varphi_\ell(\nu)$ be as in \eqref{as:form}. Then, we have
\[\lim_{\ell\to +\infty}\varphi_\ell(\nu)=+\infty \quad\text{and}\quad \lim_{\ell\to 0}\varphi_\ell(\nu)=1,\]
for every $\nu \in S^1$. Moreover, there holds
\[\lim_{\ell\to +\infty}\frac{\varphi_\ell(\nu)}{\ell}=|\nu|_\infty,\]
for every $\nu \in S^1$.
\end{prop}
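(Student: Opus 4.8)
The plan is to establish the three limits by exploiting the variational characterization of $\varphi_\ell$ in \eqref{as:form} together with sharp two-sided bounds. Throughout I write $\varphi_\ell^T$ for the pre-limit quantity in \eqref{def:phiT} and recall that $\varphi_\ell(\nu)=\lim_{T\to+\infty}\varphi_\ell^T(\nu)$. The key observation is that a test configuration for $\varphi_\ell^T(\nu)$ consists of a value function $v\in\mathcal A_1(TQ^\nu)$ vanishing on a channel $\C$ and equal to $1$ near $\partial TQ^\nu$; since a channel is a strong path connecting opposite sides of $TQ^\nu$, its cardinality is at least $T$ (a path disconnecting the cube in the direction $\nu$ must cross it), so the first term $\ell\sum_i(v^i-1)^2$ is bounded below roughly by $\ell\cdot(\text{number of sites where }v\text{ is bounded away from }1)$, while the second term $\frac{1}{2\ell}\sum|v^i-v^j|^2$ controls how fast $v$ can rise from $0$ back to $1$. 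This is exactly a discrete Modica--Mortola balance.

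\textbf{Step 1: $\lim_{\ell\to0}\varphi_\ell(\nu)=1$.} For the lower bound I would compare with the subcritical regime: by Theorem \ref{gamma:compactness}-(2) (the lower bound there), or more directly by Remark \ref{lowerbound:local} and the fact that the Mumford--Shah surface density is $1$, one has $\phi_\ell(t,\nu)\geq \tfrac{1}{\sqrt n}$ uniformly, but to get the sharp constant $1$ one should instead use a slicing/Modica--Mortola trick exactly as in Step 2 of Proposition \ref{prop:liminf}: the $G_\e$-part alone forces $\liminf G_\e(v_\e)\geq \#(S_u)$ with density $1$, and since the definition of $\varphi_\ell$ uses only (a rescaling of) $G_\e$, one gets $\varphi_\ell(\nu)\geq 1-o(1)$ as $\ell\to0$; more precisely the discrete optimal-profile energy converges to the continuum value $\mathbf m=1$ of \eqref{min:prob}. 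For the upper bound I would take $\nu=e_2$ first (flat channel along the coordinate axis), choose the channel $\C$ to be the flat discrete line, and build $v$ from a truncated/discretised version of the optimal profile $f(t)=1-e^{-t/\ell^{1/2}}$ suitably rescaled; a direct computation gives $\varphi_\ell^T(e_2)\leq c_\ell$, where $c_\ell$ is the explicit constant from Remark \ref{rem:coordinate}, and from the closed form there $c_\ell\to 1$ as $\ell\to0$. For general $\nu$ one uses the continuity of $\nu\mapsto\varphi_\ell(\nu)$ established in Step 1 of Theorem \ref{characterization:phi} together with monotonicity/comparison arguments, or one observes $\varphi_\ell(\nu)\leq \sqrt2\, c_{\ell,2}\to1$ using the staircase channel construction; in any case the sandwiching $1-o(1)\leq\varphi_\ell(\nu)\leq 1+o(1)$ closes this case.

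\textbf{Step 2: $\lim_{\ell\to+\infty}\varphi_\ell(\nu)=+\infty$ and the rate $\lim_{\ell\to+\infty}\varphi_\ell(\nu)/\ell=|\nu|_\infty$.} These two are proved together. For the lower bound on $\varphi_\ell/\ell$: drop the nonnegative gradient term $\frac{1}{2\ell}\sum|v^i-v^j|^2$ and estimate $\frac{1}{2T}\,\ell\sum_{i\in TQ^\nu}(v^i-1)^2\geq \frac{\ell}{2T}\,\#\{i\in\C: v^i=0\}=\frac{\ell}{2T}\cdot(\text{cardinality of }\C)$ (up to boundary corrections). The minimal cardinality of a strong channel crossing $TQ^\nu$ in the direction $\nu$ is asymptotically $2T|\nu|_\infty + o(T)$: indeed a strong path disconnecting the oriented cube must, for each integer level in the direction $e_{k_0}$ where $|\nu_{k_0}|=|\nu|_\infty$, contain at least one site, and the number of such levels intersected is $T|\nu|_\infty(1+o(1))$, while the "strong" requirement forces at least two sites per level — this is precisely the combinatorial content already used for the $H_\e$ upper bound in the proof of Theorem \ref{thm:rescaled}, where $\#\{i:v_\e^i=0\}=2\lfloor \HH^{n-1}(\Omega\cap K_{\sqrt n\delta})|\nu_n|/\delta^{n-1}\rfloor$ appears with the factor $|\nu|_\infty$. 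Hence $\varphi_\ell^T(\nu)\geq \ell\,|\nu|_\infty(1+o_T(1))$, giving $\liminf_{\ell\to\infty}\varphi_\ell(\nu)/\ell\geq|\nu|_\infty$ and in particular $\varphi_\ell(\nu)\to+\infty$. For the matching upper bound I use the construction from Step 2 of Theorem \ref{thm:rescaled}: take $\C$ to be the "staircase" discrete hypersurface approximating $\Pi_\nu$ with exactly the minimal number $\sim 2T|\nu|_\infty$ of sites, set $v=0$ there and $v=1$ elsewhere; then $\ell\sum(v^i-1)^2 = \ell\cdot 2T|\nu|_\infty(1+o_T(1))$, while $\frac{1}{2\ell}\sum|v^i-v^j|^2\leq \frac{C}{\ell}\cdot(\text{number of edges from }\C)=\frac{CT}{\ell}$, which is negligible after dividing by $T$ and letting $\ell\to\infty$. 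Therefore $\varphi_\ell^T(\nu)\leq \ell|\nu|_\infty(1+o_T(1)) + C/\ell$, so $\limsup_{\ell\to\infty}\varphi_\ell(\nu)/\ell\leq|\nu|_\infty$, and combining with the lower bound gives the claimed rate; since $|\nu|_\infty\geq 1/\sqrt2>0$ on $S^1$, this also yields $\varphi_\ell(\nu)\to+\infty$.

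\textbf{Main obstacle.} The delicate point is the combinatorial estimate that the minimal cardinality of a \emph{strong} channel crossing $TQ^\nu$ equals $2T|\nu|_\infty+o(T)$, both as a lower bound (any disconnecting strong path is this long) and as an upper bound (there exists one this short). The lower bound requires a careful argument that a path separating the two faces of $TQ^\nu$ orthogonal to $\nu$ must "span" the full extent of the cube in the direction where $\nu$ is largest, combined with the strong-connectedness forcing pairs of sites; the upper bound requires explicitly exhibiting the optimal staircase, which is essentially the discrete set $S_j^\nu$ of \eqref{def:sj} but one must check it is strongly connected and has the right cardinality. Once this is in hand, the rest is routine: dropping nonnegative terms for lower bounds, plugging in explicit competitors for upper bounds, and sending $T\to\infty$ then $\ell\to 0$ or $\ell\to\infty$. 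The $\ell\to0$ limit is comparatively easy because it reduces to the already-established convergence of the discrete optimal profile to the continuum value $\mathbf m=1$, for which the explicit formula for $c_\ell$ in Remark \ref{rem:coordinate} can be invoked directly in the coordinate directions and then propagated by continuity of $\varphi_\ell$.
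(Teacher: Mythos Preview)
Your Step 2 (the $\ell\to+\infty$ analysis) is correct and essentially identical to the paper's argument: drop the gradient term for the lower bound and count channel sites via the projection onto the coordinate axis where $|\nu_k|=|\nu|_\infty$; for the upper bound use the staircase channel from Theorem \ref{thm:rescaled}. The paper does exactly this.

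Your Step 1, however, has a genuine gap in the upper bound for general $\nu$. Neither of your two suggested extensions from coordinate directions works. First, the continuity modulus of $\nu\mapsto\varphi_\ell(\nu)$ established in Theorem \ref{characterization:phi} is \emph{not} uniform in $\ell$: the error in that proof comes from extending the channel by paths $p_L,p_R$ of length $O(T\eta)$, and the resulting energy contribution contains both an $\ell$-term and a $1/\ell$-term, so the constant blows up as $\ell\to0$. You cannot interchange the limits $\ell\to0$ and $\nu'\to\nu$ this way. Second, the claim $\varphi_\ell(\nu)\leq\sqrt2\,c_{\ell,2}$ is unjustified; convexity of the $1$-homogeneous extension only gives $\varphi_\ell(\nu)\leq|\nu|_1\,c_\ell\leq\sqrt2\,c_\ell$, and since $c_\ell\to1$ this yields $\limsup_{\ell\to0}\varphi_\ell(\nu)\leq\sqrt2$, which is not sharp. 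The paper avoids this by constructing, for \emph{every} $\nu$ at once, a test function $v_T(x)=f(\ell(d(x)-\sqrt2))$ built from the continuum optimal profile $f$ (as in Proposition \ref{prop:limsup}), obtaining $\varphi_\ell(\nu)\leq 1+\eta+c\ell$ directly.

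Your lower bound sketch for $\ell\to0$ has the right idea (Modica--Mortola on slices in direction $\nu$) but misses the key technical device: the paper chooses a triangulation $\T$ of the strip with the property that any triangle having one vertex on the channel $\C$ has \emph{all} vertices on $\C$. This guarantees the piecewise-affine interpolant $\tilde v_T$ vanishes identically on a region that every line in direction $\nu$ must cross, so each slice $\tilde v_T^{y,\nu}$ genuinely attains the value $0$. Without this, you cannot ensure the slice hits $0$ (a channel is only a discrete set). With it, the Young-inequality trick gives $\varphi_\ell(\nu)\geq1$ for \emph{all} $\ell>0$, not merely $1-o(1)$.

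Finally, you misidentify the main obstacle: the channel-cardinality estimates needed for Step 2 are a couple of lines each (as the paper shows); the delicate point is precisely the $\ell\to0$ analysis you treated as routine.
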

\begin{proof}
Let $\nu\in S^1$ and for every $T>0$ let $\varphi^T_\ell$ be the auxiliary function as in \eqref{def:phiT}. We start showing that $\varphi_\ell(\nu)\to +\infty$ as $\ell\to +\infty$. Indeed, for every $T>0$ and for every function $v_T$ that is admissible for $\varphi^T_\ell(\nu)$ there exists a channel $\C$ in $TQ^\nu\cap\Z^2$ along which $v_T=0$. Then it suffices to notice that $\#(\C)\geq T$ to deduce that for every $T>0$ we have
\[\varphi^T_\ell(\nu)\geq \frac{\ell}{2T}\sum_{i\in TQ^\nu\cap\Z^2}(v_T^i-1)^2\geq\frac{\ell}{2}.\]
Hence passing to the limit as $T\to +\infty$ we get $\varphi_\ell(\nu)\geq \ell/2$ and thus the claim.

We now prove that $\varphi_\ell(\nu)\to 1$ as $\ell\to 0$. To this end we first show that
\begin{equation}\label{int:lowerbound}
\liminf_{\ell\to 0}\varphi_\ell(\nu)\geq 1.
\end{equation}
Let $\nu\in S^1$, $T>0$, and let $v_T$ be an arbitrary test function for $\varphi^T_\ell(\nu)$. Then in particular $v_T=1$ in a neighbourhood of the two opposite sides of $Q_T^\nu$ perpendicular to $\nu$. For our purposes it is convenient to extend $v_T$ to $1$ to the discrete stripe $S_T^\nu\cap \Z^2$, where
$$
S_T^\nu:= \{x\in \R^2 \colon -T/2\leq \langle x,\nu^\perp \rangle \leq T/2 \}.
$$
With a little abuse of notation we still denote by $v_T$ such an extension.  
Moreover, we recall that by definition of $v_T$ there exists a channel $\C$ in $TQ^\nu\cap\Z^2$ such that $v_T=0$ on $\C$. Since by definition $\C$ is a strong path (see Figure \ref{fig:strongpath}), we can find a triangulation $\T$ of $S^\nu_T$ with vertices in $\Z^2$ such that if a triangle $\tau\in\T$ has one vertex in $\C$ then all its vertices belong to $\C$. Denote with $\tilde{v}_T$ the piecewise affine interpolation of $v_T$ on the triangulation $\T$. We have
\begin{align}\label{int:est:01}
\frac{1}{2T} &\Bigg(\ell\sum_{i\in TQ^\nu\cap\Z^2}(v_T^i-1)^2+\frac{1}{2\ell}\sum_{\begin{smallmatrix}i,j\in TQ^\nu\cap\Z^2\\|i-j|=1\end{smallmatrix}}|v_T^i-v_T^j|^2\Bigg)\nonumber\\
=\frac{1}{2T} &\Bigg(\ell\sum_{i\in S_T^\nu\cap\Z^2}(v_T^i-1)^2+\frac{1}{2\ell}\sum_{\begin{smallmatrix}i,j\in S_T^\nu\cap\Z^2\\|i-j|=1\end{smallmatrix}}|v_T^i-v_T^j|^2\Bigg)\nonumber\\ 
&\geq\frac{1}{2T}\int_{S_{T-\sqrt 2}^\nu}\ell\,(\tilde{v}_T-1)^2+\frac{1}{\ell}|\nabla\tilde{v}_T|^2dx\nonumber\\
&=\frac{1}{2T}\int_{\Pi_\nu\cap Q^\nu_{T-\sqrt 2}}\left(\int_{-(T+\sqrt 2)/2}^{(T+\sqrt 2)/2}\ell\,(\tilde{v}_T^{y,\nu}(t)-1)^2+\frac{1}{\ell}((\tilde{v}_T^{y,\nu})'(t))^2dt\right)d\HH^1(y),
\end{align}
where $\tilde{v}_T^{y,\nu}(t):=\tilde{v}_T(y+t\nu)$, for every $t\in (-(T+\sqrt 2)/2,(T+\sqrt 2)/2)$. Thus, by definition of $v_T$ for every $y\in\Pi_\nu\cap Q_{T-\sqrt 2}^\nu$ we have $\tilde{v}_T^{y,\nu}((T+\sqrt 2)/2)=\tilde{v}_T^{y,\nu}(-(T+\sqrt 2)/2)=1$. Moreover, thanks to the choice of our triangulation $\T$, for every $y\in\Pi_\nu\cap Q_{T-\sqrt 2}^\nu$ there exists $t_y\in (-(T+\sqrt 2)/2, (T+\sqrt 2)/2)$ such that $\tilde{v}_T^{y,\nu}(t_y)=0$. Thus, for every $y\in\Pi_\nu\cap Q^\nu_{T-\sqrt 2}$ we get
\begin{align*}
\frac{1}{2} &\int_{-(T+\sqrt 2)/2}^{(T+\sqrt 2)/2}\ell(\tilde{v}_T^{y,\nu}(t)-1)^2+\frac{1}{\ell}((\tilde{v}_T^{y,\nu})'(t))^2dt\\
&\geq\int_{t_y}^{(T+\sqrt 2)/2}(1-\tilde{v}_T^{y,\nu}(t))|(\tilde{v}_T^{y,\nu})'(t)|dt+\int_{-(T+\sqrt 2)/2}^{t_y}(1-\tilde{v}_T^{y,\nu}(t))|(\tilde{v}_T^{y,\nu})'(t)|dt\\
&=2\int_0^1(1-z)dz=1.
\end{align*}
Therefore in view of \eqref{int:est:01} we deduce 
\[\frac{1}{2T}\bigg(\ell\sum_{i\in TQ^\nu\cap\Z^2}(v_T^i-1)^2+\frac{1}{2\ell}\sum_{\begin{smallmatrix}i,j\in TQ^\nu\cap\Z^2\\|i-j|=1\end{smallmatrix}}|v_T^i-v_T^j|^2\bigg)\geq\frac{T-\sqrt{2}}{T},\]
hence, by the arbitrariness of $v_T$ we get $\varphi^T_\ell(\nu)\geq\frac{T-\sqrt{2}}{T}$ for every $T>0$ and every $\ell>0$. Then, letting $T\to +\infty$ gives $\varphi_\ell(\nu)\geq 1$ for all $\ell>0$ and thus \eqref{int:lowerbound}. 

Now it remains to show that
\begin{equation}\label{int:upperbound}
\limsup_{\ell\to 0}\varphi_\ell(\nu)\leq 1.
\end{equation}
To prove the above upper-bound inequality \eqref{int:upperbound} we construct a suitable test function $v_T$ for $\varphi_\ell(\nu)$. 
To this end let $\eta>0$ be fixed; let $T_\eta>0$ and $f\in C^2([0,T_\eta])$ be such that $f(0)=0$, $f(T_\eta)=1$, $f'(T_\eta)=f''(T_\eta)=0$ , and
\[\int_0^{T_\eta}(f-1)^2+(f')^2dt\leq 1+\eta.\]
Clearly, up to setting $f(t)=1$ for every $t\geq T_\eta$ we can always assume that $f\in C^2([0,+\infty))$. 
Let $T>0$ and set $T':=T-\sqrt{2}$. Denoting by $d(x)$ the distance of $x$ from $\Pi_\nu$ we set
\[v_T(x):=
\begin{cases}
0 &\text{if}\ d(x)\leq \sqrt{2},\\
f(\ell(d(x)-\sqrt{2})) &\text{if}\ x\in T'Q^\nu,\ d(x)>\sqrt{2},\\
1 &\text{otherwise},
\end{cases}\]
which is well-defined for $T>\frac{T_\eta}{\ell}+2\sqrt{2}$. We now define the sets
\begin{align*}
A &:=\{i\in TQ^\nu\cap\Z^2:\ d(i)\leq\sqrt{2}\},\\
B &:=\{i\in T'Q^\nu\cap\Z^2:\ \sqrt{2}<d(i)<T_\eta/\ell+2\sqrt{2}\},\\
C &:=\{i\in (TQ^\nu\setminus T'Q^\nu)\cap\Z^2: d(i)>\sqrt{2}\}.
\end{align*}
We notice that the set $A$ contains a channel $\C$ along which $v_T=0$. In particular, $v_T$ is admissible for $\varphi^T_\ell(\nu)$. Thus we obtain
\begin{align}\label{int:est02}
\varphi^T_\ell(\nu) &\leq\frac{1}{2T}\Bigg(\ell\sum_{i\in TQ^\nu\cap\Z^2}(v_T^i-1)^2+\frac{1}{2\ell}\sum_{\begin{smallmatrix}i,j\in TQ^\nu\cap\Z^2\\|i-j|=1\end{smallmatrix}}|v_T^i-v_T^j|^2\Bigg)\nonumber\\
&=\frac{1}{2T}\Bigg(\sum_{i\in A}\ell\, (v_T^i-1)^2+\frac{1}{\ell}\sum_{i\in C}\hspace*{-1em}\sum_{\begin{smallmatrix}k=1\\i+e_k\in TQ^\nu\setminus C\end{smallmatrix}}^2|v_T^i-v_T^{i+e_k}|^2\nonumber\\
&\hspace*{2em}+\sum_{i\in B}\Big(\ell\,(v_T^i-1)^2+\frac{1}{\ell}\sum_{\begin{smallmatrix}k=1\\i+e_k\in B\end{smallmatrix}}^2|v_T^i-v_T^{i+e_k}|^2\Big)+\frac{1}{\ell}\sum_{i\in B}\sum_{\begin{smallmatrix}k=1\\i+e_k\in A\end{smallmatrix}}^2|v_T^i|^2\Bigg).
\end{align}
We estimate the terms on the right hand side of \eqref{int:est02} separately. First notice that $\#(A)\leq cT$, while
$$
\#(\{i\in C:\ i+e_k\in TQ^\nu\setminus C\ \text{for some}\ k=1,2\})\leq c/\ell\,, 
$$ 
for some $c>0$. Thus we get
\begin{align}\label{int:est03}
\frac{1}{2T}\sum_{i\in A}\ell\,(v_T^i-1)^2=\frac{\ell}{2T}\#(A)\leq c \,\ell
\end{align}
and
\begin{align}\label{int:est04}
\frac{1}{2T\ell}\sum_{i\in C}\hspace*{-1em}\sum_{\begin{smallmatrix}k=1\\i+e_k\in TQ^\nu\setminus C\end{smallmatrix}}^2|v_T^i-v_T^{i+e_k}|^2\leq\frac{c}{T\ell^2}.
\end{align}
Moreover, if $i\in B$ is such that $i+e_k\in A$ for some $k\in\{1,2\}$ then $|v^i|^2\leq c\,\ell^2$ for some $c>0$ and $\#(\{i\in B:\ i+e_k\in A\ \text{for some}\ k=1,2\})\leq cT$. Then
\begin{align}\label{int:est05}
\frac{1}{2T\ell}\sum_{i\in B}\sum_{\begin{smallmatrix}k=1\\i+e_k\in A\end{smallmatrix}}^2|v_T^i|^2\leq c\,\ell.
\end{align}
Finally, arguing as in the proof of Proposition \ref{prop:limsup} yields 
\begin{align}
\frac{1}{2T} &\sum_{i\in B}\Big(\ell\,(v_T^i-1)^2+\frac{1}{\ell}\sum_{\begin{smallmatrix}k=1\\i+e_k\in B\end{smallmatrix}}^2|v_T^i-v_T^{i+e_k}|^2\Big)\nonumber\\
&\leq\frac{1}{2T}\sum_{i\in B}\left(\int_{i+[0,1)^2}\ell\,(v_T(x)-1)^2+\frac{1}{\ell}|\nabla v_T(x)|^2dx+c\,\ell^2\right)\nonumber\\
&\leq\frac{1}{T}\int_{\Pi_\nu\cap TQ^\nu}\left(\int_0^{T_\eta}(f-1)^2+(f')^2dt\right)d\HH^1+c\,\ell\leq (1+\eta)+c\,\ell.
\end{align}
Gathering \eqref{int:est02}-\eqref{int:est05} we then obtain
\[\varphi^T_\ell(\nu)\leq 1+\eta+ c\Big(\ell+\frac{1}{T\ell^2}\Big).\]
Passing first to the limit as $T\to+\infty$ and then letting $\ell\to 0$, \eqref{int:upperbound} follows by the arbitrariness of $\eta>0$.

We now show that ${\varphi_\ell(\nu)}/{\ell} \to|\nu|_\infty$ as $\ell \to +\infty$.
To this end let $\nu=(\nu_1,\nu_2)\in S^1$; without loss of generality we may assume that $|\nu|_\infty=|\nu_2|$. 
Let $p_2:\R^2\to\Pi_{e_2}$ be the orthogonal projection onto $\Pi_{e_2}$ and for every $j\in\Pi_{e_2}\cap\Z^2$ let $R_j:=\{k\in\Z:\ j+ke_2\in TQ^\nu\}$. Let $T>0$ and suppose that $v_T$ is a test function for $\varphi^T_\ell(\nu)$. Let $\C$ be a channel in $TQ^\nu\cap\Z^2$ along which $v_T=0$. Since $\C$ is a strong path we  deduce that for every $j\in p_2(\Pi_\nu)\cap\Z^2$ there exist at least two points $k_1,k_2\in R_j$ such that $v_T^{j+k_1e_2}=v_T^{j+k_2e_2}=0$. This yields
\[\frac{1}{2T}\sum_{i\in TQ^\nu\cap\Z^2}(v_T^i-1)^2\geq\frac{1}{2T}\sum_{j\in p_2(\Pi_\nu)\cap\Z^2}\sum_{k\in R_j}(v_T^{j+ke_2}-1)^2\geq\frac{\lfloor T\rfloor \,|\nu_2|}{T}.\]
Letting $T\to +\infty$ we then obtain
\begin{equation}\label{int:est06}
\frac{\varphi^\ell(\nu)}{\ell}\geq |\nu_2|=|\nu|_\infty,
\end{equation}
for every $\ell>0$.

To prove that, up to a small error, the reverse inequality also holds, we construct a suitable test function $v_T$ for $\varphi^T_\ell(\nu)$. 
To this end we set
\[v_T^i:=
\begin{cases}
0 &\text{if}\ \mathcal S_{(i-e_2,i+e_2]}\cap\Pi_\nu\neq\emptyset,\\
1 &\text{otherwise in}\ TQ^\nu\cap\Z^2.
\end{cases}\]
Then, arguing as in the proof of Theorem \ref{thm:rescaled}, Step 2 one can show that the set $\{v_T=0\}$ is a channel in $TQ^\nu\cap\Z^2$. In particular, $v_T$ is admissible for $\varphi^T_\ell(\nu)$. Moreover, a direct computation gives
\begin{align*}
\frac{1}{\ell}\varphi^T_\ell(\nu) &\leq\frac{1}{2T}\Bigg(\sum_{i\in TQ^\nu\cap\Z^2}(v_T^i-1)^2+\frac{1}{2\ell^2}\sum_{\begin{smallmatrix}i,j\in TQ^\nu\cap\Z^2\\|i-j|=1\end{smallmatrix}}|v_T^i-v_T^j|^2\Bigg)\\
&\leq\frac{1}{T}\lfloor T |\nu_2|\rfloor\left(1+\frac{2}{\ell^2}\right).
\end{align*}
Thus, letting $T\to+\infty$, for every $\ell>0$ we get
\[\frac{1}{\ell}\varphi_\ell(\nu)\leq |\nu|_\infty\left(1+\frac{2}{\ell^2}\right),\]
which together with \eqref{int:est06} gives the thesis.
\end{proof}

\bigskip

\noindent{\bf Acknowledgments.}
Andrea Braides acknowledges the MIUR Excellence Department Project awarded to the Department of Mathematics, University of Rome Tor Vergata, CUP E83C18000100006.

\end{document}